\newcommand{\N}{\mathbb{N}}
\newcommand{\Z}{\mathbb{Z}}
\newcommand{\R}{\mathbb{R}}
\newcommand{\F}{\mathcal{F}}
\newcommand{\G}{\mathcal{G}}
\newcommand{\Ai}{\mathcal{A_{\infty}}}
\newcommand{\deffct}[5]{#1: \begin{array}{ccc}
    #2 & \to & #3  \\
     #4 & \mapsto & \displaystyle #5
\end{array}}
\newcommand{\trajb}[1]{\overline{\mathcal{L}}(#1)}
\newcommand{\trajbi}[2]{\overline{\mathcal{L}}_{#1}(#2)}
\newcommand{\traji}[2]{\mathcal{L}_{#1}(#2)}
\newcommand{\traj}[1]{\mathcal{L}(#1)}
\newcommand{\wb}[2]{\overline{W}^{#1}(#2)}
\newcommand{\wbi}[3]{\overline{W}^{#1}_{#2}(#3)}
\newcommand{\Crit}{\textup{Crit}}
\newcommand{\Or}{\textup{Or}}
\newcommand{\Id}{\textup{Id}}
\newcommand{\Coor}{\textup{Coor}}
\newcommand{\Hom}{\textup{Hom}}
\newcommand{\End}{\textup{End}}
\newcommand{\EZ}{\textup{EZ}}
\newcommand{\ev}{\textup{ev}}
\newcommand{\red}[1]{\textcolor{red}{#1}}
\newcommand{\blue}[1]{\textcolor{blue}{#1}}
\newcommand{\Y}{\mathcal{Y}}
\newcommand{\Xq}{X/\Y}
\newcommand{\ftimes}[2]{\hspace{0.1em} _{#1}\!\times_{#2}}
\newcommand{\ls}[1]{\mathcal{L} #1}
\newcommand{\CS}{\textup{CS}_{DG}}
\newcommand{\m}{\mathbf{\tilde{m}}}
\newcommand{\mL}{\mathbf{\tilde{m}^L}}
\renewcommand{\qed}{\hfill$\blacksquare$}
\renewenvironment{proof}{\begin{addmargin}[1em]{0em}\begin{newproof}}{\end{newproof}\end{addmargin}\qed}
\newenvironment{myproof}[2]{ \begin{addmargin}[1em]{0em} \emph{Proof {#1} {#2}.}}{\end{addmargin}\qed}
\definecolor{darkgreen}{RGB}{0,168,0}
\newtheorem{thm}{Theorem}[section]
\newtheorem{notation}[thm]{Notation}
\newtheorem{defi}[thm]{Definition}
\newtheorem{rem}[thm]{Remark}
\newtheorem{prop}[thm]{Proposition}
\newtheorem{cor}[thm]{Corollary}
\newtheorem{lemme}[thm]{Lemma}
\newtheorem{theorem}{Theorem}
\title{A chain-level model for Chas-Sullivan products in Morse homology with differential graded coefficients}
\author{Robin Riegel\\
IRMA, Université de Strasbourg}
\date{}
\begin{document}
\selectlanguage{english}
\renewcommand{\contentsname}{Table of Contents}
\renewcommand{\proofname}{Proof}
\renewcommand{\refname}{References}
\renewcommand{\figurename}{Figure}
\setlength{\parindent}{0pt}

\maketitle

\begin{center}
    \textbf{Abstract}
\end{center}

We use the framework of Morse theory with differential graded coefficients to study certain operations on the total space of a fibration. More particularly, we focus in this paper on a chain-level description of the Chas-Sullivan product on the homology of the free loop space of an oriented, closed and connected manifold. The idea of "intersecting on the base" and "concatenating on the fiber" is well-adapted to this framework. We also give a Morse theoretical description of other products that follow the same principle. For this purpose, we develop functorial properties with respect to the coefficient in terms of morphisms of $\Ai$-modules and morphisms of fibrations. We also build a differential graded version of the Künneth formula and of the Pontryagin-Thom construction. 

\tableofcontents

\newpage

\section{Introduction and main results}

Given an oriented $n$-dimensional closed manifold $X$, Chas and Sullivan described in \cite{ChaSu99} a degree $-n$ product  $$H_i(\ls{X}) \otimes H_j(\ls{X}) \to H_{i+j-n}(\ls{X})$$ on the homology of the free loop space $\ls{X} = C^{0}(\mathbb{S}^1,X)$ of $X$  as an intersection product on $H_*(X)$ combined with the Pontryagin product on based loops. This multiplication is now a fundamental operation in string topology, the study of the homology of loop spaces.\\

\emph{State of the art.}

A geometric realisation of this product has been described by Cohen-Jones \cite{cohen2002homotopy} in terms of the Thom spectrum $\ls{X}^{-TX} = \Sigma^{-N-d} Th(\ev^*\nu^N)$ where $\ev : \ls{X} \to X$ is the evaluation at the basepoint and $\nu^N \to X$ is the $N$-dimensional normal bundle of a fixed embedding $X \to \R^{N+n}$. This approach has been generalized by Gruher-Salvatore \cite{GS07} to define a product on the homology of any total space of a fiber bundle $E \overset{p}{\to} X$ equipped with a "multiplication map" $m : E \ftimes{p}{p} E \to E$ and a "unit section" $s : X \to E$.
A chain-level description of the Chas-Sullivan product on the singular complex of $\ls{X}$ has been given by Hingston-Wahl in \cite{hingston2022product}. This description consists in intersecting two chains $A,B \in C_*(\ls{X})$ by considering a tubular neighborhood $\mathcal{U}$ of the diagonal $\Delta X \subset X^2$ and taking the cap-product with $A\times B$ of the pullback of a representative $\tau$ of the Thom-class of $\mathcal U \to \Delta X$ to obtain $(\ev \times \ev)^*\tau \cap (A \times B) \in C_*((\ev \times \ev)^{-1} \mathcal{U})$ and then retracting into the \textbf{figure-eight space} $\ls{X} \ftimes{\ev}{\ev} \ls{X}$ using "geodesic sticks" before concatenating.
Abbondandolo-Schwarz constructed in \cite{AS10} a loop product using the fact that the figure-eight space has a smooth tubular neighborhood in $\ls{X}\times \ls{X}$ and thus use an "Umkehr map" $e_! : H_*(\ls{X} \times \ls{X}) \to H_{*-n}(\ls{X} \ftimes{\ev}{\ev} \ls{X})$.
Laudenbach defines in \cite{Lau11} a loop-product in terms of "transverse bi-simplices" $u\times v$ in $\ls{X} \times \ls{X}$, seeing $\ls{X}$ as a simplicial set instead of a topological space.
Chataur introduced in \cite{Cha05} a geometric way to define the loop product from the point of view of geometric homology theory.
Abouzaid took in \cite{Abo15} the approach of approximating the loop space $\ls{X}$ by finite dimensional spaces of piecewise geodesic loops in order to use Morse theory.

Most of these constructions either use the Hilbert space of $H^1$-loops to represent the loop space $\ls{X}$ and use infinite dimensional considerations, or define the loop product only at the homology level. We construct in this paper a chain-level model for Chas-Sullivan products using finite dimensional arguments.
\\

\emph{Our construction.}

Let $(X^n,\star)$ be a smooth pointed, oriented, closed, connected manifold.
The first goal of this paper is to give a finite dimensional Morse theoretic chain-level model for the Chas-Sullivan product and generalize it to any Hurewicz fibration $F \hookrightarrow E \overset{\pi}{\to} X$ equipped with a \textbf{morphism of fibrations} $m : E\  _{\pi}\!\times_{\pi} E  \to E$, ie $m$ is continuous and preserves the fibrations $\pi \circ m = \pi$.

Our construction relies on the fact that there exists a notion of \emph{intersecting on the base} and \emph{multiplying on the fiber}  in the setting of \textbf{enriched Morse homology} defined and studied by Barraud-Damian-Humilière-Oancea \cite{BDHO23} using the original idea of Barraud and Cornea \cite{BC07}. The authors define a Morse homology theory with coefficients in any differential graded (DG) right-module $(\F_*, \partial_{\F})$ over $C_*(\Omega X)$, the cubical chains on the Moore based loop space over $X$ with algebra structure given by the Pontryagin product. Such a DG-module $\F$ is called a \textbf{DG local system}.

 Given a Morse-Smale pair $ (f, \xi)$, they build following \cite{BC07} a Maurer-Cartan element referred to as the \textbf{Barraud-Cornea twisting cocycle} $\{m_{x,y} \in C_{|x|-|y|-1}(\Omega X), \ x,y \in \Crit(f)\}$ satisfying

 \begin{equation}\label{eq : Maurer-Cartan}
     \partial m_{x,y} = \sum_{z \in \Crit(f)} (-1)^{|x|-|z|} m_{x,z} \cdot m_{z,y}
 \end{equation}
 
 and use it to define the \textbf{enriched Morse complex} or \textbf{Morse complex with DG coefficients} 

$$C_*(X, \F_*) = \F_* \otimes_{\Z} \Z\Crit(f) $$ with the twisted differential $$\partial(\alpha \otimes x) = \partial_{\F} \alpha \otimes x + (-1)^{|\alpha|} \sum_y \alpha \cdot m_{x,y} \otimes y.$$

One of their main results is what we will refer to as the \textbf{Fibration Theorem}.

Given a fibration $F \hookrightarrow E \overset{\pi}{\to} X$, a \textbf{transitive lifting function} $\Phi : E \ftimes{\pi}{\ev_0} \mathcal{P}X \to E$ is a map that lifts all paths in $X$ and respects concatenation in the sense that lifting a concatenation of two paths is the same as lifting them one after the other : $\Phi(\Phi(e,\gamma),\delta) = \Phi(e,\gamma \#\delta).$\\

\textbf{Fibration Theorem} (\cite{BDHO23} Theorem 7.2.1) : \emph{Let $F \hookrightarrow E \to X$ be a Hurewicz fibration and equip $C_*(F)$ with the $C_*(\Omega X)$-module structure induced by a transitive lifting function associated to this fibration (see \cite[Section 7.1]{BDHO23}). Then, there exists a quasi-isomorphism denoted $$\Psi_E : C_*(X, C_*(F)) \to C_*(E).$$}

This result was also proved by Charette in \cite[Theorem 1.1.1]{Char17} considering these complexes as $\Z/2\Z$-vector spaces instead of $\Z$-modules.

We will prove in Corollary \ref{cor : homotopy euqivalence if different lifting functions} that the complex $C_*(X,C_*(F))$ does not depend, up to homotopy equivalence, on the choice of transitive lifting function.  

This setting therefore provides a description of a chain complex on $X$ that computes the homology of the total space $E$ of a fibration in terms of Morse data on the base $X$ twisted by the cubical chain complex of the fiber $F$. It is particularly adapted to operations such as the Chas-Sullivan product. This setting also provides an operation that is a model for the \emph{intersection on the base} which is the shriek map

$$\xymatrix{C_*(X \times X, C_*(F \times F)) \ar[r]^-{\Delta_!} \ar[d]^{\Psi_{E^2}} & C_{*-n}(X, \Delta^*C_*(F \times F)) \ar[d]^{\Psi_{E\!\ftimes{\pi}{\pi} E}}\\
C_*(E \times E) \ar@{-->}[r] & C_{*-n}(E\ftimes{\pi}{\pi} E)}$$

induced by the diagonal $\Delta : X \to X\times X$ (see sections 9 and 10 of \cite{BDHO23}). \\

We now state our generalization of the Chas-Sullivan product.

\begin{theorem}[\ref{thm : A}] Let $F \hookrightarrow E \overset{\pi}{\to} X$ be a fibration and denote $\F=C_*(F)$ with the DG-module structure over $C_*(\Omega X)$  induced by a transitive lifting function $\Phi : E \ftimes{\pi}{\ev_0} \mathcal{P} X \to E$ associated to the fibration.  Let $m : E\ _{\pi}\!\times_{\pi} E \to E$ be a morphism of fibrations over $X$ (see definition \ref{defi : Ai morphism of fibration}). This morphism induces a degree $-n$ product
$$\CS : H_*(X, \F)^{\otimes 2} \to H_*(X, \F).$$

The following properties hold :
    \begin{itemize}
    \item \textbf{Associativity (\ref{prop : associativité CSDG}) :} If $m_*$ is associative in homology, then so is $\CS$.
    \item \textbf{Commutativity (\ref{prop : commutativité CSDG}) :} If $m_*$ is commutative in homology, then $\CS$ is commutative up to sign in homology, ie $\CS(\gamma \otimes \tau) = (-1)^{(n-|\gamma|)(n-|\tau|)} \CS(\tau \otimes \gamma).$
    \item \textbf{Neutral element (\ref{prop : Neutral element}) :}  If $\pi$ admits a section $s : X \to E$ such that $m(s(\pi(e)), e) = m(e, s(\pi(e))) = e$ for all $e \in E$, then $\CS$ admits a neutral element.
    \item \textbf{Functoriality (\ref{prop : Functoriality property}) :}\\ $\bullet$ For any pointed, oriented, closed and connected manifold $Y^k$, any continuous map $g : Y \to X$ induces a degree $-k$ product for the fibration $F \hookrightarrow g^*E \to Y$

    $$\CS^Y : H_i(Y,g^*\F)\otimes H_j(Y, g^*\F) \to H_{i+j-k}(Y,g^*\F)$$ such that $g_! : H_*(X,\F) \to H_{*+n-k}(Y, g^*\F)$ is a morphism of rings up to sign.
    
     $\bullet$ If $g$ is an orientation-preserving homotopy equivalence then,
     $g_!$ and $g_* : H_*(Y,g^*\F) \to H_{*}(X, \F)$  are isomorphisms of rings inverses of each other.
     
     \item \textbf{Spectral sequence (\ref{prop : spectral sequence CSDG}) :} The canonical filtration
    $$F_p(C_*(X, \F)) = \bigoplus_{\substack{i +j = k\\ i \leq p}} \F_j \otimes \Z\Crit_i(f)$$
   induces a spectral sequence $E^r_{p,q}$ that is endowed with an algebra structure
    $$E^r_{p,q} \otimes E^r_{l,m} \to E^r_{p+l-n, q+m}$$
   induced by a chain description $\CS : C_*(X,\Xi,\F)^{\otimes 2} \to C_*(X,\Xi,\F)$ and converges towards $H_*(X, \F)$ as algebras. For $s,t \geq 0$, $E^2_{s,t} = H_s(X,H_t(\F))$ and the algebra structure is given, up to sign, by the intersection product on $X$ with coefficients in $H_t(\F)$.
    \end{itemize}

Moreover, this product corresponds in homology, via the Fibration Theorem, to the product $\mu_* : H_i(E) \otimes H_j(E) \to H_{i+j-n}(E)$ defined in \cite{GS07} (see Proposition \ref{prop : CS < CSDG}). In particular, if the fibration is the loop-loop fibration $\Omega X \hookrightarrow \mathcal{L}X \to X$, then $\CS$ corresponds to the Chas-Sullivan product.
\end{theorem}

This (re)proves that the product $\mu_*$, and in particular the Chas-Sullivan product, has the properties of Associativity, Commutativity, Neutral element, Functoriality and Spectral sequence.\\
 
We will define $\CS : H_*(X, C_*(F))^{\otimes 2} \to H_*(X, C_*(F))$ as the composition
$$
\begin{array}{ccccc}
H_i(X, C_*(F)) \otimes H_j(X, C_*(F)) & \overset{K^{top}}{\longrightarrow} &  H_{i+j}(X \times X , C_*(F \times F)) &\overset{\Delta_!}{\longrightarrow} &
H_{i+j-n}(X, \Delta^*C_*(F \times F))  \\
& & & \overset{\tilde{m}}{\longrightarrow} & H_{i+j-n}(X, C_*(F))
\end{array}
$$
with the Dold sign, ie $\CS(\gamma \otimes \tau) = (-1)^{n(n-|\tau|)} \tilde{m} \circ \Delta_! \circ K(\gamma \otimes \tau)$.

The Dold sign has been first introduced in \cite[Chapter VIII, \textsection 13.3]{Dold72} in order to make the intersection product in homology Poincaré dual to the unsigned cup-product, and used in \cite{Lau11} in order to make the Chas-Sullivan product graded commutative. It also serves such purposes in our setting.

The map $K^{top} : C_*(X, C_*(F))^{\otimes 2} \to C_*(X^2, C_*(F\times F))$ is a version of the cross product in the enriched Morse homology setting. The construction of this map is the object of Section \ref{section : DG Künneth formula}.

The map $\tilde{m}: C_{*}(X, \Delta^*C_*(F\times F)) \to C_{*}(X, C_*(F))$ is the morphism of complexes induced by the morphism of fibrations $m$. This will be constructed and studied in Section \ref{section : Functoriality on the coefficient}. In particular, we will prove that this map is a good notion of \emph{multiplication on the fiber} in our model.\\

\emph{Occurence of $\Ai$-modules and morphisms.}

Let us give an intuition and a motivation behind the use of $\Ai$-modules and morphisms in the case of the \emph{loop-loop fibration} $\Omega X \hookrightarrow \ls{X} \overset{\ev}{\to} X$. It is quite clear from \eqref{eq : Maurer-Cartan} that if $\varphi : \F \to \G$ is a morphism of DG-modules, then $\Tilde{\varphi} : C_*(X, \F) \to C_*(X, \G)$ defined by $\Tilde{\varphi}(\alpha \otimes x) = \varphi(\alpha) \otimes x$ is a morphism of complexes. However, the map $m_* : C_*(\Omega X \times \Omega X)^{ad} \to C_*(\Omega X)^{ad}$ induced by the concatenation $m:\Omega X \times \Omega X \to \Omega X$ is not a morphism of DG-modules over $C_*(\Omega X)$. The notation "\emph{ad}" indicates that the module structures are induced by the conjugation at the topological level. Indeed,

$$\forall \alpha, \beta \in \Omega X, \forall \gamma \in \Omega X, \ m((\alpha,\beta)\cdot\gamma) = m(\gamma^{-1} \alpha \gamma, \gamma^{-1} \beta \gamma) = \gamma^{-1} \alpha \gamma \gamma^{-1} \beta \gamma \neq \gamma^{-1} \alpha \beta \gamma = m(\alpha,\beta)\cdot\gamma.$$

However, we will prove that there exists a sequence of maps $$\{m_n : [0,1]^{n-1} \times (\Omega X\times \Omega X) \times \Omega X^{n-1} \to \Omega X\}_{n \geq 1}$$ that successively compensate (as more precisely defined in Definition \ref{defi : Ai morphism of topological module}) for the fact that the concatenation $m_1 := m$ commutes with the topological module structure induced by the fibrations only up to homotopy.
We will refer to such a sequence $(m_n)$ as an \textbf{$\Ai$-morphism of topological modules}. This yields a morphism of $\Ai$-modules $\{m_{n,*}\} :C_*(\Omega X \times \Omega X)^{ad} \to C_*(\Omega X)^{ad}$ over $C_*(\Omega X)$ (see Proposition \ref{prop : Ai topo induit Ai}). The concatenation being a core operation for the Chas-Sullivan product, we needed to prove that one is still able to extract a morphism of complexes $C_*(X,C_*(\Omega X \times \Omega X)^{ad}) \to C_*(X, C_*(\Omega X)^{ad})$ out of it. 
That is why we will prove \\

\textbf{Proposition B (\ref{prop : Morphisme de Ai module en morphisme de complexe})} : \emph{Let $\mathcal{A},\mathcal{B}$ be DG-modules over $C_*(\Omega X)$. Any morphism of $\Ai$-modules $\boldsymbol{\nu} : \mathcal{A} \to \mathcal{B}$ induces a morphism of complexes denoted $\tilde{\nu} : C_*(X,\mathcal{A}) \to C_*(X,\mathcal{B})$.}\\

In the context of a fibration, we will prove the following succession of result in Section \ref{section : Functoriality on the coefficient}. \begin{align*}
    &\varphi : E_1 \to E_2 \textup{ is a morphism of fibrations.}\\
     &\overset{\textup{\ref{lemme : morphism of fibrations induces coherent homotopy}}}{\Longrightarrow} \textup{There exists a \textbf{coherent homotopy} } \{\varphi_{n+1} : I^n \times  F_1 \times \Omega X^{n-1} \times \mathcal{P}_{\star \to X}X \to E_2\}\\ 
     & \textup{ that induces an } \Ai\textup{-morphism of topological modules } \{\varphi_{n+1}^{F_1} : I^n  \times F_1 \times \Omega X^{n} \to F_2\}.\\
    & \overset{\textup{\ref{prop : Ai topo induit Ai}}}{\Longrightarrow} \textup{The map } \varphi \textup{ induces a morphism } C_*(F_1) \to C_*(F_2) \textup{ of }\Ai\textup{-modules over } C_*(\Omega X). \\
    & \overset{\ref{prop : Morphisme de Ai module en morphisme de complexe}}{\Longrightarrow}\textup{The map } \varphi \textup{ induces a morphism } \tilde{\varphi} : C_*(X,C_*(F_1)) \to C_*(X,C_*(F_2)).\\
    & \overset{\ref{thm : morphisme induit commute avec iso}}{and} \ \textup{A coherent homotopy induces a chain homotopy between } \Psi_2 \circ \tilde{\varphi} \textup{ and } \varphi_* \circ \Psi_1.
\end{align*} 

The notion of coherent homotopy is defined in Definition \ref{defi : coherent homotopy}. We denoted $\Psi_1 : C_*(X,C_*(F_1)) \to C_*(E_1)$ and $\Psi_2 : C_*(X,C_*(F_2)) \to C_*(E_2)$ the quasi-isomorphisms given by the Fibration Theorem.
This constitutes the outline of a proof for the following theorem:

\begin{theorem}[Theorem \ref{thm : morphisme induit commute avec iso}]
Let $F_1 \hookrightarrow E_1 \to X$ and $F_2 \hookrightarrow E_2 \to X$ be two fibrations. If $\varphi: E_1 \to E_2$ is a morphism of fibrations, then there exists a morphism of complexes $\tilde{\varphi} : C_*(X, C_*(F_1)) \to C_*(X, C_*(F_2))$ such that the following diagram commutes up to chain homotopy

    \[
    \xymatrix{
    C_*(X, C_*(F_1)) \ar[r]^-{\Tilde{\varphi}} \ar[d]^{\Psi_1} & C_*(X, C_*(F_2)) \ar[d]_{\Psi_2} \\
    C_*(E_1) \ar[r]_{\varphi_*} & C_*(E_2),
    }
    \] where the morphisms $\Psi_1$ and $\Psi_2$ are the quasi-isomorphisms given by the Fibration Theorem.
\end{theorem}

This compatibility between $\Ai$-structures and DG Morse theory led us to explore the relation between these theories. We define the enriched Morse complex with coefficients in an $\Ai$-module over $C_*(\Omega X)$ (Section \ref{section : Morse homology with coefficients in a Ai-module}) and prove invariance properties using the canonical spectral sequences associated to such a complex (section \ref{section : Invariance Ai Morse}). We will moreover prove Proposition B for $\mathcal{A}$ and $\mathcal{B}$ any $\Ai$-modules over $C_*(\Omega X)$. \\

\emph{Künneth formula.}

Let $(X,\star_X)$ and $(Y, \star_Y)$ be pointed, oriented, closed, connected and smooth manifolds with Morse-Smale pairs $(f,\xi_X)$ and $(g,\xi_Y)$. Let $\F$ and $\G$ be DG modules over $C_*(\Omega X)$ and  $C_*(\Omega Y)$. We build in Section \ref{subsection : Morse data on a Cartesian product} what we will refer to as the \textbf{Künneth twisting cocycle} $\{m^K_{z,w} \in C_{|z|-|w|-1}(\Omega X \times \Omega Y), \ z,w \in \Crit(f\oplus g)\}$ adapted to the Cartesian product (see Definition \ref{defi : m^K}) such that there exists a homotopy equivalence

$$C_*(X \times Y, \mathcal{H}) \simeq C_*(X\times Y, m^K_{z,w}, \mathcal{H})$$ for any DG-module $\mathcal{H}$ over $C_*(\Omega X\times \Omega Y)$ (see Proposition \ref{prop : dg Kunneth m^0 and m^1 quasi-iso}).
\\

From there, let us present two versions of cross products on enriched Morse complexes :

$\bullet$ We can use the Serre diagonal $\Delta :  C_*(\Omega X \times \Omega Y)  \to C_*(\Omega X) \otimes C_*(\Omega Y)$ to define a $C_*(\Omega(X \times Y))$-module structure on the tensor product $\F \otimes_{\Z} \G$ such that $$ \deffct{K^{alg}}{C_*(X, \F) \otimes C_*(Y, \G)}{C_*(X \times Y, m^K_{z,w}, \F \otimes \G)}{(\alpha \otimes x) \otimes (\beta \otimes y)}{(-1)^{|\beta||x|}(\alpha \otimes \beta) \otimes (x,y)}$$ is an isomorphism of complexes.

$\bullet$ If $F \hookrightarrow E \to X$ and $G \hookrightarrow E' \to Y$ are two fibrations with respective transitive lifting functions $\Phi_X$ and $\Phi_Y$, then the fibration $F \times G \hookrightarrow E \times E' \to X \times Y$ is naturally endowed with the transitive lifting function $(\Phi_X, \Phi_Y)$. Therefore, to study fibrations, if $\F = C_*(F)$ and $\G = C_*(G)$, it is natural to consider the complex $C_*(X\times Y, m^K_{z,w}, C_*(F \times G))$ where the module structure is defined by $(\Phi_X, \Phi_Y)$. 

That is why if $F$ and $G$ are topological spaces each endowed with a topological module structure over respectively $\Omega X$ and $\Omega Y$, we define

$$\deffct{K^{top}}{C_*(X, C_*(F)) \otimes C_*(Y, C_*(G))}{C_*(X \times Y, m^K_{z,w}, C_*(F \times G))}{(\alpha \otimes x) \otimes (\beta \otimes y)}{(-1)^{|\beta||x|}(\alpha , \beta) \otimes (x,y)}.$$

To summarize, we have the following theorem:

\begin{theorem}[Theorem \ref{thm : Künneth formula}]
    Let $\F$ and $\G$ be DG modules over $C_*(\Omega X)$ and  $C_*(\Omega Y)$. There exists a Künneth twisting cocycle $\{m^K_{z,w} \in C_{|z|-|w|-1}(\Omega X \times \Omega Y), \ z,w \in \Crit(f\oplus g)\}$ which computes the same homology as the Barraud-Cornea cocycle and such that :

$$K^{alg} : C_*(X, \F) \otimes C_*(Y,\G)  \to C_*(X \times Y, m^K_{z,w}, \F \otimes \G)$$ is an isomorphism of complexes.

If $\F = C_*(F)$ and $\G = C_*(G)$ are complexes of cubical chains of topological spaces equipped with topological module structures respectively over $\Omega X$ and $\Omega Y$, then $$K^{top}: C_*(X, C_*(F)) \otimes C_*(Y,  C_*(G))\to C_*(X \times Y, m^K_{z,w}, C_*(F \times G))$$ is a quasi-isomorphism of complexes.
\end{theorem}

In particular, this will prove that $H_*(X \times Y, C_*(F \times G))$ is isomorphic to $H_*(X \times Y, C_*(F) \otimes C_*(G)) $. In this paper, we will mainly use the cross product $K := K^{top}$ since we will heavily rely on considerations on fibrations to define the product $\CS.$ \\

This result is not a consequence of the Künneth formula from the "classical" Morse theory.
Consider a Morse-Smale pair $(f,\xi_X)$ on $X$ and a Morse-Smale pair $(g,\xi_Y)$ on $Y$. Then $F = f \oplus g : X \times Y \to \R$ is a Morse function and $\xi = (\xi_X \oplus \xi_Y)$ is an adapted pseudo-gradient. The identification between the moduli spaces of Morse trajectories $$\traji{F}{(x,y),(x',y')} \simeq \traji{f}{x,x'} \times \traji{g}{y,y'}$$ is only true if either the moduli spaces are empty, $x=x'$ or $y=y'$. In "classical" Morse theory, this is all we have to consider since the differential only counts trajectories between critical points of consecutive indexes and, if $|x|+|y| = |x'|+|y'|+1$, then one of the conditions above is true. However, in DG Morse theory, we have to consider all pairs of critical points in order to construct the twisting cocycle $m_{(x,y),(x',y')}$ and this identification no longer holds.\\

\emph{Thom isomorphism in DG Morse theory.}

Recall that $\ev : \ls{X} \to X$ denotes the evaluation at the basepoint.
Since the product $\CS$ is meant to generalize the Chas-Sullivan product, we constructed it so that the diagram 

$$  \xymatrix
    @C=20pt
    @R=1cm
    {
H_i(X,\F)^{\otimes 2} \ar[r]^-{K^{top}} \ar[d]^-{\sim} & H_{i+j}(X \times X, \F^2)  \ar[d]^-{\sim} \ar[r]^-{\Delta_!}&    H_{i+j-n}(X, \F^2) \ar[r]^-{\tilde{m}} \ar[d]^-{\sim} & H_{i+j-n}(X, \F) \ar[d]^-{\sim}\\
H_i(\mathcal{L}X)^{\otimes 2} \ar[r]^-{\EZ} & H_{i+j}(\mathcal{L}X \times \ls{X}) \ar[r]^-{\Delta_!}& H_{i+j-n}(\mathcal{L} X \ftimes{\ev}{\ev} \mathcal{L}X) \ar[r]^-{m_*} & H_{i+j-n}(\mathcal{L}X) 
}
$$   
commutes, where we denoted $\F = C_*(\Omega X)$ and $\F^2 = C_*(\Omega X \times \Omega X)$, the vertical arrows are defined by the Fibration Theorem and $\EZ : H_i(\mathcal{L}X)^{\otimes 2}  \to H_{i+j}(\mathcal{L}X \times \ls{X})$ is the Eilenberg-Zilber map. The map $\Delta_! : H_{i+j}(\mathcal{L}X \times \ls{X}) \to H_{i+j-n}(\mathcal{L} X \ftimes{\ev}{\ev} \mathcal{L}X)$ is the "Gysin" or "Umkehr" map induced by the diagonal $\Delta : X \to X\times X.$

It is a simple proof (Lemma \ref{lemme : Compatibility K and Fibration Theorem}) that the first square commutes and Theorem B will conclude that, since the concatenation $m : \ls{X} \ftimes{\ev}{\ev} \ls{X}\to \ls{X}$ is a morphism of fibrations, the last square also commutes. In order to complete this correspondence, we prove a DG version of the Pontryagin-Thom construction stating that $\Delta_!$ corresponds to the intersection product :

\begin{theorem}[Theorem \ref{thm : DG Thom iso}]
Let $Y^{n-k} \overset{\varphi}{\hookrightarrow} Z^n$ be an embedding of closed manifolds with tubular neighborhood $U$. Let $F \hookrightarrow E \overset{\pi}{\to} Z$ be a fibration and $F \hookrightarrow E_Y \to Y$ be the pullback fibration by $\varphi$. Define a Gysin map $\varphi_! : H_*(E) \to H_{*-k}(E_Y)$ as the composition of a Pontryagin-Thom collapse map $\tau_* : H_*(E) \to H_*(E_U,E_{\partial U})$ on $E_U = \pi^{-1}(U)$ with the Thom isomorphism $u_* :H_*(E_U, E_{\partial U}) \to H_{*-k}(E_Y)$.

    The following diagram commutes up to the sign $(-1)^{ki +k}$.

    \[
    \xymatrix{
    H_i(Z,C_*(F)) \ar[r]^-{\varphi_!} \ar[d]_{\Psi_E}^{\sim}& H_{i-k}(Y,\varphi^*C_*(F)) \ar[d]^{\Psi_{E_Y}}_{\sim} \\
    H_i(E) \ar[r]_-{\varphi_!} & H_{i-k}(E_Y).
    }
    \]  
\end{theorem}

To the best of the author's knowledge, the setting in which the Gysin map $\varphi_! : H_*(E_1) \to H_{*-k}(E_2)$ is classically considered is when $E_2 \hookrightarrow E_1$  are Hilbert manifolds with $E_2$ of finite codimension in $E_1$ so that there always exists a tubular neighborhood $V$ of $E_2$ in $E_1$, unique up to isotopy, in which to consider the Thom isomorphism (see \cite[Section 3]{ChOa15} and \cite{Lang95}). In a similar fashion as \cite{hingston2022product} did for the particular case where the fibration is $\ls{Y} \times \ls{Y} \to Y \times Y = Z$ and $\varphi : Y \to Z$ is the diagonal map, we here avoid infinite dimensional arguments by using the fibration $E \overset{\pi}{\to} Z$ to pull back a tubular neighborhood $U$ of $Y$ in $Z$ which is finite dimensional and prove that there exists a Thom isomorphism $H_*(E_U, E_{\partial U}) \to H_{*-k}(E_Y)$ in this case, where $E_U = \pi^{-1}(U)$.\\

\emph{Path-product.} 

Theorem B, Theorem C and Theorem D have their own interest independently of Theorem A and may be used to describe other operations involving the idea of combining operations on the base and on the fiber. In this spirit, we also give a Morse-theoretic description of the path product on $H_*(\mathcal{P}_{X \to X} Y)$, the homology of the space of paths in $Y$ starting and ending in $X$, of degree $-n$ where $(X^n,\star)$ is a pointed, oriented, closed and connected manifold included in a topological space $Y$. This space and this product have been studied in \cite{Max23} when $Y$ is a closed manifold. The construction also relies on "intersecting" paths to make them concatenable and then concatenating.

Consider the fibration $\Omega Y \hookrightarrow \mathcal{P}_{X \to X} Y \overset{(\ev_0,\ev_1)}{\to} X^2$ on the space of paths in $Y$ that start and end on $X$. We state here a theorem in preparation that will be proved in a future paper :

\begin{theorem}
The concatenation $m : \mathcal{P}_{X \to X} Y \ftimes{\ev_1}{\ev_0} \mathcal{P}_{X \to X} Y \to \mathcal{P}_{X \to X} Y$ is a morphism of fibrations and defines a product $$\textup{PP}_{DG} : H_*(X^2,C_*(\Omega Y))^{\otimes 2}) \to H_*(X^2,C_*(\Omega Y))$$ of degree $-n$ that consists in intersecting endpoints of a chain of such paths with the startpoints of another chain, concatenating them and then forgetting the concatenation point. This can be written as the composition 
\begin{align*}
    H_*(X^2,C_*(\Omega Y))^{\otimes 2} \overset{K}{\to} H_*(X^4, C_*(\Omega Y^2)) \overset{D_!}{\to}H_{*-n}(X^3,D^*C_*(\Omega Y^2)) & \overset{\Tilde{m}}{\to} H_{*-n}(X^3, p^*C_*(\Omega Y))\\
    & \overset{p_{*}}{\to} H_{*-n}(X^2, C_*(\Omega Y))
\end{align*}

with the Dold sign. Here $D : X^3 \to X^4$, $D(a,b,c) = (a,b,b,c)$ and $p : X^3 \to X^2$, $p(a,b,c) = (a,c)$,

This product is associative up to the sign $(-1)^n$ and admits a neutral element.
\end{theorem}

\textbf{Structure of this paper :} Since we use quite often in this paper tools and techniques that have been developed and introduced in \cite{BDHO23}, we present in Section \ref{section : Morse homology with DG coefficients} a non-exhaustive (and sometimes heuristic) summary of definitions and constructions that we will need in the rest of the paper.
We prove Theorem D in Section \ref{section : DG Thom iso}.
In Section \ref{section : Morse homology with coefficients in a Ai-module}, we define the Morse complex with coefficients in an $\Ai$-module over $C_*(\Omega X)$ and prove that the homology of this complex is independent of the choices made to define it. 
We then study in Section \ref{section : Functoriality on the coefficient} the maps induced by an $\Ai$-morphism of modules between the coefficients and prove Theorem B.
 
We prove Theorem C in Section \ref{section : DG Künneth formula}.
We then use the results of the previous sections to define $\CS$ and prove Theorem A in Section \ref{section : Morse description and generalization of the Chas-Sullivan product}.

We then discuss in Section \ref{section : Further direction} further directions in which we could expand this paper. \\ 

\emph{Acknowledgments.} I would like to express my gratitude to my advisors Mihai Damian and Alexandru Oancea for their many advices and continuous support through the writing of this article. I am also very grateful to Colin Fourel for a lot of very useful discussions.

\setlength{\parindent}{0pt}

\section{Recollections on Morse homology with DG coefficients}\label{section : Morse homology with DG coefficients}

Throughout this paper, $X$ will denote an oriented, closed and connected manifold. Let $\star \in X$ be a preferred point in $X$ and $(f,\xi)$ be a Morse-Smale pair on $X$.
This section is a brief and non-exhaustive summary of \cite{BDHO23} aimed at presenting the main tools used in the present paper.

\subsection{Morse homology with DG coefficients}\label{subsection : Enriched Morse complex}

For any topological space $V$, we can consider its \textbf{cubical complex} $C_*(V)$, the $\Z$-module spanned by continuous maps $\sigma : [0,1]^k \to V$ endowed with the differential \begin{equation*}\label{def diff cubique}
    \partial \sigma = \sum_{i=1}^{k} (-1)^{i-1} (\left. \sigma \right \lvert_{x_i=1} - \left. \sigma \right \lvert_{x_i=0}),
\end{equation*}
modded out by the $\Z$-module spanned by the maps $\sigma : [0,1]^k \to V$ that factor through a face of the cube $[0,1]^k$. We call these chains \textbf{degenerate}.

Consider $\Omega X = \{ \gamma : [0,a] \to X \ \textup{continuous}, \ a \in [0, + \infty) , \ \gamma(0)=\gamma(a)\}$ the \textbf{space of Moore loops} based at $\star$. The concatenation is strictly associative on this space. Define the \textbf{Pontryagin product} on $C_*(\Omega X)$ by $$\sigma \otimes \tau  \mapsto \left(\sigma.\tau : (x,y) \mapsto \sigma(x)\#\tau(y) \right)$$ for every continuous maps $\sigma : [0,1]^k \to \Omega X$ and $\tau : [0,1]^l \to \Omega X$, where $\#$ denotes the concatenation in $\Omega X$. It extends to a product $\mu : C_i(\Omega X) \otimes C_j(\Omega X)  \to  C_{i+j}(\Omega X)$ which defines a DGA structure on $C_*(\Omega X)$.

\begin{defi}

A \textbf{twisting cocycle} is a family $\{ m_{x,y} \in C_{|x|-|y|-1}(\Omega X),  \ x,y \in \Crit(f)\}$ that satisfies the Maurer-Cartan equation \begin{equation}
\partial m_{x,y} = \sum_{|x|>|z|>|y|} (-1)^{|x| - |z|} m_{x,z}m_{z,y}.
\label{MC}
\end{equation}
\end{defi}

\begin{defi}
    We call \textbf{DG system} a right differential graded module $(\F_*, \partial_{\F})$ on $C_*(\Omega X)$.
\end{defi}
\begin{defi}

Let  $\F$ be a DG system and let $(m_{x,y})$ be a twisting cocycle.
The Morse complex of $X$ with coefficients in $\F$ is defined by 

$$C_*(X,m_{x,y},\F) :=  \F_* \otimes \Z \Crit(f).$$

with the differential \[\partial(\alpha \otimes x) = \partial \alpha \otimes x + (-1)^{|\alpha|} \left ( \sum_{|y|<|x|} \alpha\cdot m_{x,y} \right ) \otimes y.\]

The resulting homology depends on the choice of twisting cocycle. We denote by $H_*(X,m_{x,y},\F)$ the homology of this complex.
We will refer to a complex with coefficient in a DG system as an \textbf{enriched Morse complex} and its homology as \textbf{enriched Morse homology}.

\end{defi}

In the next section, we describe how the authors define a particular twisting cocycle called \textbf{Barraud-Cornea twisting cocycle} as the construction follows the seminal paper \cite{BC07}. It carries homological information on the moduli space of Morse trajectories $\trajb{x,y}$. This construction depend on choices but the resulting homology will not depend on them. We therefore denote $H_*(X,\F)$ the homology of an enriched Morse complex $C_*(X,m_{x,y},\F)$ constructed with a Barraud-Cornea twisting cocycle.

\subsection{Twisting cocycles defined by DG Morse data}\label{subsection : Construction of the twisting cocyle and representing systems}

Choose a Morse-Smale pair $(f,\xi)$ on $X$ and $o$, an orientation of the unstable manifolds $W^u(x)$ for each $ x \in \Crit(f)$.

\begin{defi}
    For any pair of critical points $x,y \in \textup{Crit}(f)$, define the \textbf{moduli space of broken Morse trajectories} 

$$\trajb{x,y} := \traj{x,y} \cup \bigcup_{\substack{k \geq 1 \\z_1, \dots, z_k \in \Crit(f)}} \traj{x,z_1} \times \traj{z_1,z_2} \times \dots \times \traj{z_k,y}.$$
    
\end{defi}

We refer to \cite[Section 3.2]{AD14} to prove that with its natural topology, $\trajb{x,y}$ is an orientable compact manifold with boundary and corners of dimension $|x|-|y|-1$ whose interior is $\traj{x,y}$.

Intuitively, the twisting cocycle $m_{x,y} \in C_{|x|-|y|-1}(\Omega X)$ is constructed by evaluating in the based loop space $\Omega X$ a well-suited representative of the fundamental class of $\trajb{x,y}$ for any $x,y \in \Crit(f)$.

\begin{defi}\label{defi : representing chain system traj}
    A \textbf{representing chain system} of the moduli spaces of trajectories is a collection $\{s_{x,y} \in C_{|x|-|y|-1}(\trajb{x,y}) , \ x,y \in \textup{Crit}(f)\}$ such that 

    \begin{enumerate}
        \item each $s_{x,y}$ is a cycle relative to the boundary and represents the fundamental class $[\trajb{x,y}]$.\\
        \item each $s_{x,y}$ satisfies 

        \begin{equation}\label{eq : relation sys repr}
            \partial s_{x,y} = \sum_z (-1)^{|x|-|z|} s_{x,z} \times s_{z,y}.
        \end{equation}
    \end{enumerate}
\end{defi}

Such a representing chain system always exists and is inductively constructed. It is nonetheless not unique (although all representing chain system are homologous in a sense defined in \cite[Proposition 5.2.8]{BDHO23}). \\

\begin{defi}\label{defi : Evaluation maps}
    We say that a family of maps $q_{x,y} : \trajb{x,y} \to \Omega X$ for $x,y \in \Crit(f)$ is a family of \textbf{evaluation maps} if it satisfies 

\begin{enumerate}
        \item \underline{The concatenation relation} : For any $(\lambda, \lambda') \in \trajb{x,z} \times \trajb{z, y}\subset \partial \trajb{x,y}$, 
        
        \begin{equation}\label{eq : Concatenation relation evaluation map}
            q_{x,y}(\lambda, \lambda') = q_{x,z}(\lambda) \# q_{z, y}(\lambda').
        \end{equation}
        
    \item \underline{Compatibility with lifting} : Choose for each critical points $x$ a preferred lift $\tilde{x}$ in $\tilde{X}$, the universal cover of $X$. For any $|x|=|y|+1$ and $\lambda \in \trajb{x,y}$, denote $g=[q_{x,y}(\lambda)] \in \pi_1(X)$ the homotopy class of the loop $q_{x,y}(\lambda)$. We say that $q_{x,y}$ is compatible with lifting if the lift $\tilde{\lambda}$ of $\lambda$ starting from $\tilde{x}$ ends in $g\tilde{y}$.
    
    \end{enumerate}
\end{defi}

Here is the construction of a family of evaluation maps given in \cite[Lemma 5.9]{BDHO23} that we will use as a blueprint in the rest of the paper when it will come to construct families of evaluation maps:

First, choose 

\begin{enumerate}
    \item A tree $\mathcal{Y}$ in $X$ whose vertices are the critical points of $f$ and whose root is the basepoint $\star$.
    \item $\theta : \Xq \to X$, a homotopy inverse of the canonical projection $p : X \to \Xq$ such that $\theta([\mathcal{Y}]) = \star$.
\end{enumerate}

Define a parametrization map $\Gamma_{x,y}$ parametrizing a trajectory $\lambda$ into a path. Here, we parametrize maps using the values of the Morse function $f$.
$$\deffct{\Gamma_{x,y}}{\trajb{x,y}}{C^0([0, f(x)-f(y)], X)}{\lambda}{t \mapsto (\lambda \cap f^{-1}(f(x)-t))}.$$
This map is well-defined since $f$ is strictly decreasing along the (broken) trajectory $\lambda$. This parametrization map is clearly compatible with concatenations, ie if $(\lambda, \lambda') \in \trajb{x,z} \times \trajb{z,y} \subset \partial \trajb{x,y}$, then $$\Gamma_{x,y}(\lambda,\lambda') = \Gamma_{x,z}(\lambda)\#\Gamma_{z,y}(\lambda'). $$

The evaluation maps $q_{x,y} : \trajb{x,y} \to \Omega X$ are then defined by $$q_{x,y} = \theta \circ p \circ \Gamma_{x,y}.$$ Here, modding out by the tree $\mathcal{Y}$ turns the path $\Gamma_{x,y}(\lambda)$ into a loop.
The twisting cocycle is defined by $m_{x,y} = q_{x,y,*}(s_{x,y}) \in C_{|x|-|y|-1}(\Omega X).$\\

To construct the Barraud-Cornea cocycle, we needed six choices : 

\begin{itemize}
    \item[$\bullet$] A Morse function $f : X \to \R$.\\
    \item[$\bullet$] A pseudo-gradient $\xi$ adapted to $f$. \\
    \item[$\bullet$] An orientation $o$ of the unstable manifolds of $(f,\xi)$.\\
    \item[$\bullet$] A representing chain system $\{s_{x,y}\}$\\
    \item[$\bullet$] A tree $\mathcal{Y}$\\
    \item[$\bullet$] A homotopy inverse $\theta$ of the projection $p : X \to \Xq$.
\end{itemize}

\begin{figure}[!ht]
    \centering
    \includegraphics[width=9cm, height=3cm]{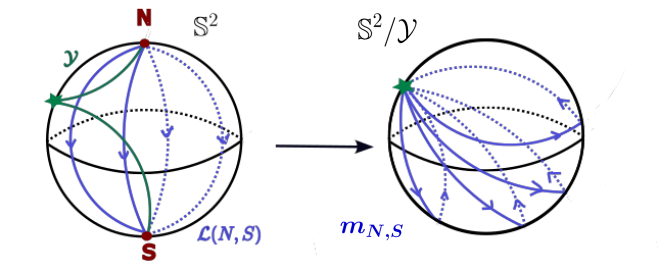}\\
    \caption{Example of a Barraud-Cornea twisting cocycle on $\mathbb{S}^2$.}
    \label{Im : twisting cocycle on S^2}
\end{figure}

\begin{defi}
    Denote $\Xi = (f, \xi, \mathcal{Y}, o, s_{x,y}, \theta)$  the choices that have been made in order to construct such a twisting cocycle. We will refer to such a set $\Xi$ as \textbf{DG Morse data} on $(X, \star_X)$. We will omit to mention the basepoint $\star_X$ if it is not necessary.
\end{defi}

Given a set of DG Morse data  $\Xi$, we will denote $C_*(X, \Xi, \F) := C_*(X, m_{x,y}, \F) = \F_* \otimes \Z \Crit(f)$ the enriched Morse complex with the Barraud-Cornea cocycle constructed with the data $\Xi$. \cite{BDHO23} proves that the homology of this complex does not depend on the set of data $\Xi$. More precisely, given two sets of data $\Xi_0$ and $\Xi_1$, \cite{BDHO23} builds a \textbf{continuation data} $\Xi$, a \textbf{continuation map} $$\Psi^{\Xi} : C_*(X,\Xi_0,\F) \to C_*(X,\Xi_1,\F)$$ and proves the following theorem.

\begin{thm}\label{thm : continuation morphism Psi_01}\cite[Theorem 6.3.1]{BDHO23}

    1) Given two sets $\Xi_0$ and $\Xi_1$ of DG Morse data on $X$, the continuation map $$\Psi^{\Xi} : C_*(X,\Xi_0,\F) \to C_*(X,\Xi_1,\F)$$ is a homotopy equivalence and its chain homotopy type does only depend on $\Xi_0$ and $\Xi_1$. The map $\Psi^{\Xi}$ is in particular a quasi-isomorphism. \\
    
    2) Given another set of data $\Xi_2$ on  $X$ and denoting $\Psi_{ij}$ the continuation map between the data $\Xi_i$ and $\Xi_j$, then $\Psi_{00}$ is homotopic to the identity and $\Psi_{02}$ is homotopic to $\Psi_{12}\circ \Psi_{01}$. In particular, in homology

    $$\Psi_{00} = \textup{Id} \ \textup{and} \ \Psi_{12}\circ \Psi_{01} = \Psi_{02}.$$
\end{thm}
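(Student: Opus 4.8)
The plan is to prove Theorem \ref{thm : continuation morphism Psi_01} by an interpolation argument: one connects the two sets of DG Morse data by a path of auxiliary data, builds a chain map out of it, and checks that all the constructions depend on the chosen path only up to higher homotopy. Concretely, given $\Xi_0$ and $\Xi_1$ on $X$, I would first treat the ``easy'' changes of data separately and then patch them together. Changing the representing chain system $\{s_{x,y}\}$, the tree $\mathcal{Y}$, the orientations $o$, or the homotopy inverse $\theta$ while keeping $(f,\xi)$ fixed should be handled by noting that any two choices are linked by a chain homotopy of twisting cocycles, i.e.\ a family $\{h_{x,y}\in C_{|x|-|y|}(\Omega X)\}$ with $\partial h_{x,y} = m^1_{x,y} - m^0_{x,y} + \sum_z \pm(m^1_{x,z} h_{z,y} + h_{x,z} m^0_{z,y})$; such an $h$ induces a chain isomorphism (in fact, the identity on $\F_*\otimes\Z\Crit(f)$ up to a twist) between the two enriched complexes, by the same inductive argument used to construct the representing chain system. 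The genuinely nontrivial change is that of the Morse--Smale pair $(f,\xi)$ itself, which alters the underlying $\Z$-module $\Z\Crit(f)$.

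For the change of Morse--Smale pair I would follow the standard continuation scheme, upgraded to the DG setting: pick a generic homotopy $(f_t,\xi_t)_{t\in[0,1]}$ from $(f_0,\xi_0)$ to $(f_1,\xi_1)$, and consider the associated moduli spaces of continuation trajectories $\mathcal{M}^{\Xi}(x,y)$ for $x\in\Crit(f_0)$, $y\in\Crit(f_1)$, which are manifolds with boundary and corners whose codimension-one faces are products of continuation moduli spaces with ordinary broken-trajectory moduli spaces at the two ends. One then chooses a representing chain system $\{s^{\Xi}_{x,y}\}$ for these continuation moduli spaces compatible with the boundary relation, equips them with evaluation maps $q^{\Xi}_{x,y}:\mathcal{M}^{\Xi}(x,y)\to\Omega X$ built exactly as in the blueprint (parametrize by the value of the function, project through a tree, correct by $\theta$), and sets $n^{\Xi}_{x,y}=q^{\Xi}_{x,y,*}(s^{\Xi}_{x,y})$. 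The boundary relation for $s^{\Xi}$ translates into the identity $\partial n^{\Xi}_{x,y} = \sum_z \pm\, m^1_{x,z}\, n^{\Xi}_{z,y} + \sum_z \pm\, n^{\Xi}_{x,z}\, m^0_{z,y}$ in $C_*(\Omega X)$, which is exactly the statement that $\Psi^{\Xi}(\alpha\otimes x) = \sum_y (-1)^{?}\,\alpha\cdot n^{\Xi}_{x,y}\otimes y$ is a chain map from $C_*(X,\Xi_0,\F)$ to $C_*(X,\Xi_1,\F)$; here one uses that $\F$ is a DG right module over $C_*(\Omega X)$ to push the module action through $\partial_{\F}$.

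That $\Psi^{\Xi}$ is a homotopy equivalence would follow from part 2: composing the continuation map for the homotopy $\Xi_{01}$ with the one for the reversed homotopy $\Xi_{10}$ gives, by the composition/concatenation property applied to $\Xi_{01}\#\Xi_{10}$ together with the fact that a constant homotopy yields a map homotopic to the identity, that $\Psi_{10}\circ\Psi_{01}\simeq\Id$ and symmetrically $\Psi_{01}\circ\Psi_{10}\simeq\Id$. So the core of the argument is really part 2, i.e.\ the statement that continuation maps compose up to homotopy and that a constant homotopy gives (a map homotopic to) the identity. For this one introduces the next level of moduli spaces: for a homotopy of homotopies (a generic path in the space of data interpolating between $\Xi_{01}\#\Xi_{12}$ and $\Xi_{02}$), the corresponding moduli spaces have dimension one higher, carry their own representing chain systems and evaluation maps, and their boundary faces produce precisely the terms $\Psi_{12}\circ\Psi_{01}$, $\Psi_{02}$, and the chain-homotopy cross terms; evaluating in $\Omega X$ and pushing through the module structure yields a chain homotopy $K:C_*(X,\Xi_0,\F)\to C_*(X,\Xi_2,\F)$ with $\partial K + K\partial = \Psi_{12}\circ\Psi_{01} - \Psi_{02}$. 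The homotopy class of $\Psi^{\Xi}$ depending only on the endpoints is the same statement at this level applied to two homotopies with the same endpoints.

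The main obstacle I anticipate is bookkeeping rather than conceptual: making the orientations, signs, and the compatibility-with-lifting condition cohere across all three levels of moduli spaces (trajectories, continuation trajectories, continuation-of-continuation trajectories) so that the boundary relations for the representing chain systems produce the stated identities \emph{on the nose} in $C_*(\Omega X)$, and then verifying that the twist by the Dold-type signs $(-1)^{|\alpha|}$ coming from the DG module action is consistent with those identities. All of this is modeled on the construction of the Barraud--Cornea cocycle itself and on the invariance arguments of \cite[Section 6]{BDHO23}, so the strategy is to invoke the blueprint for evaluation maps given above at each level and reduce everything to the existence of compatible representing chain systems, which is itself an inductive argument over the (finite) partial order on critical points. \qed
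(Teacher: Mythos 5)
Note first that this theorem is not proved in the present paper at all: it is recalled from \cite[Theorem 6.3.1]{BDHO23}, and the only place the paper engages with its proof is in Section~\ref{section : Invariance Ai Morse}, where it states the $\Ai$ analogue (Theorem~\ref{thm : Continuation morphism Ai invariance}) and remarks that the proof of \cite{BDHO23} carries over once one has the $\Ai$ homotopy criterion (Proposition~\ref{Prop : homotopy Criterion Ai}). So your task is really to reconstruct the argument of \cite{BDHO23}, and the paper's own recollection of that argument in Section~\ref{section : Invariance Ai Morse} is the right yardstick.

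Your overall skeleton --- continuation cocycle $\tau_{x,y}$ satisfying equation~\eqref{eq : mu_1 de tau}, the chain map $\Psi^\Xi(\alpha\otimes x)=\sum_y \alpha\cdot\tau_{x,y}\otimes y$, the quasi-isomorphism criterion via the lifted Morse complex, and a homotopy cocycle $h_{x,y}$ satisfying equation~\eqref{eq : mu_1 de h} to handle composition and reparametrization --- matches \cite{BDHO23} and the recollection in Section~\ref{section : Invariance Ai Morse} faithfully. Splitting the "easy" changes (chain system, tree, orientations, $\theta$) from the Morse--Smale pair change is a legitimate organizational choice, but it is not what the source does and is not needed: the single construction on $X\times[0,1]$ handles all changes uniformly. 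Also, your claim that the easy changes induce a "chain isomorphism" needs a filtration argument (it is upper-triangular with invertible leading term in $H_0(\Omega X)=\Z[\pi_1 X]$), not "the same inductive argument used to construct the representing chain system."

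The genuine gap is the setting of the continuation moduli spaces. You propose to work with a parametrized family $(f_t,\xi_t)_{t\in[0,1]}$ directly on $X$ and then build evaluation maps $q^{\Xi}_{x,y}$ "exactly as in the blueprint (parametrize by the value of the function, project through a tree, correct by $\theta$)." That blueprint relies on the Morse function being strictly decreasing along trajectories, which is what makes the parametrization well-defined and compatible with concatenation. Along a parametrized continuation trajectory, neither $f_0$ nor $f_1$ nor $f_{t(s)}$ is monotone, so there is no obvious function to parametrize by, and the concatenation relation~\eqref{eq : Concatenation relation evaluation map} for $q^\Xi$ does not come for free. This is precisely the reason \cite{BDHO23} (and the recollection in Section~\ref{section : Invariance Ai Morse}, as well as the analogous argument in Section~\ref{subsection : Invariance with respect to the parametrization}) instead works on $X\times[0,1]$ with $F(t,x)=f_t(x)+a(t)$ for a sufficiently steeply decreasing $a$: then $F$ is an honest Morse function on $X\times[0,1]$, $\Crit_k(F)=\Crit_{k-1}(f_0)\times\{0\}\cup\Crit_k(f_1)\times\{1\}$, the evaluation-map blueprint applies verbatim, and the compatibility-with-lifting condition comes from ordinary Morse theory on $X\times[0,1]$. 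For the homotopy between $\Psi_{12}\circ\Psi_{01}$ and $\Psi_{02}$ and for reparametrization invariance, one goes one step up to $X\times[0,1]^2$, as the paper does explicitly in the proof of Proposition~\ref{prop : equivalence homotopie param par Morse function}. So your argument becomes correct once you replace the parametrized family $(f_t,\xi_t)$ on $X$ by Morse theory on $X\times[0,1]$; as written, the construction of the evaluation maps and their concatenation property is not justified.
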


\subsection{Fibration Theorem}\label{subsection : Fibration Theorem}

In this section, we will introduce one of the main result of \cite{BDHO23}, the Fibration Theorem, that will connect our construction on the enriched Morse complex of the base $X$ of a fibration with previous constructions that use the singular complex of the total space $E$ of this fibration. 

\subsubsection{Lifting functions}

The notion of lifting function is a tool that we will use all along this paper to study fibrations and enriched Morse complexes. 

\begin{notation}\quad

\begin{itemize}
    \item[$\bullet$] For any topological space $A$ and $x,y \in A$, we will denote $$\mathcal{P}_{x \to y} A := \left\{ \gamma \in C^0([0,a], A), \ \gamma(0) = x , \ \gamma(a) = y \right\}$$
    the space of Moore paths that start in $x$ and end in $y$. We will respectively denote $\ev_0$ and $\ev_1$ the evaluation at the startpoint and the endpoint of a Moore path.\\
    \item[$\bullet$] If $B,C \subset A$, $b\in B$ and $c \in C$ we use the notations $$\mathcal{P}_{B \to C} A := \displaystyle \bigcup_{b \in B, c \in C} \mathcal{P}_{b \to c} A, \quad \mathcal{P}_{b \to C} A := \mathcal{P}_{\{b\} \to C} A \ \textup{and} \ \mathcal{P}_{B \to c} A := \mathcal{P}_{B \to \{c\}} A.$$
\end{itemize}
\end{notation}

A \textbf{Hurewicz fibration} $ E \overset{\pi}{\to} X$ is a continuous and surjective map that has the \textbf{homotopy lifting property} with respect to all spaces : For any space $B$ and any maps $H : B \times [0,1] \to X$ and $f : B \to E$ such that $\pi \circ f = H(\cdot, 0)$, there exists a lift $\tilde{H}$

$$
\xymatrix{
B \ar[r]^f \ar[d] & E \ar[d]^{\pi} \\
B \times [0,1] \ar[r]^H \ar@{.>}[ur]^{\tilde{H}} & X.
}
$$

This property is equivalent to the existence of a map called \textbf{lifting function} $\phi : E \ _{\pi}\!\times_{\ev_0} \mathcal{P} X \to \mathcal{P}E$ such that $\pi \circ \phi = pr_2$ and $\ev_0 \circ \phi = pr_1.$

A \textbf{transitive lifting function} is a map $$\Phi : E \ftimes{\pi}{\ev_0} \mathcal{P} X \to E$$ such that $\pi \circ \Phi = \ev_1 \circ pr_2$, such that for any $e\in E$,  $\Phi(e,\pi(e)) = e$ where $\pi(e)$ is the constant path in $\pi(e) \in X$ and such that for every $\gamma, \delta \in \mathcal{P} X$ such that $\ev_1\gamma= \ev_0\delta$,
$$\Phi(\Phi(e,\gamma),\delta) = \Phi( e, \gamma \# \delta).$$

It is proven in \cite[Proposition 5.5]{DK69}  that every fibration is fiber homotopy equivalent to a fibration that admits a transitive lifting function and we will therefore assume that any fibration is equipped with a transitive lifting function.\\

Denote $F = \pi^{-1}(\star)$\textbf{ the fiber} of the fibration.
A transitive lifting function induces a right $C_*(\Omega X)$-module structure on $C_*(F)$ given by $\alpha \cdot \sigma = \Phi_*(\alpha \otimes \sigma)$.

\subsubsection{Statement}

One of the main results of \cite{BDHO23} is the following theorem 

\begin{thm}(\cite[Theorem 7.2.1]{BDHO23})
    Let $F \hookrightarrow E \to X$ a (Hurewicz) fibration, $\Phi : E \ftimes{\pi}{\ev_0} \mathcal{P}X \to E$ a transitive lifting function and $\Xi$ a set of DG Morse data on $X$. Then there exists a quasi-isomorphism $$\Psi_E : C_*(X,\Xi,C_*(F)) \to C_*(E),$$
    where $C_*(F)$ is endowed with the $C_*(\Omega X)$-module structure induced by $\Phi$.
\end{thm}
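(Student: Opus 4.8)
The plan is to build $\Psi_E$ by the same inductive scheme that produced the twisting cocycle $m_{x,y}$, but now evaluating representatives of the fundamental classes of the moduli spaces $\trajb{x,y}$ \emph{into the total space} rather than into $\Omega X$, using the transitive lifting function $\Phi$ to glue the fiberwise data along Morse trajectories. Concretely, for each critical point $x$ and each $\lambda \in \trajb{x,y}$ with $y$ the lowest relevant critical point, the parametrized trajectory $\Gamma_{x,y}(\lambda)$ is a Moore path in $X$ from (a neighborhood of) $x$ down to $y$; composing $\Phi$ with these paths transports fiber data from $F_x$ to $F_y$. First I would fix, in addition to the DG Morse data $\Xi$, a family of maps comparing the fiber over each critical point with the reference fiber $F=\pi^{-1}(\star)$ — using paths in $X$ from $x$ to $\star$ inside the chosen tree $\mathcal{Y}$ — so that everything is consistently expressed in terms of $C_*(F)$ with its $C_*(\Omega X)$-module structure $\alpha\cdot\sigma=\Phi_*(\alpha\otimes\sigma)$.

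Next I would define $\Psi_E$ on a generator $\sigma\otimes x$ (with $\sigma:[0,1]^k\to F$) by a formula of the shape
\[
\Psi_E(\sigma\otimes x)=\sum_{y} (-1)^{|\sigma|}\,\Phi_*\bigl(\sigma\otimes s_{x,y}'\bigr)\times W^u(y)\ \in C_*(E),
\]
where $s_{x,y}'$ is a chain in $\mathcal{P}X$ obtained from the representing chain system $s_{x,y}$ via the parametrization $\Gamma_{x,y}$, and $W^u(y)$ (a representative of the unstable-cell chain pushed into $E$ via a section-like datum over the tree) accounts for the Latour cell over $y$; the $y=x$ term is the ``leading'' term gluing $\sigma$ to the unstable cell of $x$ itself. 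The verification that $\Psi_E$ is a chain map then reduces to two computations: the boundary of $s_{x,y}'$ produces, via relation \eqref{eq : relation sys repr}, exactly the twisted-differential terms $\sum_z \alpha\cdot m_{x,z}\otimes z$ after applying $\Phi_*$ and using transitivity $\Phi(\Phi(e,\gamma),\delta)=\Phi(e,\gamma\#\delta)$ to turn the concatenation relation \eqref{eq : Concatenation relation evaluation map} into module multiplication; and the Morse-theoretic boundary of the unstable cells $W^u(y)$ supplies the remaining terms. The signs are dictated by the Koszul/Dold conventions already in play in the definition of $\partial(\alpha\otimes x)$.

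For the quasi-isomorphism statement I would not argue directly but filter both sides: the target $C_*(E)$ carries the filtration by skeleta of a CW-structure adapted to $(f,\xi)$ (or by the values of $f\circ\pi$ on the base), and the source carries the canonical filtration $F_p=\bigoplus_{i\le p}\F_j\otimes\Z\Crit_i(f)$; $\Psi_E$ is filtration-preserving by construction. On the associated graded, the twisting drops out and $\Psi_E$ induces on the $E^1$-page the classical statement that the Morse cochain complex of the base with local coefficients $H_*(F)$ (equivalently $C_*(F)$ as a $\Z[\pi_1(X)]$-module) computes $H_*(E)$ — this is the Leray–Serre spectral sequence of the fibration, whose Morse-theoretic incarnation identifies $E^1$ with $\bigoplus_p C_*(F)\otimes_{\Z[\pi_1(X)]}\Tilde{C}_p(f,\xi)$. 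Since $\Psi_E$ is an isomorphism (or at least a quasi-isomorphism) on each $E^1$-term and the filtrations are bounded (finitely many critical points, finite-dimensional base), a standard spectral-sequence comparison argument gives that $\Psi_E$ is a quasi-isomorphism. The main obstacle I anticipate is purely bookkeeping: making the inductive construction of the evaluation-into-$E$ chains genuinely compatible with the representing chain system and the lifting function simultaneously — i.e. choosing $s_{x,y}'$ and the cell-representatives over each $y$ so that the boundary identities hold on the nose and not merely up to homotopy — and tracking the signs through the concatenation/transitivity steps; the spectral-sequence comparison itself is routine once the chain map is in hand.
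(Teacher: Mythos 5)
Your overall strategy — use the transitive lifting function $\Phi$ to push fiberwise data along a Morse-theoretic decomposition of $X$, then compare spectral sequences — is the same as the paper's, but your central formula is malformed and the key structural ingredient that makes the construction work is missing.

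The expression $\Phi_*(\sigma\otimes s_{x,y}')\times W^u(y)$ does not define a chain in $C_*(E)$. If $s_{x,y}'$ parametrizes trajectories from $x$ to $y$, applying $\Phi$ produces fiber data over the endpoint $y$, and a Cartesian product of that with the chain $W^u(y)$ in $X$ lands in $E\times X$, not $E$. To spread fiber data over $W^u(y)$ you must again invoke $\Phi$ along the gradient lines inside $W^u(y)$ — and then again for the strata of its closure, and so on. The paper packages this coherently via the \emph{Latour cells} $\wb{u}{x}=W^u(x)\cup\bigcup_{|y|<|x|}\trajb{x,y}\times W^u(y)$ (Definition \ref{defi : Latour cells}) together with a \emph{compatible representing chain system} $\{s_x\in C_{|x|}(\wb{u}{x})\}$ satisfying $\partial s_x=\sum_y s_{x,y}\times s_y$ (Definition \ref{defi : for the fibration thm}). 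After projecting to $X/\mathcal{Y}$ (so that every critical point gets identified with the basepoint and the parametrized cells live in $\mathcal{P}_{\star\to X/\mathcal{Y}}(X/\mathcal{Y})$), one evaluates $s_x$ to get a single chain $m_x$ of \emph{based paths} satisfying $\partial m_x=\sum_y m'_{x,y}m_y$, and then sets $\Psi_{E'}(\alpha\otimes x)=\Phi_*(\alpha\otimes m_x)$. Only after this does the boundary identity follow by one application of $\Phi_*$ and transitivity. You explicitly flag "making the inductive construction... genuinely compatible... so that the boundary identities hold on the nose" as the main obstacle; the compatible representing chain system for Latour cells \emph{is} the resolution, and without it the construction does not close up.

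Two smaller points: you omit the passage to the pullback fibration over $X/\mathcal{Y}$, which is what turns trajectory parametrizations into based paths on which $\Phi$ can act uniformly; and your spectral-sequence comparison is stated at the $E^1$ page (and with $C_*(F)$ where $H_*(F)$ must appear), whereas the clean comparison is at $E^2$, where both sides are identified with $H_p(X;H_q(F))$ with local coefficients as in the Leray--Serre spectral sequence.
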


\begin{notation}
    For any fibration $F \hookrightarrow E \to X$ over a pointed oriented, closed and connected manifold $(X, \star)$, unless otherwise specified, we will denote $\Psi_E : H_*(X, C_*(F)) \to H_*(E)$ the isomorphism given by the Fibration Theorem.
\end{notation}

\begin{rem}(\cite[Remark 7.3.5]{BDHO23})
    This theorem has a version for manifolds with boundary. Let $\partial X = \partial_+ X \cup \partial_-X$, where $\partial_+ X$ is the subset of $\partial X$ where the pseudo-gradient points outwards and $\partial_- X$ the subset where it points inwards. 
    Then, if we denote $E_+ = \pi^{-1}(\partial_+ X)$,
    there exists a quasi-isomorphism 

    $$C_*(X, \partial_+ X, C_*(F)) \to C_*(E,E_+).$$
\end{rem}

\subsubsection{Tools for the proof}

We will need in some proofs of this paper to understand how this quasi-isomorphism is constructed. Therefore, let us lay out the tools necessary for the proof of this theorem.

We first consider $F' \hookrightarrow E' \to X/\mathcal{Y}$ the pullback fibration by $\theta : X/\mathcal{Y} \to X$, the chosen homotopy inverse of the projection $p : X \to X/\mathcal{Y}$.
The family of evaluation maps $\left\{q'_{x,y} = p \circ \Gamma_{x,y} : \trajb{x,y} \to \Omega (\Xq) \right\}$ define a twisting cocycle $m'_{x,y} \in C_*(\Omega (\Xq))$ such that $\theta_*(m'_{x,y}) = m_{x,y}$ and the $C_*(\Omega (\Xq))$-module structure on $C_*(F')$ is given by $\alpha \cdot \sigma := \alpha \cdot \theta_*(\sigma)$.

The map $\Psi_E$ is then defined as a composition of three maps 

$$C_*(X, C_*(F)) \to C_*(X,C_*(F')) \overset{\Psi_{E'}}{\to} C_*(E') \to C_*(E).$$

The only real consideration is on the middle map since the first one is just an identification $\alpha \cdot m'_{x,y} = \alpha \cdot \theta_*(m'_{x,y}) = \alpha \cdot m_{x,y} \in C_*(F')=C_*(F)$ and the last one is a homotopy equivalence given by $\theta.$

The map $\Psi_{E'} : C_*(X,C_*(F')) \to C_*(E')$ is essentially constructed by using the transitive lifting function to lift the cellular decomposition of $X$ given by its unstable manifolds. Let us be more precise.

\begin{defi}\label{defi : Latour cells}

Define for every $x\in \textup{Crit}(f)$, its \textbf{Latour cell} $$\wb{u}{x} = W^u(x) \cup \displaystyle \bigcup_{|y|<|x|} \trajb{x,y} \times W^u(y).$$ It has been proven in \cite[Proposition 2.11]{Lat94}, in \cite[Theorem 3]{qin10} and  \cite[section 4.9.c]{AD14} that with its natural topology, $\wb{u}{x}$ is a compact manifold with boundary and corners of dimension $|x|$ whose interior is $W^u(x)$ and which is homeomorphic to the closed disk $\overline{D}^{|x|}$. Latour cells induce a cellular decomposition of $X$.
\end{defi}

\begin{defi}\label{defi : for the fibration thm}

We define the morphism $\Psi_{E'}$ in several steps :\\

$\bullet$ There exists a family of evaluation maps $q_x : \wb{u}{x} \to \mathcal{P}_{\star \to X/\mathcal{Y}} X/\mathcal{Y} $ constructed by parametrizing the gradient lines and projecting them into $X/\mathcal{Y}$ (see \cite[Lemma 7.3.3]{BDHO23}).

$\bullet$ Let $(s_{x,y})$ be a representing chain system for the Morse moduli spaces in $B$. A \textbf{compatible representing chain system of the Latour cells} in $B$ as defined in \cite{BDHO23} is a collection $\{s_x \in C_{|x|}(\wb{u}{x}) \ \lvert \  x \in \textup{Crit}(f) \}$ of chains such that :

    \begin{enumerate}
        \item each $s_x$ is a cycle relative to the boundary and represent the fundamental class $[\wb{u}{x}, \partial \wb{u}{x}]$.\\
        \item each $s_x$ satisfies $\partial s_x = \displaystyle \sum_y s_{x,y} \times s_y$ with the product of chains defined via the inclusions $\trajb{x,y} \times \overline{W^u}(y) \subset \partial \overline{W^u}(x) \subset \overline{W^u}(x).$ 
    \end{enumerate}

Such a system always exists and is constructed inductively.

$\bullet$ Define $m_x = q_{x,*}(s_x) \in C_{|x|}(\mathcal{P}_{\star \to B/\mathcal{Y}} B/\mathcal{Y})$ for each $x \in \Crit(f)$. This family satisfies $$\partial m_x = \sum_y m'_{x,y}m_y.$$

$\bullet$ The map $\Psi_{E'} : C_*(X,C_*(F')) \to C_*(E')$ is then defined by $\Psi_{E'}(\alpha \otimes x) = \Phi_{*}(\alpha \otimes m_x).$

\end{defi}

To conclude the proof, one remarks that $\Psi$ induces a morphism of spectral sequences between the spectral sequence defined earlier that converges towards $H_*(X, C_*(F'))$ and the Leray-Serre spectral sequence that converges towards $H_*(E')$, and proves that this is an isomorphism between the second pages.

\subsection{Direct and shriek maps}\label{subsection : Direct and shriek maps}

Let $\varphi : (X^n,\star_X) \to (Y^m,\star_Y)$ be a continuous map between pointed, oriented, closed and connected manifolds. We will assume $\varphi(\star_X) = \star_Y.$

Let $\F$ be a DG right-module over $C_*(\Omega Y)$ and $\varphi^* \F$ be its pullback by $\varphi$, ie $\F$ endowed with the DG right-module over $C_*(\Omega X)$, 

$$\forall \alpha \in \F, \ \forall \gamma \in C_*(\Omega X), \ \alpha \cdot \gamma := \alpha \cdot \underbrace{\varphi_*(\gamma)}_{\in C_*(\Omega Y)}$$

\begin{thm}
\cite[Theorem 8.1.1]{BDHO23}
There exist two maps respectively called \textbf{direct map} and \textbf{shriek map}, 

$$\varphi_* : H_*(X, \varphi^*\F) \to H_*(Y, \F) \ \textup{and} \ \varphi_! : H_*(Y, \F) \to H_{*+n-m}(X, \varphi^* \F)$$ 

that satisfy the following properties :

\begin{enumerate}
    \item \textbf{Identity:} $\textup{Id}_* = \textup{Id}_!= \textup{Id}$ in homology.
    \item \textbf{Composition:} Let $\psi : Z^k \to X $ be a continuous map and $\F$ be a DG right-module over $C_*(\Omega Y)$ . Then $$(\varphi \psi)_* = \varphi_* \psi_* : H_*(Z, \varphi^*\psi^*\F) \to H_*(X, \varphi^*\F) \to H_*(Y, \F)  $$ and $$(\varphi \psi)_! = \psi_! \varphi_! : H_*(Y, \F) \to H_{*+n-m}(X, \varphi^*\F) \to H_{*+k-m}(Z, \varphi^*\psi^*\F).$$
    \item \textbf{Homotopy:} Two homotopic maps induce the same direct and shriek maps in homology.
    \item \textbf{Spectral sequence :} The morphisms $\varphi_*$ and $\varphi_!$ in homology are limits of morphisms between spectral sequences associated to the corresponding enriched complexes, given at the second page by 
    $$\varphi_{p,*} : H_p(X, \varphi^* H_q(\F)) \to H_p(Y, H_q(\F)) \ \textup{and} \ \varphi_{p,!} : H_p(Y,H_q(\F)) \to H_{p+n-m}(X, \varphi^* H_q(\F)). $$
\end{enumerate}
\end{thm}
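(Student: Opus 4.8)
The plan is to imitate the construction of the Barraud--Cornea twisting cocycle and of the continuation morphism of Theorem~\ref{thm : continuation morphism Psi_01}, replacing the moduli spaces of a single Morse function by spaces of ``$\varphi$-trajectories'' built from Morse--Smale data on both $X$ and $Y$. Fix Morse--Smale pairs $(f,\xi_X)$ on $X$ and $(g,\xi_Y)$ on $Y$, orientations of all unstable manifolds, and the tree/homotopy-inverse data on each factor; after a generic perturbation inside the homotopy class of $\varphi$ (harmless once the Homotopy property is established) assume $\varphi$ is smooth and that, for all $x\in\Crit(f)$ and $y\in\Crit(g)$, the restriction $\varphi|_{\wb{u}{x}}$ is transverse to the compactified stable manifold $\wb{s}{y}\subset Y$ and $\varphi|_{\wb{s}{x}}$ is transverse to the Latour cell $\wb{u}{y}\subset Y$ (in the stratified sense; the stable cells $\wb{s}{x}$, $\wb{s}{y}$ are compact, being the unstable Latour cells of $-f$, $-g$). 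For the direct map, set
$$\mathcal{N}_{x,y}\ :=\ \{\,p\in\wb{u}{x}\ :\ \varphi(p)\in\wb{s}{y}\,\},$$
the transverse fibre product of $\varphi|_{\wb{u}{x}}$ with $\wb{s}{y}\hookrightarrow Y$: a compact manifold with corners of dimension $|x|-|y|$ whose boundary is $\bigcup_z\trajb{x,z}\times\mathcal{N}_{z,y}$ (breakings in $X$) together with $\bigcup_w\mathcal{N}_{x,w}\times\trajb{w,y}$ (breakings in $Y$). For the shriek map, set
$$\mathcal{M}_{y,x}\ :=\ \{\,p\in\wb{s}{x}\ :\ \varphi(p)\in\wb{u}{y}\,\},$$
a compact manifold with corners of dimension $|y|-|x|+n-m$ with boundary $\bigcup_w\trajb{y,w}\times\mathcal{M}_{w,x}\ \cup\ \bigcup_z\mathcal{M}_{y,z}\times\trajb{z,x}$.

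Next, exactly as for the representing chain systems of Definition~\ref{defi : representing chain system traj} and the Latour-cell chain systems used in the proof of the Fibration Theorem, I would build by induction chain systems $\nu_{x,y}\in C_{|x|-|y|}(\mathcal{N}_{x,y})$ and $\mu_{y,x}\in C_{|y|-|x|+n-m}(\mathcal{M}_{y,x})$ representing the relative fundamental classes, with boundaries mirroring the decompositions above (so $\partial\nu_{x,y}=\sum_z\pm\,s^X_{x,z}\times\nu_{z,y}+\sum_w\pm\,\nu_{x,w}\times s^Y_{w,y}$, in the spirit of \eqref{eq : relation sys repr}, and similarly for $\mu$). Following the blueprint for evaluation maps given after Definition~\ref{defi : Evaluation maps}, I would then define maps $Q_{x,y}\colon\mathcal{N}_{x,y}\to\Omega Y$ and $R_{y,x}\colon\mathcal{M}_{y,x}\to\Omega Y$ sending a configuration to the $\star_Y$-based loop in $Y$ obtained by concatenating a tree path into the first critical point, the $X$-gradient segment pushed forward by $\varphi$, the $Y$-gradient segment to the second critical point, and a tree path back to $\star_Y$ --- using $\varphi(\star_X)=\star_Y$ so that $\varphi$ takes $X$-loops to $Y$-loops, and the $\theta$/tree machinery so that the concatenation relation \eqref{eq : Concatenation relation evaluation map} holds on the nose along each boundary stratum. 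Setting $t_{x,y}:=Q_{x,y,*}(\nu_{x,y})$ and $u_{y,x}:=R_{y,x,*}(\mu_{y,x})$ in $C_*(\Omega Y)$, one obtains identification-cocycle identities such as $\partial t_{x,y}=\sum_z\varphi_*(m^X_{x,z})\cdot t_{z,y}-\sum_z(-1)^{|x|-|z|}t_{x,z}\cdot m^Y_{z,y}$ (and its analogue for $u$), whence a direct computation shows that
$$\varphi_*(\alpha\otimes x):=\sum_y\alpha\cdot t_{x,y}\otimes y,\qquad \varphi_!(\beta\otimes y):=\sum_x\pm\,\beta\cdot u_{y,x}\otimes x$$
(dots denoting the $C_*(\Omega Y)$-action on $\F$) are morphisms of complexes $C_*(X,\varphi^*\F)\to C_*(Y,\F)$ and $C_*(Y,\F)\to C_{*+n-m}(X,\varphi^*\F)$. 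As with the continuation morphism, a cobordism argument shows that their chain-homotopy classes are independent of all auxiliary choices, hence they descend to $\varphi_*$ and $\varphi_!$ in homology.

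It remains to verify the four properties; each is the analogue for $\varphi$-trajectories of a statement already available for Morse trajectories or continuation morphisms. \emph{Identity}: when $\varphi=\Id_X$, the spaces $\mathcal{N}_{x,y}$ and $\mathcal{M}_{y,x}$ are trajectory cylinders, so $t_{x,x}=u_{x,x}=1$ and $t_{x,y}=u_{y,x}=0$ whenever $|x|=|y|$ with $x\neq y$; thus $\varphi_*$ and $\varphi_!$ equal $\Id$ plus a strictly filtration-shifting term, which the standard cylinder argument (or continuation-uniqueness, Theorem~\ref{thm : continuation morphism Psi_01}) shows is chain homotopic to $\Id$. \emph{Composition}: given $\psi\colon Z^k\to X$, one glues the $\psi$- and $\varphi$-moduli spaces along a one-parameter family of broken $\varphi\psi$-configurations whose two ends realise $(\varphi\psi)_*$ and $\varphi_*\psi_*$ (respectively $(\varphi\psi)_!$ and $\psi_!\varphi_!$, the reversal being forced by the ordering of the two breaking strata), exactly as in part~2 of Theorem~\ref{thm : continuation morphism Psi_01}. \emph{Homotopy}: a based homotopy $H\colon X\times[0,1]\to Y$ produces, after a parametrised perturbation, moduli spaces $\bigcup_{s\in[0,1]}\mathcal{N}^{H_s}_{x,y}$ of one higher dimension whose $s\in\{0,1\}$ boundary contributions yield the required chain homotopy, once one identifies $H_*(X,\varphi_0^*\F)\cong H_*(X,\varphi_1^*\F)$ via the homotopy of $C_*(\Omega X)$-module structures. \emph{Spectral sequence}: $t_{x,y}$ with $|x|=|y|$ and $u_{y,x}$ with $|x|=|y|+n-m$ are the only pieces of zero internal degree, so $\varphi_*$ and $\varphi_!$ are filtered (of filtration degree $0$ and $n-m$); their projections to $H_0(\Omega Y)=\Z[\pi_1(Y)]$ are the signed counts of $\varphi$-trajectories tagged by their homotopy class in $Y$, i.e.\ precisely the matrices of the classical direct and shriek maps of Morse homology with local coefficients in $H_q(\F)$, so on the second page they induce exactly $\varphi_{p,*}\colon H_p(X,\varphi^*H_q(\F))\to H_p(Y,H_q(\F))$ and $\varphi_{p,!}\colon H_p(Y,H_q(\F))\to H_{p+n-m}(X,\varphi^*H_q(\F))$; convergence of the spectral sequences then gives the conclusion in homology.

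The main difficulty is organisational rather than conceptual: one must (i) replace the merely continuous $\varphi$ by a smooth map whose restrictions to the various Latour cells are transverse, in the stratified sense, to the relevant cells of $Y$, so that $\mathcal{N}_{x,y}$ and $\mathcal{M}_{y,x}$ become honest compact manifolds with corners with the asserted boundaries --- and then argue, using the Homotopy property just proved, that the outcome is independent of this smoothing; and (ii) push the inductive constructions of the chain systems and evaluation maps, and all sign conventions, far enough that every identification-cocycle relation and every boundary identity entering the Composition and Homotopy arguments holds exactly. Both tasks are line-by-line adaptations of the corresponding arguments of \cite{BDHO23} for the twisting cocycle and the continuation morphism. (For $\varphi_!$ alone one may also avoid transversality by defining it as $D_X^{-1}\circ\varphi^*\circ D_Y$ from the contravariant cohomological pullback and Poincaré duality $D_X$, $D_Y$ in enriched Morse homology; the direct map $\varphi_*$ still requires a construction of the above kind.)
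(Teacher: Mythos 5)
This theorem is not proved in the paper: it is stated in the ``Recollections'' section as a citation of \cite[Theorem~8.1.1]{BDHO23}, and the paper immediately redirects the reader to \cite[Sections~9 and~10]{BDHO23} for the two equivalent constructions of $\varphi_*$ and $\varphi_!$. There is therefore no internal proof to compare against, and any reconstruction has to be measured against the cited source rather than the paper itself.

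That said, your sketch is a faithful reconstruction of what \cite{BDHO23} call the ``second definition'' of these maps: moduli spaces $\overline{\mathcal M}^{\varphi_*}(x,y)=\wb{u}{x}\cap\varphi^{-1}(\wb{s}{y})$ and $\overline{\mathcal M}^{\varphi_!}(y,x)=\wb{s}{x}\cap\varphi^{-1}(\wb{u}{y})$ of the right dimensions, representing chain systems with boundary relations mixing $s^X$ and $s^Y$, evaluation into $\Omega Y$, identification-cocycle identities as in \eqref{eq : mu_1 de tau}, and then the four properties by the usual cylinder, gluing, parametrised-moduli and filtration arguments. The only mild discrepancy in emphasis: \cite{BDHO23} achieve transversality by perturbing the pseudo-gradient on $Y$ (not $\varphi$ itself), and they pass from continuous to smooth maps by pre-composing with the homotopy identification morphism $\Psi^H$ of a $C^0$-small smoothing; your proposal to perturb $\varphi$ in its homotopy class amounts to the same thing once the Homotopy property is available, and you flag this circularity correctly. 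Your alternative definition of $\varphi_!$ via Poincar\'e duality is not the route of \cite{BDHO23}, who for non-embeddings instead factor $\varphi$ through a graph embedding $(\chi,\varphi):X\hookrightarrow D^N\times Y$ and use invertibility of $i_!$ for the slice inclusion; either approach works, but the duality route presupposes that duality has already been established in the DG-coefficient setting, which \cite{BDHO23} treat as a consequence rather than an input. Modulo these presentational differences and the sign/orientation bookkeeping you elide, the sketch is correct and is essentially the construction of the cited reference.
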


There are two equivalent definitions for these maps (see \cite[Sections 9 and 10]{BDHO23}).

\section{Thom isomorphism in DG Morse theory. Proof of Theorem D}\label{section : DG Thom iso}

Let $(Z^n,\star)$ be a pointed, oriented, closed, connected manifold and $F \hookrightarrow E \overset{\pi}{\to} Z$ a fibration endowed with a transitive lifting function $\Phi : E \ftimes{\pi}{\ev_0} \mathcal{P}Z \to E$.

Let $\varphi : Y^{n-k} \hookrightarrow Z^n$ be an embedding of a closed manifold and $i_U : U \to Z$ be an embedding of the total space of the normal bundle $p : U \to Y$ of $\varphi$. Denote $[\tau] \in H^k(U,\partial U)$ the Thom class of this disk bundle and choose $\tau \in C^k(U,\partial U)$ a representative. We will see $\varphi$ as an inclusion since the diffeomorphism $Y \to \varphi(Y)$ will not play a role in this section and we will write $Y$ instead of $\varphi(Y)$ in the arguments that follow. We will therefore see $U$ as a tubular neighborhood of $Y$ in $Z$ and denote $E_Y = \pi^{-1}(Y)$, $E_U = \pi^{-1}(U)$ and $E_U^+ = \pi^{-1}(\partial U)$.

The next definition is a fibered version of the classical Thom isomorphism.

\begin{defi}
    The cap product with the pullback $\pi^*[\tau] \in H^k(E_U, E_U^+)$ defines an isomorphism $$u_* : H_*(E_U, E_U^+) \to H_{*-k}(E_Y)$$ in the following way.
    Let $v \in E_U$ and $w = \pi(v) \in U$. Let $\gamma_w : [0,1] \to p^{-1}(p(w))$ such that $\gamma_w(0) = w$ and $\gamma_w(1) = p(w) \in Y$ be the path given by the gradient line passing through $w$. Let $p_{\pi} : E_U \to E_Y$ be the map defined by $p_{\pi}(v) = \Phi(v,\gamma_{\pi(v)}).$  We then define $$u_* = p_{\pi,*}(\cdot \cap \pi^*[\tau]).$$
\end{defi}

It will be a direct consequence of Proposition \ref{prop : Morse identification DG is the Thom iso} that $u_*$ is an isomorphism.\\

 One can construct a Gysin map $\varphi_! : H_*(E) \to H_{*-k}(E_Y)$ given by the composition of a Pontryagin-Thom collapse map $\tau_{\varphi,E,*} : H_*(E) \to H_*(E, E-E_U) \simeq H_*(E_U, E_U^+)$ with the Thom isomorphism $u_*$ (see \cite{GS07}).

We now prove that the shriek map $\varphi_! : H_*(Z,C_*(F)) \to H_{*-k}(Y,\varphi^*C_*(F))$ corresponds via the Fibration Theorem to the Gysin map $\varphi_! : H_*(E) \to H_{*-k}(E_Y)$.

\begin{thm}\label{thm : DG Thom iso}
    Let $Y^{n-k} \overset{\varphi}{\hookrightarrow} Z^n$ be an embedding of manifolds, $F \hookrightarrow E \overset{\pi}{\to} Z$ be a fibration endowed with a transitive lifting function $\Phi : E \ftimes{\pi}{\ev_0} \mathcal{P}Z \to E$ and $F \hookrightarrow E_Y \overset{\pi}{\to} Y$ be the pullback fibration by $\varphi$. The shriek map $\varphi_! : H_*(Z,C_*(F)) \to H_{*-k}(Y,\varphi^*C_*(F))$ is compatible, up to sign, with the Gysin map $\varphi_! : H_*(E) \to H_{*-k}(E_Y)$ via the Fibration Theorem \cite[Theorem 7.2.1]{BDHO23}. More precisely, for $i \geq 0$, the following diagram commutes up to the sign $(-1)^{ki -k}$.

    \[
    \xymatrix{
    H_i(Z,C_*(F)) \ar[r]^-{\varphi_!} \ar[d]_{\Psi_E}^{\sim}& H_{i-k}(Y,\varphi^*C_*(F)) \ar[d]^{\Psi_{E_Y}}_{\sim} \\
    H_i(E) \ar[r]_-{\varphi_!} & H_{i-k}(E_Y).
    }
    \]  
    
\end{thm}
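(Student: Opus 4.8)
The plan is to factor the Gysin map on the total space as $\varphi_!=u_*\circ\tau_{\varphi,E,*}$, to factor the shriek map on the enriched Morse complexes in a parallel way, and to match the two factorizations through the Fibration Theorem and its relative version \cite[Remark 7.3.5]{BDHO23}. First I would recall the chain-level description of the shriek map of an inclusion from \cite[Section 9]{BDHO23}: with the DG Morse data $\Xi^Z$ on $Z$ chosen to extend $\Xi_Y$ as above, one has $\Crit(f)\cap U=\Crit(f_Y)$ with $|x|_Z=|x|_Y+k$, and, the pseudo-gradient being outward-pointing along $\partial U$, no broken trajectory of $(f,\xi)$ ever enters $U$. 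Hence the subgroup generated by $\Crit(f)\smallsetminus U$ is a subcomplex of $C_*(Z,\Xi^Z,\F)$ and the shriek map is the quotient projection
$$\varphi_!:C_*(Z,\Xi^Z,\F)\twoheadrightarrow C_*(U,\partial U,\Xi^U,i_U^*\F)=C_{*-k}(Y,\Xi_Y,\varphi^*\F),$$
the last equality being the DG Morse--Thom reindexing: the two complexes have the same generators, and their twisting cocycles agree because a trajectory of $(f|_U,\xi|_U)$ joining two critical points of $f_Y$ cannot leave $Y$ (it would have to come back to a higher value of $f$), so $\mathcal L^U(x,y)=\mathcal L^Y(x,y)$ and the representing chain systems may be taken to restrict. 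That this agrees with the homology-level shriek map is part of the recollections.

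On the total space, by definition $\varphi_!:H_*(E)\to H_{*-k}(E_Y)$ is the composite of the Pontryagin--Thom collapse $\tau_{\varphi,E,*}:H_*(E)\to H_*(E,E-E_U)\xrightarrow{\ \mathrm{exc}\ }H_*(E_U,E_U^+)$ with the fibered Thom isomorphism $u_*=p_{\pi,*}(\,\cdot\cap\pi^*\tau)$. The theorem then splits into two compatibilities. The first is a collapse compatibility: for $x\notin U$ the Latour cell $\wb{u}{x}$ is disjoint from $Y$ (a point of $\wb{u}{x}\cap Y$ would lie on a broken trajectory from $x$ entering $U$), hence disjoint from a small enough $U$; choosing the tree $\mathcal Y$ and the parametrizations so that the cells $m_x$ with $x\notin U$ stay over $Z\smallsetminus U$, the lifted chains $\Phi_*(\alpha\otimes m_x)$ land in $E-E_U$. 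Thus $\Psi_E$ descends to the quotient, and the relative Fibration Theorem for $(U,\partial U)$ makes the square with horizontal maps $\Psi_E$, $\Psi_{E_U,E_U^+}$ and vertical maps the projection and $\tau_{\varphi,E}$ commute up to chain homotopy.

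The second and main input is Proposition~\ref{prop : Morse identification DG is the Thom iso}, stating that the reindexing $C_*(U,\partial U,\Xi^U,C_*(F))=C_{*-k}(Y,\Xi_Y,\varphi^*C_*(F))$ is intertwined with the Thom isomorphism, i.e. $u_*\circ\Psi_{E_U,E_U^+}$ is chain homotopic to $\Psi_{E_Y}$ precomposed with this identification. Geometrically, $\Psi_{E_U}$ sends $\alpha\otimes x$ ($x\in\Crit(f_Y)$, now in degree $|x|+k$) to the lifted Latour cell $\Phi_*(\alpha\otimes m^U_x)$ with $m^U_x=q^U_{x,*}(s^U_x)$; capping with the cocycle $\pi^*\tau$ realizes the transverse intersection of this chain with $E_Y$, and since $\wb{u}{x}\pitchfork Y=\wbi{u}{Y}{x}$ inside $U$ while $p_\pi$ restricts over $\wbi{u}{Y}{x}$ to the gradient-flow retraction used to build the evaluation maps of $Y$, one recovers exactly $\Phi_*(\alpha\otimes m^Y_x)=\Psi_{E_Y}(\alpha\otimes x)$, up to a global sign fixed by comparing the co-orientation of $Y\subset Z$ with the Thom class $\tau$. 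This computation also shows that $u_*$ is an isomorphism — both vertical maps become, after transport along the $\Psi$'s, the identity up to homotopy — which is the fibered Thom Lemma used to define the Gysin map on the total space. Pasting the two squares along $\Psi_{E_U,E_U^+}$ and recalling that $\varphi_!$ on the Morse side is the left column yields the square of the theorem in homology; independence of the construction from the tubular neighborhood is then automatic from the homotopy invariance of the shriek and of the Gysin maps.

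I expect Proposition~\ref{prop : Morse identification DG is the Thom iso} to be the genuine obstacle. One has to (a) build the projection $p_\pi:E_U\to E_Y$ and lifted Latour cells of $U$ that restrict compatibly with those of $Y$ (same tree, same homotopy inverse $\theta$, compatible representing chain systems of Latour cells); (b) check that the chain-level cap product with $\pi^*\tau$ implements, cell by cell, the transverse intersection $\wb{u}{x}\pitchfork Y$ followed by $p_\pi$, a Poincaré--Lefschetz-type identity fibered over the base; and (c) reconcile the Morse orientation conventions for $\wbi{u}{Y}{x}\subset\wb{u}{x}$ with the orientation of the normal bundle defining $\tau$, so that the sign is uniform. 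The collapse compatibility is comparatively routine, but still requires arranging the DG Morse data on $Z$, $U$ and $Y$ to be honest restrictions of one another and verifying carefully that no broken trajectory enters $U$.
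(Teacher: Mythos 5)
Your proposal follows essentially the same route as the paper: split the statement into a Pontryagin–Thom collapse compatibility and a Thom/Morse-identification compatibility (Proposition~\ref{prop : Morse identification DG is the Thom iso}), use the chain-level description of $i_{U,!}$ as the projection killing critical points outside $U$, observe that no (broken) trajectory from $x\notin U$ enters $U$ so that $\Psi_E(\alpha\otimes x)$ lands in the complement of $E_U$, and finally realize the fibered Thom isomorphism on lifted Latour cells via $p_\pi$ and $\pi^*\tau$. The paper executes your step (b)/(c) concretely by constructing a compatible representing chain system $s_x^U=s_x^Y\times a$ with $a$ a representative of $[\mathbb{D}^k,\partial\mathbb{D}^k]$ and then computing $\tau(a)=1$, which is the chain-level incarnation of your transverse-intersection picture $\wb{u}{x}\pitchfork Y=\wbi{u}{Y}{x}$, but the overall decomposition and the key lemmas are the same.
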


\begin{proof}
Denote $\F = C_*(F)$. Recall $i_U : U \to Z$ is an embedding of the normal bundle $p : U \to Y$ of $\varphi$ and that we see $\varphi : Y \hookrightarrow Z$ as an inclusion.\\

Take a Morse-Smale pair $(f_Y,\xi_Y)$ on $Y$ and extend $f_Y$  on $U$ by $f_U = \chi\cdot f_Y\circ p$, where $\chi$ is some cutoff function equals $1$ near $Y$ and $0$ near $\partial U$. The pseudo-gradient $\xi_U$ extends $\xi_Y$ and points outwards of $U$. Extend it again to a Morse-Smale pair $(f,\xi)$ on $Z$  such that $\Crit_j(f) \cap U = \Crit_{j-k}(f_Y)$. This is the Morse identification $C^{\textup{Morse}}_*(U,\partial U) = C^{\textup{Morse}}_{*-k}(Y)$.\\

We use the definition of the shriek map in the case of an embedding given in \cite[Section 9]{BDHO23} :

    $$i_{U,!} : H_*(Z,\F) \to H_*(U,\partial U, i_U^*\F) = H_{*-k}(Y, \varphi^*\F), \ i_{U,!}(\alpha \otimes x) = \left\{ \begin{array}{cc}
        \alpha \otimes x & \textup{ if } x \in U  \\
         0 & \textup{ otherwise } 
    \end{array} \right..$$

We prove the theorem by showing that $i_{U,!} : H_*(Z,\F) \to H_*(U,\partial U, i_U^*\F)$ corresponds to the Pontryagin-Thom collapse $\tau_{\varphi,E,*} : H_*(E) \to H_*(E_U, E_U^+)$ and that the equality $H_*(U,\partial U, i_U^*\F) = H_{*-k}(Y, \varphi^*\F)$ corresponds to the Thom isomorphism via the Fibration Theorem.

\begin{prop}
The following diagram commutes :
    \[
    \xymatrix{
    H_*(Z,\F) \ar[r]^-{\Psi_{E}} \ar[d]^{i_{U,!}} & H_*(E) \ar[d]^{\tau_{\varphi,E,*}}\\
    H_*(U,\partial U,i_U^*\F)  \ar[r]^-{\Psi_{E_{U}}} & H_*(E_{U}, E_{U}^+).
    }
    \]

\end{prop}

\begin{proof}
Let $\mathcal{Y}$ be a tree rooted at $\star$ whose vertices are the critical points of $f$ and denote $\theta : Z/\mathcal{Y} \to Z$ a homotopy inverse of the canonical projection $X \to X/\mathcal{Y}.$
Consider $F_{\mathcal{Y}} \hookrightarrow E_{\mathcal{Y}} \overset{\pi_{\mathcal{Y}}}{\to} Z/\mathcal{Y}$ the pullback fibration of $F \hookrightarrow E \to Z$ by $\theta : Z/\mathcal{Y} \to Z$. Denote $E_{U_{\mathcal{Y}}} = \pi_{\mathcal{Y}}^{-1}(U_{\mathcal{Y}})$, where $U_{\mathcal{Y}} :=U/\mathcal{Y}$, $E^+_{U_\mathcal{Y}} = \pi_{\mathcal{Y}}^{-1}(\partial U_{\mathcal{Y}}) $ and $\F_{\mathcal{Y}} = C_*(F_{\mathcal{Y}})$.

By the definition of the quasi-isomorphisms $\Psi_E$ and $\Psi_{E_U}$ (see Section \ref{subsection : Fibration Theorem} or \cite[Section 7]{BDHO23} for more details), the diagram is decomposable into
    \[
    \xymatrix{
    H_*(Z,\F) \ar[r]^-{\sim}\ar[d]^{i_{U,!}} & H_*(Z,\F_{\mathcal{Y}}) \ar[r]^-{\Psi_{E_{\mathcal{Y}}}} \ar[d]^{i_{U,!}} & H_*(E_{\mathcal{Y}}) \ar[r]^-{\sim} \ar[d]^{\tau_{\varphi,E_{\mathcal{Y}},*}} & H_*(E) \ar[d]^{\tau_{\varphi,E,*}}\\
    H_*(U,\partial U,i_U^*\F) \ar[r]^-{\sim} & H_*(U,\partial U,i_{U}^*\F_{\mathcal{Y}}) \ar[r]^-{\Psi_{E_{U,\mathcal{Y}}}} & H_*(E_{U_\mathcal{Y}}, E^{+}_{U_\mathcal{Y}}) \ar[r]^-{\sim} & H_*(E_{U}, E_{U}^+)
    }
    \]
     The leftmost square commutes because the left horizontal maps are identifications of DG modules. The horizontal maps on the right are the isomorphism induced by the homotopy equivalence $\tilde{\theta} : E_\mathcal{Y} \to E$, $\tilde{\theta}(y,e) = (\theta(y),e)$. We define $\tau_{\varphi,E_{\mathcal{Y}},*} = \tilde{\theta}_* \circ \tau_{\varphi,E,*} \circ \tilde{\theta}_*^{-1}$  so that the rightmost square commutes.\\
    
    Let us now prove that the middle square commutes. Let $x \in \Crit(f)$ and $\alpha \in \F_{\mathcal{Y}}$. If $x \notin U$, then none of the gradient lines coming from $x$ will cross $U$ since the pseudo-gradient points outwards $U$. Therefore, 

    $\Psi_{E_{\mathcal{Y}}}(\alpha \otimes x) \in C_*(E_{\mathcal{Y}}\setminus E_{U,\mathcal{Y}})$ and

    $$\tau_{\varphi,E_{\mathcal{Y}},*}\circ \Psi_{E_{\mathcal{Y}}}(\alpha \otimes x) = 0.$$

     Moreover, by definition of $i_{U,!}$, $$\Psi_{E_{U, \mathcal{Y}}} \circ i_{U,!}(\alpha \otimes x)= 0.$$ 
        
     If $x \in U$, we decompose the middle square into 
    
    $$\xymatrix{
    H_*(Z,\F_{\mathcal{Y}}) \ar[d]^{p_{U,*}} \ar[r]^-{\Psi_{E_{\mathcal{Y}}}} & H_*(E_{\mathcal{Y}}) \ar[d]^{p_{E_U,*}} \\
    H_*(Z, Z \setminus U, \F_{\mathcal{Y}}) \ar@{=}[d] & H_*(E_{\mathcal{Y}}, E_{\mathcal{Y}} \setminus E_{U_ \mathcal{Y}}) \\
    H_{*}(U,\partial U, i_{U}^*\F_{\mathcal{Y}}) \ar[r]^-{\Psi_{E_{U_\mathcal{Y}}}} & H_*(E_{U_\mathcal{Y}}, E^+_{U_ \mathcal{Y}}) \ar[u]^{\sim}_{j_{U,*}}
    }$$

    The map $j_{U,*} : C_*(E_{U_\mathcal{Y}},E^+_{U_\mathcal{Y}}) \to  C_*(E_{\mathcal{Y}}, E_{\mathcal{Y}} / E_{U_\mathcal{Y}})$ is the homotopy equivalence given by excision and $p_{E_U,*} : C_*(E_{\mathcal{Y}}) \to C_*(E_{\mathcal{Y}}, E_{\mathcal{Y}} / E_{U_\mathcal{Y}})$, $p_{U,*} : C_*(Z,\F_{\mathcal{Y}}) \to C_*(Z, Z \setminus U, \F_{\mathcal{Y}}) $ are the canonical projections.\\
    
    Let $\{s_x \in C_{|x|}(\wb{u}{x}), \ x \in \Crit(f)\}$ be a representing chain system for the Latour cells of the pair $(f,\xi)$. 
    We can define evaluation maps (see \cite[Lemma 7.3.3]{BDHO23}) $$q_{x} : \wb{u}{x} \to \mathcal{P}_{\star \to Z/\mathcal{Y}} Z/\mathcal{Y}$$ into the space of based paths in $Z/\mathcal{Y}$ or $$q^U_{x} :  \wb{u}{x} \to \mathcal{P}_{\star \to U_{\mathcal{Y}}} U_{\mathcal{Y}}$$ into $U_{\mathcal{Y}}$. These two maps respectively evaluate $s_x$ to obtain $$m_x \in C_{|x|}(\mathcal{P}_{\star \to Z/\mathcal{Y}} Z/\mathcal{Y}),$$ or to obtain $$m^{U}_{x} \in C_{|x|}(\mathcal{P}_{\star \to U_{\mathcal{Y}}} U_{\mathcal{Y}}, \mathcal{P}_{\star \to \partial U_{\mathcal{Y}}} U_{\mathcal{Y}}).$$ 

   It is then clear that, up to homotopy, for all $\alpha \in \F_{\mathcal{Y}}$, 
   $$j_{U,*} (\Psi_{E_{U_\mathcal{Y}}}(\alpha \otimes x)) = j_{U,*}(\Phi( \alpha \otimes m_x^{U})) = p_{E_U,*}(\Phi(\alpha \otimes m_x)) = p_{E_U,*}(\Psi_{E_{\mathcal{Y}}}(\alpha \otimes x)) \in C_*(E_{\mathcal{Y}}, E_{\mathcal{Y}}\setminus E_{U_\mathcal{Y}}).$$

\end{proof}
    
    In order to finish the proof of Theorem \ref{thm : DG Thom iso}, we prove the following proposition, which is a result that has been referred to in \cite[Remark 9.2.5]{BDHO23}.

\begin{prop}\label{prop : Morse identification DG is the Thom iso}
    The Morse identification $ H_*(U, \partial U, i_{U}^*\F) = H_{*-k}(Y, i^*\F)$ corresponds to the Thom isomorphism in the sense that the diagram

    $$\xymatrix{
     & H_i(U, \partial U, i_U^*\F) = H_{i-k}(Y, i^*\F) \ar[dr]_{\Psi_{E_Y}} \ar[dl]^{\Psi_{E_U}} &\\
     H_i(E_U , E_{U}^+) \ar[rr]^{u_*}& & H_{i-k}(E_Y)
    }$$
    commutes up to the sign $(-1)^{ki-k}$.
\end{prop}

\begin{proof}
    It is enough to consider the pullback fibration by $\theta$ and prove that the diagram 

$$\xymatrix{
     & H_*(U, \partial U, i_U^*\F_{\mathcal{Y}}) = H_{*-k}(Y, i^*\F_{\mathcal{Y}}) \ar[dr]_{\Psi_{E_{Y,\mathcal{Y}}}} \ar[dl]^{\Psi_{E_{U,\mathcal{Y}}}} &\\
     H_*(E_{U, \mathcal{Y}} , E^{+}_{U, \mathcal{Y}}) \ar[rr]^{u_*}& & H_{*-k}(E_{Y,\mathcal{Y}}).
    }$$

    \noindent commutes.

    Let $\alpha \in \F$ and $x \in \Crit(f_Y).$ Denote $|x|_U = |x| +k$ the index of $x$ as a critical point of $f_U$. Let $\{s_{x,y} \in C_{|x|-|y|-1}(\trajbi{Y}{x,y}), \ x,y \in \Crit(f_Y)\}$ be a representing chain system for the Morse moduli spaces in $Y$ and $\left\{s_x^Y \in C_{|x|_Y}(\wbi{u}{Y}{x}), \  x \in \Crit(f_Y)\right\}$ a compatible representing chain system of the Latour cells in $Y$ as defined in Definition \ref{defi : for the fibration thm}. Recall that it is a family of chains that satisfies 

    \begin{enumerate}
        \item each $s_x^Y$ is a cycle relative to the boundary and represents the fundamental class $[\wbi{u}{Y}{x}, \partial \wbi{u}{Y}{x}]$.\\
        \item each $s_x$ satisfies $\partial s_x = \displaystyle \sum_y s_{x,y} \times s_y$ with the product of chains defined via the inclusions $\trajbi{Y}{x,y} \times \wbi{u}{Y}{y} \subset \partial \wbi{u}{Y}{x} \subset \wbi{u}{Y}{x}.$ 
    \end{enumerate}

    We want to build a family $\left\{s_x^U \in C_{|x|_U}\left(\wbi{u}{U}{x}, \partial U \cap \wbi{u}{U}{x}\right), x \in \Crit(f_U)\right\}$ such that 

    \begin{enumerate}
    \item $s_x^U$ is a cycle relative to the boundary and represents the fundamental class $[\wbi{u}{U}{x}, \partial \wbi{u}{U}{x}]$.
        \item $s_x^Y = s_x^U\lvert_{[0,1]^{|x|} \times \{0\}}  \in C_{|x|}(\wbi{u}{Y}{x})$ is a representing chain system of the unstable manifold of $x$ seen as a critical point in $Y$.
        \item $a = s_x^U\lvert_{\{0\} \times [0,1]^k}$  is a representative of the fundamental class $[\mathbb{D}^k, \partial \mathbb{D}^k].$
        
    \end{enumerate}

Once such a representing chain system is built, we can define the associated twisting cocycle $m_x^Y$ and $m_x^U$ by evaluating respectively $s_x^Y$ and $s_x^U$ in the space of based paths of $Y/\mathcal{Y}$ and $U/\mathcal{Y}$. Recall that the isomorphisms $\Psi_{E_{U,\mathcal{Y}}}$ and $\Psi_{E_{Y,\mathcal{Y}}}$ are then defined by

    $$\Psi_{E_{U,\mathcal{Y}}}(\alpha \otimes x) = \Phi_{*}(\alpha \otimes m_x^U) \  \textup{and} \ \Psi_{E_{Y,\mathcal{Y}}}(\alpha \otimes x) = \Phi_{*}(\alpha \otimes m_x^Y).$$
    
We then can just evaluate at the chain level using a representative $\tau$ of the Thom class $[\tau] \in H^k(E_{U,\mathcal{Y}}, E^+_{U,\mathcal{Y}})$:
\begin{align*}
   u_*\left(\Psi_{E_{U,\mathcal{Y}}}(\alpha \otimes x)\right) &= u_*(\Phi_{*}(\alpha \otimes m_x^U)) \\
   &= (-1)^{k(|\alpha| + |x|_Y)} \tau(a) \Phi_{*}(\alpha \otimes m_x^Y) \\
   &= (-1)^{k(|\alpha| + |x|_U - k)} \Phi_{*}(\alpha \otimes m_x^Y)\\
   &= (-1)^{k(|\alpha| + |x|_U) - k} \Psi_{E_{Y,\mathcal{Y}}}(\alpha \otimes x).
\end{align*}

It remains to construct such a representing chain system $\{s_x^U, \ x \in \Crit(f_U)\}$. We do it by induction on $|x|_Y.$ Let $\{s_x^Y, \ x \in \Crit(f_Y)\}$ be a representing chain system of the Latour cells $\left\{\overline{W}^u_Y(x)\right\}$.

If $|x|_Y = 0$, then $\overline{W}^u_U(x) = \mathbb{D}^k$ and $\partial U \cap \overline{W}^u_U(x) = \partial \overline{W}^u_U(x) = \partial \mathbb{D}^k$. Define $s_x^U = a$ to be a representative of the fundamental class $[\mathbb{D}^k, \partial \mathbb{D}^k] = [\overline{W}^u_U(x), \partial \overline{W}^u_U(x)]$.

Suppose now that $s_y^U$ has been constructed for every $|y|_Y \leq l$ for some $l \in \N$. Let $x\in Crit_{k+1}(f_Y)$. 

Define $s_x^U = s_x^Y \times a$ and remark that, since $a$ is a cycle, $$\partial s_x^U = \sum_y s_{x,y} s_y^U$$ and it is a representative of $[\partial \wbi{u}{U}{x}]$ by the induction hypothesis. Indeed, $$\partial \wbi{u}{U}{x} =  \bigcup_{|y|<|x|} \trajbi{U}{x,y} \times \wbi{u}{U}{y}$$ and the product orientation is equal to the boundary orientation.  Therefore, if $s'^{U}_x$ is a representative of $[\wbi{u}{U}{x}]$, there exists a chain $p_{x} \in C_{|x|}(\partial\wbi{u}{U}{x})$ such that $\partial(s^U_x - s'^{U}_x) = \partial p_x.$

It follows that $s'^{U}_x + p_x - s^U_x\in C_{|x|_U}(\wbi{u}{U}{x})$ is a cycle. However, since $\wbi{u}{U}{x}$ is a $|x|_U$-dimensional, closed and connected manifold with boundary and corners, we have $H_{|x|_U}(\wbi{u}{U}{x}) = 0$ and therefore there exists $b \in C_{|x|_U+1}(\wbi{u}{U}{x})$ such that

$$s'^{U}_x + p_x - s^U_x = \partial b$$

and therefore 

$$s^U_x = s'^{U}_x = [\wbi{u}{U}{x}, \partial \wbi{u}{U}{x}] \in H_{|x|_U}(\wbi{u}{U}{x}, \partial \wbi{u}{U}{x}).$$

\end{proof}

To summarize, we have proved that the diagram 

$$\xymatrix@C=4cm{
H_*(Z,\F) \ar@/^2pc/[rr]^{\varphi_!} \ar[r]_{i_{U,!}} \ar[d]^{\Psi_E} & H_*(U, \partial U , i_U^*\F) \ar@{=}[r] \ar[d]^{\Psi_{E_U}} & H_{*-k}(Y, \varphi^*\F) \ar[d]_{\Psi_{E_Y}} \\
H_*(E)\ar@/_2pc/[rr]_{\varphi_!} \ar[r]^-{\tau_{\varphi,E,*}} & H_*(E_U, E_u^+) \ar[r]^-{u_*} & H_{*-k}(E_{Y})
}$$
commutes and therefore Theorem \ref{thm : DG Thom iso} is proved.

\end{proof}

\section{Morse homology with coefficients in an \texorpdfstring{$\Ai$}{Ai}-module}\label{section : Morse homology with coefficients in a Ai-module}

In this section, we extend the construction of Morse homology with coefficients in a DG module to the more general case of a $\Ai$-module structure over $C_*(\Omega X)$. The main motivation to consider this setting is that a morphism of $\Ai$-modules $\boldsymbol{\varphi} : \mathcal{A} \to  \mathcal{A}'$ induces a morphism of enriched complexes $\Tilde{\varphi} : C_*(X,\mathcal{A}) \to C_*(X, \mathcal{A}')$ (Proposition \ref{prop : Morphisme de Ai module en morphisme de complexe}). This property plays a key role in the definition of the Chas-Sullivan product $\CS$.

\subsection{\texorpdfstring{$\Ai$}{Ai}-modules over \texorpdfstring{$C_*(\Omega X)$}{C(OmegaX)}}

The complex $C_*(\Omega X)$ has an associative algebra structure, which we view as an $\Ai$-algebra structure $(\mu_i)_{i\geq 0}$ given by the differential $\mu_1 : C_*(\Omega X) \to C_{*-1}(\Omega X)$, the Pontryagin product $\mu_2 : C_*(\Omega X)^{\otimes 2} \to C_*(\Omega X)$, and $\mu_i = 0$ for $i \geq 3$.

\begin{defi}
    A differential graded $\Z$-module $(\mathcal{A},\nu)$ is an $\boldsymbol{\mathcal{A_{\infty}}}$\textbf{-module} over $C_*(\Omega X)$ if there exists a sequence of operations $\nu_n^A : \mathcal{A} \otimes C_*(\Omega X)^{\otimes n-1} \to \mathcal{A}$ of degrees $n-2$ for all $n \geq 1$ such that $\nu_1^A = \nu$ and for all $N\geq 1$,
\begin{equation*}
    \displaystyle \sum_{\substack{s+t =N \\ s\geq 1}} (-1)^{st} \nu^A_{t+1}( \nu^A_s \otimes 1^{\otimes t}) + \sum_{\substack{r+s+t=N \\ r,s \geq 1}} (-1)^{r+st} \nu^A_{r+t+1}(1^{\otimes r} \otimes \mu_s \otimes 1^{\otimes t}) = 0.
\end{equation*}

Using the fact that $\mu_i = 0$ for $i\geq 3$, this leads to the equation

\begin{equation}\label{eq : nu A infinite module}
    \displaystyle \sum_{\substack{s+t =N \\ s\geq 1}} (-1)^{st} \nu^A_{t+1}( \nu^A_s \otimes 1^{\otimes t}) + \sum_{\substack{r+t=N-1 \\ r \geq 1}} (-1)^{N-1} \nu^A_{N}(1^{\otimes r} \otimes \mu_1 \otimes 1^{\otimes t}) + \sum_{\substack{r+t=N-2 \\ r \geq 1}} (-1)^r \nu^A_{N-1}(1^{\otimes r} \otimes \mu_2 \otimes 1^{\otimes t}) = 0.
\end{equation}

\end{defi}
Note that these functional expressions are subject to the Koszul sign rule when applied to elements: if $f : V_* \to W_*$ and $g : V'_* \to W'_*$ are homogeneous graded linear maps of respective degrees $|f|$ and $|g|$, then for any $x \otimes y \in V \otimes V'$, $(f \otimes g)(x\otimes y) = (-1)^{|g||x|} f(x) \otimes g(y)$.

\begin{defi}
    A \textbf{morphism }$\boldsymbol{\varphi}: (\mathcal{A}, \boldsymbol{\nu^A}) \to (\mathcal{B}, \boldsymbol{\nu^B})$ \textbf{of} $\boldsymbol{\mathcal{A}_{\infty}}$\textbf{-modules} over $C_*(\Omega X)$ is the data, for all $n \geq 1$, of a chain map $\varphi_n : \mathcal{A} \otimes C_*(\Omega X)^{\otimes n-1} \to \mathcal{B}$ of degree $n-1$ such that for all $N \in \N^*$,

\begin{equation*}
    \sum_{s+t = N} (-1)^{st} \varphi_{t+1}(\nu^A_s \otimes 1^{\otimes t}) +  \sum_{\substack{r+s+t = N \\ r \geq 1}} (-1)^{r +st} \varphi_{r+t+1}( 1^{\otimes r} \otimes \mu_s \otimes 1^{\otimes t}) = \sum_{s+t = N} (-1)^{(s+1)t} \nu^B_{t+1}(\varphi_s \otimes 1^{\otimes t}).
\end{equation*}
    
Using that $C_*(\Omega X)$ is an associative algebra, we get
\begin{equation}\label{eq : relation morphisme de Ai module}
\begin{split}
    \sum_{n+k = N} (-1)^{n(k+1)} \varphi_{n+1}(\nu_{k+1}^A \otimes & 1^{\otimes n}) + \sum_{r=1}^{N} (-1)^N \varphi_{N+1}(1^{\otimes r} \otimes \mu_1 \otimes 1^{\otimes N-r}) \\
    &+ \sum_{r=1}^{N-1} (-1)^r \varphi_{N}(1^{\otimes r} \otimes \mu_2 \otimes 1^{\otimes N-1-r}) = \sum_{n+k = N} (-1)^{kn} \nu^B_{n+1}(\varphi_{k+1} \otimes 1^{\otimes n}).
\end{split}
\end{equation}
\end{defi} 

\subsection{Complex with coefficients in an \texorpdfstring{$\Ai$}{Ai}-module}\label{subsection : Complex with coefficients in an Ai module}

Defining a complex with this larger kind of coefficients is not very different from the case of DG modules. However, in order to prove that there exist $\Ai$ analogues for the definitions and tools of \cite{BDHO23}, raw computations can be very complicated. That is why, in this paper, we deliberately use as much functional expressions as possible in order to avoid Kozsul signs that quickly make  computations unreadable. To this effect, we use the following notation:

\begin{notation}\label{not : morphism tilde m}
Let $\{m_{x,y} \in C_{|x|-|y|-1}(\Omega X), \ x,y \in \Crit(f)\}$ be a twisting cocycle. We denote $\mathbf{m} \in \Hom_{-1}(\Z\Crit(f), C_*(\Omega X) \otimes \Z\Crit(f))$ the morphism given by $$\mathbf{m}(x) = \sum_y m_{x,y} \otimes y.$$ 

For a more compact writing, if $\mathcal{A}$ is an $\Ai$-module over $C_*(\Omega X)$, we extend $\mathbf{m}$ to $\mathbf{\Tilde{m}}\in \End_{-1}(\mathcal{A} \otimes TC_*(\Omega X) \otimes \Z\textup{Crit}(f))$ defined by

$$\mathbf{\Tilde{m}}(\alpha \otimes \gamma_1 \otimes \dots \otimes \gamma_k \otimes x) = (1^{\otimes k+1} \otimes \mathbf{m})(\alpha \otimes \gamma_1 \otimes \dots \otimes \gamma_k \otimes x),$$

where $TC_*(\Omega X) = \displaystyle \bigoplus_{i \geq 0} C_*(\Omega X)^{\otimes i}$ is the tensor algebra of $C_*(\Omega X)$. 

We also extend $\mu_1$ and $\mu_2$ to $\mathcal{A} \otimes TC_*(\Omega X) \otimes \Z\Crit(f)$ by 
\begin{align*}
    &\Tilde{\mu}_1(\alpha \otimes \gamma_1 \otimes \dots \otimes \gamma_k \otimes x) = (1^{\otimes k} \otimes \mu_1 \otimes 1)( \alpha \otimes \gamma_1 \otimes \dots \otimes \gamma_k \otimes x), \\
    &\Tilde{\mu}_2(\alpha \otimes \gamma_1 \otimes \dots \otimes \gamma_{k+1} \otimes x) = (1^{\otimes k} \otimes \mu_2 \otimes 1)( \alpha \otimes \gamma_1 \otimes \dots \otimes  \gamma_{k+1} \otimes x). 
\end{align*}
\end{notation}

\begin{lemme}
    The Maurer-Cartan equation can be written as

\begin{equation}\label{eq : MC Ai module}
    \Tilde{\mu}_1 \mathbf{\Tilde{m}} + \Tilde{\mu}_2 \mathbf{\Tilde{m}}^2 = 0.
\end{equation}
\end{lemme}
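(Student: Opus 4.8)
The plan is to prove the equivalence by evaluating both operators $\Tilde{\mu}_1\mathbf{\Tilde{m}}$ and $\Tilde{\mu}_2\mathbf{\Tilde{m}}^2$ on a general element $w = \alpha \otimes \gamma_1 \otimes \dots \otimes \gamma_k \otimes x$ of $\mathcal{A}\otimes TC_*(\Omega X)\otimes\Z\Crit(f)$, reading off the $C_*(\Omega X)$-coefficient of each $y\in\Crit(f)$ in the output, and matching it against the two sides of \eqref{MC}. Since each of $\mathbf{\Tilde{m}}$, $\Tilde{\mu}_1$, $\Tilde{\mu}_2$ only modifies the tail of $w$ and leaves the prefix $\alpha\otimes\gamma_1\otimes\dots\otimes\gamma_k$ untouched, the whole computation reduces to bookkeeping of the Koszul signs incurred by commuting the maps $\mathbf{m}$ (degree $-1$), $\mu_1$ (degree $-1$) and $\mu_2$ (degree $0$) past this prefix, whose total degree I denote $a = |\alpha|+|\gamma_1|+\dots+|\gamma_k|$.

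First I would record $\mathbf{\Tilde{m}}(w) = (-1)^{a}\sum_y \alpha\otimes\gamma_1\otimes\dots\otimes\gamma_k\otimes m_{x,y}\otimes y$, the sign $(-1)^a$ coming from commuting $\mathbf{m}$ past the prefix. Applying $\Tilde{\mu}_1$ introduces another factor $(-1)^{a}$ and acts by $\partial$ on $m_{x,y}$, so the two signs cancel and $\Tilde{\mu}_1\mathbf{\Tilde{m}}(w) = \sum_y \alpha\otimes\gamma_1\otimes\dots\otimes\gamma_k\otimes\partial m_{x,y}\otimes y$. For the quadratic term, a second application of $\mathbf{\Tilde{m}}$ must commute $\mathbf{m}$ past both the prefix and the newly created factor $m_{x,y}$, hence contributes $(-1)^{a+|m_{x,y}|}=(-1)^{a+|x|-|y|-1}$; combined with the first $(-1)^{a}$ this gives $\mathbf{\Tilde{m}}^2(w) = \sum_{y,z}(-1)^{|x|+|y|+1}\,\alpha\otimes\gamma_1\otimes\dots\otimes\gamma_k\otimes m_{x,y}\otimes m_{y,z}\otimes z$. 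Then $\Tilde{\mu}_2$ contributes no sign (as $|\mu_2|=0$) and replaces $m_{x,y}\otimes m_{y,z}$ by the Pontryagin product $m_{x,y}\,m_{y,z}$; after relabelling the summation so that the trailing critical point is $y$ and the intermediate one is $z$, this reads $\Tilde{\mu}_2\mathbf{\Tilde{m}}^2(w) = \sum_y \alpha\otimes\gamma_1\otimes\dots\otimes\gamma_k\otimes\left(\sum_z (-1)^{|x|+|z|+1} m_{x,z}\,m_{z,y}\right)\otimes y$.

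Adding the two, the operator identity $\Tilde{\mu}_1\mathbf{\Tilde{m}} + \Tilde{\mu}_2\mathbf{\Tilde{m}}^2 = 0$ holds on every such generator if and only if $\partial m_{x,y} + \sum_z (-1)^{|x|+|z|+1} m_{x,z}m_{z,y} = 0$ for all $x,y\in\Crit(f)$, that is, $\partial m_{x,y} = \sum_z (-1)^{|x|-|z|} m_{x,z}m_{z,y}$ (the constraint $|x|>|z|>|y|$ being automatic since $m_{u,v}\in C_{|u|-|v|-1}(\Omega X)$ vanishes unless $|u|>|v|$), which is precisely \eqref{MC}.

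The only point requiring care is the sign produced by the second application of $\mathbf{\Tilde{m}}$: it is exactly the extra $(-1)^{|m_{x,y}|}=(-1)^{|x|-|y|-1}$ coming from commuting $\mathbf{m}$ past $m_{x,y}$ that, once the names of the two summation indices are swapped, becomes the sign $(-1)^{|x|-|z|}$ of the Maurer-Cartan equation; pinning this down is the whole content of the lemma, the remainder being a mechanical application of the Koszul rule $(f\otimes g)(u\otimes v)=(-1)^{|g||u|}f(u)\otimes g(v)$ recalled above.
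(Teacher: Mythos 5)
Your proof is correct and follows essentially the same route as the paper: evaluate both operators on a generator $\alpha\otimes\gamma_1\otimes\dots\otimes\gamma_k\otimes x$, track the Koszul signs produced by commuting $\mathbf{m}$, $\mu_1$, $\mu_2$ past the prefix, and observe that the identity is exactly the Maurer-Cartan relation with the sign $(-1)^{|x|-|z|}$. The only cosmetic difference is that the paper plugs the Maurer-Cartan equation into $\Tilde\mu_1\mathbf{\Tilde m}$ and recognizes $-\Tilde\mu_2\mathbf{\Tilde m}^2$ at the end, whereas you compute both sides independently and compare; the sign bookkeeping is identical.
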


\begin{proof}
    Using the Kozsul sign rule, we check  

    \begin{align*}
        \Tilde{\mu}_1 \mathbf{\Tilde{m}}(\alpha \otimes \gamma_1 \otimes \dots \otimes \gamma_k \otimes x) &= (-1)^{|\alpha| + |\gamma_1| + \dots + |\gamma_k|} \sum_y \Tilde{\mu}_1(\alpha \otimes \gamma_1 \otimes \dots \otimes \gamma_k \otimes m_{x,y} \otimes y)\\
        &= \sum_{y,z} (-1)^{|x| - |z|} \alpha \otimes \gamma_1 \otimes \dots \otimes \gamma_k \otimes \mu_2( m_{x,z} \otimes m_{z,y}) \otimes y\\
        &= \sum_{y,z} (-1)^{|x| - |z|} \Tilde{\mu}_2(\alpha \otimes \gamma_1 \otimes \dots \otimes \gamma_k \otimes m_{x,z} \otimes m_{z,y} \otimes y)\\
        &= - \Tilde{\mu_2} \mathbf{\Tilde{m}}^2(\alpha \otimes \gamma_1 \otimes \dots \otimes \gamma_k \otimes x).
    \end{align*}
\end{proof}

Using this equation, we prove a technical lemma that will be useful in several proofs :

\begin{lemme}\label{lemme : elimination using MC}
    For all $r \in \{1, \dots, N\}$, $$(-1)^N (1^{\otimes r} \otimes \mu_1 \otimes 1^{\otimes N-r}) \m^N = (-1)^{r+1} (1^{\otimes r} \otimes \mu_2 \otimes 1^{\otimes N-r})\mathbf{\Tilde{m}}^{N+1}.$$
\end{lemme}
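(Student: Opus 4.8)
The plan is to derive the identity from the operator form \eqref{eq : MC Ai module} of the Maurer-Cartan equation, $\Tilde{\mu}_1\m + \Tilde{\mu}_2\m^2 = 0$, by reading both sides of the claimed formula as compositions of $\m$ with the localized operators $\Tilde{\mu}_1$ and $\Tilde{\mu}_2$. The structural point is that $\m$ always acts at the innermost slot (it turns the critical point $x$ into $\sum_y m_{x,y}\otimes y$), so after $r$ iterations the $r$-th of the newly created $m$-factors is precisely the last loop factor. Hence $\Tilde{\mu}_1 = 1^{\otimes\bullet}\otimes\mu_1\otimes 1$ applied to $\m^{r}(\cdot)$ hits exactly the $m$-factor singled out by $1^{\otimes r}\otimes\mu_1\otimes 1^{\otimes N-r}$ in $\m^N(\cdot)$, and similarly $\Tilde{\mu}_2$ applied to $\m^{r+1}(\cdot)$ acts on the two factors singled out by $1^{\otimes r}\otimes\mu_2\otimes 1^{\otimes N-r}$ in $\m^{N+1}(\cdot)$.

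Concretely, I would first establish, for every $r\in\{1,\dots,N\}$, the two relocation identities
$$(1^{\otimes r}\otimes\mu_1\otimes 1^{\otimes N-r})\,\m^N \;=\; (-1)^{N-r}\,\m^{N-r}\,\Tilde{\mu}_1\,\m^{r}, \qquad (1^{\otimes r}\otimes\mu_2\otimes 1^{\otimes N-r})\,\m^{N+1} \;=\; \m^{N-r}\,\Tilde{\mu}_2\,\m^{r+1}.$$
Both follow from the Koszul sign rule and the fact that the $N-r$ copies of $\m$ applied after $\mu_i$ act on slots lying to the right of, hence disjoint from, those touched by $\mu_i$. The delicate point, which I expect to be the main obstacle, is the sign: $\mathbf{m}$ has degree $-1$, and applying $\mu_1$ to a loop factor lowers the total degree of the loop block by one, so each of the $N-r$ subsequent applications of $\m$ contributes an extra factor $(-1)^{(-1)(-1)}=-1$, accumulating to $(-1)^{N-r}$; applying $\mu_2$ instead replaces two loop factors by their Pontryagin product without changing the total degree, so no such sign occurs. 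It is tempting but incorrect to conclude that no sign at all appears, so this bookkeeping has to be done carefully.

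Finally I would feed in the Maurer-Cartan equation: from $\Tilde{\mu}_1\m = -\Tilde{\mu}_2\m^2$, composing on the left with $\m^{N-r}$ and on the right with $\m^{r-1}$ (legitimate as $r\geq 1$) gives
$$\m^{N-r}\,\Tilde{\mu}_1\,\m^{r} \;=\; \m^{N-r}\,(\Tilde{\mu}_1\m)\,\m^{r-1} \;=\; -\,\m^{N-r}\,\Tilde{\mu}_2\,\m^{r+1}.$$
Combining this with the two relocation identities yields $(1^{\otimes r}\otimes\mu_1\otimes 1^{\otimes N-r})\m^N = (-1)^{N-r+1}(1^{\otimes r}\otimes\mu_2\otimes 1^{\otimes N-r})\m^{N+1}$, and multiplying both sides by $(-1)^N$ and using $(-1)^{2N-r+1}=(-1)^{r+1}$ gives exactly the claimed equality. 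As a consistency check I would treat the boundary case $r=N$ separately: there $1^{\otimes N}\otimes\mu_1$ and $1^{\otimes N}\otimes\mu_2$ are literally $\Tilde{\mu}_1$ and $\Tilde{\mu}_2$, and the statement collapses to \eqref{eq : MC Ai module} composed on the right with $\m^{N-1}$, which confirms the signs.
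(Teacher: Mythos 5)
Your proof is correct and follows essentially the same route as the paper's: both establish the two relocation identities $(1^{\otimes r}\otimes\mu_1\otimes 1^{\otimes N-r})\m^N = (-1)^{N-r}\m^{N-r}\Tilde{\mu}_1\m^r$ and $(1^{\otimes r}\otimes\mu_2\otimes 1^{\otimes N-r})\m^{N+1} = \m^{N-r}\Tilde{\mu}_2\m^{r+1}$, then substitute the functional Maurer--Cartan equation $\Tilde{\mu}_1\m = -\Tilde{\mu}_2\m^2$. Your write-up is somewhat more explicit about the Koszul sign bookkeeping and adds the $r=N$ sanity check, but the underlying argument is the same.
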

\begin{proof}
    \begin{align*}
    (-1)^N (1^{\otimes r} \otimes \mu_1 \otimes 1^{\otimes N-r})\mathbf{\Tilde{m}}^N &=   (-1)^N (-1)^{N-r} \mathbf{\Tilde{m}}^{N-r} \Tilde{\mu_1}\mathbf{\Tilde{m}}^r\\
    &= - (-1)^{r} \mathbf{\Tilde{m}}^{N-r} \Tilde{\mu_2}\mathbf{\Tilde{m}}^{r+1}\\
    &= -(-1)^{r} (1^{\otimes r} \otimes \mu_2 \otimes 1^{\otimes N-r})\mathbf{\Tilde{m}}^{N+1}.
\end{align*}
\end{proof}

\begin{defi}
    Let $f : X \to \R$ be a Morse function on $X$, $\{m_{x,y} \in C_{|x|-|y|-1}(\Omega X), \ x,y \in \Crit(f)\}$ be a twisting cocycle and $(\mathcal{A}, \boldsymbol{\nu} = \{\nu_{n}\}_n)$ be a $\Ai$-module over $C_*(\Omega X)$. We define the complex $$C_*(X,m_{x,y},\mathcal{A}) = C_*(X, \mathcal{A}) := \mathcal{A}_* \otimes \Z\Crit(f),$$

endowed with the differential 

$$\partial = \sum_{n\geq 0} (\nu_{n+1} \otimes 1) \mathbf{\Tilde{m}}^n.$$
\end{defi}

We spell out the three first terms of this differential when applied to a generator $\alpha \otimes x \in C_*(X,\mathcal{A}).$

\begin{align*}
    \partial(\alpha \otimes x) &= \nu_1(\alpha) \otimes x + (\nu_2\otimes 1)\m(\alpha \otimes x) + (\nu_3\otimes 1) \m^2(\alpha \otimes x) + \dots \\
    &= \partial \alpha \otimes x + (-1)^{|\alpha|} \sum_{y}(\nu_2 \otimes 1)(\alpha \otimes  m_{x,y} \otimes y) + \sum_{y} (-1)^{|\alpha|}(\nu_3 \otimes 1)\m(\alpha \otimes  m_{x,y} \otimes y) + \dots \\
    &= \partial \alpha \otimes x + (-1)^{|\alpha|} \sum_{y}\nu_2(\alpha \otimes  m_{x,y}) \otimes y + \sum_{y,z} (-1)^{|\alpha|}(-1)^{|\alpha| + |x|-|y|-1} \nu_3(\alpha \otimes m_{x,y} \otimes m_{y,z}) \otimes z + \dots
\end{align*}

\begin{rem}
    This sum is finite because $\m$ strictly decreases the index of the critical point. We can also remark that if $\mathcal{A}$ is a DG-module (ie $\nu_{n+1} = 0$ for all $n \geq 2$), then this differential corresponds to the known differential for the DG Morse complex.
\end{rem}

\begin{prop}
    The map $\partial$ is a differential on $C_*(X,m_{x,y},\mathcal{A})$.
\end{prop}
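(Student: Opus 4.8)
The plan is to show $\partial^2 = 0$ by a direct computation using the compact functional notation just introduced, exactly mirroring how one verifies $\partial^2=0$ in the DG case but keeping track of the higher operations $\nu_{n+1}$ via the $\Ai$-module relation \eqref{eq : nu A infinite module}. Writing $\partial = \sum_{n\geq 0}(\nu_{n+1}\otimes 1)\m^n$, I would expand
$$\partial^2 = \sum_{n,k\geq 0} (\nu_{n+1}\otimes 1)\m^n(\nu_{k+1}\otimes 1)\m^k,$$
and the first step is to commute $\m^n$ past $(\nu_{k+1}\otimes 1)$. Since $\m$ acts as $1^{\otimes(\bullet+1)}\otimes\mathbf m$ and only touches the rightmost tensor factor and the critical point, while $\nu_{k+1}\otimes 1$ acts on the leftmost $\mathcal A$-factor together with the first $k-1$ loop-factors, the two are "almost" disjoint; one gets $\m^n(\nu_{k+1}\otimes 1) = \pm(\nu_{k+1}\otimes 1^{\otimes n}\otimes 1)\m^n$ up to a Koszul sign coming from $|\nu_{k+1}| = k-1$ against the degree shifts introduced by $\m$ (each application of $\m$ has degree $-1$). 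So I would first record this sign bookkeeping lemma, then reindex so that $\partial^2 = \sum_N \big(\sum_{n+k=N}(\pm)(\nu_{n+1}\otimes 1)(\nu_{k+1}\otimes 1^{\otimes n}\otimes 1)\big)\m^N$, i.e. for each fixed total number $N$ of applied twisting cocycles, the coefficient is a sum over $n+k=N$ of composites $(\nu_{n+1}(\nu_{k+1}\otimes 1^{\otimes n}))\otimes 1$.

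The second step is to recognize this inner sum as the left-hand block of the $\Ai$-module equation \eqref{eq : nu A infinite module}, which asserts that $\sum_{s+t=n}(-1)^{st}\nu_{t+1}(\nu_s\otimes 1^{\otimes t})$ is not zero on its own but is cancelled by the two terms involving $\mu_1$ and $\mu_2$. Concretely, \eqref{eq : nu A infinite module} tells us
$$\sum_{s+t=N}(-1)^{st}\nu_{t+1}(\nu_s\otimes 1^{\otimes t}) = -\sum_{r}(-1)^{N-1}\nu_N(1^{\otimes r}\otimes \mu_1\otimes 1^{\otimes t}) - \sum_r(-1)^r\nu_{N-1}(1^{\otimes r}\otimes\mu_2\otimes 1^{\otimes t}),$$
so substituting back in, $\partial^2$ becomes a sum of terms of the form $(\nu_N\otimes 1)(1^{\otimes r}\otimes\mu_1\otimes 1^{\otimes t}\otimes 1)\m^N$ and $(\nu_{N-1}\otimes 1)(1^{\otimes r}\otimes\mu_2\otimes 1^{\otimes t}\otimes 1)\m^{N}$, that is, of the form $\Tilde\mu_1$ and $\Tilde\mu_2$ sandwiched between powers of $\m$ (after again commuting the $\nu$'s outward with their Koszul signs).

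The third and final step is to kill these remaining terms using Lemma \ref{lemme : elimination using MC}, which is precisely the Maurer–Cartan equation repackaged to state that a $\mu_1$ flanked by $\m$'s equals (up to sign) a $\mu_2$ flanked by one more $\m$. Pairing the $\mu_1$-terms at level $N$ against the $\mu_2$-terms at level $N$ (the latter coming with $\m^N$ as well, since $\nu_{N-1}$ is preceded by one extra $\m$ in the reindexing), Lemma \ref{lemme : elimination using MC} matches them with exactly opposite signs, and everything cancels, giving $\partial^2=0$. The whole argument is the $\Ai$-analogue of the DG verification, with \eqref{eq : nu A infinite module} replacing the Leibniz rule and Lemma \ref{lemme : elimination using MC} doing the same job it does in the DG case.

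I expect the main obstacle to be purely the sign bookkeeping: getting the Koszul signs right when commuting $(\nu_{k+1}\otimes 1)$ past $\m^n$, and then checking that the signs produced by this commutation, by \eqref{eq : nu A infinite module}, and by Lemma \ref{lemme : elimination using MC} conspire to cancel rather than to double. The structural content is straightforward; the risk is entirely in the arithmetic of degrees $|\nu_{n+1}|=n-1$, $|\m|=-1$, $|\mu_1|=-1$, $|\mu_2|=0$. To control this I would work systematically with the functional (element-free) identities as the paper advocates, applying the Koszul rule only at the very end, and double-check against the known DG case ($\nu_{n+1}=0$ for $n\geq 2$), where the computation must reduce to the standard one.
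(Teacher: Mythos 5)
Your plan is exactly the paper's proof: expand $\partial^2$, commute $\m^n$ past $(\nu_{k+1}\otimes 1)$ with the Koszul sign $(-1)^{n(k-1)}$ to get $\sum(-1)^{n(k-1)}(\nu_{n+1}\nu_{k+1}\otimes 1)\m^{n+k}$, invoke \eqref{eq : nu A infinite module} to replace the inner sum by $\mu_1$- and $\mu_2$-terms, and finally apply Lemma \ref{lemme : elimination using MC} to convert the $\mu_1$-terms into $\mu_2$-terms which then cancel telescopically. The only small imprecision is your phrasing of the last cancellation: after the lemma, the converted $\mu_1$-terms at level $N$ carry $\m^{N+1}$, so they cancel against the $\mu_2$-terms at level $N+1$, not at level $N$ — it is a telescoping sum rather than a level-by-level pairing.
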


\begin{proof}
The map $\partial$ has degree $-1$ because $\mathbf{\Tilde{m}}^n$ has degree $-n$ and $\nu_{n+1}$ has degree $n-1$.

It remains to show that $\partial^2 =0$.

\begin{align*}
    \partial^2 &= \sum_{n,k}(\nu_{n+1} \otimes 1)\mathbf{\Tilde{m}}^n (\nu_{k+1} \otimes 1) \mathbf{\Tilde{m}}^k\\
    &= \sum_{n,k} (-1)^{n(k-1)} (\nu_{n+1} (\nu_{k+1} \otimes 1^{\otimes n}) \otimes 1) \mathbf{\Tilde{m}}^{n+k}\\
    &= - \sum_N \left[ \sum_{r=1}^{N} (-1)^N \nu_{N+1}(1^{\otimes r} \otimes \mu_1 \otimes 1^{\otimes N-r}) + \sum_{r=1}^{N-1} (-1)^r \nu_{N}(1^{\otimes r} \otimes \mu_2 \otimes 1^{\otimes N-1-r}) \right] \mathbf{\Tilde{m}}^N. 
\end{align*}
We used equation \eqref{eq : nu A infinite module} for the last equality.
It then follows, using Lemma \ref{lemme : elimination using MC}, that
\begin{align*}
    \partial^2 &= - \sum_N \left[ \sum_{r=1}^{N} (-1)^N \nu_{N+1}(1^{\otimes r} \otimes \mu_1 \otimes 1^{\otimes N-r}) + \sum_{r=1}^{N-1} (-1)^r \nu_{N}(1^{\otimes r} \otimes \mu_2 \otimes 1^{\otimes N-1-r}) \right] \mathbf{\Tilde{m}}^N\\
    &= \sum_N \left[ \sum_{r=1}^{N} (-1)^r \nu_{N+1}(1^{\otimes r} \otimes \mu_2 \otimes 1^{\otimes N-r}) \mathbf{\Tilde{m}}^{N+1} - \sum_{r=1}^{N-1} (-1)^r \nu_{N}(1^{\otimes r} \otimes \mu_2 \otimes 1^{\otimes N-1-r})\mathbf{\Tilde{m}}^N \right]\\
    &=0
\end{align*}
\end{proof}

We will also denote the associated homology groups by  $H_*(X, m_{x,y}, \mathcal{A})$. If $\{m_{x,y}\}$ is a Barraud-Cornea cocycle, we will denote $H_*(X, \mathcal{A})$ the associated homology groups ; we will prove in Theorem \ref{thm : Continuation morphism Ai invariance} that this homology does not depend on the Morse data set $\Xi$ used to define the $\{m_{x,y}\}$.

\subsection{\texorpdfstring{$\Ai$}{Ai}-Morse toolset}\label{subsection : Continuation and homotopy cocycles} 

The following propositions are $\Ai$ analogues to \cite{BDHO23} Proposition 2.3.3 and Proposition 2.3.4. The goal is to prove that a cocycle defined by a homotopy between two Morse-Smale pairs induces a chain map between enriched complexes defined using those pairs. If those cocycles are themselves homotopic (in a sense that will be defined in equation \eqref{eq : mu_1 de h}), then it induces an homotopy between the chain maps. However, we will state these propositions in a more general setting, so they can be applied even in situations where the twisting cocycles considered are not Barraud-Cornea cocycles. 
Let $\F$ be an $\Ai$-module over $C_*(\Omega X)$.

\begin{prop}\label{Prop : continuation morphisms}

Let $f_0, f_1 : X \to \R$ be two Morse functions on $X$. Let $\{m^0_{x,y} \in C_{|x|-|y|-1}(\Omega X), \ x,y \in \Crit(f_0)\}$ and $\{m^1_{x',y'} \in C_{|x'|-|y'|-1}(\Omega X), \ x',y' \in \Crit(f_1)\}$ be twisting cocycles.

Let $\{\tau_{x,y'}\in C_{|x|-|y'|}(\Omega X), \ x \in \Crit(f_0), \ y' \in \Crit(f_1)\}$ be a cocycle satisfying the equation

\begin{equation}\label{eq : mu_1 de tau}
    \partial \tau_{x,y'} = \sum_{z \in \Crit(f_0)} m^0_{x,z}\cdot\tau_{z,y'} - \sum_{w' \in \Crit(f_1)} (-1)^{|x|-|w'|} \tau_{x,w'}\cdot m^1_{w',y'}.
\end{equation}

Then, the map $\Psi : C_*(X, m^0_{x,y},\F) \to C_*(X, m^1_{x',y'}, \F)$ defined by

\begin{align*}
    \Psi = \sum_{n \geq 1} \sum_{u=1}^{n} (-1)^{u-1} (\nu_{n+1} \otimes 1) \mathbf{\Tilde{m}}_{(1)}^{n-u}\boldsymbol{\Tilde{\tau}} \mathbf{\Tilde{m}}_{(0)}^{u-1}
\end{align*}

is a morphism of complexes where $\boldsymbol{\Tilde{\tau}} \in \Hom_0(\F \otimes TC_*(\Omega X) \otimes \Z\Crit(f_0), \F \otimes TC_*(\Omega X) \otimes \Z\Crit(f_1))$ is defined below.
\end{prop}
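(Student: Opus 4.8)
\emph{Plan.} The strategy is to prove that $\partial_1 \Psi = \Psi \partial_0$ by a direct functional-calculus computation, modelled on the proof that $\partial^2 = 0$ in Subsection~\ref{subsection : Complex with coefficients in an Ai module}, so that all Koszul signs stay hidden in the operators of Notation~\ref{not : morphism tilde m}. First I would make $\boldsymbol{\Tilde{\tau}}$ precise: writing $\mathbf{\tau} \in \Hom_0(\Z\Crit(f_0), C_*(\Omega X) \otimes \Z\Crit(f_1))$ for $\mathbf{\tau}(x) = \sum_{y'} \tau_{x,y'} \otimes y'$, set $\boldsymbol{\Tilde{\tau}}(\alpha \otimes \gamma_1 \otimes \dots \otimes \gamma_k \otimes x) = (1^{\otimes k+1} \otimes \mathbf{\tau})(\alpha \otimes \gamma_1 \otimes \dots \otimes \gamma_k \otimes x)$, exactly as $\mathbf{\Tilde{m}}$ is built from $\mathbf{m}$, and let $\mathbf{\Tilde{m}}_{(0)}, \mathbf{\Tilde{m}}_{(1)}$ be the operators attached to the cocycles $m^0, m^1$. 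The first step is then to record the functional form of the hypothesis~\eqref{eq : mu_1 de tau}, which by the same Koszul computation as the lemma following~\eqref{eq : MC Ai module} reads
$$\Tilde{\mu}_1 \boldsymbol{\Tilde{\tau}} + \Tilde{\mu}_2\,\mathbf{\Tilde{m}}_{(1)} \boldsymbol{\Tilde{\tau}} = \Tilde{\mu}_2\,\boldsymbol{\Tilde{\tau}}\,\mathbf{\Tilde{m}}_{(0)},$$
the analogue of~\eqref{eq : MC Ai module}; I would also note that $\mathbf{\Tilde{m}}_{(0)}$ and $\mathbf{\Tilde{m}}_{(1)}$ each satisfy~\eqref{eq : MC Ai module} in their own right.

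Next I would prove an elimination lemma generalizing Lemma~\ref{lemme : elimination using MC}: in a word $\mathbf{\Tilde{m}}_{(1)}^{a}\,\boldsymbol{\Tilde{\tau}}\,\mathbf{\Tilde{m}}_{(0)}^{b}$, an insertion $(1^{\otimes r} \otimes \mu_1 \otimes 1^{\otimes \cdot})$ can be traded for $\mu_2$-insertions — via~\eqref{eq : MC Ai module} for $m^0$ if $r$ lands in the $\mathbf{\Tilde{m}}_{(0)}$-block (as in Lemma~\ref{lemme : elimination using MC}), via~\eqref{eq : MC Ai module} for $m^1$ if it lands in the $\mathbf{\Tilde{m}}_{(1)}$-block, and via the identity of the previous step if it hits the $\boldsymbol{\Tilde{\tau}}$-factor, in which case it produces the two terms $\Tilde{\mu}_2\,\boldsymbol{\Tilde{\tau}}\,\mathbf{\Tilde{m}}_{(0)}$ and $-\Tilde{\mu}_2\,\mathbf{\Tilde{m}}_{(1)}\boldsymbol{\Tilde{\tau}}$ inserted at the two slots adjacent to $\boldsymbol{\Tilde{\tau}}$. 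Then I would expand $\partial_1 \Psi$ and $\Psi \partial_0$, substitute $\partial_i = \sum_{k \geq 0}(\nu_{k+1} \otimes 1)\mathbf{\Tilde{m}}_{(i)}^{k}$ and the definition of $\Psi$, and in every resulting product of two $\nu$'s invoke the $\Ai$-module relation~\eqref{eq : nu A infinite module} to replace it by $\mu_1$- and $\mu_2$-insertion terms, exactly as in the $\partial^2 = 0$ computation; applying the elimination lemma then converts all $\mu_1$-terms into $\mu_2$-terms, so that every surviving term has the shape $(\nu_{N+1} \otimes 1)$ composed with a single $\mu_2$-insertion into $\mathbf{\Tilde{m}}_{(1)}^{a}\,\boldsymbol{\Tilde{\tau}}\,\mathbf{\Tilde{m}}_{(0)}^{b}$.

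It remains to check that these $\mu_2$-terms cancel. The terms in which $\mu_2$ joins two consecutive $m^0$-factors form, for fixed total length, a telescoping sum over the splitting index $u$ inside $\Psi \partial_0$ and cancel — the signs $(-1)^{u-1}$ in the definition of $\Psi$ being tuned precisely for this — and symmetrically the $\mu_2$-between-$m^1$ terms telescope inside $\partial_1 \Psi$; the remaining terms, in which $\mu_2$ straddles $\boldsymbol{\Tilde{\tau}}$ (that is $\mu_2(m^0 \otimes \tau)$ or $\mu_2(\tau \otimes m^1)$), match up between $\partial_1 \Psi$ and $\Psi \partial_0$ and cancel, whence $\partial_1 \Psi = \Psi \partial_0$. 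The main obstacle is exactly this last bookkeeping: tracking the signs coming from~\eqref{eq : nu A infinite module}, from the elimination lemma, and from the $(-1)^{u-1}$, and verifying that the telescopes and the $\boldsymbol{\Tilde{\tau}}$-straddling cancellations close up; conceptually one expects them to, since the triple $(m^0, m^1, \tau)$ should assemble into a single twisting cocycle on the mapping cylinder $X \times [0,1]$ with $\Psi$ the corresponding off-diagonal piece of its differential, but carrying this out through the $\Ai$ functional calculus directly is where the work lies.
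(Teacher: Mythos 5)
Your plan is essentially the paper's proof: expand $\partial\Psi$ and $\Psi\partial$ in the functional calculus, invoke the $\Ai$-module relation \eqref{eq : nu A infinite module} to replace every $\nu\nu$-composite by $\mu_1$- and $\mu_2$-insertion terms, split the $\mu_1$-insertion slot $r$ into the three cases $r\le u-1$, $r>u$, $r=u$ (using Lemma \ref{lemme : elimination using MC} for the first two and the functional form \eqref{eq: mu_1 of tau functional version} of the hypothesis for the third), and cancel what remains — all of which you have correctly identified. The one point where your description of the final cancellation is organized differently is immaterial: the paper forms $\Psi\partial-\partial\Psi$ as a single sum over $(N,u,r)$ and shows that for each triple the converted $\mu_2$-term (at level $N+1$) cancels the native $\mu_2$-term (at level $N$), whereas you propose separate telescopes inside $\Psi\partial_0$ and $\partial_1\Psi$ plus matching of the $\boldsymbol{\Tilde\tau}$-straddling terms; both are reorderings of the same sum and either works with the signs $(-1)^{u-1}$.
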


\begin{rem}
    We used notations similar to those in the previous section : Define the morphism $\boldsymbol{\tau} \in \Hom_{0}(\Z\Crit(f_0), C_*(\Omega X) \otimes \Z\Crit(f_1))$ by

    $$\boldsymbol{\tau}(x) = \sum_{y'} \tau_{x,y'} \otimes y'$$

    and $\boldsymbol{\Tilde{\tau}} \in \Hom_0(\F \otimes TC_*(\Omega X) \otimes \Z\Crit(f_0), \F \otimes TC_*(\Omega X) \otimes \Z\Crit(f_1))$ by

    $$\boldsymbol{\Tilde{\tau}}(\alpha \otimes \gamma_1 \otimes \dots \otimes \gamma_k \otimes x) = (1^{\otimes (k+1)} \otimes \boldsymbol{\tau})(\alpha \otimes \gamma_1 \otimes \dots \otimes \gamma_k \otimes x).$$

    We can then write equation \eqref{eq : mu_1 de tau} as

    \begin{equation}\label{eq: mu_1 of tau functional version}
        \Tilde{\mu}_1 \boldsymbol{\Tilde{\tau}} = \Tilde{\mu}_2 \boldsymbol{\Tilde{\tau}}\mathbf{\Tilde{m}}_{(0)} - \Tilde{\mu}_2 \mathbf{\Tilde{m}}_{(1)} \boldsymbol{\Tilde{\tau}}.
    \end{equation}
\end{rem}

\begin{myproof}{of Proposition}{\ref{Prop : continuation morphisms}} 
    We compute

    \begin{align*}
        \partial \Psi &= \sum_{\substack{n \geq 1\\ k \geq 0}} \sum_{u=1}^n (-1)^{u-1} (\nu_{k+1} \otimes 1) \mathbf{\Tilde{m}}^k_{(1)} (\nu_{n+1} \otimes 1) \mathbf{\Tilde{m}}^{n-u}_{(1)} \boldsymbol{\Tilde{\tau}} \mathbf{\Tilde{m}}^{u-1}_{(0)}\\
        &= \sum_{\substack{n \geq 1\\ k \geq 0}} \sum_{u=1}^n (-1)^{u-1} (-1)^{(n+1)k} (\nu_{k+1} (\nu_{n+1} \otimes 1^{\otimes k}) \otimes 1) \mathbf{\Tilde{m}}^{n+k-u}_{(1)} \boldsymbol{\Tilde{\tau}} \mathbf{\Tilde{m}}^{u-1}_{(0)}
    \end{align*}

    and

    \begin{align*}
        \Psi \partial &= \sum_{\substack{n \geq 0 \\ k \geq 1}} \sum_{u=1}^k (-1)^{u-1} (\nu_{k+1} \otimes 1)\mathbf{\Tilde{m}}^{k-u}_{(1)} \boldsymbol{\Tilde{\tau}} \mathbf{\Tilde{m}}^{u-1}_{(0)} (\nu_{n+1} \otimes 1)\mathbf{\Tilde{m}}^{n}_{(0)}\\
        &=\sum_{\substack{n \geq 0\\ k \geq 1}} \sum_{u=1}^k (-1)^{u-1} (-1)^{(k-1)(n+1)} (\nu_{k+1} (\nu_{n+1} \otimes 1^{\otimes k}) \otimes 1) \mathbf{\Tilde{m}}^{k-u}_{(1)} \boldsymbol{\Tilde{\tau}} \mathbf{\Tilde{m}}^{n+u-1}_{(0)}\\
        &=\sum_{\substack{n \geq 0\\ k \geq 1}} \sum_{u=n+1}^{n+k} (-1)^{u-n-1} (-1)^{(n +1)k + n + 1} (\nu_{k+1} (\nu_{n+1} \otimes 1^{\otimes k}) \otimes 1) \mathbf{\Tilde{m}}^{n+k-u}_{(1)} \boldsymbol{\Tilde{\tau}} \mathbf{\Tilde{m}}^{u-1}_{(0)}\\
        &=-\sum_{\substack{n \geq 0\\ k \geq 1}} \sum_{u=n+1}^{n+k} (-1)^{u-1} (-1)^{(n +1)k } (\nu_{k+1} (\nu_{n+1} \otimes 1^{\otimes k}) \otimes 1) \mathbf{\Tilde{m}}^{n+k-u}_{(1)} \boldsymbol{\Tilde{\tau}} \mathbf{\Tilde{m}}^{u-1}_{(0)}.
    \end{align*}

    Therefore, taking $N= n+k$, equation \eqref{eq : nu A infinite module} gives 

    \begin{align*}
         \Psi \partial - \partial \Psi &= \sum_{N \geq 1} \sum_{u=1}^N (-1)^{u-1} \left[ \sum_r (-1)^N \nu_{N+1}(1^{\otimes r} \otimes \mu_1 \otimes 1^{\otimes N-r}) \right. \\
         &+ \left. \sum_r (-1)^r \nu_{N}(1^{\otimes r} \otimes \mu_2 \otimes 1^{\otimes N-1-r})  \right] \mathbf{\Tilde{m}}^{N-u}_{(1)} \boldsymbol{\Tilde{\tau}} \mathbf{\Tilde{m}}^{u-1}_{(0)}.
    \end{align*}

    We decompose the computation of $(1^{\otimes r} \otimes \mu_1 \otimes 1^{\otimes N-r})\mathbf{\Tilde{m}}^{N-u}_{(1)} \boldsymbol{\Tilde{\tau}} \mathbf{\Tilde{m}}^{u-1}_{(0)}$ into three cases:

\begin{enumerate}
    \item If $r \leq u-1$, Lemma \ref{lemme : elimination using MC} gives 
    \begin{align*}
        (1^{\otimes r} \otimes \mu_1 \otimes 1^{\otimes N-r})\mathbf{\Tilde{m}}^{N-u}_{(1)} \boldsymbol{\Tilde{\tau}} \mathbf{\Tilde{m}}^{u-1}_{(0)} = (-1)^{N-r} (1^{\otimes r} \otimes \mu_2 \otimes 1^{\otimes N-r})\mathbf{\Tilde{m}}^{N-u}_{(1)} \boldsymbol{\Tilde{\tau}} \mathbf{\Tilde{m}}^{u}_{(0)}.
    \end{align*}\\
    \item If $r > u$, the same lemma gives
    \begin{align*}
        (1^{\otimes r} \otimes \mu_1 \otimes 1^{\otimes N-r})\mathbf{\Tilde{m}}^{N-u}_{(1)} \boldsymbol{\Tilde{\tau}} \mathbf{\Tilde{m}}^{u-1}_{(0)} = (-1)^{N-r+1} (1^{\otimes r} \otimes \mu_2 \otimes 1^{\otimes N-r})\mathbf{\Tilde{m}}^{N+1-u}_{(1)} \boldsymbol{\Tilde{\tau}} \mathbf{\Tilde{m}}^{u-1}_{(0)}.
    \end{align*}\\
    \item If $r = u$, we use equation \eqref{eq: mu_1 of tau functional version} to obtain the equality:
    \begin{align*}
         &(1^{\otimes u} \otimes \mu_1 \otimes 1^{\otimes N-u})\mathbf{\Tilde{m}}^{N-u}_{(1)} \boldsymbol{\Tilde{\tau}}\mathbf{\Tilde{m}}^{u-1}_{(0)} \\
         &= (-1)^{N-u} \mathbf{\Tilde{m}}^{N-u}_{(1)}(\Tilde{\mu}_1\boldsymbol{\Tilde{\tau}}) \mathbf{\Tilde{m}}^{u-1}_{(0)} \\
         &= (-1)^{N-u} \mathbf{\Tilde{m}}^{N-u}_{(1)} \left[ \Tilde{\mu}_2 \boldsymbol{\Tilde{\tau}} \mathbf{\Tilde{m}}_{(0)} - \Tilde{\mu}_2 \mathbf{\Tilde{m}}_{(1)} \boldsymbol{\Tilde{\tau}}\right] \mathbf{\Tilde{m}}^{u-1}_{(0)} \\
         &= (-1)^{N-u} (1^{\otimes u} \otimes \mu_2 \otimes 1^{\otimes N-r}) \left[ \mathbf{\Tilde{m}}^{N-u}_{(1)} \boldsymbol{\Tilde{\tau}} \mathbf{\Tilde{m}}^{u}_{(0)} - \mathbf{\Tilde{m}}^{N+1-u}_{(1)}\boldsymbol{\Tilde{\tau}} \mathbf{\Tilde{m}}^{u-1}_{(0)} \right].
    \end{align*}
\end{enumerate}

Therefore, \begin{align*}
    &\sum_r (1^{\otimes r} \otimes \mu_1 \otimes 1^{\otimes N-r})\mathbf{\Tilde{m}}^{N-u}_{(1)} \boldsymbol{\Tilde{\tau}} \mathbf{\Tilde{m}}^{u-1}_{(0)} \\
    &= \sum_{r=1}^u (-1)^{N-r} (1^{\otimes r} \otimes \mu_2 \otimes 1^{\otimes N-r}) \mathbf{\Tilde{m}}^{N-u}_{(1)} \boldsymbol{\Tilde{\tau}} \mathbf{\Tilde{m}}^{u}_{(0)} \\
    &+ \sum_{r=u}^{N} (-1)^{N-r+1} (1^{\otimes r} \otimes \mu_2 \otimes 1^{\otimes N-r}) \mathbf{\Tilde{m}}^{N+1-u}_{(1)} \boldsymbol{\Tilde{\tau}} \mathbf{\Tilde{m}}^{u-1}_{(0)},
\end{align*}

and \begin{align*}
    &\Psi \partial - \partial \Psi = \sum_N \sum_{u=1}^N \left[ \sum_{r=1}^u (-1)^{u+1+r} \nu_{N+1}( 1^{\otimes r} \otimes \mu_2 \otimes 1^{\otimes N-r}) \mathbf{\Tilde{m}}^{N+1 -(u+1)}_{(1)} \boldsymbol{\Tilde{\tau}} \mathbf{\Tilde{m}}^{(u+1)-1}_{(0)}\right.\\
    & + \sum_{r=u}^N (-1)^{u+r} \nu_{N+1}( 1^{\otimes r} \otimes \mu_2 \otimes 1^{\otimes N-r}) \mathbf{\Tilde{m}}^{N+1-u}_{(1)} \boldsymbol{\Tilde{\tau}} \mathbf{\Tilde{m}}^{u-1}_{(0)}\\
    &- \left. \sum_{r=1}^{N-1} (-1)^{u+r} \nu_{N}( 1^{\otimes r} \otimes \mu_2 \otimes 1^{\otimes N-1-r}) \mathbf{\Tilde{m}}^{N-u}_{(1)} \boldsymbol{\Tilde{\tau}} \mathbf{\Tilde{m}}^{u-1}_{(0)} \right] \\
    &= 0.
\end{align*}

Indeed, for every triple $(N,u,r)$ with $N\geq 3$, $N \geq u$, and $N \geq r$, the term $$(-1)^{u+r}\nu_{N}(1^{\otimes r} \otimes \mu_2 \otimes 1^{\otimes N-r-1})\mathbf{\Tilde{m}}^{N-u}_{(1)} \boldsymbol{\Tilde{\tau}} \mathbf{\Tilde{m}}^{u-1}_{(0)}$$ appears only once with a positive sign (either in the first sum if $r \leq u-1$ or in the second sum) and always once negatively in the third sum.

\end{myproof}

The next proposition is an $\Ai$ analogue to Proposition 2.3.4 of \cite{BDHO23}. It aims to provide homotopies between the maps defined by the previous proposition. We will state this proposition to the same degree of generality as the previous one.

\begin{prop}\label{Prop : homotopy Criterion Ai}

    Let $f_0,f_1 : X \to \R$ be Morse functions.
    Let $\{m^0_{x,y} \in C_{|x|-|y|-1}(\Omega X), \ x,y \in \Crit(f_0)\}$ and $\{m^1_{x',y'} \in C_{|x'|-|y'|-1}(\Omega X), \ x',y' \in \Crit(f_1)\}$ be twisting cocycles.
    Let $\{\tau_{x,y'}\in C_{|x|-|y'|}(\Omega X), \ x \in \Crit(f_0), \ y' \in \Crit(f_1)\}$ and $\{\tau'_{x,y'}\in C_{|x|-|y'|}(\Omega X), \ x \in \Crit(f_0), \ y' \in \Crit(f_1)\}$ be cocycles that satisfy \eqref{eq : mu_1 de tau}, and let $\Psi$ and $\Psi'$ be the morphisms associated with $\{\tau_{x,y'}\}$ and $\{\tau'_{x,y'}\}$ respectively (see Proposition \ref{Prop : continuation morphisms}).

     Suppose that there exists a cocycle $\{h_{x,y'} \in C_{|x|-|y|+1}(\Omega X)\}$ such that 

\begin{equation}\label{eq : mu_1 de h}
    \mu_1 h_{x,y'} = \tau_{x,y'} - \tau'_{x,y'} + \sum_{z \in \textup{Crit}(f_0)} (-1)^{|x|-|z|} m^{0}_{x,z} \cdot h_{z,y'} + \sum_{w' \in \textup{Crit}(f_1)} (-1)^{|x|-|w|} h_{x,w'} \cdot m^{1}_{w',y'}.
\end{equation}

Then the map $H : C_*(X, m^0_{x,y}, \F) \to C_{*+1}(X, m^1_{x,y}, \F)$ defined by
\begin{align*}
    H = \sum_{n \geq 1} \sum_{u=1}^{n} (\nu_{n+1} \otimes 1) \mathbf{\Tilde{m}}_{(1)}^{n-u}\boldsymbol{\Tilde{h}} \mathbf{\Tilde{m}}_{(0)}^{u-1}
\end{align*}

is a chain homotopy between $\Psi$ and $\Psi'$.
\end{prop}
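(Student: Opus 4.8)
The plan is to run the same kind of bookkeeping computation as in the proof of Proposition~\ref{Prop : continuation morphisms}, with $\boldsymbol{\Tilde{h}}$ playing the role of $\boldsymbol{\Tilde{\tau}}$; the new feature is that the defining relation \eqref{eq : mu_1 de h} for $h$ has the two ``free'' terms $\tau_{x,y'}-\tau'_{x,y'}$, and these are exactly what will produce $\Psi-\Psi'$ at the end. First I would set up the functional notation as before: define $\boldsymbol{h}\in\Hom_1(\Z\Crit(f_0),C_*(\Omega X)\otimes\Z\Crit(f_1))$ by $\boldsymbol{h}(x)=\sum_{y'}h_{x,y'}\otimes y'$ and extend it to $\boldsymbol{\Tilde{h}}\in\Hom_1(\F\otimes TC_*(\Omega X)\otimes\Z\Crit(f_0),\F\otimes TC_*(\Omega X)\otimes\Z\Crit(f_1))$ by $\boldsymbol{\Tilde{h}}(\alpha\otimes\gamma_1\otimes\dots\otimes\gamma_k\otimes x)=(1^{\otimes(k+1)}\otimes\boldsymbol{h})(\alpha\otimes\gamma_1\otimes\dots\otimes\gamma_k\otimes x)$. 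Applying the Koszul sign rule exactly as in the passage from \eqref{eq : mu_1 de tau} to \eqref{eq: mu_1 of tau functional version}, equation \eqref{eq : mu_1 de h} becomes a functional identity of the form
\begin{equation*}
    \Tilde{\mu}_1\boldsymbol{\Tilde{h}} = \boldsymbol{\Tilde{\tau}} - \boldsymbol{\Tilde{\tau}}' + \Tilde{\mu}_2\,\boldsymbol{\Tilde{h}}\,\mathbf{\Tilde{m}}_{(0)} + \Tilde{\mu}_2\,\mathbf{\Tilde{m}}_{(1)}\,\boldsymbol{\Tilde{h}},
\end{equation*}
the precise signs of the last two terms being dictated by the fact that $\boldsymbol{\Tilde{h}}$ has degree $+1$ (and to be checked carefully at the level of elements).

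Next I would expand $\partial H$ and $H\partial$ using $\partial=\sum_{n\geq 0}(\nu_{n+1}\otimes 1)\mathbf{\Tilde{m}}^n$, with the subscript $(1)$ for the copy of $\partial$ on the left of $H$ and the subscript $(0)$ for the copy on the right, and commute the leftmost $\nu_{k+1}$ past the $\mathbf{\Tilde{m}}$'s so that every term of $\partial H+H\partial$ takes the shape $(\nu_{k+1}\nu_{n+1}\otimes 1)\,\mathbf{\Tilde{m}}^{N-u}_{(1)}\,\boldsymbol{\Tilde{h}}\,\mathbf{\Tilde{m}}^{u-1}_{(0)}$ with $N=n+k$ and $1\le u\le N$, up to a sign depending only on $n,k,u$. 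I would then invoke the $\Ai$-module relation \eqref{eq : nu A infinite module} to replace $\sum\nu_{k+1}\nu_{n+1}$ by the sum of the terms $\nu_{N+1}(1^{\otimes r}\otimes\mu_1\otimes 1^{\otimes N-r})$ and $\nu_N(1^{\otimes r}\otimes\mu_2\otimes 1^{\otimes N-1-r})$, so that $\partial H+H\partial$ is a combination of such operators sandwiched between powers of $\mathbf{\Tilde{m}}$ around $\boldsymbol{\Tilde{h}}$.

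The core of the argument is then a three-way case split on the position $r$ of the $\mu_1$ relative to the position $u$ of $\boldsymbol{\Tilde{h}}$, exactly as in Proposition~\ref{Prop : continuation morphisms}. If $r\leq u-1$ or $r>u$, the $\mu_1$ lands strictly inside a block of $\mathbf{\Tilde{m}}_{(0)}$'s or of $\mathbf{\Tilde{m}}_{(1)}$'s, and Lemma~\ref{lemme : elimination using MC} converts it into a $\mu_2$-term with an extra power of $\mathbf{\Tilde{m}}$. If $r=u$, the $\mu_1$ hits the $\boldsymbol{\Tilde{h}}$ factor and the functional form of \eqref{eq : mu_1 de h} above produces a $\boldsymbol{\Tilde{\tau}}-\boldsymbol{\Tilde{\tau}}'$ contribution together with two $\mu_2$-terms. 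Collecting everything, all the $\mu_2$-terms cancel in pairs by the same telescoping argument as at the end of the proof of Proposition~\ref{Prop : continuation morphisms} (each term $\nu_N(1^{\otimes r}\otimes\mu_2\otimes 1^{\otimes N-1-r})\mathbf{\Tilde{m}}^{N-u}_{(1)}\boldsymbol{\Tilde{h}}\mathbf{\Tilde{m}}^{u-1}_{(0)}$ occurring once positively and once negatively), and what survives is precisely $\sum_{N\geq 1}\sum_{u=1}^{N}\pm(\nu_{N+1}\otimes 1)\mathbf{\Tilde{m}}^{N-u}_{(1)}(\boldsymbol{\Tilde{\tau}}-\boldsymbol{\Tilde{\tau}}')\mathbf{\Tilde{m}}^{u-1}_{(0)}$, which is exactly $\Psi-\Psi'$ (up to the overall sign convention of the statement).

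I expect the genuine difficulty to be entirely in the sign bookkeeping rather than in the structure of the argument. Because $\boldsymbol{\Tilde{h}}$ has degree $+1$ where $\boldsymbol{\Tilde{\tau}}$ has degree $0$, every commutation of a $\nu$ or a $\mu$ past $\boldsymbol{\Tilde{h}}$ picks up a sign that differs from the one in Proposition~\ref{Prop : continuation morphisms}, and this is precisely what forces the definition of $H$ to drop the alternating factor $(-1)^{u-1}$ that appears in $\Psi$: the sign emerging from the $r=u$ case must reconstitute the $(-1)^{u-1}$ of $\Psi$ by itself. Getting these signs to balance --- in the pairwise cancellation of the $\mu_2$-terms and in the reassembly of the $\boldsymbol{\Tilde{\tau}}-\boldsymbol{\Tilde{\tau}}'$ terms into $\Psi-\Psi'$ --- is the one step that has to be carried out with care.
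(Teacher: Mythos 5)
Your proposal is correct and follows the paper's own proof essentially step for step: you mirror the computation of $\partial H$ and $H\partial$ from Proposition \ref{Prop : continuation morphisms}, invoke the $\Ai$-module relation to rewrite $\nu_{k+1}\nu_{n+1}$, split on the position $r$ relative to $u$ using Lemma \ref{lemme : elimination using MC} in the off-diagonal cases and the functional form of \eqref{eq : mu_1 de h} when $r=u$, and let the $\mu_2$-terms telescope, leaving $\Psi-\Psi'$. The only point at which you hedged — the signs in the functional rewriting of \eqref{eq : mu_1 de h} — is indeed where care is needed: the paper's equation \eqref{eq: mu_1 of h functional version} carries minus signs on the two $\Tilde{\mu}_2$-terms, not plus as you tentatively wrote, precisely because $\boldsymbol{\Tilde{h}}$ has degree $+1$, and this is also what compensates for dropping the $(-1)^{u-1}$ in the definition of $H$, as you correctly observed.
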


\begin{rem}

    We will refer to such a cocycle $\{h_{x,y}\}$ as \textbf{a homotopy cocycle}.
    
   As for any cocycle, we used the notations
   $$\boldsymbol{\Tilde{h}} \in \Hom_1(\F \otimes TC_*(\Omega X) \otimes \Z\Crit(f_0), \F \otimes TC_*(\Omega X) \otimes \Z\Crit(f_1))$$ for the morphism associated with a homotopy cocycle $\{h_{x,y}\}$. We can rewrite equation \eqref{eq : mu_1 de h} using this morphism :

   \begin{equation}\label{eq: mu_1 of h functional version}
       \Tilde{\mu}_1 \boldsymbol{\Tilde{h}} = \boldsymbol{\Tilde{\tau}} - \boldsymbol{\Tilde{\tau}'} - \Tilde{\mu}_2 \boldsymbol{\Tilde{h}}\m_{(0)} - \Tilde{\mu}_2 \m_{(1)}\boldsymbol{\Tilde{h}}.
   \end{equation}
\end{rem}

\begin{myproof}{of Proposition}{\ref{Prop : homotopy Criterion Ai}}
The proof is very similar to the proof of Proposition \ref{Prop : continuation morphisms}. We compute 

\begin{align*}
        \partial H &= \sum_{\substack{n \geq 1\\ k \geq 0}} \sum_{u=1}^n (\nu_{k+1} \otimes 1) \mathbf{\Tilde{m}}^k_{(1)} (\nu_{n+1} \otimes 1) \mathbf{\Tilde{m}}^{n-u}_{(1)} \boldsymbol{\Tilde{h}} \mathbf{\Tilde{m}}^{u-1}_{(0)}\\
        &= \sum_{\substack{n \geq 1\\ k \geq 0}} \sum_{u=1}^n (-1)^{(n+1)k} (\nu_{k+1} \nu_{n+1} \otimes 1) \mathbf{\Tilde{m}}^{n+k-u}_{(1)} \boldsymbol{\Tilde{h}} \mathbf{\Tilde{m}}^{u-1}_{(0)}
    \end{align*}

    and 

    \begin{align*}
        H \partial &= \sum_{\substack{n \geq 0 \\ k \geq 1}} \sum_{u=1}^k  (\nu_{k+1} \otimes 1)\mathbf{\Tilde{m}}^{k-u}_{(1)} \boldsymbol{\Tilde{h}} \mathbf{\Tilde{m}}^{u-1}_{(0)} (\nu_{n+1} \otimes 1)\mathbf{\Tilde{m}}^{n}_{(0)}\\
        &=\sum_{\substack{n \geq 0\\ k \geq 1}} \sum_{u=1}^k  (-1)^{k(n+1)} (\nu_{k+1} \nu_{n+1} \otimes 1) \mathbf{\Tilde{m}}^{k-u}_{(1)} \boldsymbol{\Tilde{h}} \mathbf{\Tilde{m}}^{n+u-1}_{(0)}\\
        &=\sum_{\substack{n \geq 0\\ k \geq 1}} \sum_{u=n+1}^{n+k} (-1)^{(n +1)k} (\nu_{k+1} \nu_{n+1} \otimes 1) \mathbf{\Tilde{m}}^{n+k-u}_{(1)} \boldsymbol{\Tilde{h}} \mathbf{\Tilde{m}}^{u-1}_{(0)}.
    \end{align*}

    Therefore, by taking $N= n+k$,

    \begin{align*}
         H \partial + \partial H &= - \sum_{N \geq 1} \sum_{u=1}^N \left[ \sum_r (-1)^N \nu_{N+1}(1^{\otimes r} \otimes \mu_1 \otimes 1^{\otimes N-r}) \right. \\
         &+ \left. \sum_r (-1)^r \nu_{N}(1^{\otimes r} \otimes \mu_2 \otimes 1^{\otimes N-1-r})  \right] \mathbf{\Tilde{m}}^{N-u}_{(1)} \boldsymbol{\Tilde{h}} \mathbf{\Tilde{m}}^{u-1}_{(0)}.
    \end{align*}

    We decompose the computation of $(1^{\otimes r} \otimes \mu_1 \otimes 1^{\otimes N-r})\mathbf{\Tilde{m}}^{N-u}_{(1)} \boldsymbol{\Tilde{h}}\mathbf{\Tilde{m}}^{u-1}_{(0)}$ into three cases:

\begin{enumerate}
    \item If $r \leq u-1$, Lemma \ref{lemme : elimination using MC} gives 
    \begin{align*}
        (1^{\otimes r} \otimes \mu_1 \otimes 1^{\otimes N-r})\mathbf{\Tilde{m}}^{N-u}_{(1)} \boldsymbol{\Tilde{h}} \mathbf{\Tilde{m}}^{u-1}_{(0)} = (-1)^{N-r+1} (1^{\otimes r} \otimes \mu_2 \otimes 1^{\otimes N-r})\mathbf{\Tilde{m}}^{N-u}_{(1)} \boldsymbol{\Tilde{h}} \mathbf{\Tilde{m}}^{u}_{(0)}.
    \end{align*}\\
    \item If $r > u$, the same lemma gives
    \begin{align*}
        (1^{\otimes r} \otimes \mu_1 \otimes 1^{\otimes N-r})\mathbf{\Tilde{m}}^{N-u}_{(1)} \boldsymbol{\Tilde{h}} \mathbf{\Tilde{m}}^{u-1}_{(0)} = (-1)^{N-r+1} (1^{\otimes r} \otimes \mu_2 \otimes 1^{\otimes N-r})\mathbf{\Tilde{m}}^{N+1-u}_{(1)} \boldsymbol{\Tilde{h}} \mathbf{\Tilde{m}}^{u-1}_{(0)}.
    \end{align*}\\
    \item If $r = u$, we use equation \eqref{eq: mu_1 of h functional version} to obtain the equality:
    \begin{align*}
         &(1^{\otimes u} \otimes \mu_1 \otimes 1^{\otimes N-u})\mathbf{\Tilde{m}}^{N-u}_{(1)} \boldsymbol{\Tilde{h}} \mathbf{\Tilde{m}}^{u-1}_{(0)} \\
         &= (-1)^{N-u} \mathbf{\Tilde{m}}^{N-u}_{(1)}(\Tilde{\mu}_1\boldsymbol{\Tilde{h}}) \mathbf{\Tilde{m}}^{u-1}_{(0)} \\
         &= (-1)^{N-u} \mathbf{\Tilde{m}}^{N-u}_{(1)}[ \boldsymbol{\Tilde{\tau}} - \boldsymbol{\Tilde{\tau}'} - \Tilde{\mu}_2\boldsymbol{\Tilde{h}}\m_{(0)} - \Tilde{\mu}_2 \m_{(1)}\boldsymbol{\Tilde{h}}]\mathbf{\Tilde{m}}^{u-1}_{(0)}.
    \end{align*}
\end{enumerate}

Therefore, \begin{align*}
    &\sum_r (1^{\otimes r} \otimes \mu_1 \otimes 1^{\otimes N-r})\mathbf{\Tilde{m}}^{N-u}_{(1)} \boldsymbol{\Tilde{h}} \mathbf{\Tilde{m}}^{u-1}_{(0)} \\
    &= \sum_{r=1}^u (-1)^{N-r} (1^{\otimes r} \otimes \mu_2 \otimes 1^{\otimes N-r+1}) \mathbf{\Tilde{m}}^{N-u}_{(1)} \boldsymbol{\Tilde{h}} \mathbf{\Tilde{m}}^{u}_{(0)} \\
    &+ \sum_{r=u}^{N} (-1)^{N-r+1} (1^{\otimes r} \otimes \mu_2 \otimes 1^{\otimes N-r}) \mathbf{\Tilde{m}}^{N+1-u}_{(1)} \boldsymbol{\Tilde{h}} \mathbf{\Tilde{m}}^{u-1}_{(0)}\\
    &+ (-1)^{N-u+1} (1^{\otimes r} \otimes \mu_2 \otimes 1^{\otimes N-r}) \mathbf{\Tilde{m}}^{N-u}_{(1)} \boldsymbol{\Tilde{\tau}} \mathbf{\Tilde{m}}^{u-1}_{(0)} - (-1)^{N-u+1} (1^{\otimes r} \otimes \mu_2 \otimes 1^{\otimes N-r}) \mathbf{\Tilde{m}}^{N-u}_{(1)} \boldsymbol{\Tilde{\tau}'} \mathbf{\Tilde{m}}^{u-1}_{(0)}
\end{align*}

and \begin{align*}
    H \partial + \partial H &= \sum_N \sum_{u=1}^N \left[ \sum_{r=1}^u (-1)^{r} \nu_{N+1}( 1^{\otimes r} \otimes \mu_2 \otimes 1^{\otimes N-r}) \mathbf{\Tilde{m}}^{N+1 -(u+1)}_{(1)} \boldsymbol{\Tilde{h}} \mathbf{\Tilde{m}}^{(u+1)-1}_{(0)}\right.\\
    & + \sum_{r=u}^N (-1)^{r} \nu_{N+1}( 1^{\otimes r} \otimes \mu_2 \otimes 1^{\otimes N-r}) \mathbf{\Tilde{m}}^{N+1-u}_{(1)} \boldsymbol{\Tilde{h}} \mathbf{\Tilde{m}}^{u-1}_{(0)}\\
    &- \left. \sum_{r=1}^{N-1} (-1)^{r} \nu_{N}( 1^{\otimes r} \otimes \mu_2 \otimes 1^{\otimes N-1-r}) \mathbf{\Tilde{m}}^{N-u}_{(1)} \boldsymbol{\Tilde{h}} \mathbf{\Tilde{m}}^{u-1}_{(0)} \right] + \Psi - \Psi'\\
    &= \Psi - \Psi'.
\end{align*}

\end{myproof}

\subsection{Filtration and spectral sequence}\label{subsection : Filtration and spectral sequence}

If $f : X \to \R$ is a Morse function on $X$, $\{m_{x,y} \in C_{|x|-|y|-1}(\Omega X) , \ x,y \in \Crit(f)\}$ is a twisting cocycle and $\F$ is an $\Ai$-module over $C_*(\Omega X)$, then the associated enriched Morse complex has a natural filtration

$$F_p(C_k(X, m_{x,y}, \F)) = \bigoplus_{\substack{i+j = k \\ i \leq p}} \F_j \otimes \Z\Crit_i(f).$$

The spectral sequence $E^r_{p,q}$ associated to this filtration converges to $H_{p+q}(X, \F)$ and its first page is 

$$E^1_{p,q} = H_q(\F) \otimes \Z \Crit_p(f).$$

The structure of $\Ai$ $\Z[\pi_1(X)]$-module on $H_q(\F)$ is associative. Therefore, we can define a differential on this first page in the same way as in the case of a DG-module :

$$d^1(\hat{\alpha} \otimes x) = (-1)^q \sum_{|y| = |x| -1} \hat{\alpha}\cdot\hat{m}_{x,y} \otimes y,$$
where $\hat{m}_{x,y} \in H_0(\Omega X) = \Z[\pi_1(X)]$ is the projection of the twisting cocycle $m_{x,y} \in C_0(\Omega X)$.\\

Denote $\Tilde{C}_p(f) = \Z[\pi_1(X)] \otimes_{\Z} \Z \Crit_p(f)$ the \textbf{lifted Morse complex} of $(f,\xi)$ (see \cite[section 5.1.1]{BDHO23}, \cite{Lat94} and \cite{Dam12}). It is the $\Z[\pi_1(X)]$-complex spanned by preferred lifts $\tilde{x}$ of critical points $x \in \Crit(f)$ in the universal cover $\tilde{X}$ of $X$ and whose differential is given by counting trajectories $\lambda \in \traj{x,y}$ which lift in $\tilde{X}$ to a trajectory starting at $\tilde{x}$ and ending at $g\tilde{y}$ for $g \in \pi_1(X)$ for each pair $x,y \in \Crit(f)$ such that $|x|=|y|+1$.\\

From the \emph{compatibility with lifting} property of the evaluation maps (see Definition \ref{defi : Evaluation maps}), it follows that the differential of the lifted Morse complex can be written
$$d(x) = \sum_{|y| = |x|-1} \hat{m}_{x,y} \otimes  y.$$

Therefore $d^1$ is, up to sign, the differential of the complex $H_q(\F) \otimes \tilde{C}_*(f)$ and the second page is given by $$E^2_{p,q} = H_p(\Tilde{C}_p(f); H_q(\F)).$$

\subsection{Invariance and \texorpdfstring{$\Ai$}{Ai} continuation map}\label{section : Invariance Ai Morse}
Let  $\Xi_0 = (f_0, \xi_0, o_0, s^{(0)}_{x,y}, \mathcal{Y}_0, \theta_0) $ and $\Xi_1 = (f_1, \xi_1, o_1, s^{(1)}_{x,y}, \mathcal{Y}_1, \theta_1)$ be enriched Morse data on $X$ and let $\F$ be a $\Ai$-module over $C_*(\Omega X)$.
We prove that there exists a continuation map $\Psi_{01} : C_*(X,\Xi_0,\F) \to C_*(X,\Xi_1, \F)$ which is a quasi-isomorphism. As a consequence, the homology of the enriched complex with coefficients in a $\Ai$-module does not depend on any of the choices that have been made to define it.

 Following \cite[Lemma 6.2.1]{BDHO23}, one builds a DG Morse set of \textbf{continuation data} $\Xi$ on $X \times [0,1]$ by considering a representing chain system $s_F$ on the moduli spaces of trajectories in $X\times [0,1]$ such that:

    \begin{itemize}
        \item $s^F_{x_0,y_0} = (-1)^{|x|-|y|} s^{(0)}_{x,y}$ for all $x,y \in \Crit(f_0)$.
        \item $s^F_{x_1,y_1} = s^{(1)}_{x,y}$ for all $x,y \in \Crit(f_1)$.
        \item $\sigma_{x,y} := s^F_{x_0,y_1}$ for all $x \in \Crit(f_0)$ and $y \in \Crit(f_1)$.
    \end{itemize}

The family $\{\sigma_{x,y}\}$ can then be evaluated into $\Omega X$
 to build a continuation cocycle $\{\tau_{x,y}\in C_{|x|-|y|}(\Omega X), \ x \in \Crit(f_0), \ y \in \Crit(f_1)\}$ associated to the continuation data $\Xi$. For $i \in \{0,1\}$, denote $m^i_{x,y}$ the Barraud-Cornea twisting cocycle defined by $\Xi_i$. The cocycle $\{\tau_{x,y}\}$ satisfies equation \eqref{eq : mu_1 de tau}.

\begin{defi}\label{defi : Ai continuation map}
    We call $\boldsymbol{\Ai}$ \textbf{continuation map}
    the map induced by $\{\tau_{x,y}\in C_{|x|-|y|}(\Omega X), \ x \in \Crit(f_0), \ y \in \Crit(f_1)\}$
    by Proposition \ref{Prop : continuation morphisms} :
     $$\Psi^{\Xi} : C_*(X,\Xi_0,\mathcal{F}) \to C_*(X,\Xi_1,\mathcal{F})$$ defined by

    $$\Psi^{\Xi} = \sum_{n\geq 1}\sum_{u=1}^n (-1)^{u-1} (\nu_{n+1} \otimes 1)
     \m_{(1)}^{n-u} \Tilde{\boldsymbol{\tau}} \m_{(0)}^{u-1}.$$
\end{defi}

\begin{prop}\label{Prop : QIso Criterion Ai}
Let $\Tilde{\Psi} : \Tilde{C}(f_0, \xi_0) \to \Tilde{C}(f_1, \xi_1)$ be the morphism  between lifted Morse complexes defined by $\Tilde{\Psi}(x) = \sum_{|x|=|y|} n_{x,y} y$, where $n_{x,y}$ is the projection of $\tau_{x,y}$ in $H_0(\Omega X) = \Z[\pi_1(X)]$.

If $\Tilde{\Psi}$ is a quasi-isomorphism, then so is $\Psi.$    
\end{prop}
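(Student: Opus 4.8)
The plan is to compare, through $\Psi := \Psi^{\Xi}$, the spectral sequences attached to the canonical filtrations of $C_*(X,\Xi_0,\F)$ and $C_*(X,\Xi_1,\F)$ introduced in Section~\ref{subsection : Filtration and spectral sequence}. First I would check that $\Psi$ is a filtered map: in
$$\Psi = \sum_{n\geq 1}\sum_{u=1}^n (-1)^{u-1}(\nu_{n+1}\otimes 1)\,\m_{(1)}^{n-u}\,\boldsymbol{\Tilde{\tau}}\,\m_{(0)}^{u-1},$$
each factor $\m_{(i)}$ strictly decreases the Morse index of the critical-point label and $\boldsymbol{\Tilde{\tau}}$ never raises it, since $\tau_{x,y}=0$ whenever $|y|>|x|$; hence $\Psi(F_p)\subseteq F_p$. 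Both filtrations are bounded in each total degree, because $\Crit_i(f)=\emptyset$ for $i\notin[0,n]$ (so $F_{-1}(C_k)=0$ and $F_n(C_k)=C_k$), so the associated spectral sequences converge and $\Psi$ induces a morphism of spectral sequences $E^r(\Psi)\colon E^r_{p,q}(\Xi_0)\to E^r_{p,q}(\Xi_1)$ converging to $\Psi_*$ on $H_*(X,\F)$.

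Next I would identify $E^1(\Psi)$. Passing to the associated graded kills every summand of $\Psi$ that strictly lowers the index, so by the index count above only the $u=n=1$ term $(\nu_2\otimes 1)\boldsymbol{\Tilde{\tau}}$ survives, and within it only the part of $\boldsymbol{\Tilde{\tau}}$ landing on critical points $y$ with $|y|=|x|$, which sends $\alpha\otimes x$ to $\pm\,\alpha\cdot\tau_{x,y}\otimes y$ with $\tau_{x,y}\in C_0(\Omega X)$. On $E^1_{p,q}=H_q(\F)\otimes\Z\Crit_p(f)$ the action of the degree-$0$ chain $\tau_{x,y}$ on $H_q(\F)$ factors through its class $n_{x,y}\in H_0(\Omega X)=\Z[\pi_1(X)]$, so $E^1(\Psi)=\pm\,\mathrm{id}_{H_q(\F)}\otimes\Tilde{\Psi}$ with $\Tilde{\Psi}(x)=\sum_{|y|=|x|}n_{x,y}y$. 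Consequently $E^2_{p,q}(\Psi)$ is exactly the map induced in homology by $\Tilde{\Psi}$ on the lifted Morse complexes $\Tilde{C}_*(f_\bullet)$ with coefficients in the $\Z[\pi_1(X)]$-module $H_q(\F)$, i.e. on $E^2_{p,q}=H_p(\Tilde{C}_*(f_\bullet);H_q(\F))$.

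Finally, $\Tilde{C}_*(f_0)$ and $\Tilde{C}_*(f_1)$ are bounded complexes of free $\Z[\pi_1(X)]$-modules and $\Tilde{\Psi}$ is a morphism of $\Z[\pi_1(X)]$-complexes, so if $\Tilde{\Psi}$ is a quasi-isomorphism it is a chain homotopy equivalence over $\Z[\pi_1(X)]$; hence it remains a quasi-isomorphism after applying $-\otimes_{\Z[\pi_1(X)]}H_q(\F)$, and $E^2(\Psi)$ is an isomorphism on every bidegree $(p,q)$. By the comparison theorem for convergent spectral sequences, $E^\infty(\Psi)$ is an isomorphism, hence so is $\Psi_*$ on $H_*(X,\F)$, i.e. $\Psi$ is a quasi-isomorphism. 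The only delicate point is the identification of $E^1(\Psi)$ in the second paragraph — deciding which terms survive on the associated graded and controlling the Koszul signs — after which the argument is the standard spectral-sequence comparison combined with the classical fact that a quasi-isomorphism between bounded complexes of projective modules is a homotopy equivalence.
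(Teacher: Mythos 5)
Your proof is correct and follows essentially the same route as the paper: the paper filters both complexes, observes that $\Psi$ preserves the filtration and induces $\Psi^{(1)} = \mathrm{Id}\otimes\Tilde{\Psi}$ on $E^1$, and then invokes the change-of-coefficients spectral sequence to pass from a quasi-isomorphism $\Tilde{\Psi}$ to an isomorphism on $E^2$ (referring to \cite[Proposition 4.4.1]{BDHO23} for the details you supply). The only minor divergence is your final step, where instead of citing the change-of-coefficients spectral sequence you argue directly that a quasi-isomorphism between bounded complexes of free $\Z[\pi_1(X)]$-modules is a chain homotopy equivalence and hence survives $-\otimes_{\Z[\pi_1(X)]} H_q(\F)$; this is an equivalent and arguably more elementary way to close the argument.
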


\begin{proof}
    As in \cite[Proposition 4.4.1]{BDHO23},
    the proof comes from the fact that $\Psi$ preserves the canonical filtrations and therefore induces a morphism of spectral sequences $(\Psi^{(r)})_r$ such that $\Psi^{(1)}= \textup{Id} \otimes \Tilde{\Psi}.$ It then follows from the existence of a spectral sequence for the change of coefficients that, if $\Tilde{\Psi}$ is a quasi-isomorphism, then so is $\Psi^{(1)}$ and therefore $\Psi.$
\end{proof}

The $\Ai$ continuation map $\Psi^{\Xi}$ is therefore a quasi-isomorphism since it is shown in \cite[Proposition 6.2.2]{BDHO23} that the cocycle $\{\tau_{x,y}\}$ satisfies the condition of Proposition \ref{Prop : QIso Criterion Ai}. Hence, the homology  $H_*(X, \F)$ is well-defined if $\F$ is an $\Ai$-module over $C_*(\Omega X)$.\\

In fact more is true : there is an $\Ai$ analogue of the invariance Theorem \ref{thm : continuation morphism Psi_01}.

\begin{thm}\label{thm : Continuation morphism Ai invariance}
Let $\F$ be an $\Ai$-module over $C_*(\Omega X)$.

    1) Given two sets $\Xi_0$ and $\Xi_1$ of enriched Morse data on $X$ and continuation data $\Xi$ on $X \times [0,1]$ , the $\Ai$ continuation map $\Psi^{\Xi} : C_*(X,\Xi_0,\F) \to C_*(X,\Xi_1,\F)$ is a homotopy equivalence and its chain homotopy type only depends on $\Xi_0$ and $\Xi_1$. The map $\Psi^{\Xi}$ is in particular a quasi-isomorphism. \\
    
    2) Given another set of data $\Xi_2$ on  $X$ and denoting $\Psi_{ij}$ the $\Ai$ continuation map between the data $\Xi_i$ and $\Xi_j$, then $\Psi_{00}$ is homotopic to the identity and $\Psi_{02}$ is homotopic to $\Psi_{12}\circ \Psi_{01}$. In particular, in homology

    $$\Psi_{00} = \textup{Id} \ \textup{and} \ \Psi_{12}\circ \Psi_{01} = \Psi_{02}.$$
\end{thm}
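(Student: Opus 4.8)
The plan is to run the proof of \cite[Theorem~6.3.1]{BDHO23} essentially unchanged, replacing the two DG lemmas it uses by their $\Ai$ counterparts proved above: the continuation criterion (Proposition~\ref{Prop : continuation morphisms}) and the homotopy criterion (Proposition~\ref{Prop : homotopy Criterion Ai}). That $\Psi^{\Xi}$ is a morphism of complexes and a quasi-isomorphism has already been recorded (Proposition~\ref{Prop : continuation morphisms}, Proposition~\ref{Prop : QIso Criterion Ai} and \cite[Proposition~6.2.2]{BDHO23}; that last input only concerns the lifted Morse complex and $H_0(\Omega X)=\Z[\pi_1(X)]$, hence is insensitive to the passage from DG to $\Ai$ coefficients). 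It remains to prove: (i) two sets of continuation data interpolating the \emph{same} pair $(\Xi_0,\Xi_1)$ induce chain-homotopic maps; (ii) $\Psi_{00}\simeq\Id$; (iii) $\Psi_{02}\simeq\Psi_{12}\circ\Psi_{01}$; and then to deduce (iv) that $\Psi^{\Xi}$ is a chain homotopy equivalence.

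For (i): given two continuation data $\Xi,\Xi'$ on $X\times[0,1]$ over $(\Xi_0,\Xi_1)$, a representing chain system on the moduli spaces of trajectories of a suitable Morse function on $X\times[0,1]^2$ (restricting to the two given data along the ends of the second factor) provides, after evaluation in $\Omega X$, a family $\{h_{x,y}\in C_{|x|-|y|+1}(\Omega X)\}$ satisfying \eqref{eq : mu_1 de h} with $\tau=\tau^{\Xi}$ and $\tau'=\tau^{\Xi'}$; Proposition~\ref{Prop : homotopy Criterion Ai} then yields $\Psi^{\Xi}\simeq\Psi^{\Xi'}$. For (ii), take $\Xi_0=\Xi_1$ and the constant continuation data: the unique trajectory from $x$ at the bottom to $x$ at the top, once parametrized and pushed into $\Xq$, is the constant loop $\star$, so $\tau^{\textup{triv}}_{x,y}=\delta_{x,y}\,\star$. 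As $\star$ is the strict unit of $C_*(\Omega X)$ and acts strictly unitally on the $\Ai$-modules at play (so every $\nu_{n+1}$ with $n\geq 2$ vanishes on an input containing $\star$), the sum defining $\Psi^{\textup{triv}}$ in Proposition~\ref{Prop : continuation morphisms} collapses to its $n=u=1$ term, which is insertion of the unit followed by $\nu_2$, i.e.\ the identity. With (i), this gives $\Psi_{00}\simeq\Id$ for an arbitrary choice of continuation data.

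Step (iii) is the heart of the matter and the step I expect to be the main obstacle. Following \cite[Section~6.3]{BDHO23}, one builds Morse data on $X\times[0,1]$ whose critical points fall into three levels corresponding to $f_0,f_1,f_2$, restricting to $\Xi_{01}$ and $\Xi_{12}$ on adjacent pairs of levels, together with a compatible representing chain system; the component of that system carried by the trajectories running from the level of $f_0$ to the level of $f_2$ evaluates to a family $\{h_{x,y}\}$ satisfying \eqref{eq : mu_1 de h} with $\tau'=\tau^{02}$ and with $\tau$ a ``convolution'' cocycle built out of $\tau^{01}$ and $\tau^{12}$. The genuinely new point, absent from the DG case, is that the composite $\Psi_{12}\circ\Psi_{01}$ is \emph{not} equal to the continuation map $\Psi^{\tau}$ attached to this convolution: it involves two nested module operations rather than one, so the higher $\Ai$ terms contribute corrections. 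I would bridge this either through a two-parameter refinement of Proposition~\ref{Prop : continuation morphisms} (equivalently, through the description of continuation maps as off-diagonal blocks of twisted differentials on enriched complexes over an interval, as in \cite[Section~6]{BDHO23}, and of $\Psi_{12}\circ\Psi_{01}$ as a block of the twisted differential over $X\times[0,1]$ stacked with three levels), or through a direct chain homotopy $\Psi_{12}\circ\Psi_{01}\simeq\Psi^{\tau}$ produced by the same elimination mechanism (Lemma~\ref{lemme : elimination using MC}) as in the proofs of Propositions~\ref{Prop : continuation morphisms} and~\ref{Prop : homotopy Criterion Ai}; the bulk of the work is the attendant Koszul-sign bookkeeping, best handled in the functional notation of Notation~\ref{not : morphism tilde m}. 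Once this identification is in hand, Proposition~\ref{Prop : homotopy Criterion Ai} gives $\Psi^{\tau}\simeq\Psi^{\tau^{02}}=\Psi_{02}$, and hence $\Psi_{12}\circ\Psi_{01}\simeq\Psi_{02}$.

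Finally, (iv) is formal: choosing continuation data $\Xi_{10}$ from $\Xi_1$ to $\Xi_0$, steps (iii) and (ii) give $\Psi_{10}\circ\Psi_{01}\simeq\Psi_{00}\simeq\Id$ and $\Psi_{01}\circ\Psi_{10}\simeq\Psi_{11}\simeq\Id$, so $\Psi^{\Xi_{01}}$ is a chain homotopy equivalence with homotopy inverse $\Psi^{\Xi_{10}}$; and by (i) its chain homotopy type depends only on $\Xi_0$ and $\Xi_1$. This is part~1, and assertions (ii) and (iii) are exactly the two statements of part~2.
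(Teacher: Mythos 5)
Your step (ii) contains an error. The continuation cocycle arising from the constant continuation data on $X\times[0,1]$ is \emph{not} $\delta_{x,y}\star$: for $|x|>|y|$ the moduli space $\trajbi{F}{x_0,y_1}$ has positive dimension $|x|-|y|$ and is nonempty whenever $\trajbi{f}{x,y}$ is; a representing chain $\sigma_{x,y}$ must carry its fundamental class, so it cannot be made to vanish, and the $q_{x_0,y_1}$-evaluation of this moduli space into $\Omega X$ consists of nonconstant loops (the $X$-projection of such a trajectory is an honest gradient trajectory from $x$ to $y$, which becomes a nontrivial loop after passing to $\Xq$). What \emph{is} true, and what [BDHO23] actually exploits, is that the family $\tau^{\Id}_{x,y}:=\delta_{x,y}\star$ is a purely \emph{algebraic} cocycle satisfying \eqref{eq : mu_1 de tau} (indeed $\partial(\delta_{x,y}\star)=0$ while the right-hand side telescopes to $m^0_{x,y}-m^0_{x,y}=0$) even though it does not arise from any geometric continuation data; one then produces a \emph{geometric} homotopy cocycle between $\tau^{\Id}$ and $\tau^{\Xi}$ and feeds it into Proposition~\ref{Prop : homotopy Criterion Ai}. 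Separately, the identity $\Psi^{\tau^{\Id}}=\Id$ that you rely on requires the $\Ai$-module $\F$ to act strictly unitally on $\star$ — i.e.\ $\nu_2(\alpha\otimes\star)=\alpha$ and $\nu_{n+1}$ annihilates inputs containing $\star$ for $n\geq 2$ — an assumption the paper's Definition of $\Ai$-module does not impose; you should either state it, argue that every $\Ai$-module over the unital DGA $C_*(\Omega X)$ is homotopy-equivalent to a strictly unital one, or restrict to the modules actually occurring in the applications (DG modules from transitive lifting functions, where it holds automatically).

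Your step (iii) correctly isolates the genuinely new obstacle — in the presence of higher $\nu_n$, $\Psi_{12}\circ\Psi_{01}$ is no longer the continuation map of the convolution cocycle — but you propose two routes without carrying out either, so the step remains a gap, albeit an honestly flagged one, and one which the paper's one-sentence ``their proof carries over'' does not discharge either. Route~(b) is the natural one: expand the composite by commuting $\nu$'s past $\m$ and $\boldsymbol{\Tilde{\tau}}$ up to Koszul sign, compare with $\Psi^{\tau^{01}*\tau^{12}}$ via the $\Ai$-module relation \eqref{eq : nu A infinite module} (replacing $\nu_{m+1}\nu_{n+1}$ by $\nu_{m+n+1}(1\otimes\mu_2\otimes 1)$ plus correction terms), and package the discrepancy as $\partial H + H\partial$ with $H$ built from $(\nu_{k+1}\otimes 1)$ applied to inputs containing \emph{both} $\boldsymbol{\Tilde{\tau}}^{12}$ and $\boldsymbol{\Tilde{\tau}}^{01}$ — the same elimination game via Lemma~\ref{lemme : elimination using MC} as in the proofs of Propositions~\ref{Prop : continuation morphisms} and~\ref{Prop : homotopy Criterion Ai}. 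Until that computation (or the two-parameter refinement of route~(a)) is written out, the proposal does not constitute a proof of part~2 of the theorem.
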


Theorem \ref{thm : continuation morphism Psi_01} is proven in \cite[Section 6.2.2]{BDHO23}. Using Proposition \ref{Prop : homotopy Criterion Ai} to construct homotopies between continuation maps, their proof carries over in our setting.

\section{Morphisms of fibrations and \texorpdfstring{$\Ai$}{Ai}-morphism of modules. Proof of Theorem B}\label{section : Functoriality on the coefficient}

The main goal of this section is to prove that a morphism of fibrations $\varphi : E_1 \to E_2$ over $X$ induces a morphism of complexes $\tilde{\varphi} : C_*(X,C_*(F_1)) \to C_*(X, C_*(F_2))$ that is compatible with the Fibration theorem, with direct and shriek maps. The main difficulty is that $\varphi$ does not necessarily respect the transitive lifting functions associated to the fibrations $F_1 \hookrightarrow E_1 \to X$ and $F_2 \hookrightarrow E_2 \to X$ and therefore does not induce a DG morphism of modules over $C_*(\Omega X)$ from $C_*(F_1)$ to $C_*(F_2)$ in general. Hence, the map $\tilde{\varphi} : C_*(X,C_*(F_1)) \to C_*(X, C_*(F_2))$, $\tilde{\varphi}(\alpha \otimes x) = \varphi_*(\alpha) \otimes x$ is \emph{a priori} not a morphism of complexes.
We will prove that if there exists a morphism $\boldsymbol{\varphi} : \F \to \G$ of $\Ai$-modules over $C_*(\Omega X)$, then there exists a morphism of complexes $\tilde{\varphi} : C_*(X,\F) \to C_*(X,\G)$ compatible with direct and shriek maps.

In the context of a fibration, we will prove Theorem B (\ref{thm : morphisme induit commute avec iso}) that states that a morphism of fibrations $\varphi : E_1 \to E_2$ induces a morphism $\boldsymbol{\varphi}: C_*(F_1) \to C_*(F_2)$ of $\Ai$-modules over $C_*(\Omega X)$ and that the map $\tilde{\varphi} : C_*(X,\F) \to C_*(X,\G)$ corresponds to the map $\varphi_* : C_*(E_1) \to C_*(E_2)$ via the Fibration Theorem. Therefore, in this context, $\tilde{\varphi}$ inherits the good functorial behavior of singular complexes in our finite dimensional model of enriched Morse complexes.

\subsection{Functoriality with respect to the coefficients}
 Let $\Xi$ be a set of DG Morse data on $X$ with Morse function $f : \R \to X$.
\begin{prop}\label{prop : Morphisme de Ai module en morphisme de complexe}
    Let $\boldsymbol{\varphi} : (\mathcal{A}, \nu_i^{\mathcal{A}}) \to (\mathcal{B}, \nu_i^{\mathcal{B}})$ be a morphism of $\Ai$-modules over $C_*(\Omega X)$.\\
    Then, the map $\Tilde{\varphi} : C_*(X,\Xi, \mathcal{A}) \to C_*(X,\Xi, \mathcal{B})$ defined by $$\Tilde{\varphi} = \sum_{n \geq 0} (\varphi_{n+1} \otimes 1) \mathbf{\Tilde{m}}^n$$

is a morphism of complexes.
\end{prop}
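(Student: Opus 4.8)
The plan is to verify directly that $\partial_{\mathcal B} \circ \Tilde\varphi = \Tilde\varphi \circ \partial_{\mathcal A}$ by expanding both sides in terms of the functional operators $\mathbf{\Tilde m}$, $\Tilde\mu_1$, $\Tilde\mu_2$, the module operations $\nu^{\mathcal A}_n, \nu^{\mathcal B}_n$ and the morphism components $\varphi_n$, using the same bookkeeping strategy as in the proofs of Proposition~\ref{Prop : continuation morphisms} and Proposition~\ref{Prop : homotopy Criterion Ai}. Recall $\partial_{\mathcal A} = \sum_{k\ge 0}(\nu^{\mathcal A}_{k+1}\otimes 1)\mathbf{\Tilde m}^k$, $\partial_{\mathcal B} = \sum_{k\ge 0}(\nu^{\mathcal B}_{k+1}\otimes 1)\mathbf{\Tilde m}^k$ and $\Tilde\varphi = \sum_{n\ge 0}(\varphi_{n+1}\otimes 1)\mathbf{\Tilde m}^n$.

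First I would compute $\Tilde\varphi \circ \partial_{\mathcal A}$ and $\partial_{\mathcal B}\circ\Tilde\varphi$ by composing the series, sliding each $\mathbf{\Tilde m}$ past the intervening operation and collecting the Koszul signs produced by $\mathbf{\Tilde m}$ having degree $-1$ while $\nu_{k+1}$ and $\varphi_{k+1}$ have known degrees. Setting $N = n+k$ and grouping, the difference $\partial_{\mathcal B}\Tilde\varphi - \Tilde\varphi\partial_{\mathcal A}$ should reorganize, via the defining relation \eqref{eq : relation morphisme de Ai module} of an $\Ai$-morphism, into an expression involving only terms of the shape $(1^{\otimes r}\otimes\mu_1\otimes 1^{\otimes N-r})\mathbf{\Tilde m}^N$ and $(1^{\otimes r}\otimes\mu_2\otimes 1^{\otimes N-r-1})\mathbf{\Tilde m}^N$ applied after $\varphi_{N+1}$. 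Precisely, the $\nu^{\mathcal A}$-terms of $\Tilde\varphi\partial_{\mathcal A}$ and the $\nu^{\mathcal B}$-terms of $\partial_{\mathcal B}\Tilde\varphi$ cancel against the two sums $\sum(-1)^{N}\varphi_{N+1}(1^{\otimes r}\otimes\mu_1\otimes 1^{\otimes N-r})$ and $\sum(-1)^r\varphi_N(1^{\otimes r}\otimes\mu_2\otimes 1^{\otimes N-1-r})$ coming from \eqref{eq : relation morphisme de Ai module}, once those $\mu_1$-terms are rewritten using the Maurer--Cartan identity. Then I would invoke Lemma~\ref{lemme : elimination using MC} to convert every occurrence of $(1^{\otimes r}\otimes\mu_1\otimes 1^{\otimes N-r})\mathbf{\Tilde m}^N$ into $\pm(1^{\otimes r}\otimes\mu_2\otimes 1^{\otimes N-r})\mathbf{\Tilde m}^{N+1}$, exactly as in the proof that $\partial^2=0$, and check that the resulting $\mu_2$-terms appear in cancelling pairs: for each triple $(N,r)$ a term $(-1)^{\epsilon}\varphi_{N}(1^{\otimes r}\otimes\mu_2\otimes 1^{\otimes N-1-r})\mathbf{\Tilde m}^{N}$ shows up once with each sign.

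The main obstacle I anticipate is purely the sign bookkeeping: tracking the Koszul signs that arise when $\mathbf{\Tilde m}^j$ (degree $-j$) is commuted past $\varphi_{k+1}$ (degree $k$) and past $\nu_{k+1}$ (degree $k-1$), and matching them with the signs $(-1)^{n(k+1)}$, $(-1)^{kn}$, $(-1)^{N}$, $(-1)^r$ appearing in \eqref{eq : relation morphisme de Ai module} and in Lemma~\ref{lemme : elimination using MC}. The cleanest way to avoid mistakes is to stay at the level of functional expressions throughout (never evaluating on elements $\alpha\otimes\gamma_1\otimes\cdots\otimes\gamma_k\otimes x$ except to fix conventions once), and to reuse verbatim the index manipulation $u\mapsto u+1$, $N=n+k$ used in Proposition~\ref{Prop : continuation morphisms}. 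Since $\mathbf{\Tilde m}$ strictly decreases the Morse index, all sums are finite, so there is no convergence issue; the only content is that \eqref{eq : relation morphisme de Ai module} together with \eqref{eq : MC Ai module} forces the telescoping cancellation. Finally I would note that $\Tilde\varphi$ has the correct degree ($0$), since $\varphi_{n+1}$ has degree $n$ and $\mathbf{\Tilde m}^n$ has degree $-n$, completing the proof.
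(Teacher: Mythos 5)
Your proposal is correct and follows essentially the same route as the paper's own proof: compose the two series, collect Koszul signs with $N=n+k$, apply the $\Ai$-morphism relation \eqref{eq : relation morphisme de Ai module} to rewrite the $\nu^{\mathcal B}$-terms, cancel the resulting $\nu^{\mathcal A}$-terms against $\Tilde\varphi\,\partial_{\mathcal A}$, and then use Lemma~\ref{lemme : elimination using MC} to convert the residual $\mu_1$-terms into $\mu_2$-terms that cancel after reindexing $N\mapsto N+1$. (One minor note: the auxiliary index $u$ you mention is a feature of Propositions~\ref{Prop : continuation morphisms}--\ref{Prop : homotopy Criterion Ai} and does not appear here; in this proof the only reindexing needed is $N=n+k$.)
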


\begin{proof}

The definition of $\m\in \End_{-1}(\mathcal{A} \otimes TC_*(\Omega X) \otimes \Z\textup{Crit}(f))$ and  is given in \ref{not : morphism tilde m}.

We use equation \eqref{eq : relation morphisme de Ai module} to compute
\begin{align*}
    \partial \Tilde{\varphi} &= \sum_{n,k}(\nu^B_{n+1} \otimes 1)\mathbf{\Tilde{m}}^n (\varphi_{k+1} \otimes 1) \mathbf{\Tilde{m}}^k\\
    &= \sum_{n,k} (-1)^{nk} (\nu^B_{n+1} (\varphi_{k+1} \otimes 1^{\otimes n}) \otimes 1) \mathbf{\Tilde{m}}^{n+k}\\
    &=\sum_{n,k} (-1)^{n(k+1)} (\varphi_{n+1}(\nu^A_{k+1} \otimes 1^{\otimes n}) \otimes 1) \mathbf{\Tilde{m}}^{n+k}\\
    &+ \sum_N \left[ \sum_{r=1}^{N} (-1)^N \varphi_{N+1}(1^{\otimes r} \otimes \mu_1 \otimes 1^{\otimes N-r}) + \sum_{r=1}^{N-1} (-1)^r \varphi_{N}(1^{\otimes r} \otimes \mu_2 \otimes 1^{\otimes N-1-r}) \right] \mathbf{\Tilde{m}}^N
\end{align*}

and 
\begin{align*}
\Tilde{\varphi}\partial &= \sum_{n,k}(\varphi_{n+1} \otimes 1)\mathbf{\Tilde{m}}^n (\nu^A_{k+1} \otimes 1) \mathbf{\Tilde{m}}^k\\
    & =\sum_{n,k} (-1)^{n(k+1)} (\varphi_{n+1}(\nu^A_{k+1} \otimes 1^{\otimes n}) \otimes 1) \mathbf{\Tilde{m}}^{n+k}.
\end{align*}

It then follows from Lemma \ref{lemme : elimination using MC} that $\partial \Tilde{\varphi} - \Tilde{\varphi}\partial = 0$, and $\Tilde{\varphi}$ is indeed a morphism of complexes.
\end{proof}

\begin{notation}
    For any $\boldsymbol{\varphi} : (\mathcal{A}, \nu_i^{\mathcal{A}}) \to (\mathcal{B}, \nu_i^{\mathcal{B}})$ morphism of $\Ai$-modules over $C_*(\Omega X)$, we will denote $$\tilde{\varphi} : C_*(X, \Xi, \mathcal{A}) \to C_*(X, \Xi, \mathcal{B})$$ the associated morphism of complexes. 
\end{notation}

\subsection{Topological modules and morphisms of fibrations. Proof of Theorem B}\label{subsection : Topological modules and morphism of fibrations}

Since many naturally occuring morphisms of $\Ai$-modules are of a topological nature, we define the notion of \textbf{$\mathbf{\Ai}$-morphism of topological modules} over $\Omega X$.

We will denote $\cdot$ the concatenation.

\begin{defi}\label{defi : Ai morphism of topological module}
    Let $(F,\nu_F)$ and $(G,\nu_G)$ be two topological spaces endowed with strictly associative topological module structures over $\Omega X$. An \textbf{$\Ai$-morphism of topological modules} (over $\Omega X$) is the data of a sequence of maps 
    $$\varphi_{n+1} : I^{n} \times F \times \Omega X^{n} \to G, \ n \geq 0$$ such that for all $t_1, \dots, t_n \in I^{n}$, $\alpha \in F$ and $\gamma_1, \dots, \gamma_n \in \Omega X$,

    \begin{equation}\label{eq : Ai morphism of topo modules}
    \begin{split}
    & \varphi_{n+1}(t_1,\dots, t_n, \alpha, \gamma_1, \dots, \gamma_n) = \\
    & \left\{\begin{array}{rl}
      \varphi_{n}(t_2,\dots, t_n, \nu_F(\alpha,\gamma_1),\gamma_2, \dots, \gamma_n)   &  \textup{if} \ t_1=1, \\
       \varphi_{n}(\hat{t}_j , \alpha, \gamma_1, \dots,\gamma_{j-1} \cdot \gamma_j,\dots, \gamma_n)  & \textup{if} \ t_j=1, \ j \geq 2,\\
       \nu_G\left(\varphi_{j}(t_1,\dots, t_{j-1}, \alpha, \gamma_1, \dots, \gamma_{j-1}),\gamma_{j} \cdot \ \dots\ \cdot \gamma_n)\right) & \textup{if} \ t_j=0.
    \end{array}\right.
    \end{split}
    \end{equation}
For $j \geq 2$, we denoted $\hat{t}_j = (t_1, \dots, t_{j-1}, t_{j+1}, \dots, t_n)$.    
\end{defi}

Let $\varphi_{n+1} : I^{n} \times F \times \Omega X^{n} \to G, \ n \geq 0$ be an $\Ai$-morphism of topological modules between two topological spaces $(F,\nu_F)$ and $(G,\nu_G)$  endowed with strictly associative topological module structures over $\Omega X$.\\

Then, the map $\varphi_2 : I \times F \times \Omega X \to G$ satisfies \eqref{eq : Ai morphism of topo modules} for $n=1$, ie $\varphi_2(1,\alpha, \gamma) = \varphi_1 (\nu_F(\alpha,\gamma))$ and $\varphi_2(0, \alpha, \gamma) = \nu_G(\varphi_1(\alpha), \gamma)$ for all $\alpha \in F$ and $\gamma \in \Omega X$. It follows that $\varphi_2$ is a homotopy between $\varphi_1 \circ \nu_F$ and $\nu_G(\varphi_1, \cdot)$ and that $\varphi_1$ has to be a \emph{morphism of topological modules} up to homotopy.\\

For $n=2$, \eqref{eq : Ai morphism of topo modules} reads $$\begin{array}{ccl}
    \varphi_3(1,t,\alpha,\gamma_1,\gamma_2)& = & \varphi_2(t,\nu_F(\alpha, \gamma_1), \gamma_2) \\
    \varphi_3(t,1,\alpha,\gamma_1,\gamma_2)& = & \varphi_2(t,\alpha, \gamma_1 \cdot \gamma_2) \\
    \varphi_3(0,t,\alpha,\gamma_1,\gamma_2)& = & \nu_G(\varphi_1(\alpha), \gamma_1 \cdot \gamma_2) \\
    \varphi_3(t,0,\alpha,\gamma_1,\gamma_2)& = & \nu_G(\varphi_2(t,\alpha, \gamma_1),\gamma_2).
\end{array}$$

\begin{rem}
    The reader accustomed to $\Ai$-theory may expect the maps $\varphi_{n+1}$ to be defined on the multiplihedra $J_n$ introduced in \cite{Sta70}. Since we will mainly work on fibrations, where we can always assume that there exists a transitive lifting function, we will only work on the case where $F$ and $G$ have strictly associative topological module structures on $\Omega X$ itself endowed with a strictly associative multiplication. The next proposition is a proof that cubes are enough to encode coherent homotopies for a map between two strictly associative (topological) modules over a strictly associative (topological) algebra.
\end{rem}

\begin{prop}\label{prop : Ai topo induit Ai}
    An $\Ai$-morphism of topological modules $\{\varphi_{n+1} : I^{n} \times F \times \Omega X^{n} \to G\}$ induces a morphism $\boldsymbol{\phi} :(C_*(F),\partial, \nu_{F,*}) \to (C_*(G), \partial, \nu_{G,*})$ of $\Ai$-modules over $C_*(\Omega X)$, where $\phi_{n+1} : C_*(F) \otimes C_*(\Omega X)^{\otimes n} \to C_*(G)$, the map of degree $n$ induced by $\varphi_{n+1}$, is defined by 

    $$\phi_{n+1}(\alpha \otimes \gamma_1 \otimes \dots \otimes \gamma_n) = \varphi_{n+1,*}(\Id_{I^n} \otimes \alpha \otimes \gamma_1 \otimes \dots \otimes \gamma_n)$$
    
\end{prop}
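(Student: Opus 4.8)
The plan is to unwind the definition of $\phi_{n+1}$ and to show that the sequence $\boldsymbol{\phi}=\{\phi_{n+1}\}$ satisfies the defining relation \eqref{eq : relation morphisme de Ai module} of a morphism of $\Ai$-modules, the source and target being the DG-modules $C_*(F)$ and $C_*(G)$ regarded as $\Ai$-modules with structure concentrated in arities $1$ and $2$, so that $\nu^A_1=\nu^B_1=\partial$, $\nu^A_2=\nu_{F,*}$, $\nu^B_2=\nu_{G,*}$ and $\nu^A_{\ge 3}=\nu^B_{\ge 3}=0$. First I would check that $\phi_{n+1}$ is well defined on $C_*(F)\otimes C_*(\Omega X)^{\otimes n}$: if one of $\alpha,\gamma_1,\dots,\gamma_n$ is degenerate then so is the product cube $\Id_{I^n}\times\alpha\times\gamma_1\times\dots\times\gamma_n$, hence so is its image under $\varphi_{n+1}$; and $\phi_{n+1}$ has degree $n$ since the fundamental cube $\Id_{I^n}$ has degree $n$.

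The heart of the argument is the computation of $\partial\phi_{n+1}(\alpha\otimes\gamma_1\otimes\dots\otimes\gamma_n)$. Since push-forward of cubical chains commutes with the boundary and the boundary of a product of cubical chains obeys the Leibniz rule,
\[
\partial\phi_{n+1}(\alpha\otimes\gamma_1\otimes\dots\otimes\gamma_n)=(\varphi_{n+1})_*\,\partial\bigl(\Id_{I^n}\times\alpha\times\gamma_1\times\dots\times\gamma_n\bigr),
\]
and, using $\partial\Id_{I^n}=\sum_{j=1}^{n}(-1)^{j-1}\bigl(\Id_{I^n}|_{t_j=1}-\Id_{I^n}|_{t_j=0}\bigr)$ together with the Leibniz rule, the right-hand side splits into the $2n$ face contributions of the cube plus the contributions of $\partial\alpha$ and of the $\partial\gamma_i$. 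I would then identify each term using the defining equations \eqref{eq : Ai morphism of topo modules}: the face $t_1=1$ yields $\phi_n\bigl(\nu_{F,*}(\alpha\otimes\gamma_1)\otimes\gamma_2\otimes\dots\otimes\gamma_n\bigr)$, i.e. the term $\varphi_n(\nu^A_2\otimes 1^{\otimes n-1})$; for $2\le j\le n$ the face $t_j=1$ concatenates $\gamma_{j-1}$ with $\gamma_j$, and since concatenation of cubical chains of $\Omega X$ is by definition the Pontryagin product, it yields $\pm\,\phi_n\bigl(\alpha\otimes\dots\otimes\mu_2(\gamma_{j-1}\otimes\gamma_j)\otimes\dots\otimes\gamma_n\bigr)$, i.e. the term $\varphi_n(1^{\otimes j-1}\otimes\mu_2\otimes 1^{\otimes n-j})$; the face $t_n=0$ yields $\nu_{G,*}\bigl(\phi_n(\alpha\otimes\gamma_1\otimes\dots\otimes\gamma_{n-1})\otimes\gamma_n\bigr)$, i.e. the term $\nu^B_2(\varphi_n\otimes 1)$; and for $1\le j\le n-1$ the face $t_j=0$ equals, by \eqref{eq : Ai morphism of topo modules}, an expression $\nu_G\bigl(\varphi_j(\dots),\gamma_j\cdot\ldots\cdot\gamma_n\bigr)$ which does not depend on the coordinates $t_{j+1},\dots,t_n$, so the corresponding cubical chain is degenerate and vanishes in $C_*(G)$. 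Finally the Leibniz terms $\partial\alpha=\nu^A_1\alpha$ and $\partial\gamma_i=\mu_1\gamma_i$ give the terms $\varphi_{n+1}(\nu^A_1\otimes 1^{\otimes n})$ and $\varphi_{n+1}(1^{\otimes r}\otimes\mu_1\otimes 1^{\otimes n-r})$. Collecting all contributions and reading $\partial=\nu^B_1$, one recovers exactly equation \eqref{eq : relation morphisme de Ai module} with $N=n$, which is the claim.

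The only genuinely delicate point is the sign bookkeeping: I must check that the cubical sign $(-1)^{j-1}$ carried by the face $t_j$, the Leibniz sign $(-1)^n$ carried by the terms differentiating $\alpha$ and the $\gamma_i$, and the Koszul signs produced by the graded reorderings all combine to reproduce the signs $(-1)^{n(k+1)}$, $(-1)^{N}$, $(-1)^r$ and $(-1)^{kn}$ of \eqref{eq : relation morphisme de Ai module}. I would carry this out by writing the whole computation in the functional notation of Notation \ref{not : morphism tilde m}, pushing the Koszul signs to the very last step, and checking the cases $N=1,2$ by hand as a sanity check. Conceptually, the real content — and the reason cubes suffice here rather than Stasheff's multiplihedra — is the degeneracy vanishing of the faces $t_j=0$ for $j<n$: this is precisely what forces the boundary of the universal cube $\Id_{I^n}$ to see only those terms that appear in the $\Ai$-morphism equation for a target which is an honest DG-module, where no higher operation $\nu^B_{\ge 3}$ is available to absorb such faces.
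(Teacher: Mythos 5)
Your proposal is correct and takes essentially the same approach as the paper: both compute the cubical boundary of $\phi_{n+1}$ by splitting it into the $2n$ face contributions of $\Id_{I^n}$ plus the Leibniz terms for $\partial\alpha$ and the $\partial\gamma_i$, then identify each face via the defining relations \eqref{eq : Ai morphism of topo modules}, with the key point in both arguments being that the faces $t_r=0$ for $r<n$ produce chains that do not depend on $t_{r+1},\dots,t_n$ and are therefore degenerate, hence vanish in $C_*(G)$.
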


\begin{proof}
    Let $\mu_* : C_*(\Omega X) \otimes C_*(\Omega X) \to C_*(\Omega X)$ denote the Pontryagin-product.
    We use equation \eqref{eq : Ai morphism of topo modules} in order to compute $\partial \phi_{n+1}$ for all $n \in \N$ :

    \begin{align*}
        &\partial \phi_{n+1}(\alpha \otimes \gamma_1 \otimes \dots \otimes \gamma_n)\\
        &= \left[ \underbrace{\phi_{n}(\nu_{F,*} \otimes 1^{\otimes n-1})}_{t_1=1} + \sum_{r=1}^{n-1} (-1)^r \underbrace{\phi_n(1^{\otimes r} \otimes \mu_* \otimes 1^{\otimes n-1-r})}_{t_{r+1}=1} - (-1)^{n-1} \underbrace{\nu_{G,*} (\phi_{n} \otimes 1)}_{t_n = 0} + \underbrace{0}_{t_r=0, \ r<n} \right. \\
        &+ \left. \sum_{r=0}^{n-1} (-1)^n \phi_{n+1}(1^{\otimes r} \otimes \mu_1 \otimes 1^{\otimes n-r})\right](\alpha \otimes \gamma_1 \otimes \dots \otimes \gamma_n).
    \end{align*}

    Indeed, for any $\delta \in F, \ \tau_1, \dots, \tau_n \in \Omega X$, if there exists $r<n$ such that $t_r = 0$, then $\varphi_{n+1}(t_1, \dots, t_n, \delta, \tau_1, \dots, \tau_n)$ does not depend on $t_{r+1}$. Therefore, $\phi_{n+1}(\alpha \otimes \gamma_1 \otimes \dots \otimes \gamma_n) \lvert_{t_r=0}$ is a degenerate chain.

    This shows that $\boldsymbol{\phi} :(C_*(F),\partial, \nu_{F,*}) \to (C_*(G), \partial, \nu_{G,*}) $ satisfies equation \eqref{eq : relation morphisme de Ai module} and finishes the proof.
    
\end{proof}

Proposition \ref{prop : Morphisme de Ai module en morphisme de complexe} gives a direct Corollary.

\begin{cor}\label{cor : Ai morphism of toplogical modules induces morphism of comlpexes}
    If $\{\varphi_{n+1} : I^{n} \times F \times \Omega X^{n} \to G\}$ is an $\Ai$-morphism of topological modules, then it induces a morphism of complexes $$\tilde{\varphi} : C_*(X, C_*(F)) \to C_*(X, C_*(G)).$$
\end{cor}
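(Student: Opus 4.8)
The plan is to simply chain together the two results that immediately precede this corollary, so the proof is a short composition argument. First I would apply Proposition \ref{prop : Ai topo induit Ai} to the given $\Ai$-morphism of topological modules $\{\varphi_{n+1} : I^{n} \times F \times \Omega X^{n} \to G\}$: this produces a morphism $\boldsymbol{\phi} : (C_*(F),\partial,\nu_{F,*}) \to (C_*(G),\partial,\nu_{G,*})$ of $\Ai$-modules over $C_*(\Omega X)$, whose component of degree $n$ is the map $\phi_{n+1} : C_*(F) \otimes C_*(\Omega X)^{\otimes n} \to C_*(G)$ given by $\phi_{n+1}(\alpha \otimes \gamma_1 \otimes \cdots \otimes \gamma_n) = \varphi_{n+1,*}(\Id_{I^n} \otimes \alpha \otimes \gamma_1 \otimes \cdots \otimes \gamma_n)$.

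Next I would feed this morphism of $\Ai$-modules into Proposition \ref{prop : Morphisme de Ai module en morphisme de complexe}, applied with $\mathcal{A} = C_*(F)$ and $\mathcal{B} = C_*(G)$, each equipped with the $C_*(\Omega X)$-module structure induced by its topological module structure — which is precisely the structure used to form the enriched complexes $C_*(X, C_*(F))$ and $C_*(X, C_*(G))$. That proposition then gives directly that
$$\tilde{\varphi} := \sum_{n \geq 0} (\phi_{n+1} \otimes 1)\,\m^{\,n} : C_*(X, \Xi, C_*(F)) \to C_*(X, \Xi, C_*(G))$$
is a morphism of complexes, which is exactly the assertion of the corollary (with the notation $\tilde{\varphi}$ consistent with the notation fixed after Proposition \ref{prop : Morphisme de Ai module en morphisme de complexe}).

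There is no genuine obstacle here — the content is entirely contained in the two cited propositions. The only point deserving a moment's attention is a bookkeeping one: one must check that the $C_*(\Omega X)$-module structure on $C_*(F)$ (respectively $C_*(G)$) that Proposition \ref{prop : Ai topo induit Ai} works with, namely $\nu_{F,*}$ (respectively $\nu_{G,*}$), literally coincides with the module structure used to build $C_*(X, C_*(F))$ (respectively $C_*(X, C_*(G))$). In the setting where $F$ and $G$ are fibers of fibrations this is automatic, since in both cases the module structure is by construction the one induced by the chosen transitive lifting function; so the two propositions compose without any extra hypothesis.
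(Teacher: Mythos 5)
Your proposal is correct and is essentially identical to the paper's own argument: the corollary is stated immediately after Proposition~\ref{prop : Ai topo induit Ai}, and the paper explicitly presents it as a direct consequence of that proposition combined with Proposition~\ref{prop : Morphisme de Ai module en morphisme de complexe}, which is exactly the two-step composition you describe. Your additional remark about checking that the module structures $\nu_{F,*}$ and $\nu_{G,*}$ agree with those used to form the enriched complexes is a reasonable piece of bookkeeping, consistent with how the paper sets things up.
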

\begin{flushright}$\blacksquare$\end{flushright}

Assume that $F$ and $G$ are fibers of fibrations $F \hookrightarrow E_1 \to X$ and $G \hookrightarrow E_2 \to X$ endowed with the $\Omega X$ topological module structures induced by transitive lifting functions.  For the morphism of complexes $\tilde{\varphi} : C_*(X, C_*(F)) \to C_*(X, C_*(G))$ to be compatible with the Fibration Theorem, we will require that $\{\varphi_{n+1} : I^{n} \times F \times \Omega X^{n}  \to G\}$ is induced by a \textbf{morphism of fibrations} $\varphi_1 : E_1 \to E_2$.

\begin{defi}\label{defi : Ai morphism of fibration}
    Let $F \hookrightarrow E_1 \overset{\pi_1}{\to} X$ and $F_2 \hookrightarrow E_2 \overset{\pi_2}{\to} X$ be fibrations over $X$. A \textbf{morphism of fibrations} (over X) is a continuous map $\varphi : E_1 \to E_2$ such that $\pi_2 \circ \varphi = \pi_1$.

\end{defi}

A morphism of fibrations does not in general preserve lifting functions. Indeed, consider $\Omega X \hookrightarrow \ls{X} \overset{\ev}{\to} X$ endowed with the transitive lifting function

$$\Phi : \ls{X} \ftimes{\ev}{\ev_0} \mathcal{P} X \to \ls{X}, \quad \Phi(\alpha,\gamma) = \gamma^{-1} \alpha \gamma$$
and the fibration of the figure-eight space $\Omega X^2 \hookrightarrow \ls{X} \ftimes{\ev}{\ev} \ls{X} \overset{\ev}{\to} X$ endowed with the transitive lifting function
$$ \Phi^2 : (\ls{X} \ftimes{\ev}{\ev} \ls{X}) \ftimes{\ev}{\ev_0} \mathcal{P} X, \quad    \Phi^2((\alpha,\beta), \gamma) = (\gamma^{-1} \alpha \gamma, \gamma^{-1} \beta \gamma).$$

The concatenation $m : (\ls{X} \ftimes{\ev}{\ev} \ls{X}) \to \ls{X}$ does not preserve those transitive lifting functions :

$$m(\Phi((\alpha,\beta),\gamma)) = \gamma^{-1}\alpha \gamma \gamma^{-1} \beta\gamma \neq \gamma^{-1}\alpha\beta\gamma = \Phi(m(\alpha,\beta),\gamma).$$

This makes it difficult to define a morphism $$m : \underbrace{H_*(X, C_*(\Omega X^2))}_{\simeq H_*(\ls{X} \ftimes{\ev}{\ev} \ls{X})} \to \underbrace{H_*(X,C_*(\Omega X))}_{\simeq H_*(\ls{X})}$$ at the chain level, since it would have to be compatible with the module structures, which are themselves defined by the respective transitive lifting functions.

However, Theorem B states that every morphism of fibrations yields a morphism between enriched Morse complexes that corresponds to the morphism between the singular complexes of the total spaces. We will now restate Theorem B and prove it.

\begin{thm}\label{thm : morphisme induit commute avec iso}
   
    Let $F_1 \hookrightarrow E_1 \to X$ and $F_2 \hookrightarrow E_2 \to X$ be two fibrations. Let $\varphi: E_1 \to E_2$ be a morphism of fibrations and $\Xi$ be a set of DG Morse data on $X$.\\
    
    i) There exists a sequence of maps $\left\{\varphi_{n+1} : I^n \times F_1 \times \Omega X^{n-1} \times \mathcal{P}_{\star \to X} X \to E_2\right\}$ called a \textbf{coherent homotopy} for $\varphi$ that induces a morphism of complexes $\tilde{\varphi} : C_*(X,\Xi, C_*(F_1)) \to C_*(X,\Xi, C_*(F_2))$.\\

    ii) A coherent homotopy induces a chain homotopy $v : C_*(X,\Xi,C_*(F_1)) \to C_{*+1}(E_2)$ between $\Psi_2 \circ \tilde{\varphi}$ and $\varphi_* \circ \Psi_1$ for any set of DG Morse data $\Xi$ on $X$  where the morphisms $\Psi_1$ and $\Psi_2$ are the quasi-isomorphisms given by the Fibration Theorem. In other words, the following diagram commutes up to chain homotopy

    \[
    \xymatrix{
    C_*(X,\Xi, C_*(F_1)) \ar[r]^-{\Tilde{\varphi}} \ar[d]^{\Psi_1} & C_*(X,\Xi, C_*(F_2)) \ar[d]_{\Psi_2} \\
    C_*(E_1) \ar[r]_{\varphi_*} & C_*(E_2).
    }
    \]
\end{thm}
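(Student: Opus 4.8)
The plan is to obtain part (i) by chaining together results already established, and to prove part (ii) by writing down an explicit chain homotopy built from the coherent homotopy, evaluated on the Latour-cell data that defines $\Psi_1$ and $\Psi_2$.

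For part (i): by Lemma \ref{lemme : morphism of fibrations induces coherent homotopy}, the morphism of fibrations $\varphi$ admits a coherent homotopy $\{\varphi_{n+1}: I^n\times F_1\times\Omega X^{n-1}\times\mathcal{P}_{\star\to X}X\to E_2\}$ whose restriction, with the final path-slot taken to be a Moore loop based at $\star$, is an $\Ai$-morphism of topological modules $\{\varphi^{F_1}_{n+1}: I^n\times F_1\times\Omega X^n\to F_2\}$ in the sense of Definition \ref{defi : Ai morphism of topological module}. Proposition \ref{prop : Ai topo induit Ai} turns this into a morphism $\boldsymbol{\varphi}: C_*(F_1)\to C_*(F_2)$ of $\Ai$-modules over $C_*(\Omega X)$, with components $\phi_{n+1}$ the push-forwards of the $\varphi^{F_1}_{n+1}$ (in particular $\phi_1=(\varphi|_{F_1})_*$, which lands in $C_*(F_2)$ since a morphism of fibrations sends $F_1$ into $F_2$). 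Corollary \ref{cor : Ai morphism of toplogical modules induces morphism of comlpexes}, i.e. Proposition \ref{prop : Morphisme de Ai module en morphisme de complexe}, then produces, for every set of DG Morse data $\Xi$, the morphism of complexes $\tilde\varphi=\sum_{n\geq 0}(\phi_{n+1}\otimes 1)\,\m^n : C_*(X,\Xi,C_*(F_1))\to C_*(X,\Xi,C_*(F_2))$.

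For part (ii), I reduce, exactly as in the proof of the Fibration Theorem (Section \ref{subsection : Fibration Theorem}), to the pullback fibrations $E_1',E_2'\to X/\mathcal{Y}$, for which $\Psi_i$ is represented by $\alpha\otimes x\mapsto\Phi_{i,*}(\alpha\otimes m_x)$, with $\{m_x\in C_{|x|}(\mathcal{P}_{\star\to X/\mathcal{Y}}(X/\mathcal{Y}))\}$ the evaluation of a compatible representing chain system of the Latour cells, satisfying $\partial m_x=\sum_y m'_{x,y}\,m_y$ for the lifted twisting cocycle $m'_{x,y}$ of Definition \ref{defi : for the fibration thm}. Applying Lemma \ref{lemme : morphism of fibrations induces coherent homotopy} to the pulled-back morphism $\varphi':E_1'\to E_2'$, I set
$$v(\alpha\otimes x)=\sum_{n\geq 0}(-1)^{\varepsilon_n}\sum_{x=y_0,y_1,\dots,y_n}\varphi'_{n+2,*}\!\left(\Id_{I^{n+1}}\otimes\alpha\otimes m'_{y_0,y_1}\otimes\cdots\otimes m'_{y_{n-1},y_n}\otimes m_{y_n}\right),$$
feeding the iterated lifted twisting-cocycle loop chains into the $n$ loop-slots of $\varphi'_{n+2}$ and the Latour-cell chain $m_{y_n}$ into its path-slot, with $\varepsilon_n$ a sign to be fixed. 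One then computes $\partial v+v\,\partial$, splitting the boundary of each push-forward chain into the boundary of the parametrising cube $I^{n+1}$ and the boundaries of $\alpha$ and of the chains $m'_{\ast,\ast}$, $m_{y_n}$. The cube-boundary terms are read off from the defining face relations of a coherent homotopy: the faces on which the outermost unused coordinate equals $1$ successively assemble the components $\phi_j$; the faces on which an intermediate coordinate equals $1$ produce concatenations of two consecutive loop chains; the faces on which a coordinate equals $0$ apply $\Phi_{2,*}$ to an already-assembled $\phi_j$-term, the intermediate such faces being degenerate for the same reason as in the proof of Proposition \ref{prop : Ai topo induit Ai}, while the innermost $t=0$ face is $\varphi\circ\Phi_1$. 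Summing over all sequences of critical points and all $n$, the concatenation terms coming from the $t=1$ faces cancel against the terms produced by the Maurer–Cartan equation \eqref{MC} for the $m'_{\ast,\ast}$ and by $\partial m_y=\sum m'_{y,z}m_z$, together with the terms of $v\,\partial$ coming from $\partial_{C_*(F_1)}\alpha$ and from the twisted differential; the surviving terms are precisely $\Psi_{E_2'}\circ\tilde\varphi$ (outermost $t=1$-then-$t=0$ faces) and $-\varphi_*\circ\Psi_{E_1'}$ (innermost $t=0$ face), so that $\partial v+v\,\partial=\Psi_{E_2'}\circ\tilde\varphi-\varphi_*\circ\Psi_{E_1'}$. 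Transporting back along the homotopy equivalences $C_*(E_i')\simeq C_*(E_i)$ induced by $\theta$ and the identification $m'_{x,y}\leftrightarrow m_{x,y}$ of coefficient data yields the chain homotopy $v:C_*(X,\Xi,C_*(F_1))\to C_{*+1}(E_2)$ and hence the commutativity of the square up to chain homotopy. The main obstacle is the sign and degeneracy bookkeeping in part (ii) — choosing $\varepsilon_n$ and controlling the Koszul signs from the degrees of the $\phi_{n+1}$, $m'_{\ast,\ast}$ and $m_{y_n}$, the orientation signs from the cube faces, and the signs in \eqref{MC}, so that the cancellations above are exact and every interior $t=0$ face is genuinely a degenerate cube; the combinatorial skeleton is, however, identical to that of Propositions \ref{Prop : continuation morphisms} and \ref{prop : Ai topo induit Ai}, and the functional notation of Notation \ref{not : morphism tilde m} keeps it manageable.
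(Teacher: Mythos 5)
Your argument follows the paper's proof essentially verbatim: part (i) is the same chain Lemma \ref{lemme : morphism of fibrations induces coherent homotopy} $\Rightarrow$ Proposition \ref{prop : Ai topo induit Ai} $\Rightarrow$ Proposition \ref{prop : Morphisme de Ai module en morphisme de complexe}, and after reindexing ($n \mapsto n+1$) your $v$ is exactly the paper's $v=\sum_{n\geq 1}(-1)^{n+1}\phi_{n+1}\,\mL\,\m^{n-1}$, whose $\partial v+v\partial$ is computed there by noting that the pushed-forward family $\{\phi_{n+1}\}$ still satisfies relation \eqref{eq : relation morphisme de Ai module} with $\nu_2^A=\Phi_{1,*}$, $\nu_2^B=\Phi_{2,*}$, so that the cancellations reduce to the pattern of Proposition \ref{Prop : continuation morphisms}. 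The one imprecision in your face analysis is that the two endpoint faces are partly swapped — for $\varphi_2$ it is the $t=1$ face, not the $t=0$ face, that equals $\varphi\circ\Phi_1$ and hence yields $\varphi_*\circ\Psi_1$, while the $\Phi_{2,*}$-terms assembling $\Psi_2\circ\tilde\varphi$ come from the \emph{last} coordinate being $0$ — but this is precisely the bookkeeping the paper's functional notation makes mechanical.
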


\begin{myproof}{of}{i)}
 We will prove that even if \emph{a priori} 

$$\begin{array}{ccc}
F_1 \times \mathcal{P}_{\star \to X}X & \to & E_2\\
(\alpha,\gamma) & \mapsto & \Phi_2(\varphi(\alpha),\gamma)
\end{array} \neq 
\begin{array}{ccc}  F_1 \times \mathcal{P}_{\star \to X}X & \to & E_2\\
(\alpha,\gamma) & \mapsto & \varphi(\Phi_1(\alpha,\gamma)) 
\end{array},$$
these maps are always homotopic and there exists a family of higher homotopies as defined in the following lemma.

\begin{lemme}\label{lemme : morphism of fibrations induces coherent homotopy}
    Let $F_1 \hookrightarrow E_1 \overset{\pi_1}{\to} X $ and $F_2 \hookrightarrow E_2 \overset{\pi_2}{\to} X$ be two fibrations over $X$ endowed with transitive lifting functions $\Phi_1 : E_1 \times \mathcal{P}_{\star \to X} X \to E_1$ and $\Phi_2 : E_2 \times \mathcal{P}_{\star \to X} X \to E_2$.
    
    For any morphism of fibrations $\varphi : E_1 \to E_2$, there exists a sequence of maps $$\varphi_{n+1} : I^n \times F_1 \times \Omega X^{n-1} \times \mathcal{P}_{\star \to X} X \to E_2$$ for $n\geq 1$ such that $\varphi = \varphi_1$, $\pi_2 \circ \varphi_{n+1}(t_1,\dots, t_n, \alpha, \gamma_1, \dots, \gamma_{n}) = \ev_1(\gamma_n)$ and
    
    \begin{equation}\label{eq : Ai morphism of fibration}
    \begin{split}
    & \varphi_{n+1}(t_1,\dots, t_n, \alpha, \gamma_1, \dots, \gamma_n) = \\
    & \left\{\begin{array}{rl}
      \varphi_{n}(t_2,\dots, t_n, \Phi_1(\alpha,\gamma_1),\gamma_2, \dots, \gamma_{n})   &  \textup{if} \ t_1=1, \\
       \varphi_{n}(\hat{t}_j , \alpha, \gamma_1, \dots,\gamma_{j-1} \cdot \gamma_j,\dots, \gamma_{n})  & \textup{if} \ t_j=1, \ j \geq 2,\\
       \Phi_2\left(\varphi_{j}(t_1,\dots, t_{j-1}, \alpha, \gamma_1, \dots, \gamma_{j-1}),\gamma_{j} \cdot \ \dots \ \cdot \gamma_{n})\right) & \textup{if} \ t_j=0.
    \end{array}\right.
    \end{split}
    \end{equation} 
 
\end{lemme}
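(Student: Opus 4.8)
The plan is to construct the family $\{\varphi_{n+1}\}$ by induction on $n$, with $\varphi_1=\varphi$, and to do so by an explicit formula that interpolates between lifting "inside'' with $\Phi_1$ and "outside'' with $\Phi_2$. For the base case: the two maps $F_1\times\mathcal P_{\star\to X}X\to E_2$ given by $(\alpha,\gamma)\mapsto\varphi(\Phi_1(\alpha,\gamma))$ and $(\alpha,\gamma)\mapsto\Phi_2(\varphi(\alpha),\gamma)$ both lie over $ev_1(\gamma)$ (using $\pi_2\circ\varphi=\pi_1$), and they agree when $\gamma$ is constant because $\Phi_1(\alpha,\star)=\alpha$ and $\Phi_2(e,\star)=e$. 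Writing $\gamma:[0,a]\to X$, I would therefore set
\[
\varphi_2(t,\alpha,\gamma)=\Phi_2\bigl(\varphi(\Phi_1(\alpha,\gamma|_{[0,ta]})),\,\gamma|_{[ta,a]}\bigr),
\]
which satisfies $\varphi_2(1,\alpha,\gamma)=\varphi(\Phi_1(\alpha,\gamma))$, $\varphi_2(0,\alpha,\gamma)=\Phi_2(\varphi(\alpha),\gamma)$, and $\pi_2\circ\varphi_2=ev_1(\gamma)$.

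For the inductive step, given $\varphi_n$ with $\pi_2\circ\varphi_n=ev_1(\cdot)$ on its last ($\mathcal P_{\star\to X}X$) argument, I would define for $n\ge2$, with $\gamma_n:[0,a_n]\to X$,
\begin{align*}
\varphi_{n+1}(t_1,\dots,t_n,\alpha,\gamma_1,\dots,\gamma_n):=\Phi_2\Bigl(&\varphi_n\bigl(t_1,\dots,t_{n-1},\alpha,\gamma_1,\dots,\gamma_{n-2},\,\gamma_{n-1}\#(\gamma_n|_{[0,t_na_n]})\bigr),\\
&\gamma_n|_{[t_na_n,a_n]}\Bigr).
\end{align*}
This is manifestly continuous; the inductive hypothesis guarantees that $\varphi_n(\dots,\gamma_{n-1}\#(\gamma_n|_{[0,t_na_n]}))$ lies over $\gamma_n(t_na_n)=ev_0(\gamma_n|_{[t_na_n,a_n]})$, so $\Phi_2$ may be applied, and then $\pi_2\circ\varphi_{n+1}=ev_1(\gamma_n)$ as required. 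It remains to check the face identities \eqref{eq : Ai morphism of fibration}. The faces $\{t_n=0\}$ and $\{t_n=1\}$ are immediate from the formula together with $\Phi_2(\Phi_2(e,\gamma),\delta)=\Phi_2(e,\gamma\#\delta)$ and $\Phi_2(e,\star)=e$. The faces $\{t_j=0\}$ for $j<n$ follow by feeding $t_j=0$ into the (inductive) version of \eqref{eq : Ai morphism of fibration} for $\varphi_n$ and then collapsing the resulting double lift by transitivity of $\Phi_2$. The face $\{t_1=1\}$ — where $\Phi_1$ enters, recursively through the $\{t_1=1\}$ faces of the lower $\varphi_k$'s — and the faces $\{t_j=1\}$ for $2\le j\le n-1$ — where two consecutive loop arguments merge — follow by unwinding the recursion and invoking the corresponding face identities of $\varphi_n$.

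The main obstacle is bookkeeping: one must verify that this single closed expression reproduces all of the prescribed faces simultaneously, which is a somewhat delicate induction keeping track of which among $\gamma_1,\dots,\gamma_n$ are currently being lifted "inside'' (via the iterated $\{t_\bullet=1\}$ faces feeding into $\Phi_1$) versus "outside'' (via $\Phi_2$), using at each stage only the strict transitivity and unitality $\Phi_i(\Phi_i(e,\gamma),\delta)=\Phi_i(e,\gamma\#\delta)$, $\Phi_i(e,\star)=e$ of the two lifting functions. Once Lemma \ref{lemme : morphism of fibrations induces coherent homotopy} is established, the restriction $\varphi_{n+1}^{F_1}:I^n\times F_1\times\Omega X^n\to F_2$ obtained by taking $\gamma_n$ to be a loop is an $\Ai$-morphism of topological modules over $\Omega X$ (Definition \ref{defi : Ai morphism of topological module}), and Proposition \ref{prop : Ai topo induit Ai} together with Proposition \ref{prop : Morphisme de Ai module en morphisme de complexe} then deliver part i) of Theorem \ref{thm : morphisme induit commute avec iso}.

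As an alternative, more abstract route (useful if one prefers not to commit to a formula): build $\varphi_{n+1}$ first on $\partial I^n\times F_1\times\Omega X^{n-1}\times\mathcal P_{\star\to X}X$ directly from the face prescriptions in \eqref{eq : Ai morphism of fibration}, checking consistency on the codimension-two strata — which again reduces to the transitivity relations — and then extend over the cube using that $\pi_2$ is a Hurewicz fibration. Here the point is that the obstruction to extending a lift from the full boundary $\partial I^n$ vanishes: after the deformation retraction collapsing the "merge'' faces $\{t_j=1\}$, the boundary datum becomes a map that is null over $X$ by reparametrization of Moore paths (as in the case $n=2$, where the relevant triangle $P_0\to P_1\to P_2$ is a reparametrization of a single path, hence degenerate and fillable).
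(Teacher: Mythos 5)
Your construction is correct and is essentially the paper's proof. The recursive formula $\varphi_{n+1}(\dots)=\Phi_2\bigl(\varphi_n(t_1,\dots,t_{n-1},\alpha,\gamma_1,\dots,\gamma_{n-2},\gamma_{n-1}\#(\gamma_n|_{[0,t_na_n]})),\gamma_n|_{[t_na_n,a_n]}\bigr)$, once unwound using transitivity of $\Phi_2$, reproduces exactly the paper's closed form $\varphi_{n+1}=\Phi_2\bigl(\varphi(\Phi_1(\alpha,(\gamma_1\cdots\gamma_n)|_{I^0_n})),(\gamma_1\cdots\gamma_n)|_{I^1_n}\bigr)$ with splitting time $J_n(t_1,\dots,t_n,a_1,\dots,a_n)=t_1(a_1+t_2(a_2+\dots+t_na_n))$, and the face checks you sketch are the same verifications the paper carries out via the identities for $J_n$.
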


\begin{defi}\label{defi : coherent homotopy}
    We say that a family $$\left\{\varphi_{n+1} : I^n \times F_1 \times \Omega X^{n-1} \times \mathcal{P}_{\star \to X} X \to E_2\right\}$$ satisfying equation \eqref{eq : Ai morphism of fibration} is a \textbf{coherent homotopy} for the morphism of fibrations $\varphi = \varphi_1$.
\end{defi}

Before proving this lemma, we will see how it concludes the proof of i).

If $\varphi: E_1 \to E_2$ is a morphism of fibrations, Lemma \ref{lemme : morphism of fibrations induces coherent homotopy} builds a coherent homotopy $\{\varphi_{n+1} : I^n \times F_1 \times \Omega X^{n-1} \times \mathcal{P}_{\star \to X}X \to E_2, \ n\geq 1\}$. It suffices to check that a coherent homotopy induces an $\Ai$-morphism of topological modules $\{\varphi^{F_1}_{n+1} : I^n \times F_1 \times \Omega X^{n-1} \times \Omega X \to F_2, n \geq 0\}$ where $\varphi^{F_1}_1 = \varphi\lvert_{F_1} : F_1 \to F_2$ and $\varphi^{F_1}_{n+1} = \varphi_{n+1}\lvert_{I^n \times F_1 \times \Omega X^{n}}$.
Indeed, it would prove that $\varphi \lvert_{F_1} : F_1 \to F_2 $ induces a
morphism $$\tilde{\varphi} : C_*(X,\Xi, C_*(F_1)) \to C_*(X,\Xi, C_*(F_2))$$ by Corollary \ref{cor : Ai morphism of toplogical modules induces morphism of comlpexes}.

     First remark that since $\pi_2\circ \varphi = \pi_1$, $\varphi(F_1) \subset F_2$ and therefore $\varphi\lvert_{F_1} : F_1 \to F_2$. 
     Moreover, since for any $t_1,\dots,t_n, \alpha, \gamma_1, \dots, \gamma_{n-1}, \tau \in I^n \times F_1 \times \Omega X^{n-1} \times \mathcal{P}_{\star \to X}X$, $\pi_2 \circ \varphi_{n+1}(t_1,\dots, t_n, \alpha, \gamma_1, \dots, \gamma_{n}) = \ev_1(\gamma_n)$, if $\gamma_n \in \Omega X$, then $\varphi_{n+1}(t_1,\dots, t_n, \alpha, \gamma_1, \dots, \gamma_{n}) \in F_2.$

     The module structure on $C_*(F_1)$ and $C_*(F_2)$ are defined by the lifting functions $\Phi_1$ and $\Phi_2$. Therefore since $\{\varphi_{n+1} : I^n \times F_1 \times \Omega X^{n-1} \times \mathcal{P}_{\star \to X}X \to E_2\}$ satisfies equation \eqref{eq : Ai morphism of fibration}, the family $\{\varphi^{F_1}_{n+1} : I^n \times F_1 \times \Omega X^{n} \to F_2\}$ satisfies equation \eqref{eq : Ai morphism of topo modules} and indeed defines an $\Ai$-morphism of topological modules over $\Omega X$.\\

\begin{myproof}{of Lemma}{\ref{lemme : morphism of fibrations induces coherent homotopy}}

Before defining the whole coherent homotopy for the morphism of fibrations $\varphi : E_1 \to E_2$, let us define $\varphi_2 : I \times F_1 \times \mathcal{P}_{\star \to X} X \to E_2$. This map is a homotopy between $\varphi_2(0, \cdot) : (\alpha,\gamma) \mapsto \Phi_2(\varphi(\alpha),\gamma)$ and $\varphi_2(1, \cdot) : (\alpha,\gamma) \mapsto \varphi(\Phi_1(\alpha,\gamma))$. 

Let $\alpha \in F_1$ and $\gamma : [0,s] \to X \in \mathcal{P}_{\star \to X}X.$
This homotopy is defined by 
$$\varphi_2(t,\alpha,\gamma) = \Phi_2(\varphi(\Phi_1(\alpha,\gamma\lvert_{[0,ts]})),\gamma_{[ts,s]}).$$

It lifts a part of $\gamma$ in $E_1$ starting from $\alpha$, sends $\Phi_1(\alpha,\gamma\lvert_{[0,ts]}) $ into $E_2$ by $\varphi$ and lifts the rest of $\gamma$ starting from $\varphi(\Phi_1(\alpha,\gamma\lvert_{[0,ts]})).$ See Figure \hyperref[fig : 2]{2}.

We now define $\varphi_{n+1} : I^n \times F_1 \times \Omega X^{n-1} \times \mathcal{P}_{\star \to X}X \to E_2$ for all $n\geq 1$.
Let us first fix some notations.

$\bullet$ Let $J_1 : I \times [0,+\infty) \to  [0,+\infty)$

$$J_1(t,s) = ts.$$

For any $n \geq 2$, define $J_{n} : I^n \times [0, + \infty)^{n} \to [0, +\infty)$ by

\begin{align*}
    J_n(t_1, \dots,t_n, s_1, \dots s_n) &= J_{n-1}(t_1, \dots, t_{n-1},s_1, \dots, s_{n-1} + t_{n}s_n)\\
    &= t_1( s_1 + t_2(s_2 + t_3(s_3 + \dots + t_{n-1}( s_{n-1} +t_{n}s_{n}) \dots))).
\end{align*}

By convention, we set $J_0 = 0.$
For any $n \in \N$, denote $$I_n^0(t_1, \dots,t_n, s_1, \dots s_n) = [0, J_n(t_1, \dots,t_n, s_1, \dots s_n)]$$ and $$I_n^1(t_1, \dots,t_n, s_1, \dots s_n) = [J_n(t_1, \dots,t_n, s_1, \dots s_n), s_1 + \dots +s_n].$$

$\bullet$  In the following arguments, for any $\gamma_i \in \mathcal{P}_{\star \to X} X$ we will always denote $a_i\geq 0$ such that $\gamma_i : [0,a_i] \to X$.

Let $r_1 : I \times E_1 \times \mathcal{P}_{\star \to X} X \to E_2$, $$r_1(t,\alpha,\gamma_1) = \varphi\left(\Phi_1(\alpha, \gamma_1 \lvert_{[0,ta_1]})\right).$$

\begin{center}
\begin{overpic}[scale = 0.5]{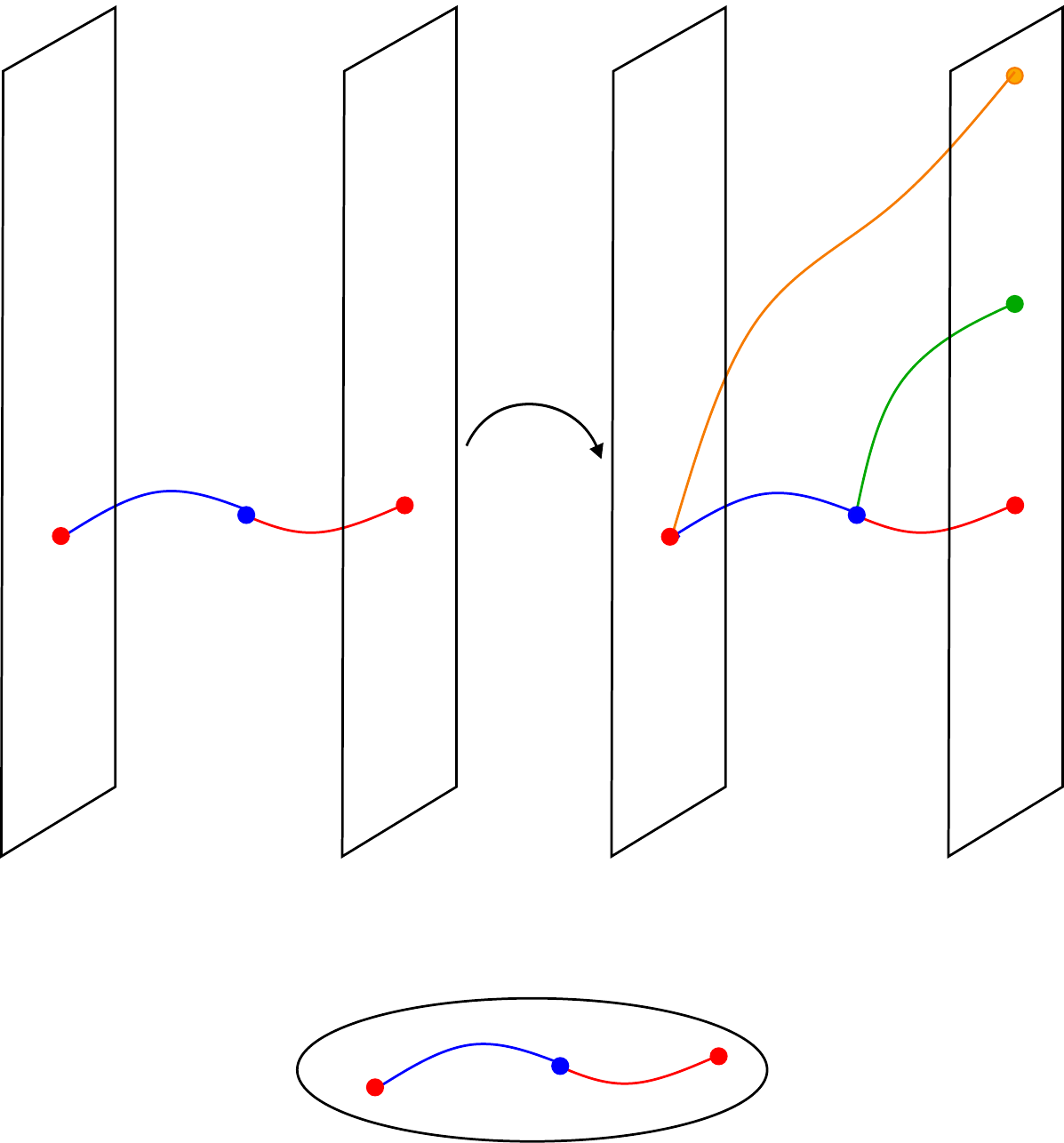}\label{fig : 2}
    \put(-8,53){$E_1$}
    \put(100,53){$E_2$}
    \put(70,5){$X$}
    \put(46,66){$\varphi$}
    \put(3,48){\textcolor{red}{$\alpha$}}
    \put(31,50){\red{$\Phi_1(\alpha,\gamma)$}}
    \put(55,48){\red{$\varphi(\alpha)$}}
    \put(50.5,8.5){$\gamma$}
    \put(40,2.5){\blue{$\gamma \lvert_{[0,ts]}$}}
    \put(53,2.5){\red{$\gamma\lvert_{[ts,s]}$}}
    \put(94,62){\red{$\varphi(\Phi_1(\alpha,\gamma))$}}
    \put(94,58){\red{$= \varphi_2(1,\alpha,\gamma)$}}
    \put(63,50){\blue{$\varphi(\Phi_1(\alpha,\gamma\lvert_{[0,ts]}))$}}
    \put(11,60){\blue{$\Phi_1(\alpha,\gamma\lvert_{[0,ts]})$}}
    \put(78.2,75){\textcolor{darkgreen}{$\Phi_2(\varphi(\Phi_1(\alpha,\gamma\lvert_{[0,ts]})),\gamma\lvert_{[ts,s]})$}}
    \put(95,71){\textcolor{darkgreen}{$= \varphi_2(t,\alpha,\gamma)$}}
    \put(68,100){\textcolor{orange}{$\Phi_2(\varphi(\alpha),\gamma)$}}
    \put(68,96){\textcolor{orange}{$=\varphi_2(0,\alpha,\gamma) $}}
\end{overpic}\\
\paragraph{}
Figure 2 - Homotopy $\varphi_2$
\end{center}

For any $n \geq 2$, define $r_{n} : I^n \times F_1 \times \Omega X^{n-1} \times \mathcal{P}_{\star \to X} X \to E_2$ by

\begin{align*}
    r_n(t_1,\dots,t_n,\alpha,\gamma_1,\dots,\gamma_n) &= r_{n-1}\left(t_1,\dots,t_{n-1}, \alpha, \gamma_1, \dots, \gamma_{n-1}\cdot \left(\gamma_n\lvert_{[0,t_na_n]}\right)\right)\\
    &=\varphi\left(\Phi_1\left(\alpha, (\gamma_1\cdot \ \dots \ \cdot \gamma_n)\lvert_{I^0_n(t_1,\dots,t_n,a_1,\dots,a_n)}\right)\right).
\end{align*}

By convention, we set $r_0 =\varphi : E_1 \to E_2$.

$\bullet$ Define the coherent homotopy in the following way :

Let $n \geq 1$. Define $\varphi_{n+1} : I^n \times F_1 \times \Omega X^{n-1} \times \mathcal{P}_{\star \to X} \to E_2$ by 
$$\varphi_{n+1}(t_1,\dots,t_n,\alpha, \gamma_1, \dots, \gamma_n) = \Phi_2\left( r_n(t_1, \dots,t_n,\alpha, \gamma_1, \dots, \gamma_n), (\gamma_1\cdot \ \dots \ \cdot \gamma_n)\lvert_{I^1_n(t_1, \dots, t_n, a_1, \dots, a_n)}\right).$$

We now prove that this satisfies the three equations of \eqref{eq : Ai morphism of fibration}.

First, we remark that for any $n \geq 1$, $j\geq 2$, $k\geq 1$, $t_1,t_2, \dots, t_n \in I^n$ and $s_1,\dots,s_n \in [0,+\infty)^{n},$ 

\begin{equation}\label{eq : Jn(1,...)}
    J_n(1,t_2,\dots, t_n,s_1,\dots,s_n) = s_1 + J_{n-1}(t_2,\dots, t_{n},s_2,\dots,s_n),
\end{equation}

\begin{equation}\label{eq : Jn(tj=1,...)}
    J_n(t_j=1, s_1, \dots,s_n) = J_{n-1}(\hat{t}_j, s_1,\dots, s_{j-1}+s_j, \dots, s_n),
\end{equation}

\begin{equation}\label{eq : Jn(tk=0,...)}
    J_n(t_k=0, s_1, \dots, s_n) = J_{k-1}(t_1, \dots, t_{k-1},s_1, \dots, s_{k-1}),
\end{equation}

where $\hat{t}_j = (t_1, \dots, t_{j-1},t_{j+1}, \dots, t_n).$\\

\textbf{\underline{First equation :}} For any $n \geq 1$, $t_1,t_2, \dots, t_n \in I^n$, $\alpha \in F_1$ and $\gamma_1,\dots,\gamma_n \in \Omega X^{n} \times \mathcal{P}_{\star \to X} X,$
using \eqref{eq : Jn(1,...)}, 

$$(\gamma_1\cdot \ \dots \ \cdot \gamma_n)\lvert_{I^0_n(1,t_2, \dots, t_n, a_1, \dots, a_n)} = \gamma_1 \cdot (\gamma_2 \cdot \ \dots \ \cdot \gamma_n)\lvert_{I^0_{n-1}(t_2, \dots, t_n, a_2, \dots,a_n)}$$
and
$$(\gamma_1\cdot \ \dots \ \cdot \gamma_n)\lvert_{I^1_n(1,t_2, \dots, t_n, a_1, \dots, a_n)} = \left\{ \begin{array}{ll}
    (\gamma_2 \cdot \ \dots \ \cdot \gamma_n)\lvert_{I^1_{n-1}(t_2, \dots, t_n, a_2, \dots, a_n)} & \textup{if } n \geq 2  \\
    \ev_1(\gamma_1) & \textup{if } n=1. 
\end{array} \right..$$

Therefore,

\begin{align*}
    r_n(1,t_2, \dots, t_n, \alpha , \gamma_1, \dots, \gamma_n) &= \varphi\left( \Phi_1\left(\alpha, \gamma_1 \cdot (\gamma_2 \cdot \ \dots \ \cdot \gamma_n)\lvert_{I^0_{n-1}(t_2, \dots, t_n, a_2, \dots,a_n)}\right) \right)\\
    &= \varphi\left( \Phi_1\left( \Phi_1(\alpha,\gamma_1), (\gamma_2 \cdot \ \dots \ \cdot \gamma_n)\lvert_{I^0_{n-1}(t_2, \dots, t_n, a_2, \dots,a_n)}\right) \right)\\
    &= r_{n-1}(t_2, \dots, t_n, \alpha \cdot \gamma_1 , \gamma_2, \dots, \gamma_n)
\end{align*}
and

$$\varphi_{n+1}(1, t_2, \dots, t_n, \alpha, \gamma_1, \dots, \gamma_n) = \varphi_{n}(t_2, \dots, t_n, \alpha \cdot \gamma_1, \gamma_2, \dots, \gamma_n).$$

\textbf{\underline{Second equation :}} 
For any $n \geq 2$, $j\geq 2$, $t_1,t_2, \dots, t_n \in I^n$, $\alpha \in F_1$ and $\gamma_1,\dots,\gamma_n \in \Omega X^{n} \times \mathcal{P}_{\star \to X} X$, using \eqref{eq : Jn(tj=1,...)},

$$(\gamma_1\cdot \ \dots \ \cdot \gamma_n)\lvert_{I^i_n(t_j=1, a_1, \dots, a_n)} = (\gamma_1 \cdot \ \dots \ \cdot(\gamma_{j-1} \cdot \gamma_j) \cdot \ \dots \ \cdot \gamma_n)\lvert_{I^i_{n-1}(\hat{t}_j, a_2, \dots, a_{j-1}+a_j, \dots a_n)}$$
for $i \in \{0,1\}$.
Hence, $$\varphi_{n+1}(t_j =1, \alpha, \gamma_1, \dots, \gamma_n) = \varphi_{n}(\hat{t}_j, \alpha, \gamma_1, \dots, \gamma_{j-1} \cdot \gamma_j, \dots, \gamma_n).$$

\textbf{\underline{Third equation :}} For any $n \geq 1$, $k\geq 1$, $t_1,t_2, \dots, t_n \in I^n$, $\alpha \in F_1$ and $\gamma_1,\dots,\gamma_n \in \Omega X^{n} \times \mathcal{P}_{\star \to X} X,$ using \eqref{eq : Jn(tk=0,...)},

$$(\gamma_1\cdot \ \dots \ \cdot \gamma_n)\lvert_{I^0_n(t_k=0, a_1, \dots, a_n)} = (\gamma_1 \cdot \ \dots \ \cdot\gamma_{k-1})\lvert_{I^0_{k-1}(t_1, \dots, t_{k-1}, a_1, \dots, a_{k-1})}$$

and $$(\gamma_1\cdot \ \dots \ \cdot \gamma_n)\lvert_{I^1_n(t_k=0, a_1, \dots, a_n)} = (\gamma_1 \cdot \ \dots \ \cdot \gamma_{k-1})\lvert_{I^1_{k-1}(t_1, \dots, t_{k-1},a_1, \dots, a_{k-1})} \cdot \gamma_k \cdot \ \dots \ \cdot\gamma_{n}.$$
Therefore, 

\begin{align*}
    &\varphi_{n+1}(t_k = 0, \alpha, \gamma_1, \dots, \gamma_n)\\
    &= \Phi_2\left( r_{k-1}(t_1, \dots, t_{k-1},\alpha,\gamma_1, \dots, \gamma_{k-1}), (\gamma_1 \cdot \ \dots \ \cdot \gamma_{k-1})\lvert_{I^1_{k-1}(t_1, \dots, t_{k-1},a_1, \dots, a_{k-1})} \cdot \ \gamma_k \cdot \ \dots \ \cdot \gamma_n  \right)\\
    &= \Phi_2\left( \Phi_2\left( r_{k-1}(t_1, \dots, t_{k-1},\alpha,\gamma_1, \dots,  \gamma_{k-1}), (\gamma_1 \cdot \ \dots \ \cdot \gamma_{k-1})\lvert_{I^1_{k-1}(t_1, \dots, t_{k-1},a_1, \dots, a_{k-1})} \right),\gamma_k \cdot \ \dots \ \cdot \gamma_n  \right)\\
    &= \Phi_2(\varphi_{k}(t_1, \dots, t_{k-1}, \alpha, \gamma_1, \dots, \gamma_{k-1}), \gamma_k \cdot \ \dots \ \cdot \gamma_n).
\end{align*}

\end{myproof}

This concludes the proof of i).

\end{myproof}

\begin{myproof}{of}{ii)}
    
To finish the proof of Theorem \ref{thm : morphisme induit commute avec iso}, we now prove that such a coherent homotopy induces a chain homotopy $v : C_*(X,\Xi,C_*(F_1)) \to C_{*+1}(E_2)$ between $\Psi_2 \circ \tilde{\varphi}$ and $\varphi_* \circ \Psi_1$. 

For $i \in \{0,1\}$, we denote $\F_i = C_*(F_i)$ endowed with the module structure induced by a transitive lifting function associated to $F_i \hookrightarrow E_i \to X$. Let $\Xi$ be a set of DG Morse data on $X$ with Morse function $f$. Let $\{s_{x,y} \in C_{|x|-|y|-1}(\trajb{x,y}), \ x,y \in \Crit(f)\}$ be a representing chain system and for each $x,y \in \Crit(f)$.

Define $\Gamma_{x,y} : \trajb{x,y} \to \mathcal{P}_{x \to y} X$ to be the parametrization map by the values of $f.$
The twisting cocycle $\{m'_{x,y} \in C_{|x|-|y|-1}(\Omega(\Xq)), \ x,y \in \Crit(f)\}$ is defined by $m'_{x,y} = p_* \circ \Gamma_{x,y,*}(s_{x,y})$, so that the Barraud-Cornea twisting cocycle satisfies $m_{x,y} = \theta_* m'_{x,y} \in C_{|x|-|y|-1}(\Omega X)$.
The twisting cocycle $\{m'_{x,y}\}$ satisfies the Maurer-Cartan equation \eqref{eq : Maurer-Cartan}

$$\partial m'_{x,y} = \sum_z (-1)^{|x|-|z|} m'_{x,z} m'_{z,y}.$$

We use the definition of the morphisms $\Psi_1$ and $\Psi_2$ from in Section \ref{subsection : Fibration Theorem}. Each of these morphisms is defined as a composition of three maps :

$$C_*(X,m_{x,y},\F_1) \overset{\sim}{\leftrightarrow} C_*(X, m'_{x,y},\theta^{*} \F_1) \overset{\Psi_1}{\to} C_*(\theta^{*}E_1) \to  C_*(E_1),$$

$$C_*(X,m_{x,y},\F_2) \overset{\sim}{\leftrightarrow} C_*(X, m'_{x,y}, \theta^{*} \F_2) \overset{\Psi_2}{\to} C_*(\theta^{*}E_2) \to  C_*(E_2).$$

We state here a lemma proving that morphisms of fibrations are stable by pullback.

\begin{lemme}\label{lemme : morphism induit poussé 
par une application continue}
Let $(Y, \star_Y)$ be a smooth, oriented, pointed, closed and connected manifold.
Let $F \hookrightarrow E \overset{\pi}{\to} X$ and $F' \hookrightarrow E'  \overset{\pi'}{\to} X$ be fibrations,  $F \hookrightarrow E_Y \to Y$, $F' \hookrightarrow E'_Y \to Y$ be the pullback fibrations by a continuous map $\psi : Y \to X$ such that $\psi(\star_Y) = \star$.  If $\varphi : E \to E'$ is a morphism of fibrations over $X$, then 

$$\psi^*\varphi : E_Y \to E'_Y, \ \psi^*\varphi(y, \alpha) = (y, \varphi(\alpha))$$

is a morphism of fibrations over $Y$.
\end{lemme}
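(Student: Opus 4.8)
The proof is a straightforward unwinding of the definition of a pullback fibration, so the plan is simply to verify each clause of Definition \ref{defi : Ai morphism of fibration} for the map $\psi^*\varphi$. Recall that the pullback fibration is $E_Y = Y \ftimes{\psi}{\pi} E = \{(y,\alpha) \in Y \times E \mid \psi(y) = \pi(\alpha)\}$ with projection $\pi_Y = pr_1 : E_Y \to Y$, and likewise $E'_Y = Y \ftimes{\psi}{\pi'} E'$ with $\pi'_Y = pr_1$; both are Hurewicz fibrations, since a pullback of a Hurewicz fibration along any continuous map is again a Hurewicz fibration, and since $\psi(\star_Y) = \star$ their fibers over $\star_Y$ are canonically $\{\star_Y\} \times F \cong F$ and $\{\star_Y\} \times F' \cong F'$, which justifies writing $F \hookrightarrow E_Y \to Y$ and $F' \hookrightarrow E'_Y \to Y$. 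Note that the smoothness and orientability hypotheses on $Y$ play no role in this particular statement; they are recorded only because the lemma will be applied in a context where $Y$ has such structure.

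First I would check that $\psi^*\varphi$ is well-defined, i.e. that $(y,\varphi(\alpha)) \in E'_Y$ whenever $(y,\alpha) \in E_Y$. This is exactly the point where the hypothesis that $\varphi$ is a morphism of fibrations over $X$ enters: from $\pi' \circ \varphi = \pi$ one gets $\pi'(\varphi(\alpha)) = \pi(\alpha) = \psi(y)$, so $(y,\varphi(\alpha))$ indeed satisfies the fiber-product condition defining $E'_Y$. Next, $\psi^*\varphi$ is continuous because it is the restriction to $E_Y \subset Y \times E$ of the continuous map $\Id_Y \times \varphi : Y \times E \to Y \times E'$, with codomain corestricted to $E'_Y$ (which carries the subspace topology). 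Finally, $\pi'_Y \circ \psi^*\varphi (y,\alpha) = \pi'_Y(y,\varphi(\alpha)) = y = \pi_Y(y,\alpha)$, so $\psi^*\varphi$ commutes with the two projections to $Y$, which is precisely the defining property of a morphism of fibrations over $Y$.

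Since all three verifications are immediate, there is no real obstacle here; the only subtlety worth flagging is that well-definedness relies crucially on $\pi' \circ \varphi = \pi$ and not merely on $\varphi$ being continuous. For use in the sequel one may also observe that pulling back is compatible with transitive lifting functions, in the sense that $\psi^*\Phi\big((y,\alpha),\gamma\big) := \big(ev_1(\gamma),\,\Phi(\alpha, \psi\circ\gamma)\big)$ defines a transitive lifting function for $E_Y \to Y$ and $\psi^*\varphi$ interacts with it exactly as $\varphi$ interacts with $\Phi$; but this is not needed for the statement at hand.
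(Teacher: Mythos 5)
Your proof is correct and follows essentially the same route as the paper's, which likewise just unwinds the definitions of $E_Y$, $E'_Y$, the projections $\pi_Y$, $\pi'_Y$, and checks $\pi'_Y \circ \psi^*\varphi = \pi_Y$. You are slightly more careful in spelling out well-definedness (that $(y,\varphi(\alpha))$ lands in $E'_Y$ because $\pi'\circ\varphi=\pi$) and continuity, points the paper leaves implicit.
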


\begin{proof}
Recall that $$E_Y = \{(y,\alpha) \in Y \times E, \ \pi(\alpha) = \psi(y)\}$$ $$E'_Y = \{(y,\alpha') \in Y \times E', \ \pi'(\alpha') = \psi(y)\}$$ and $\pi_Y : E_Y \to Y$, $\pi'_Y : E'_Y \to Y$ are defined by $\pi_Y(y,\alpha) = y$ and $\pi'_Y(y,\alpha') = y$.
The statement is therefore clear since for any $(y,\alpha) \in E_Y$, $\pi'(y,\varphi(\alpha)) = y = \pi(y,\alpha).$
\end{proof}

We will show that the following diagram is commutative up to chain homotopy.

$$
\xymatrix{
C_*(X,m_{x,y},\F_1) \ar[r]_-{\sim}\ar[d]^-{\Tilde{\varphi}} & C_*(X, m'_{x,y}, \theta^{*}\F_1) \ar[r]_-{\Psi_1}\ar[d]^-{\theta^*\Tilde{\varphi}} & C_*(\theta^{*}E_1) \ar[d]_-{\theta^*\varphi} \ar[r]_{\theta_*} & C_*(E_1) \ar[d]_-{\varphi}\\
C_*(X, m_{x,y}, \F_2) \ar[r]^-{\sim} & C_*(X, m'_{x,y}, \theta^*\F_2) \ar[r]^-{\Psi_2} & C_*(\theta^* E_2) \ar[r]^{\theta_*} & C_*(E_2).
}
$$

Since $\F_i \simeq \theta^*\F_i$ is just an identification of DG modules, it is clear that the first square commutes. The third square commutes by definition of the map $\theta^*\varphi : C_*(\theta^*E_1) \to C_*(\theta^*E_2)$ induced by $\varphi : C_*(E_1) \to C_*(E_2)$.\\

In order to complete this proof, we now have to prove that the middle square commutes up to chian homotopy :\\

We use the notation $\mathbf{m} \in \Hom_{-1}(\Z\Crit(f), C_*(\Omega(\Xq)) \otimes \Z\Crit(f))$ for the map
$\mathbf{m}(x) = \sum_y m'_{x,y} \otimes y$. We will extend $\mathbf{m}$ in the same manner as in Section \ref{subsection : Complex with coefficients in an Ai module}  to $$\mathbf{\Tilde{m}}\in \End_{-1}(C_*(F_1) \otimes TC_*(\Omega (\Xq)) \otimes \Z\textup{Crit}(f))$$ defined by
$$\mathbf{\Tilde{m}}(\alpha \otimes \sigma_1 \otimes \dots \otimes \sigma_k \otimes x) = (1^{\otimes k+1} \otimes \mathbf{m})(\alpha \otimes \sigma_1 \otimes \dots \otimes \sigma_k \otimes x).$$

Given $x\in \Crit(f)$, we can define $m_x \in C_{|x|}(\mathcal{P}_{\star \to \Xq} \Xq)$ by evaluating on $\Xq$ a suitable representative of the fundamental class of the Latour cell $\wb{u}{x}$  (see \cite[Lemma 7.3.2 and Lemma 7.3.3]{BDHO23}). It satisfies the equation

\begin{equation}\label{eq : m_x}
    \partial m_x = \sum_{y \in \Crit(f)} m'_{x,y} m_y.
\end{equation}

We denote $\mathbf{m^L} \in \Hom_0(\Z\Crit(f), C_*(\mathcal{P}_{\star \to \Xq} \Xq))$ the morphism defined by $$\mathbf{m^L}(x) = m_x \in C_{|x|}(\mathcal{P}_{\star \to \Xq} \Xq)$$ for all $x \in \textup{Crit}(f)$ and we extend it to \\
$$\mathbf{\Tilde{m}^L} \in \Hom_0(\F_i \otimes TC_*(\Omega (\Xq)) \otimes \Z\Crit(f), \F_i \otimes TC_*(\Omega (\Xq)) \otimes C_*(\mathcal{P}_{\star \to \Xq} \Xq))$$
by
$$\mathbf{\Tilde{m}^L}(\alpha \otimes \sigma_1 \otimes \dots \otimes \sigma_k \otimes x) = \alpha \otimes \sigma_1 \otimes \dots \otimes \sigma_k \otimes m_x.$$

Then, for $i \in \{1,2\}$, the definition of the quasi-isomorphism $\Psi_i : C_*(X,m'_{x,y}, \theta^*\F_i) \to C_*(\theta^*E_1)$ can be written
\begin{equation}\label{eq : preuve theorem C def iso fibration thm}
    \Psi_i = \Phi_{i,*} \mathbf{\Tilde{m}^L}.
\end{equation}

Let $\left\{\varphi_{n+1} : I^n \times F_1 \times \Omega (\Xq)^{n-1} \times \mathcal{P}_{\star \to \Xq} \Xq \to \theta^*E_2\right\}$ be a coherent homotopy for the morphism of fibrations $\theta^*\varphi$.
Denote $\phi_1 = \theta^*\varphi_{*} : C_*(\theta^*E_1) \to C_*(\theta^*E_2)$ and for all $n\geq 1$, define $$\phi_{n+1} :\theta^*\F_1 \otimes C_*(\Omega (\Xq))^{n-1} \otimes C_*(\mathcal{P}_{\star \to \Xq} \Xq) \to C_*(\theta^*E_2)$$ by $$\phi_{n+1}(\alpha \otimes \sigma_1 \otimes \dots \otimes \sigma_n) = \varphi_{n+1,*}(\Id_{I^n} \otimes \alpha \otimes \sigma_1 \otimes \dots \otimes \sigma_n).$$

Therefore $$\theta^*\tilde{\varphi} = \sum_{n\geq 0} \phi_{n+1} \m^{n}. $$

The proof of Proposition \ref{prop : Ai topo induit Ai} also proves the following lemma.

\begin{lemme}
    If we denote $\nu_1^A =\partial_{F_1}$ and $\nu_1^B = \partial$ the differentials of $\mathcal{A} = \theta^* C_*(F_1)$ and $\mathcal{B} = C_*(\theta^*E_2)$, $\nu^A_2 = \Phi_1$, $\nu^B_2 = \Phi_2$ and $\nu^A_k= \nu^B_k = 0$ for $k \geq 3$, then the $\Ai$-relation \eqref{eq : relation morphisme de Ai module} is satisfied as a functional equality on maps $$\mathcal{A} \otimes C_*(\Omega (\Xq))^{n-1} \otimes C_*(\mathcal{P}_{\star \to \Xq} \Xq) \to \mathcal{B}$$ for the family $$\{\phi_{n+1} :\mathcal{A} \otimes C_*(\Omega X)^{n-1} \otimes C_*(\mathcal{P}_{\star \to X} X) \to \mathcal{B}, n\geq 1 \}.$$ 
\end{lemme}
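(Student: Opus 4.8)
The plan is to re-run the computation from the proof of Proposition~\ref{prop : Ai topo induit Ai} essentially verbatim, observing that nothing in that argument used the last tensor slot being $C_*(\Omega X)$ rather than $C_*(\PXq)$, nor the target being a fiber rather than a total space. The input is that the coherent homotopy $\{\varphi_{n+1}\}$ satisfies equation~\eqref{eq : Ai morphism of fibration}, which has exactly the shape of the defining equation~\eqref{eq : Ai morphism of topo modules} of an $\Ai$-morphism of topological modules, with $\nu_F,\nu_G$ replaced by the transitive lifting functions $\Phi_1,\Phi_2$; transitivity makes these actions strictly associative and unital, so the associated module structures carry no higher operations and $\nu^A_k = \nu^B_k = 0$ for $k\ge 3$, while $\nu^A_2 = \Phi_{1,*}$ and $\nu^B_2 = \Phi_{2,*}$. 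The one extra remark needed for the face maps to be defined is that a based loop is in particular a based path, so $\Omega(\Xq)\subset\PXq$, and $C_*(\PXq)$ is a left DG-module over $C_*(\Omega(\Xq))$ under concatenation; this concatenation is what plays the role of $\mu_2$ in the last position.

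Concretely, for a fixed $N\ge 1$ and a generator $\alpha\otimes\sigma_1\otimes\dots\otimes\sigma_N$ of $\mathcal{A}\otimes C_*(\Omega(\Xq))^{\otimes N-1}\otimes C_*(\PXq)$, I would write $\nu^B_1\,\phi_{N+1}(\alpha\otimes\sigma_1\otimes\dots\otimes\sigma_N) = \varphi_{N+1,*}\big(\partial(\Id_{I^N}\otimes\alpha\otimes\sigma_1\otimes\dots\otimes\sigma_N)\big)$, using that the continuous map $\varphi_{N+1}$ commutes with the cubical differential, and then expand $\partial(\Id_{I^N}\otimes\alpha\otimes\sigma_1\otimes\dots\otimes\sigma_N)$ via the product rule into the $2N$ codimension-one faces of $I^N$ plus $(-1)^N\,\Id_{I^N}$ tensored with the boundary of $\alpha\otimes\sigma_1\otimes\dots\otimes\sigma_N$. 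By~\eqref{eq : Ai morphism of fibration}: the face $t_1=1$ contributes $\phi_N(\nu^A_2\otimes 1^{\otimes N-1})$; each face $t_j=1$ with $2\le j\le N$ contributes $\phi_N$ precomposed with $\mu_2$ merging $\sigma_{j-1}$ and $\sigma_j$ (the case $j=N$ being the concatenation action of $C_*(\Omega(\Xq))$ on $C_*(\PXq)$), so together these give the $\mu_2$-sum $\sum_{r=1}^{N-1}(-1)^r\phi_N(1^{\otimes r}\otimes\mu_2\otimes 1^{\otimes N-1-r})$; the face $t_N=0$ contributes $\nu^B_2(\phi_N\otimes 1)$; and each face $t_j=0$ with $j<N$ contributes a cube on which $\varphi_{N+1}$ is independent of the coordinate $t_{j+1}$, hence a degenerate cube, hence $0$ in $\mathcal{B}$. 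The second piece contributes the $\nu^A_1$-term $\phi_{N+1}(\nu^A_1\otimes 1^{\otimes N})$ and the $\mu_1$-terms $\phi_{N+1}(1^{\otimes r}\otimes\mu_1\otimes 1^{\otimes N-r})$. Rearranging the resulting identity is precisely~\eqref{eq : relation morphisme de Ai module}.

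The only point requiring care is the sign bookkeeping --- reconciling the signs $(-1)^{i-1}$ of the cubical boundary and the Koszul signs coming from the degree-$n$ maps $\phi_{n+1}$ with the signs in~\eqref{eq : relation morphisme de Ai module} --- and this is identical to the sign bookkeeping in the proof of Proposition~\ref{prop : Ai topo induit Ai}. Since there is no new geometric or combinatorial content, in the write-up I would carry out the faces-and-degeneracies analysis above and then refer back to that proof for the signs rather than reproduce them; that is the sense in which, as stated, the proof of Proposition~\ref{prop : Ai topo induit Ai} also proves this lemma.
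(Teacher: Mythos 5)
Your proposal is correct and follows the paper's approach exactly: the paper's proof of this lemma is a one-line citation of the proof of Proposition~\ref{prop : Ai topo induit Ai}, and your argument is precisely the unwinding of that citation --- re-running the faces-and-degeneracies computation, noting that equation~\eqref{eq : Ai morphism of fibration} has the same shape as equation~\eqref{eq : Ai morphism of topo modules}, that transitivity of the lifting functions gives strictly associative actions (hence $\nu^A_k=\nu^B_k=0$ for $k\ge 3$), and that the only adaptation is reading $\mu_2$ in the last slot as the concatenation $C_*(\Omega(\Xq))\otimes C_*(\PXq)\to C_*(\PXq)$. The face-by-face identification you give ($t_1=1\leadsto\nu^A_2$, $t_j=1$ for $j\ge 2\leadsto\mu_2$, $t_N=0\leadsto\nu^B_2$, $t_j=0$ for $j<N\leadsto$ degenerate) is exactly the one in the proof of Proposition~\ref{prop : Ai topo induit Ai}.
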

\begin{flushright}
    $\blacksquare$
\end{flushright}

In this case, \eqref{eq : relation morphisme de Ai module} is written for all $N \geq 1$,

\begin{equation}\label{eq : preuve Theorem C}
\begin{split}
    \partial \phi_{N+1} &= (-1)^{N}\Phi_2(\phi_{N} \otimes 1) + (-1)^{N} \phi_{N+1} (\partial_{F_1} \otimes 1^{\otimes N}) + \phi_{N}(\Phi_1 \otimes 1^{\otimes N-1})\\
    &+ \sum_{r=1}^{N} (-1)^N \phi_{N+1}(1^{\otimes r} \otimes \mu_1 \otimes 1^{\otimes N-r}) 
    + \sum_{r=1}^{N-1} (-1)^r \phi_{N}(1^{\otimes r} \otimes \mu_2 \otimes 1^{\otimes N-1-r}).
\end{split}
\end{equation}

We now define
$$v = \sum_{n \geq 1} (-1)^{n+1} \phi_{n+1} \mathbf{\Tilde{m}^L} \mathbf{\Tilde{m}}^{n-1} :C_*(X,m'_{x,y},\theta^*\F_1) \to C_{*+1}(\theta^*E_2)$$ and show that $$\varphi_*\Psi_1 - \Psi_2 \theta^*\Tilde{\varphi} = \partial v + v\partial.$$

We will denote $\mu_1$ the differential on $C_*(\Omega (\Xq))$ as well as the differential on $ C_*(\mathcal{P}_{\star \to \Xq}\Xq)$ and $\mu_2$ will denote the Pontryagin product $\mu_2 : C_*(\Omega (\Xq)) \otimes C_*(\Omega (\Xq)) \to C_*(\Omega (\Xq))$ as well as the concatenation $\mu_2 : C_*(\Omega (\Xq)) \otimes C_*(\mathcal{P}_{\star \to \Xq}\Xq) \to C_*(\mathcal{P}_{\star \to \Xq}\Xq).$
Let us first remark that the equation \eqref{eq : m_x} can be written \begin{equation}\label{eq : sys repr cell Ai}
    \mu_1 \mathbf{m^L} = \mu_2\mathbf{m^L} \mathbf{m}.
\end{equation}

We now use equation \eqref{eq : relation morphisme de Ai module} to compute

\begin{align*}
    \partial v &=  \sum_{n \geq 1} (-1)^{n+1} \partial \phi_{n+1} \mL \m^{n-1} \\
    &\overset{\eqref{eq : preuve Theorem C}}{=}\sum_{n\geq 1} (-1)^{n+1} \phi_n (\Phi_{1,*} \otimes 1^{\otimes n-1}) \mL \m^{n-1} - \sum_n  \Phi_{2,*} (\phi_n \otimes 1) \mL \m^{n-1}\\
    & + \sum_{n\geq 1} (-1)^{n+1}(-1)^n \phi_{n+1} (\partial_{F_1} \otimes 1^{\otimes n}) \mL \m^{n-1} - \sum_{n \geq 1} \sum_{r\geq 1} \phi_{n+1}(1^{\otimes r} \otimes \mu_1 \otimes 1^{\otimes n-r}) \mL \m^{n-1}\\
    & - \sum_{n\geq 2} \sum_{r \geq 1} (-1)^{r+n} \phi_n(1^{\otimes r} \otimes \mu_2 \otimes 1^{\otimes n-r-1}) \mL \m^{n-1}\\
    &\overset{(*)}{=} \varphi_*\Psi_1 - \sum_{k \geq 1} (-1)^{k+1} \phi_{k+1} \Phi_{1,*} \mL \m^{k} - \Psi_2 \theta^*\tilde{\varphi}\\
    &- \sum_{n \geq 1} (-1)^{n+1} \phi_{n+1} \mL \m^{n-1}\partial_{F_1} -  \sum_{n\geq 1} \sum_{r\geq 1} \phi_{n+1}(1^{\otimes r} \otimes \mu_1 \otimes 1^{\otimes n-r}) \mL \m^{n-1}\\
    & + \sum_{k \geq 1} \sum_{r\geq 1} (-1)^{r+k} \phi_{k+1}(1^{\otimes r} \otimes \mu_2 \otimes 1^{\otimes k-r}) \mL \m^{k}.
\end{align*}

To obtain the equality $(*)$ we performed the following operations:

\begin{enumerate}
    \item We split \begin{align*}
        \sum_{n\geq 1} (-1)^{n+1} \phi_n (\Phi_{1,*} \otimes 1^{\otimes n-1}) \mL \m^{n-1} &= \underbrace{\phi_1}_{= \varphi_*} \underbrace{\Phi_{1,*} \mL}_{\overset{\eqref{eq : preuve theorem C def iso fibration thm}}{=}{\Psi_1}} + \sum_{n\geq 2} (-1)^{n+1} \phi_n (\Phi_{1,*} \otimes 1^{\otimes n-1}) \mL \m^{n-1}\\
        &= \varphi_* \Psi_1 - \sum_{k \geq 1} (-1)^k \phi_{k+1}(\Phi_{1,*} \otimes 1^{\otimes k})\mL \m^{k}
    \end{align*}
    \item We noticed that $\Phi_{2,*} \mL (\phi_n \otimes 1) \m^{n-1} = \Phi_{2,*}(\phi_n \otimes 1) \mL \m^{n-1}$ for all $n \geq 1$ since $\phi_n$ will only take the first $n$ arguments. By definition, $$\sum_n\Phi_{2,*} \mL (\phi_n \otimes 1) \m^{n-1} = \Phi_{2,*} \mL\left( \sum_n (\phi_n \otimes 1) \m^{n-1}\right) = \Psi_2 \theta^*\tilde{\varphi}.$$
    \item We also noticed that $$\phi_{n+1} (\partial_{F_1} \otimes 1^{\otimes n}) \mL \m^{n-1} = (-1)^n \phi_{n+1}\mL \m^{n-1} \partial_{F_1}$$ for all $n \geq 1.$
\end{enumerate}

Using equation \eqref{eq : MC Ai module} and \eqref{eq : sys repr cell Ai}, we get a similar equation as in Lemma \ref{lemme : elimination using MC}: $$ (1^{\otimes r} \otimes \partial \otimes 1^{\otimes n-r}) \mL \m^n = (-1)^{r+n} (1^{\otimes r} \otimes \mu_2 \otimes 1^{\otimes n-r}) \mL \mathbf{\Tilde{m}}^{n+1}.$$ We conclude the computation of $\partial v$ with the formula

$$\partial v = \varphi_*\Psi_1 - \sum_{k \geq 1} (-1)^{k+1} \phi_{k+1} \Phi_{1,*} \mL \m^{k} - \Psi_2 \theta^*\tilde{\varphi} - \sum_{n \geq 1} (-1)^{n+1} \phi_{n+1} \mL \m^{n-1}\partial_{F_1}.$$

Moreover
\begin{align*}
    v \partial &= v(\partial_{F_1} + \Phi_{1,*} \m) \\
    & = \sum_{n \geq 1} (-1)^{n+1} \phi_{n+1} \mL \m^{n-1} \partial_{F_1} + \sum_{n \geq 1} (-1)^{n+1} \phi_{n+1} \mL \m^{n-1} \Phi_{1,*}\m\\
    &= \sum_{n \geq 1} (-1)^{n+1} \phi_{n+1} \mL \m^{n-1} \partial_{F_1} + \sum_{n \geq 1} (-1)^{n+1} \phi_{n+1}\Phi_{1,*} \mL \m^{n}.
\end{align*}

We therefore get the relation 

$$\partial v + v \partial = \varphi\Psi_1 - \Psi_2 \theta^*\Tilde{\varphi}.$$

\end{myproof}

We now state some corollaries of this theorem.

\begin{cor}\label{cor : homotopy euqivalence if different lifting functions}
    Let $\Phi_1 : E \ftimes{\pi}{\ev_0} \mathcal{P} X \to E$ and $\Phi_2 : E \ftimes{\pi}{\ev_0} \mathcal{P} X \to E$ be two transitive lifting functions associated to a fibration $F \hookrightarrow E \overset{\pi}{\to} X$. Let $\Xi$ be a set of DG Morse data on $X$. For $i \in \{1,2\}$, denote $\F_i = C_*(F)$ endowed with the $C_*(\Omega X)$-module structure induced by $\Phi_i$ and $\Psi_i : C_*(X,\Xi,\F_i) \to C_*(E)$ the quasi-isomorphism given by the Fibration Theorem. Then $\Id : E \to E$ induces a quasi-isomorphism $\widetilde{\Id} : C_*(X,\Xi,\F_1) \to C_*(X,\Xi,\F_2)$ such that the following diagram commutes up to homotopy,

    $$\xymatrix{
    C_*(X,\Xi,\F_1) \ar[rr]^{\widetilde{\Id}} \ar[dr]^{\Psi_1} & & C_*(X,\Xi,\F_2) \ar[dl]_{\Psi_2} \\
     & C_*(E). & 
    }$$

\end{cor}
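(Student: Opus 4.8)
The plan is to obtain this corollary as an immediate application of Theorem \ref{thm : morphisme induit commute avec iso}. The key observation is that the identity $\Id : E \to E$ trivially satisfies $\pi \circ \Id = \pi$, so it is a morphism of fibrations over $X$ in the sense of Definition \ref{defi : Ai morphism of fibration}, \emph{even though} the source and target of the induced map carry \emph{different} $C_*(\Omega X)$-module structures on $C_*(F)$: the one coming from $\Phi_1$ on the source and the one coming from $\Phi_2$ on the target. Nothing in the construction of Section \ref{subsection : Topological modules and morphism of fibrations}, and in particular nothing in Lemma \ref{lemme : morphism of fibrations induces coherent homotopy}, requires the two fibrations involved to be distinct or their lifting functions to agree, so the machinery applies verbatim with $E_1 = E_2 = E$, with $\Phi_1$ and $\Phi_2$ as the two transitive lifting functions, and with $\varphi = \Id$.

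Concretely, I would first invoke part i) of Theorem \ref{thm : morphisme induit commute avec iso}: it produces a coherent homotopy $\{\Id_{n+1} : I^n \times F \times \Omega X^{n-1} \times \mathcal{P}_{\star \to X} X \to E\}$ for $\Id$, hence by Corollary \ref{cor : Ai morphism of toplogical modules induces morphism of comlpexes} (through Proposition \ref{prop : Ai topo induit Ai} and Proposition \ref{prop : Morphisme de Ai module en morphisme de complexe}) a morphism of complexes $\widetilde{\Id} : C_*(X,\Xi,\F_1) \to C_*(X,\Xi,\F_2)$. Then I would apply part ii) of the same theorem with $\varphi_* = \Id_{C_*(E)}$: it yields a chain homotopy between $\Psi_2 \circ \widetilde{\Id}$ and $\Id_{C_*(E)} \circ \Psi_1 = \Psi_1$, which is precisely the assertion that the triangle in the corollary commutes up to homotopy.

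It remains to check that $\widetilde{\Id}$ is a quasi-isomorphism, and here there is genuinely nothing more than a two-out-of-three argument: passing to homology in $\Psi_2 \circ \widetilde{\Id} \simeq \Psi_1$ gives $\Psi_{2,*} \circ \widetilde{\Id}_* = \Psi_{1,*}$, and since both $\Psi_{1,*}$ and $\Psi_{2,*}$ are isomorphisms by the Fibration Theorem, $\widetilde{\Id}_* = \Psi_{2,*}^{-1} \circ \Psi_{1,*}$ is an isomorphism. I do not expect any real obstacle; the only point requiring a moment of care is the bookkeeping of which module structure ($\Phi_1$- or $\Phi_2$-induced) sits on which side of each map, so that the $\Ai$-morphism produced by Lemma \ref{lemme : morphism of fibrations induces coherent homotopy} is indeed a morphism from $(C_*(F),\Phi_{1,*})$ to $(C_*(F),\Phi_{2,*})$ and not the reverse one.
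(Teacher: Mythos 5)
Your proposal is correct and takes exactly the route the paper intends: the corollary is presented (with a closing $\blacksquare$ and no written proof) as a direct specialization of Theorem~\ref{thm : morphisme induit commute avec iso} to $E_1=E_2=E$, $\varphi=\Id$, with $\Phi_1$ and $\Phi_2$ as the two transitive lifting functions, and the quasi-isomorphism property following from the two-out-of-three argument since $\Psi_1$ and $\Psi_2$ are quasi-isomorphisms. Your note that Lemma~\ref{lemme : morphism of fibrations induces coherent homotopy} nowhere requires the two lifting functions (or the two fibrations) to coincide is precisely the observation that makes the specialization legitimate, and it matches the paper's own remark that a self-contained proof would essentially reproduce the construction of a coherent homotopy.
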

\begin{flushright}
    $\blacksquare$
\end{flushright}
This result would be no much smaller task to prove on itself than Theorem \ref{thm : DG Thom iso} since one would need to complete a homotopy between $\Phi_1$ and $\Phi_2$ into a whole coherent homotopy for $\Id$ and then prove that $\widetilde{\Id}$ corresponds to $\Id : C_*(E) \to C_*(E).$

\begin{cor}\label{cor : induced map morphism of fibration well-defined in homology}
    If $\varphi : E_1 \to E_2$ is a morphism of fibrations, then the map $\tilde{\varphi} : H_*(X,C_*(F_1)) \to H_*(X,C_*(F_2))$ is well-defined.
\end{cor}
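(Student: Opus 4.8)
The plan is to read the homology-level map of $\tilde\varphi$ off directly from part ii) of Theorem \ref{thm : morphisme induit commute avec iso}. A priori $\tilde\varphi$ depends on a set $\Xi$ of DG Morse data on $X$, on the transitive lifting functions chosen on $E_1$ and $E_2$ (which enter through the $C_*(\Omega X)$-module structures on $C_*(F_1)$ and $C_*(F_2)$, hence through the very complexes $C_*(X,\Xi,C_*(F_i))$), and on the coherent homotopy for $\varphi$ produced by Lemma \ref{lemme : morphism of fibrations induces coherent homotopy}. First I would fix all of these. Since part ii) of Theorem \ref{thm : morphisme induit commute avec iso} provides a chain homotopy between $\Psi_2 \circ \tilde\varphi$ and $\varphi_* \circ \Psi_1$, and since $\Psi_1$ and $\Psi_2$ are isomorphisms in homology, one gets
\[
\tilde\varphi = \Psi_2^{-1} \circ \varphi_* \circ \Psi_1 \colon H_*(X,C_*(F_1)) \longrightarrow H_*(X,C_*(F_2))
\]
at the level of homology. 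The right-hand side makes no reference to the coherent homotopy, so any two coherent homotopies for $\varphi$ induce the same map; equivalently, if $\tilde\varphi$ and $\tilde\varphi'$ arise from two such choices then $\Psi_2\tilde\varphi \simeq \varphi_*\Psi_1 \simeq \Psi_2\tilde\varphi'$ and $\Psi_2$ is a quasi-isomorphism.

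Next I would handle the dependence on $\Xi$ and on the lifting functions, the key point being that the Fibration Theorem isomorphism is canonical at the level of homology. For a change of transitive lifting function on $E_i$, Corollary \ref{cor : homotopy euqivalence if different lifting functions} supplies a quasi-isomorphism $\widetilde{\Id}$ between the two resulting complexes $C_*(X,\Xi,C_*(F_i))$ that commutes up to homotopy with the two Fibration Theorem maps into $C_*(E_i)$. For a change of DG Morse data $\Xi_0 \to \Xi_1$, Theorem \ref{thm : continuation morphism Psi_01} supplies a continuation homotopy equivalence which, by the construction of the Fibration Theorem map recalled in Section \ref{subsection : Fibration Theorem}, is again compatible up to homotopy with the maps into $C_*(E_i)$. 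Feeding these identifications into the formula of the previous paragraph, applied for both choices of data, yields a square commuting in homology, so $\tilde\varphi$ does not change. Hence $\tilde\varphi = \Psi_{E_2}^{-1} \circ \varphi_* \circ \Psi_{E_1}$ is a map intrinsic to $H_*(X,C_*(F_1))$, $H_*(X,C_*(F_2))$ and the continuous map $\varphi$, and is therefore well-defined.

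I expect the only delicate point --- and the genuine content of the argument --- to be the compatibility of the Fibration Theorem quasi-isomorphism with changes of data, i.e. the assertion that it induces a \emph{canonical} isomorphism $H_*(X,C_*(F_i)) \cong H_*(E_i)$. For the lifting function this is precisely Corollary \ref{cor : homotopy euqivalence if different lifting functions}, already available; for the DG Morse data it is part of the material of \cite{BDHO23} (the continuation maps are morphisms of the spectral sequences used to prove the Fibration Theorem, hence compatible with the comparison to $H_*(E_i)$). Granting these facts, the corollary follows immediately from Theorem \ref{thm : morphisme induit commute avec iso} ii), with no further computation.
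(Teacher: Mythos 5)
Your proof is correct and follows essentially the same route as the paper: both reduce to the homology-level identity $\tilde\varphi = \Psi_{E_2}^{-1}\circ\varphi_*\circ\Psi_{E_1}$ coming from Theorem \ref{thm : morphisme induit commute avec iso} ii), and both then invoke the compatibility of the continuation map with the Fibration Theorem quasi-isomorphisms (the paper citing \cite[Propositions 10.2.1 and 9.8.1]{BDHO23}) to conclude that the composite does not change when $\Xi$ changes. The only difference is that you also fold in independence from the coherent homotopy and the lifting function, which the paper defers to Corollaries \ref{cor : homotopy euqivalence if different lifting functions} and \ref{cor : equality of Ai morphism on total space gives equality of induced maps}.
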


\begin{proof}
    Let $\Xi_0$ and $\Xi_1$ be sets of DG Morse data on $X$. Proving this corollary amounts to prove that the following diagram commutes 

    $$\xymatrix{
    H_*(X,\Xi_0,C_*(F_1)) \ar[r]^{\tilde{\varphi}} \ar[d]^{\Psi_{01}} & H_*(X,\Xi_0,C_*(F_2)) \ar[d]^{\Psi_{01}} \\
    H_*(X,\Xi_1,C_*(F_1)) \ar[r]^{\tilde{\varphi}} &  H_*(X,\Xi_1,C_*(F_2)).
    }$$

    We decompose this diagram in 

    $$\xymatrix{
    H_*(X,\Xi_0,C_*(F_1)) \ar[r]^{\tilde{\varphi}} \ar[d]^{\Psi_{E_1}} \ar@/_3pc/[dd]_{\Psi_{01}} & H_*(X,\Xi_0,C_*(F_2)) \ar@/^3pc/[dd]^{\Psi_{01}} \ar[d]^{\Psi_{E_2}} \\
    H_*(E_1) \ar[r]^{\varphi_*} \ar[d]^{\Psi_{E_1}^{-1}} & H_*(E_2) \ar[d]^{\Psi_{E_2}^{-1}} \\
    H_*(X,\Xi_1,C_*(F_1)) \ar[r]^{\tilde{\varphi}} &  H_*(X,\Xi_1,C_*(F_2)).
    }$$

    Theorem \ref{thm : morphisme induit commute avec iso} gives that both squares commute. It remains to prove that $\Psi_{E_i} \circ \Psi_{01} = \Psi_{E_i}$ in homology for $i \in \{0,1\}$. \cite[Proposition 10.2.1]{BDHO23} states that $\Psi_{01} = \Id_* : H_*(X,\Xi_0,C_*(F_1)) \to H_*(X,\Xi_1,C_*(F_1))$ and \cite[Proposition 9.8.1]{BDHO23} states that $ \Psi_{E_i}\Id_* = \Id_{E_i}\Psi_{E_i}$. This concludes the proof of this Corollary.
\end{proof}

\begin{cor}\label{cor : compatibility induced map by a morphism of fibration and direct maps}
    Let $F_1 \hookrightarrow E_1 \to X$ and $F_2 \hookrightarrow E_2 \to X$ be two fibrations and $\varphi : E_1 \to E_2$ a morphism of fibrations.
    Let $g: Y^m \to X^n$ be a continuous map. Denote $\F_1 = C_*(F_1)$ and $\F_2 = C_*(F_2)$.
    Then the following diagram commutes

    $$
    \xymatrix{
    H_*(Y, g^*\F_1) \ar[r]^-{g^*\Tilde{\varphi}} \ar[d]_{g_*} & H_*(Y,g^*\F_2) \ar[d]^{g_*}\\
    H_*(X,\F_1) \ar[r]_{\Tilde{\varphi}} & H_*(X, \F_2).
    }
    $$

\begin{rem}
    Using a more algebraic approach for any morphism of complexes $\tilde{\varphi}$ induced by a morphism of $\Ai$-modules over  $C_*(\Omega X)$, we will reprove this property as well as prove that the map $\tilde{\varphi}$ is compatible with shriek maps (Proposition \ref{prop : morphisme Ai commute avec direct et shriek}).
\end{rem}

\end{cor}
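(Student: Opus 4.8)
The plan is to deduce the corollary from the Fibration Theorem together with Theorem~\ref{thm : morphisme induit commute avec iso}, so that it reduces to the plain naturality of singular homology. Write $\hat g_i : g^*E_i \to E_i$ for the canonical map covering $g$ (in the notation of Lemma~\ref{lemme : morphism induit poussé
par une application continue}, $\hat g_i(y,e)=e$), and let $\Psi_{E_i}$ and $\Psi_{g^*E_i}$ be the quasi-isomorphisms of the Fibration Theorem for $F_i\hookrightarrow E_i\to X$ and for the pullback fibration $F_i\hookrightarrow g^*E_i\to Y$.

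First I would assemble the cube whose vertical edges are the four quasi-isomorphisms $\Psi_{g^*E_1}$, $\Psi_{g^*E_2}$, $\Psi_{E_1}$, $\Psi_{E_2}$, whose top face is the square of the corollary, and whose bottom face is
$$\xymatrix{ H_*(g^*E_1)\ar[r]^-{(g^*\varphi)_*}\ar[d]_-{\hat g_{1,*}} & H_*(g^*E_2)\ar[d]^-{\hat g_{2,*}} \\ H_*(E_1)\ar[r]_-{\varphi_*} & H_*(E_2). }$$
The bottom face commutes already at the level of singular chains, because $\varphi\circ\hat g_1=\hat g_2\circ(g^*\varphi)$ as maps $g^*E_1\to E_2$ (both send $(y,e)$ to $\varphi(e)$). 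Since all four vertical edges are isomorphisms, it suffices to check that the four lateral faces of the cube commute in homology; a diagram chase then gives $\Psi_{E_2}\circ\Tilde{\varphi}\circ g_* = \Psi_{E_2}\circ g_*\circ g^*\Tilde{\varphi}$, and cancelling the isomorphism $\Psi_{E_2}$ yields $\Tilde{\varphi}\circ g_* = g_*\circ g^*\Tilde{\varphi}$, which is the claim.

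For the two lateral faces carrying $\Tilde{\varphi}$ and $g^*\Tilde{\varphi}$: Lemma~\ref{lemme : morphism induit poussé
par une application continue} shows that $g^*\varphi : g^*E_1\to g^*E_2$ is a morphism of fibrations over $Y$, while $\varphi$ is one over $X$; hence Theorem~\ref{thm : morphisme induit commute avec iso} applied over $X$ and over $Y$ gives $\Psi_{E_2}\circ\Tilde{\varphi}=\varphi_*\circ\Psi_{E_1}$ and $\Psi_{g^*E_2}\circ g^*\Tilde{\varphi}=(g^*\varphi)_*\circ\Psi_{g^*E_1}$ up to chain homotopy, hence in homology. Here $g^*\Tilde{\varphi}$ is understood as the morphism of complexes associated by Theorem~\ref{thm : morphisme induit commute avec iso} to the morphism of fibrations $g^*\varphi$; by Corollary~\ref{cor : induced map morphism of fibration well-defined in homology} it coincides in homology with the morphism induced by the $\Ai$-morphism $C_*(F_1)\to C_*(F_2)$ pulled back along $g_* : C_*(\Omega Y)\to C_*(\Omega X)$, so the statement is unambiguous. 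For the two lateral faces carrying the direct maps: the required identities $\Psi_{E_i}\circ g_* = \hat g_{i,*}\circ\Psi_{g^*E_i}$ in homology, $i\in\{1,2\}$, are precisely the compatibility of the direct map with the Fibration Theorem established in \cite[Section~9]{BDHO23} (this is the ``property of pullback fibrations'' invoked in the proof of \cite[Proposition~9.8.1]{BDHO23}).

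The only ingredient that is not bookkeeping is this last compatibility between $g_*$ and the Fibration Theorem, and it is imported from \cite{BDHO23}; I expect the main care to go into checking that the identification of $g^*\Tilde{\varphi}$ with the map coming from $g^*\varphi$ introduces no sign or basepoint discrepancy, which is in any case invisible in homology. A more computational alternative — proving directly that $g_*\circ g^*\Tilde{\varphi}$ and $\Tilde{\varphi}\circ g_*$ are chain homotopic by juggling the $\Ai$-relations and the equation defining the direct cocycle $\nu^{g_*}$ — is available but much less transparent, and is in any case subsumed by the purely algebraic treatment announced in Proposition~\ref{prop : morphisme Ai commute avec direct et shriek}.
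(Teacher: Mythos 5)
Your proof is correct and follows essentially the same strategy as the paper: both assemble the same cube with the four Fibration Theorem quasi-isomorphisms as the bridging edges, use Theorem~\ref{thm : morphisme induit commute avec iso} for the two faces carrying $\Tilde{\varphi}$ and $g^*\Tilde{\varphi}$, invoke \cite[Proposition 9.8.1]{BDHO23} for the faces carrying the direct maps, observe that the purely singular face commutes by naturality of pullback fibrations, and finish by a diagram chase. Your extra paragraph identifying $g^*\Tilde{\varphi}$ via Corollary~\ref{cor : induced map morphism of fibration well-defined in homology} is a useful clarification the paper leaves implicit, but the argument is otherwise the same.
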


\begin{proof}
    We will prove that this diagram is a face of cube where every other faces are commutative.

    $$
    \xymatrix{
    H_*(Y,g^*\F_1) \ar[ddd]^{g_*} \ar[rr]^{g^*\tilde{\varphi}} \ar@<.5ex>[dr]^{\Psi_{g^*E_1}} & & H_*(Y, g^*\F_2) \ar@<.5ex>[dr]^{\Psi_{g^*E_2}} \ar@{-->}[ddd]^{g_*} &\\
     & H_*(g^*E_1) \ar@<.5ex>[ul]^{\Psi_{g^*E_1}^{-1}} \ar[rr]^{g^*\varphi_* \qquad} \ar[ddd]^{g_*} & & H_*(g^*E_2) \ar@<.5ex>[ul]^{\Psi_{g^*E_2}^{-1}} \ar[ddd]^{g_*} \\
     & & & \\
     H_*(X, \F_1) \ar@{-->}[rr]_{\qquad \tilde{\varphi}} \ar@<.5ex>[dr]^{\Psi_{E_1}} & & H_*(X,\F_2) \ar@{-->}@<.5ex>[dr]^{\Psi_{E_2}} & \\
     & H_*(E_1) \ar@<.5ex>[ul]^{\Psi_{E_1}^{-1}}\ar[rr]^{\varphi_*}  & & H_*(E_2)\ar@{-->}@<.5ex>[ul]^{\Psi_{E_2}^{-1}}.
    }
    $$

    The front face is commutative by general properties about pullback fibrations. Theorem \ref{thm : morphisme induit commute avec iso} states that the faces on the top and on the bottom are commutative.  \cite[Proposition 9.8.1]{BDHO23} states that the lateral faces are commutative. From there, a diagram chase concludes the proof.
\end{proof}

\begin{cor}\label{cor : equality of Ai morphism on total space gives equality of induced maps}
    If $\varphi : E_1\to E_2$ is a morphism of fibrations, then  $\tilde{\varphi} : H_*(X,C_*(F_1)) \to H_*(X,C_*(F_2))$ does not depend on the coherent homotopy but only on $\varphi_* : H_*(E_1) \to H_*(E_2) $. Moreover, if $\varphi_* : H_*(E_1) \to H_*(E_2)$ is an isomorphism, then so is $\tilde{\varphi} : H_*(X,C_*(F_1)) \to H_*(X,C_*(F_2))$.
\end{cor}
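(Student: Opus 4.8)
The plan is to deduce the whole statement formally from Theorem \ref{thm : morphisme induit commute avec iso}. Recall that a coherent homotopy for $\varphi$ produces the morphism of complexes $\tilde\varphi : C_*(X,\Xi,C_*(F_1)) \to C_*(X,\Xi,C_*(F_2))$, and part ii) of that theorem supplies a chain homotopy between $\Psi_2 \circ \tilde\varphi$ and $\varphi_* \circ \Psi_1$. Passing to homology, I would first record the identity
$$\Psi_{E_2} \circ \tilde\varphi = \varphi_* \circ \Psi_{E_1} : H_*(X,C_*(F_1)) \to H_*(E_2),$$
where $\Psi_{E_1}$ and $\Psi_{E_2}$ are the isomorphisms given by the Fibration Theorem.

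Next I would observe that $\Psi_{E_1}$ and $\Psi_{E_2}$ are isomorphisms in homology and — this is the only point that genuinely needs to be checked — that they are constructed (Section \ref{subsection : Fibration Theorem}) purely from the fibrations $F_i \hookrightarrow E_i \to X$ and a choice of DG Morse data on $X$, so that they do not involve $\varphi$ at all and in particular do not depend on any coherent homotopy for $\varphi$. Inverting them in the displayed equation then gives, in homology,
$$\tilde\varphi = \Psi_{E_2}^{-1} \circ \varphi_* \circ \Psi_{E_1}.$$
Since the right-hand side is written entirely in terms of $\varphi_*$ and the fixed isomorphisms $\Psi_{E_i}$, the homology-level map $\tilde\varphi$ depends only on $\varphi_* : H_*(E_1) \to H_*(E_2)$; in particular two coherent homotopies for the same $\varphi$ induce the same map in homology. (Independence of the chosen DG Morse data $\Xi$ on $X$ is already Corollary \ref{cor : induced map morphism of fibration well-defined in homology}.)

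For the second assertion, if $\varphi_* : H_*(E_1) \to H_*(E_2)$ is an isomorphism, then $\Psi_{E_2}^{-1} \circ \varphi_* \circ \Psi_{E_1}$ is a composition of three isomorphisms, hence an isomorphism, so $\tilde\varphi : H_*(X,C_*(F_1)) \to H_*(X,C_*(F_2))$ is an isomorphism. I do not expect any real obstacle: the corollary is a formal consequence of the homotopy-commutativity established in Theorem \ref{thm : morphisme induit commute avec iso} together with the fact that $\Psi_{E_1}$ and $\Psi_{E_2}$ are quasi-isomorphisms, and the only delicate verification is the already-noted remark that the vertical maps in that theorem are constructed independently of the coherent homotopy.
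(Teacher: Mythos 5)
Your proof is correct and is exactly the argument the paper leaves implicit (the corollary is stated with no written proof, only a closing square, as an immediate consequence of Theorem~\ref{thm : morphisme induit commute avec iso}). You correctly identify the one point that needs checking --- that $\Psi_{E_1}$ and $\Psi_{E_2}$ are constructed from the fibration and Morse data alone and involve neither $\varphi$ nor a coherent homotopy --- and the rest is the formal inversion $\tilde\varphi = \Psi_{E_2}^{-1}\circ\varphi_*\circ\Psi_{E_1}$, as intended.
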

\begin{flushright}
    $\blacksquare$
\end{flushright}

\begin{cor}\label{cor : composition of morphism of fibrations}
    If $\varphi : E_0 \to E_1$ and $\psi : E_1 \to E_2$ are morphisms of fibrations, then $\psi \circ \varphi : E_0 \to E_2$ is a morphism of fibrations and $$\widetilde{\psi \circ \varphi} = \tilde{\psi} \circ \tilde{\varphi} : H_*(E_0) \to H_*(E_2).$$
\end{cor}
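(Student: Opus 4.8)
The plan is to treat the two assertions separately. The first — that $\psi\circ\varphi$ is again a morphism of fibrations — is immediate from Definition \ref{defi : Ai morphism of fibration}: writing $\pi_0,\pi_1,\pi_2$ for the three projections, we have $\pi_2\circ(\psi\circ\varphi)=(\pi_2\circ\psi)\circ\varphi=\pi_1\circ\varphi=\pi_0$, so $\psi\circ\varphi$ preserves the fibrations. Consequently the map $\widetilde{\psi\circ\varphi}:C_*(X,\Xi,C_*(F_0))\to C_*(X,\Xi,C_*(F_2))$ is defined by part i) of Theorem \ref{thm : morphisme induit commute avec iso}, and it is well-defined in homology by Corollary \ref{cor : equality of Ai morphism on total space gives equality of induced maps}.

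For the second assertion I would argue directly in homology using the commutative squares supplied by part ii) of Theorem \ref{thm : morphisme induit commute avec iso}. Denote by $\Psi_i=\Psi_{E_i}$ the quasi-isomorphisms given by the Fibration Theorem, for $i=0,1,2$. Applying Theorem \ref{thm : morphisme induit commute avec iso} to the three morphisms of fibrations $\varphi$, $\psi$ and $\psi\circ\varphi$ yields, in homology,
$$\Psi_1\circ\tilde\varphi=\varphi_*\circ\Psi_0,\qquad \Psi_2\circ\tilde\psi=\psi_*\circ\Psi_1,\qquad \Psi_2\circ\widetilde{\psi\circ\varphi}=(\psi\circ\varphi)_*\circ\Psi_0.$$
Since each $\Psi_i$ is an isomorphism in homology, the first two identities give $\tilde\varphi=\Psi_1^{-1}\circ\varphi_*\circ\Psi_0$ and $\tilde\psi=\Psi_2^{-1}\circ\psi_*\circ\Psi_1$, hence
$$\tilde\psi\circ\tilde\varphi=\Psi_2^{-1}\circ\psi_*\circ\Psi_1\circ\Psi_1^{-1}\circ\varphi_*\circ\Psi_0=\Psi_2^{-1}\circ(\psi_*\circ\varphi_*)\circ\Psi_0=\Psi_2^{-1}\circ(\psi\circ\varphi)_*\circ\Psi_0,$$
where the last equality is functoriality of singular homology. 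Comparing with the third identity, which reads $\widetilde{\psi\circ\varphi}=\Psi_2^{-1}\circ(\psi\circ\varphi)_*\circ\Psi_0$, we conclude $\widetilde{\psi\circ\varphi}=\tilde\psi\circ\tilde\varphi$ in homology.

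The only genuine content here is the first paragraph, i.e. checking that Theorem \ref{thm : morphisme induit commute avec iso} legitimately applies to $\psi\circ\varphi$; everything else is a formal diagram chase through the Fibration Theorem isomorphisms, so there is no real obstacle. I would emphasize that the potentially delicate point — splicing a coherent homotopy for $\varphi$ and one for $\psi$ into a coherent homotopy for $\psi\circ\varphi$ and then exhibiting an explicit chain homotopy between $\widetilde{\psi\circ\varphi}$ and $\tilde\psi\circ\tilde\varphi$ using the compatibility relations \eqref{eq : Ai morphism of fibration} — is entirely bypassed by passing to homology, where the classes are pinned down by the $\Psi_i$. (Such a chain-level refinement could be carried out if needed, but it is not required for this statement.)
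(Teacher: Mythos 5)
Your proof is correct and follows essentially the same strategy as the paper's: pass to homology, invert the Fibration Theorem isomorphisms $\Psi_i$ via the commutative squares of Theorem \ref{thm : morphisme induit commute avec iso}, and invoke functoriality of singular homology. You are in fact a bit more careful than the paper in two respects: you spell out why $\psi\circ\varphi$ is again a morphism of fibrations (so that $\widetilde{\psi\circ\varphi}$ makes sense), and your intertwining identities $\tilde\varphi=\Psi_1^{-1}\circ\varphi_*\circ\Psi_0$ etc.\ have the $\Psi_i$ on the correct sides, whereas the paper's displayed chain of equalities has them reversed (a typographical slip, since the intended meaning is the same as yours).
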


\begin{proof}
    Theorem \ref{thm : morphisme induit commute avec iso} proves that at the homology level, \begin{align*}
        \widetilde{\psi \circ \varphi} &= \Psi_{E_0} \circ (\psi \circ \varphi)_* \circ \Psi_{E_2}^{-1}\\
        &= \Psi_{E_0} \circ \psi_* \circ\varphi_* \circ \Psi_{E_2}^{-1}\\
        &= \Psi_{E_0} \circ \psi_* \circ \Psi_{E_1}^{-1} \circ \Psi_{E_1} \circ \varphi_* \circ \Psi_{E_2}^{-1}\\
        &= \tilde{\psi} \circ \tilde{\varphi}.
    \end{align*}
\end{proof}

\subsection{Compatibility with the \texorpdfstring{$\Ai$}{Ai}-Morse toolset, direct and shriek maps}

The main results of this section are that any morphism $\boldsymbol{\varphi} : \F \to \G$ of $\Ai$-modules over $C_*(\Omega X)$ induces a well-defined map $\tilde{\varphi} : H_*(X,\F) \to H_*(X,\G)$ in homology and that it is compatible with direct and shriek maps. Therefore, the reader only interested in the case of fibrations will only find here the proof of the compatibility of $\tilde{\varphi}$ with shriek maps. We have provided independent proofs of the compatibility with direct maps (Corollary \ref{cor : compatibility induced map by a morphism of fibration and direct maps}) and the fact that $\tilde{\varphi} : H_*(X,\F) \to H_*(X,\G)$ is well-defined (Corollary \ref{cor : induced map morphism of fibration well-defined in homology}) using Theorem \ref{thm : morphisme induit commute avec iso} in the case of a fibration.

\subsubsection{Compatibility with \texorpdfstring{$\Ai$}{Ai}-Morse toolset}

We prove here that the map induced by an $\Ai$-morphism of modules commutes in homology with the maps defined in Proposition \ref{Prop : continuation morphisms}. The statement and proof are quite technical since the proposition is aimed to be applied in various situations.

\begin{prop}\label{prop : compatibility induced map and tau maps}
   Let $f_0, f_1 : X \to \R$ be two Morse functions and $\F, \G$ two $\Ai$-modules over $C_*(\Omega X)$.
   
   Let $\{m^0_{x,y} \in C_{|x|-|y|-1}(\Omega X), \ x,y \in \Crit(f_0)\}$ and $\{m^1_{x',y'} \in C_{|x'|-|y'|}(\Omega X), x',y' \in \Crit(f_1)\}$ be two twisting cocycles on $X$. 
   Let $\{\tau_{x,y'} \in C_{|x|-|y'|}(\Omega X), x\in \textup{Crit}(f_0), y' \in \textup{Crit}(f_1)\}$ a cocycle satisfying \eqref{eq : mu_1 de tau}.
   
   Let $\Psi_{\F} : C_*(X,m^0,\F) \to C_*(X, m^1,\F)$, $\Psi_{\G} : C_*(X,m^0,\G) \to C_*(X,m^1,\G)$ be the morphisms defined in \ref{Prop : continuation morphisms} using $\{\tau_{x,y'}\in C_{|x|-|y'|}(\Omega X), \ x \in \Crit(f_0), \ y' \in \Crit(f_1)\}$.\\
   
   Let $\boldsymbol{\varphi} : (\F, \boldsymbol{\nu^F}) \to (\G, \boldsymbol{\nu^G})$ be a morphism of $\Ai$-modules and $\tilde{\varphi}_0 : C_*(X,m^0,\F) \to  C_*(X, m^0, \G)$, \\ $\tilde{\varphi}_1 : C_*(X,m^1,\F) \to  C_*(X, m^1, \G)$ be the induced morphisms of complexes as defined in Proposition \ref{prop : Morphisme de Ai module en morphisme de complexe}.

    Then, the following diagram commutes up to chain homotopy :

    $$\xymatrix{
    C_*(X,m^0,\F) \ar[d]_{\Psi_{\F}} \ar[r]^-{\Tilde{\varphi}_0} & C_*(X, m^0, \G) \ar[d]^{\Psi_{\G}} \\
    C_*(X,m^1,\F) \ar[r]_{\Tilde{\varphi}_1} & C_*(X,m^1,\G).
    }$$
\end{prop}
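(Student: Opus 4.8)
The natural approach is to construct an explicit chain homotopy by the same mechanism used throughout Section~\ref{section : Functoriality on the coefficient}: build a ``mixed'' operator that simultaneously interpolates the $\tau$-direction (continuation from $m^0$ to $m^1$) and the $\varphi$-direction ($\Ai$-morphism from $\F$ to $\G$), and check that its boundary produces exactly $\Psi_{\G}\circ\tilde\varphi_0 - \tilde\varphi_1\circ\Psi_{\F}$. Concretely, I would define
$$
H \;=\; \sum_{\substack{n\geq 1\\ 1\leq u\leq v\leq n}} (-1)^{u-1}\,(\varphi_{n+1}\otimes 1)\,\mathbf{\Tilde{m}}_{(1)}^{\,n-v}\,\boldsymbol{\Tilde{\tau}}\,\mathbf{\Tilde{m}}_{(0)}^{\,v-u}\,\boldsymbol{?}\,\mathbf{\Tilde{m}}_{(0)}^{\,u-1},
$$
where the $\varphi_{n+1}$ are the components of the $\Ai$-morphism and the placement of $\boldsymbol{\Tilde{\tau}}$ records where the continuation cocycle is inserted among the $n$ copies of $\m$; the precise combinatorial shape (a double sum over the position of $\boldsymbol{\Tilde{\tau}}$ and the position of the ``$\varphi$-cut'') is dictated by mimicking Proposition~\ref{Prop : continuation morphisms} and Proposition~\ref{prop : Morphisme de Ai module en morphisme de complexe} at the same time. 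The point is that $\tilde\varphi_i$ is built from $\sum(\varphi_{n+1}\otimes 1)\m^n$ and $\Psi$ is built from $\sum(-1)^{u-1}(\nu_{n+1}\otimes 1)\m_{(1)}^{n-u}\boldsymbol{\Tilde\tau}\m_{(0)}^{u-1}$, so the composite $\Psi_{\G}\tilde\varphi_0$ replaces the leading $\nu_{n+1}$ by a string $\nu^G_{k+1}(\varphi_{l+1}\otimes 1^{\otimes k})$, and by the $\Ai$-morphism relation \eqref{eq : relation morphisme de Ai module} this rearranges into $\varphi$-terms with $\nu^F$, plus $\mu_1$- and $\mu_2$-error terms; the $\mu_1$-errors are absorbed via Lemma~\ref{lemme : elimination using MC}, and the $\mu_2$-errors cancel telescopically against $\partial H + H\partial$.

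The key steps, in order, would be: (1) write out $\Psi_{\G}\circ\tilde\varphi_0$ and $\tilde\varphi_1\circ\Psi_{\F}$ as sums over functional compositions of $\nu$'s, $\varphi$'s, $\m_{(0)}$'s, $\boldsymbol{\Tilde\tau}$'s and $\m_{(1)}$'s, carefully tracking Koszul signs (using the degree conventions: $\m$ has degree $-1$, $\boldsymbol{\Tilde\tau}$ degree $0$, $\varphi_{n+1}$ degree $n-1$, $\nu_{n+1}$ degree $n-1$); (2) use the $\Ai$-module relation \eqref{eq : nu A infinite module}, the $\Ai$-morphism relation \eqref{eq : relation morphisme de Ai module}, the Maurer--Cartan equations for $m^0$ and $m^1$ in the form \eqref{eq : MC Ai module}, and the identity \eqref{eq: mu_1 of tau functional version} for $\boldsymbol{\Tilde\tau}$ to convert all $\mu_1$-terms into $\mu_2$-terms via Lemma~\ref{lemme : elimination using MC}; (3) compute $\partial H + H\partial$ with the same substitutions and observe that every surviving $\mu_2$-term appears once positively and once negatively, exactly as in the ``appears only once with a positive sign $\dots$ always once negatively'' argument at the end of Proposition~\ref{Prop : continuation morphisms}.

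The main obstacle is purely bookkeeping: getting the combinatorics of the double/triple sum defining $H$ right so that the telescoping is exact, and pinning down all the Koszul signs that arise when $\varphi_{n+1}$ (of nonzero degree) is slid past $\m$, $\boldsymbol{\Tilde\tau}$ and $\nu$. A secondary subtlety is that $\boldsymbol{\Tilde\tau}$ changes the critical-point set (from $\Crit(f_0)$ to $\Crit(f_1)$), so one must keep $\m_{(0)}$ strictly to the right of $\boldsymbol{\Tilde\tau}$ and $\m_{(1)}$ strictly to its left throughout; the three-case split (position $r$ of a $\mu_1$ relative to the $\boldsymbol{\Tilde\tau}$-cut and the $\varphi$-cut) will now have more cases than in Proposition~\ref{Prop : continuation morphisms}, but each case is handled by one of the already-established functional identities. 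No new conceptual ingredient beyond Lemma~\ref{lemme : elimination using MC} and the two $\Ai$-relations is needed; once the homotopy $H$ is correctly assembled the verification is a (long) direct computation, and I would present only its skeleton, deferring the sign-chase to the reader as is done elsewhere in the paper.
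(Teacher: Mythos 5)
You have the right strategy and the right toolkit, but you have overcomplicated the shape of the homotopy. There is no second ``cut'': the $\Ai$-morphism component $\varphi_{n+1}$ always sits at the outermost position (exactly where $\nu_{n+1}$ sits in $\tilde\varphi_i$), and the continuation cocycle $\boldsymbol{\Tilde\tau}$ is inserted at a single position among the $n$ copies of $\m$, exactly as in $\Psi$. So the homotopy is not a double sum with a placeholder $\boldsymbol{?}$; it is obtained from the formula for $\Psi$ in Proposition~\ref{Prop : continuation morphisms} simply by replacing $\nu_{n+1}$ with $\varphi_{n+1}$:
$$
\kappa \;=\; \sum_{n\geq 1}\sum_{u=1}^{n}(-1)^{u-1}\,(\varphi_{n+1}\otimes 1)\,\mathbf{\Tilde{m}}_{(1)}^{\,n-u}\,\boldsymbol{\Tilde{\tau}}\,\mathbf{\Tilde{m}}_{(0)}^{\,u-1}
\;:\; C_*(X,m^0,\F)\longrightarrow C_{*+1}(X,m^1,\G).
$$
With this $\kappa$, $\partial\kappa$ produces strings $\nu^G_{k+1}\varphi_{n+1}$ and $\kappa\partial$ produces strings $\varphi_{n+1}\nu^F_{k+1}$; reindexing in $u$ over $1\leq u\leq n+k$ splits these into the term $\Psi_{\G}\tilde\varphi_0 - \tilde\varphi_1\Psi_{\F}$ plus a correction in which the $\Ai$-morphism relation \eqref{eq : relation morphisme de Ai module} is applied to $\nu^G_{k+1}(\varphi_{n+1}\otimes 1^{\otimes k}) - \varphi_{n+1}(\nu^F_{k+1}\otimes 1^{\otimes n})$, producing $\mu_1$- and $\mu_2$-error terms. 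Those are then cancelled exactly as in the end of Proposition~\ref{Prop : continuation morphisms}: Lemma~\ref{lemme : elimination using MC} and equation \eqref{eq: mu_1 of tau functional version} convert the $\mu_1$-terms to $\mu_2$-terms according to whether the position $r$ lies to the right of, at, or to the left of the $\boldsymbol{\Tilde\tau}$-insertion point $u$, and the resulting $\mu_2$-terms telescope. Your ``three-case split'' is correct, but there is no extra case coming from a $\varphi$-cut because $\varphi$ never moves. Until you fix the definition of $\kappa$, your computation of $\partial H + H\partial$ cannot close up, since the unidentified $\boldsymbol{?}$ has no relations to cancel against; that is the gap.
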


\begin{proof}
    Let $\kappa : C_*(X, m^0, \F) \to C_{*+1}(X,m^1, \G)$ defined by 

    $$\kappa = \sum_{n \geq 1} \sum_{u=1}^n (-1)^{u-1} (\varphi_{n+1} \otimes 1) \m_{(1)}^{n-u} \boldsymbol{\Tilde{\tau}} \m_{(0)}^{u-1}.$$

    We now prove that $\Psi_{\G}\Tilde{\varphi}_{0} - \Tilde{\varphi}_{1}\Psi_{\F} = \partial \kappa + \kappa \partial$.

    We compute 

    \begin{align*}
        \partial \kappa &= \sum_k \sum_{n \geq 1} \sum_{u=1}^n (-1)^{u-1} (\nu'_{k+1} \otimes 1) \m^k_{(1)} (\varphi_{n+1} \otimes 1) \m_{(1)}^{n-u} \boldsymbol{\Tilde{\tau}} \m_{(0)}^{u-1} \\
        &= \sum_k \sum_{n \geq 1} \sum_{u=1}^n (-1)^{u-1} (-1)^{kn} (\nu'_{k+1}\varphi_{n+1} \otimes 1) \m^{n+k-u}_{(1)} \boldsymbol{\Tilde{\tau}} \m_{(0)}^{u-1} \\
        &= \sum_k \sum_{n \geq 1} \sum_{u=1}^{n+k} (-1)^{u-1} (-1)^{kn} (\nu'_{k+1}\varphi_{n+1} \otimes 1) \m^{n+k-u}_{(1)} \boldsymbol{\Tilde{\tau}} \m_{(0)}^{u-1}\\
        &- \sum_k \sum_{n \geq 1} \sum_{u=n+1}^{n+k} (-1)^{u-1} (-1)^{kn} (\nu'_{k+1}\varphi_{n+1} \otimes 1) \m^{n+k-u}_{(1)} \boldsymbol{\Tilde{\tau}} \m_{(0)}^{u-1}
    \end{align*}

    and 

    \begin{align*}
        \kappa \partial &= \sum_k \sum_{n \geq 1} \sum_{u=1}^n (-1)^{u-1} (\varphi_{n+1} \otimes 1) \m_{(1)}^{n-u} \boldsymbol{\Tilde{\tau}} \m_{(0)}^{u-1} (\nu_{k+1} \otimes 1) \m_{(0)}^{k+1}\\
        &= \sum_k \sum_{n \geq 1} \sum_{u=1}^n (-1)^{u-1} (-1)^{(k-1)(n-1)} (\varphi_{n+1} \nu_{k+1} \otimes 1) \m_{(1)}^{n-u} \boldsymbol{\Tilde{\tau}} \m^{k+u-1}_{(0)} \\
        &= - \sum_k \sum_{n \geq 1} \sum_{u=k+1}^{n+k} (-1)^{u-1} (-1)^{(k-1)n} (\varphi_{n+1} \nu_{k+1} \otimes 1) \m^{n+k-u}_{(1)} \boldsymbol{\Tilde{\tau}} \m_{(0)}^{u-1} \\
        &= - \sum_k \sum_{n \geq 1} \sum_{u=1}^{n+k} (-1)^{u-1} (-1)^{(k-1)n} (\varphi_{n+1} \nu_{k+1} \otimes 1) \m^{n+k-u}_{(1)} \boldsymbol{\Tilde{\tau}} \m_{(0)}^{u-1}\\
        &+ \sum_k \sum_{n \geq 1} \sum_{u=1}^{k} (-1)^{u-1} (-1)^{(k-1)n} (\varphi_{n+1} \nu_{k+1} \otimes 1) \m^{n+k-u}_{(1)} \boldsymbol{\Tilde{\tau}} \m_{(0)}^{u-1}.
    \end{align*}

    Therefore,
   \begin{align*}
     \partial \kappa + \kappa \partial & = \sum_k \sum_{n \geq 1} \sum_{u=1}^{k} (-1)^{u-1} (-1)^{(k-1)n} (\varphi_{n+1} \nu_{k+1} \otimes 1) \m^{n+k-u}_{(1)} \boldsymbol{\Tilde{\tau}} \m_{(0)}^{u-1} \\
     & - \sum_k \sum_{n \geq 1} \sum_{u=n+1}^{n+k} (-1)^{u-1} (-1)^{kn} (\nu'_{k+1}\varphi_{n+1} \otimes 1) \m^{n+k-u}_{(1)} \boldsymbol{\Tilde{\tau}} \m_{(0)}^{u-1}\\
    &- \sum_{N \geq 1} \left[ \sum_{r=1}^{N}  (-1)^N \varphi_{N+1}(1^{\otimes r} \otimes \mu_1 \otimes 1^{\otimes N-r}) \right.\\
    & \left. +  \sum_{r=1}^{N-1} (-1)^r \varphi_{N}(1^{\otimes r} \otimes \mu_2 \otimes 1^{\otimes N-1-r}) \right] \left[ \sum_{u=1}^N (-1)^{u-1} \m^{n+k-u}_{(1)} \boldsymbol{\Tilde{\tau}} \m_{(0)}^{u-1} \right].\\
    &= \Tilde{\varphi}_D\Psi_{\F} - \Psi_{\G} \Tilde{\varphi}_C \\
    &- \sum_{N \geq 1} \left[ \sum_{r=1}^{N}  (-1)^N \varphi_{N+1}(1^{\otimes r} \otimes \mu_1 \otimes 1^{\otimes N-r}) \right.\\
    & \left. +  \sum_{r=1}^{N-1} (-1)^r \varphi_{N}(1^{\otimes r} \otimes \mu_2 \otimes 1^{\otimes N-1-r}) \right] \left[ \sum_{u=1}^N (-1)^{u-1} \m^{n+k-u}_{(1)} \boldsymbol{\Tilde{\tau}} \m_{(0)}^{u-1} \right].\\
    \end{align*}

    The same arguments as in the proof of Proposition \ref{Prop : continuation morphisms} concerning the relation between $(1^{\otimes r} \otimes \mu_1 \otimes 1^{\otimes N-r})\m^{n+k-u}_{(1)} \boldsymbol{\Tilde{\tau}} \m_{(0)}^{u-1}$ and $(1^{\otimes r} \otimes \mu_2 \otimes 1^{\otimes N-1-r})\m^{n+k-u}_{(1)} \boldsymbol{\Tilde{\tau}} \m_{(0)}^{u-1}$ depending on $u$ and $r$ show that the last sum is 0 and therefore conclude the proof.

\end{proof}

\begin{cor}\label{cor : induced morphism well-defined in homology}
    For any data $\Xi_0$ and $\Xi_1$ over $X$ and any morphism $\boldsymbol{\varphi} : (\G,\nu_n) \to (\G',\nu'_n)$ of $\Ai$-module over $C_*(\Omega X)$, the following diagram commutes up to chain homotopy:

    \[
    \xymatrix{
    C_*(X,\Xi_0, \G) \ar[d]_{\Psi_{01}} \ar[r]^-{\Tilde{\varphi}_0} & C_*(X,\Xi_0,\G') \ar[d]^{\Psi_{01}} \\
    C_*(X,\Xi_1,\G) \ar[r]_{\Tilde{\varphi}_1} & C_*(X,\Xi_1,\G').
    }
    \]

    In particular, the map $\Tilde{\varphi} : H_*(X,\G) \to H_*(X,\G')$ is then well-defined.
\end{cor}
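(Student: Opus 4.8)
The plan is to recognize the statement as a direct application of Proposition \ref{prop : compatibility induced map and tau maps}, together with the invariance Theorem \ref{thm : Continuation morphism Ai invariance}, once one unwinds the definition of the continuation map $\Psi_{01}$.

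First I would fix the auxiliary data. Writing $f_0$, $f_1$ for the Morse functions of $\Xi_0$, $\Xi_1$ and $m^0_{x,y}$, $m^1_{x,y}$ for their Barraud--Cornea twisting cocycles, I would choose, as in Section \ref{section : Invariance Ai Morse}, a set of continuation data $\Xi$ on $X\times[0,1]$ with associated continuation cocycle $\{\tau_{x,y}\}$ satisfying \eqref{eq : mu_1 de tau}. By Definition \ref{defi : Ai continuation map}, the two copies of $\Psi_{01}$ in the diagram are exactly the morphisms that Proposition \ref{Prop : continuation morphisms} attaches to $\{\tau_{x,y}\}$, applied to the $\Ai$-modules $\G$ and $\G'$ respectively; and $\tilde\varphi_0$, $\tilde\varphi_1$ are the morphisms of complexes attached to $\boldsymbol\varphi$ by Proposition \ref{prop : Morphisme de Ai module en morphisme de complexe}. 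With this identification in place, Proposition \ref{prop : compatibility induced map and tau maps}, invoked with the cocycles $m^0$, $m^1$, $\{\tau_{x,y}\}$ and the morphism $\boldsymbol\varphi$, asserts precisely that the square commutes up to chain homotopy, via the explicit homotopy $\kappa = \sum_{n\geq 1}\sum_{u=1}^n (-1)^{u-1}(\varphi_{n+1}\otimes 1)\,\m_{(1)}^{n-u}\,\boldsymbol{\Tilde{\tau}}\,\m_{(0)}^{u-1}$. This proves the first assertion.

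For the ``in particular'' clause I would then argue as follows. By Theorem \ref{thm : Continuation morphism Ai invariance} each continuation map $\Psi_{01}$ is a quasi-isomorphism whose chain-homotopy type depends only on $\Xi_0$ and $\Xi_1$, and these maps satisfy $\Psi_{00}\simeq\Id$ and $\Psi_{12}\circ\Psi_{01}\simeq\Psi_{02}$; hence the groups $H_*(X,\Xi_i,\G)$ are all canonically identified, and one writes $H_*(X,\G)$ for their common value, similarly for $\G'$. The commutative square just established says that the maps $\tilde\varphi_{i,*}$ are intertwined by these canonical identifications, so they descend to a single map $\tilde\varphi\colon H_*(X,\G)\to H_*(X,\G')$. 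That this map does not depend on the auxiliary choice of $\Xi$ on $X\times[0,1]$ follows in the same way, since two such choices yield homotopic continuation maps.

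I do not expect a genuine obstacle: the real computation has already been done in Proposition \ref{prop : compatibility induced map and tau maps}, and the coherence used in the last paragraph is exactly Theorem \ref{thm : Continuation morphism Ai invariance}. The only point deserving a moment's care --- and the closest thing to a ``hard part'' --- is the bookkeeping identification of the invariance continuation map with an instance of Proposition \ref{Prop : continuation morphisms}, which however is built into the phrasing of Definition \ref{defi : Ai continuation map}.
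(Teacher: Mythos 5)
Your proposal is correct and matches the paper's proof: the square is a direct instance of Proposition \ref{prop : compatibility induced map and tau maps} applied to the Barraud--Cornea cocycles $m^0,m^1$ from $\Xi_0,\Xi_1$ and the continuation cocycle $\{\tau_{x,y}\}$, with the $\Psi_{01}$'s recognized as the $\Ai$-continuation maps of Definition \ref{defi : Ai continuation map}; the ``in particular'' clause then follows from Theorem \ref{thm : Continuation morphism Ai invariance}. The paper states this much more tersely, but the content is the same; your filling in of the explicit homotopy $\kappa$ and the coherence of the identifications is accurate and compatible with the text.
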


\begin{proof}
   This is a consequence of Proposition \ref{prop : compatibility induced map and tau maps}, with $m^0$ the Barraud-Cornea twisting cocycle arising from $\Xi_0$, $m^1$ the Barraud-Cornea twisting cocycle arising from $\Xi_1$ and $\Psi_{01} : C_*(X, \Xi_0, \G) \to C_*(X, \Xi_1, \G), \ \Psi_{01} : C_*(X, \Xi_0, \G') \to C_*(X, \Xi_1, \G')$ the corresponding $\Ai$-continuation maps (see Theorem \ref{thm : Continuation morphism Ai invariance}).
\end{proof}

\begin{prop}\label{prop : induced morphism is limit of spectral sequence maps}
    Let $\boldsymbol{\varphi} : \F \to \F'$ be a morphism of $\Ai$-modules over $C_*(\Omega X)$ and denote $\varphi = \varphi_1$. Then, the induced map in homology $\Tilde{\varphi} : H_*(X,\F) \to H_*(X,\F')$ is a limit of morphisms $\varphi^{(r)} : E^r_{p,q} \to E'^r_{p,q}$ between the spectral sequences associated with those homologies and 

    $$\varphi^{(1)} = \varphi_{*} \otimes 1 : H_q(\F) \otimes C_p(f,\xi) \to H_q(\F') \otimes C_p(f,\xi).$$
\end{prop}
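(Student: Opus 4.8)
The plan is to show that $\Tilde{\varphi} = \sum_{n \geq 0} (\varphi_{n+1} \otimes 1) \m^n$ respects the canonical filtration of Section \ref{subsection : Filtration and spectral sequence} and then to invoke the functoriality of spectral sequences of filtered complexes. Recall that $C_*(X,\F) = \F_* \otimes \Z\Crit(f)$ is filtered by $F_p = \bigoplus_{i+j = *,\, i \leq p} \F_j \otimes \Z\Crit_i(f)$, i.e. by the Morse index of the critical point; since $f$ has finitely many critical points this filtration is finite, $0 = F_{-1} \subseteq F_0 \subseteq \cdots \subseteq F_n = C_*(X,\F)$ with $n = \dim X$, so the associated spectral sequence converges to $H_*(X,\F)$ and degenerates after finitely many pages.

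First I would check that $\m$ strictly lowers the Morse index: each $m_{x,y}$ is nonzero only when $|y| < |x|$, hence $\m^n(F_p) \subseteq F_{p-n}$. Therefore the only summand of $\Tilde{\varphi}$ that does not strictly decrease the filtration degree is the $n = 0$ term $\varphi_1 \otimes 1$, which preserves $p$ exactly, and consequently $\Tilde{\varphi}(F_p) \subseteq F_p$ for all $p$. Together with Proposition \ref{prop : Morphisme de Ai module en morphisme de complexe} (which guarantees that $\Tilde{\varphi}$ is a chain map), this makes $\Tilde{\varphi}$ a morphism of filtered complexes; it thus induces morphisms $\varphi^{(r)} : E^r_{p,q} \to {E'}^r_{p,q}$ commuting with all the differentials $d^r$, and because the filtrations are finite these stabilise, their common limit being the map induced by $\Tilde{\varphi}_*$ on the associated graded of $H_{p+q}(X,\F)$ for the induced filtration. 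This is the precise sense in which $\Tilde{\varphi}_*$ ``is a limit'' of the $\varphi^{(r)}$.

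It remains to identify $\varphi^{(1)}$. On the associated graded the induced differential $d^0$ is the filtration-degree-preserving part of $\partial$, i.e. the $n = 0$ term $\nu_1^{\F} \otimes 1 = \partial_{\F} \otimes 1$; since $\Z\Crit_p(f)$ is free this yields $E^1_{p,q} = H_q(\F) \otimes \Z\Crit_p(f) = H_q(\F) \otimes C_p(f,\xi)$, as recalled in Section \ref{subsection : Filtration and spectral sequence}, and likewise for $\F'$. In the same way, the filtration-degree-preserving part of $\Tilde{\varphi}$ is precisely $\varphi_1 \otimes 1$, so the map it induces on $E^0$ is $\varphi_1 \otimes 1 : \F \otimes C_p(f,\xi) \to \F' \otimes C_p(f,\xi)$; since $\varphi_1 : \F \to \F'$ is a chain map with $\varphi_{1,*} = \varphi_*$ in homology, passing to $E^1$ gives $\varphi^{(1)} = \varphi_* \otimes 1 : H_q(\F) \otimes C_p(f,\xi) \to H_q(\F') \otimes C_p(f,\xi)$, as claimed.

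The argument is essentially formal once the filtration-compatibility is in place, and the only genuine subtlety — the main point to keep track of — is the bookkeeping of filtration degree versus homological degree, in particular that no summand $\m^n$ with $n \geq 1$ contributes either to $d^0$ or to $\varphi^{(0)}$; this is immediate from the index-lowering property of $\m$. This is the analogue, for maps induced by $\Ai$-module morphisms, of the argument used in \cite[Proposition 4.4.1]{BDHO23} and in the proof of Proposition \ref{Prop : QIso Criterion Ai} above.
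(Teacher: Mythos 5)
Your proof is correct and follows the same route as the paper's: both rest on the observation that $\m$ strictly lowers the Morse index, so that $\Tilde{\varphi}$ respects the index filtration and only the $n=0$ term $\varphi_1 \otimes 1$ contributes to the morphism induced on $E^0$, whence $\varphi^{(1)} = \varphi_* \otimes 1$. You simply spell out the general filtered-complex machinery and the distinction between the chain-level $\varphi_1$ and its homology class $\varphi_*$ more explicitly than the paper does, which is a matter of exposition rather than of substance.
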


\begin{proof}
    Recall that the enriched complex $C_*(X,\mathcal{A})$ is filtered by

    $$F_p(C_k(X,\mathcal{A})) = \bigoplus_{\substack{i+j = k \\ j \leq p}} \mathcal{A}_i \otimes C_j(f,\xi)$$

    and that the $0$-th page is given by $E^0_{p,q} = \mathcal{A}_q \otimes C_p(f,\xi)$. 
    
    The morphism $\tilde{\varphi}$ respects the filtration since $\m$ strictly decreases the index of the critical point.    
    If $\alpha \otimes x \in \mathcal{F}_q \otimes C_p(f,\xi)$, the only term in $\Tilde{\varphi}(\alpha \otimes x)$ that belongs to $\mathcal{F}'_q \otimes C_p(f,\xi)$ is $\varphi_{*}(\alpha) \otimes x$. Therefore  

    $$\varphi^{(0)}(\alpha \otimes x) = \varphi_{*}(\alpha) \otimes x$$

    and this induces in homology $\varphi^{(1)}= \varphi_{*} \otimes 1 : H_q(\F) \otimes C_p(f,\xi) \to H_q(\F') \otimes C_p(f,\xi).$
\end{proof}

\begin{cor}\label{cor : iso sur fibre induit tilde iso}
    If $\boldsymbol{\varphi} : \F \to \F'$ is a quasi-isomorphism of $\Ai$-modules, then $$\tilde{\varphi} : H_*(X, \F) \to H_*(X,\F')$$ is an isomorphism.
\end{cor}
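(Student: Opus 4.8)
The plan is to deduce this from Proposition \ref{prop : induced morphism is limit of spectral sequence maps} together with the classical comparison theorem for spectral sequences. First I would recall that, since $\boldsymbol{\varphi} : \F \to \F'$ is a quasi-isomorphism of $\Ai$-modules, the underlying chain map $\varphi = \varphi_1$ induces an isomorphism $\varphi_* : H_*(\F) \to H_*(\F')$. Hence, by Proposition \ref{prop : induced morphism is limit of spectral sequence maps}, the map $\tilde{\varphi}$ respects the canonical filtrations of $C_*(X,\F)$ and $C_*(X,\F')$ and induces a morphism $\varphi^{(r)} : E^r_{p,q} \to {E'}^r_{p,q}$ of the associated spectral sequences which, on the first page, is $\varphi^{(1)} = \varphi_* \otimes 1 : H_q(\F) \otimes C_p(f,\xi) \to H_q(\F') \otimes C_p(f,\xi)$. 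Since $\varphi_*$ is an isomorphism, $\varphi^{(1)}$ is an isomorphism.

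Next I would observe that these filtrations are bounded: with $n = \dim X$ one has $F_{-1}(C_*(X,\F)) = 0$ and $F_n(C_*(X,\F)) = C_*(X,\F)$, because $C_p(f,\xi) = 0$ for $p \notin \{0,\dots,n\}$, and likewise for $\F'$. Consequently both spectral sequences converge (to $H_*(X,\F)$ and $H_*(X,\F')$ respectively, as recalled in Section \ref{subsection : Filtration and spectral sequence}) and degenerate after finitely many steps in each bidegree. Thus $\varphi^{(\bullet)}$ is a morphism between convergent, bounded spectral sequences which is an isomorphism on the $E^1$-page.

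By the comparison theorem for spectral sequences, it follows that $\varphi^{(r)} : E^r_{p,q} \to {E'}^r_{p,q}$ is an isomorphism for every $r \geq 1$, hence also on the $E^\infty$-page. Since $\tilde{\varphi}$ respects the bounded filtrations, a standard induction on the filtration degree — applying the five lemma to the short exact sequences relating successive filtration quotients of $H_*(X,\F)$ and $H_*(X,\F')$ — then yields that $\tilde{\varphi} : H_*(X,\F) \to H_*(X,\F')$ is an isomorphism. There is no real obstacle in this argument; the only point needing a moment of care is checking the boundedness of the filtration, which guarantees convergence and lets the comparison/five-lemma machinery apply, everything else being a direct appeal to Proposition \ref{prop : induced morphism is limit of spectral sequence maps}.
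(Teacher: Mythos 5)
Your proof is correct and is exactly the argument the paper intends: the corollary is stated as an immediate consequence of Proposition \ref{prop : induced morphism is limit of spectral sequence maps}, and your write-up supplies the standard details (bounded filtration since $\Crit_p(f) = \emptyset$ for $p\notin\{0,\dots,n\}$, isomorphism on $E^1$ from $\varphi_*$ being a quasi-isomorphism, the spectral sequence comparison theorem, and the five-lemma induction on the filtration). Nothing further to add.
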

\begin{flushright}
    $\blacksquare$
\end{flushright}

\subsubsection{Compatibility with direct and shriek maps}\label{subsection : Compatibility with direct and shriek maps}
Let $X^n$ and $Y^m$ be two pointed, oriented, closed and connected manifolds and $(\F, \boldsymbol{\nu})$ be an $\Ai$-module over $C_*(\Omega Y)$. Let $f : X \to \R$ and $g : Y \to \R$ be Morse functions that are part of sets of DG Morse data $\Xi_X$ and $\Xi_Y$ on respectively $X$ and $Y$. Denote $\{m^X_{x,z}\}$ and $\{m^Y_{y',w'}\}$ the Barraud-Cornea cocycles associated to $\Xi_X$ and $\Xi_Y$.\\

Let $\varphi : X \to Y$ be a continuous map.

The first definition of direct map $\varphi_* : H_*(X,\varphi^*\F) \to H_*(Y,\F)$ and shriek map $\varphi_! : H_*(Y,\F) \to H_{*+n-m}(X,\varphi^*\F)$
are exactly the same in the $\Ai$ setting as in the DG case (see \cite[Section 9]{BDHO23}).\\

Let $\varphi : X \to Y$ be a smooth map.

The second definition of the direct map $\varphi_* : C_*(X, \Xi_X, \varphi^*\F) \to C_*(Y, \Xi_Y, \F) $ is described in \cite[Section 10]{BDHO23} by constructing a cocycle $\{\tau_{x,y'}\in C_{|x|-|y'|}(\Omega Y), \ x \in \Crit(f), \ y' \in \Crit(g)\}$ satisfying the equation \eqref{eq : mu_1 de tau} for the twisting cocycles $\{m^Y_{y',w'}\in C_{|y'|-|w'|-1}(\Omega Y)\}$ and $\{\varphi_*(m^X_{x,z}) \in C_{|x|-|z|-1}(\Omega Y)\}$ on $Y$.

The second definition of the shriek map $\varphi_! : C_*(Y,\Xi_Y, \F) \to C_{*-m+n}(X, \Xi_X, \varphi^*\F) $ is described by constructing a cocycle $$\{\tau_{y',x}\in C_{[y']-[x]}(\Omega Y), \ y' \in \Crit(g), \ x \in \Crit(f) \}$$ satisfying the equation \eqref{eq : mu_1 de tau} for the twisting cocycles $\{m^Y_{y',w'} \in C_{[y']-[w']-1}(\Omega Y)\}$ and $\{\varphi_*(m^X_{x,z}) \in C_{[x]-[z]-1}(\Omega Y)\}$ on $Y$ where we used the following grading for each $x \in Crit(f)$, $y' \in \Crit(g).$

$$[x] = |x|+m \textup{ and } [y'] = |y'|+n.$$

In our case, using Proposition \ref{Prop : continuation morphisms}, the direct map $\varphi_* : C_*(X, \varphi^*\F) \to C_*(Y,\F)$ is defined by 

$$\varphi_* = \sum_{n \geq 1} \sum_{u=1}^n (-1)^{u-1} (\nu_{n+1} \otimes 1) \m^{n-u}_Y \boldsymbol{\Tilde{\tau}} (\varphi_*\m_X)^{u-1}$$

and the shriek map $\varphi_! : C_*(Y,\F) \to C_{*-m+n}(X,\varphi^*\F)$ is defined by 

$$\varphi_! = \sum_{n \geq 1} \sum_{u=1}^n (-1)^{u-1} (\nu_{n+1} \otimes 1) (\varphi_*\m_X)^{n-u} \boldsymbol{\Tilde{\tau}^!} \m_Y^{u-1}.$$

If $\varphi : X \to Y$ is continuous, let $\varphi' :  X \to Y$ be a smooth map which is homotopic to $\varphi$ through a basepoints preserving homotopy $H$ which is $C^0$-close to the constant homotopy. The identification morphism $$\Psi^H : C_*(X,\Xi_X, \varphi^*\F) \to C_*(X, \Xi_X, \varphi'^*\F)$$ defined in \cite[Proposition 8.2.1]{BDHO23} carries over in our setting using again Proposition \ref{Prop : continuation morphisms} and we also define  $$\varphi_* = \varphi'_* \circ \Psi^H. $$

Using the $\Ai$ homotopy property \ref{Prop : homotopy Criterion Ai} instead of the DG one, the property stating in \cite{BDHO23} that the first and second definition of the direct and shriek maps are equivalent in homology still holds.

\begin{prop}\label{prop : morphisme Ai commute avec direct et shriek}
    Let $h: X^n \to Y^m$ be a continuous map, $\G$ and $\G'$ be two $\Ai$-modules over $C_*(\Omega Y)$, and $\{\varphi_{n}\} : \G \to \G'$ be a morphism of $\Ai$-modules.\\

    Then the following diagrams commute

{
\begin{minipage}{7cm}
    $$
    \xymatrix{
    H_*(X, h^*\G) \ar[r]^-{h^*\Tilde{\varphi}} \ar[d]_{h_*} & H_*(X,h^*\G') \ar[d]^{h_*}\\
    H_*(Y,\G) \ar[r]_{\Tilde{\varphi}} & H_*(Y, \G')
    }
    $$
\end{minipage}
\begin{minipage}{7cm}
   $$
    \xymatrix{
    H_{*-m+n}(X, h^*\G) \ar[r]^-{h^*\Tilde{\varphi}}  & H_{*-m+n}(X,h^*\G') \\
    H_*(Y,\G) \ar[u]^{h_!} \ar[r]_{\Tilde{\varphi}} & H_*(Y, \G') \ar[u]_{h_!}
    }
    $$
\end{minipage}
} 
   
\end{prop}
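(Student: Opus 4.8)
The statement asserts that the morphism $\tilde\varphi$ induced by an $\Ai$-morphism $\boldsymbol{\varphi}:\G\to\G'$ commutes with both direct maps $h_*$ and shriek maps $h_!$. I would treat the two squares separately but by the same strategy, since both $h_*$ and $h_!$ are, in the second definition of \cite[Section 10]{BDHO23}, of exactly the shape produced by Proposition \ref{Prop : continuation morphisms}: namely $h_* = \sum_{n\ge1}\sum_{u=1}^n (-1)^{u-1}(\nu_{n+1}\otimes 1)\,\m_Y^{n-u}\boldsymbol{\Tilde{\tau}}(h_*\m_X)^{u-1}$ and similarly for $h_!$ with $\boldsymbol{\Tilde{\tau}^!}$ and the roles of $\m_X,\m_Y$ swapped. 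So the first thing to do is record that the direct and shriek maps fit the hypotheses of Proposition \ref{prop : compatibility induced map and tau maps}: the cocycles $\{\tau_{x,y'}\}$ (resp. $\{\tau_{y',x}\}$) satisfy equation \eqref{eq : mu_1 de tau} for the pair of twisting cocycles $(\{h_*m^X_{x,z}\},\{m^Y_{y',w'}\})$ on $Y$ (resp. with the shifted grading $[x]=|x|+m$, $[y']=|y'|+n$), and $\boldsymbol{\varphi}$, being a morphism of $\Ai$-modules over $C_*(\Omega Y)$, is also a morphism over $C_*(\Omega X)$ after pulling back by $h$.

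**Carrying it out for smooth $h$.** For $h$ smooth, I would simply invoke Proposition \ref{prop : compatibility induced map and tau maps} with the appropriate data. For the direct map, take $f_0=f$ on $X$ with twisting cocycle $m^0=h_*m^X$, $f_1=g$ on $Y$ with $m^1=m^Y$, the cocycle $\tau=\tau^{h_*}$, and the $\Ai$-module morphism $\boldsymbol{\varphi}:\G\to\G'$; the proposition gives a chain homotopy making the square of complexes commute, which descends to the first displayed square in homology. The point is that $h^*\tilde\varphi$ on $C_*(X,\Xi_X,h^*\G)$ is, by Proposition \ref{prop : Morphisme de Ai module en morphisme de complexe}, the map $\sum_n(\varphi_{n+1}\otimes1)(h_*\m_X)^n$ — exactly the $\tilde\varphi_0$ of Proposition \ref{prop : compatibility induced map and tau maps} — because the $C_*(\Omega X)$-module structure on $h^*\G$ uses $h_*$ on the loop chains, so the twisting cocycle entering $\m_X$ effectively gets pushed forward. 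The same argument with the shifted grading and $\tau=\tau^{h_!}$ handles the second square. In both cases the needed chain homotopy is the $\kappa = \sum_{n\ge1}\sum_{u=1}^n(-1)^{u-1}(\varphi_{n+1}\otimes1)\,\m_{(1)}^{n-u}\boldsymbol{\Tilde{\tau}}\m_{(0)}^{u-1}$ already constructed in the proof of that proposition.

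**Handling continuous $h$ and assembling.** For general continuous $h$, recall $h_*=h'_*\circ\Psi^H$ and $h_!=(\Psi^H)^{-1}\circ h'_!$ where $h'$ is a smooth approximation and $\Psi^H$ is the identification morphism of \cite[Proposition 8.2.1]{BDHO23}, which in our setting is again of the form given by Proposition \ref{Prop : continuation morphisms}. So I would compose the smooth case with the fact that $\tilde\varphi$ commutes with $\Psi^H$ up to chain homotopy — but that is once more a direct application of Proposition \ref{prop : compatibility induced map and tau maps} (with $m^0,m^1$ the two pulled-back cocycles $h'^*_0,h'^*_1$ and $\tau$ the identification cocycle $\nu^H$). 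Chaining these homotopies and passing to homology gives both squares. The only real subtlety — and the step I expect to cost the most care — is bookkeeping the grading shift in the shriek-map case: Proposition \ref{prop : compatibility induced map and tau maps} is stated for honest degrees, so I must check that replacing $|x|$ by $[x]=|x|+m$ and $|y'|$ by $[y']=|y'|+n$ throughout (which only affects the Koszul signs uniformly by a shift) leaves every sign identity in that proof intact; since the shift is by a fixed constant on each side it does, but this should be verified explicitly rather than asserted. Everything else is a formal consequence of results already in the excerpt.
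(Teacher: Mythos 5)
Your proposal is correct and takes the same route as the paper: the paper's entire proof is the one sentence ``It is a direct consequence of Proposition~\ref{prop : compatibility induced map and tau maps},'' and your write-up simply unpacks why the direct and shriek maps, in the formulation of Section~\ref{subsection : Compatibility with direct and shriek maps}, literally fit the hypotheses of that proposition (including the continuous case via $\Psi^H$). Your caution about the grading shift in the shriek case is warranted but, as you correctly suspect, harmless: since the shift $|x|\mapsto[x]=|x|+m$, $|y'|\mapsto[y']=|y'|+n$ is constant on each side, all degree differences within a side and the degree of $\tau^!$ across sides are exactly as in Proposition~\ref{Prop : continuation morphisms}, so every sign identity in the proof of Proposition~\ref{prop : compatibility induced map and tau maps} carries over unchanged.
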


\begin{proof}
   It is a direct consequence of Proposition \ref{prop : compatibility induced map and tau maps}.
\end{proof}

\section{DG Künneth formula. Proof of Theorem C}\label{section : DG Künneth formula}

Let $X,Y$ be two pointed, oriented, closed and connected manifolds endowed with DG Morse data $\Xi_X$ and $\Xi_Y$. The goal of this section is to define a cross-product $C_*(X, \Xi_X, \F) \otimes C_*(Y,\Xi_Y, \G) \to C_*(X \times Y, \Xi_{X \times Y}, \mathcal{H})$ where the DG Morse data $\Xi_{X \times Y}$ is defined below and the coefficients $\mathcal{H}$ will be discussed in Section \ref{subsection : Künneth map}.

We will construct a representing chain system adapted to the cartesian product and evaluate it by a family of evaluation maps $q^K_{(x,y),(x',y')} : \trajb{(x,y),(x',y')} \to \Omega X \times \Omega Y$ also adapted to the Cartesian product to obtain a twisting cocycle $$m^K_{(x,y),(x',y')} = q^K_{(x,y),(x',y'),*}(s_{(x,y),(x',y')}) \in C_{|x|+|y|-|x'|-|y'|-1}(\Omega X \times \Omega Y)).$$ This twisting cocycle is not the Barraud-Cornea cocycle associated with the set of DG Morse data $\Xi_{X\times Y}$ but is well-adapted to chain-level computations and enables to define a chain-level cross product for enriched Morse complexes. We will prove that the enriched Morse complex constructed with the Barraud-Cornea cocycle is chain homotopy equivalent to the one constructed with $m^K$.

\subsection{Morse data on a Cartesian product}\label{subsection : Morse data on a Cartesian product}

Given $\Xi_X= (f,\xi_X, s_{x,x'}, o_X, \mathcal{Y}_X, \theta_X)$ a set of Morse data on $(X, \star_X)$ and $\Xi_Y = (g,\xi_Y, s_{y,y'}, o_Y, \mathcal{Y}_Y, \theta_Y)$ a set of Morse data on $(Y, \star_Y)$, we construct a set of Morse data $\Xi_{X \times Y} = (H, \xi, s_{(x,y),(x',y')}, o, \mathcal{Y}, \theta)$ on $(X \times Y, (\star_X, \star_Y))$ that consists in the following :

\begin{enumerate}
    \item $H(x,y) = f(x) + g(y)$. Note that $H$ is a Morse function on $X \times Y$ which satisfies $|(x,y)| = |x| + |y| $.\\
    \item $\xi(x,y) = (\xi_X(x), \xi_Y(y))$ is a pseudo-gradient associated to $H$.\\
    \item There is a canonical identification $\overline{W^u_H}(x,y) \simeq \overline{W^u_f}(x) \times \overline{W^u_g}(y)$. We therefore use the orientation $\Or \ \wbi{u}{H}{x,y} = \left( \Or \ \wbi{u}{f}{x} ,  \Or \ \wbi{u}{g}{y} \right) $.\\
    \item Choose $\mathcal{Y}$ the tree in $X \times Y$ rooted at $(\star_X,\star_Y)$ whose vertices are $\Crit(H) = \Crit(f) \times \Crit(g)$ such that its projection over $X$ is the tree $\mathcal{Y}_X$ and its projection over $Y$ is the tree $\mathcal{Y}_Y$.\\
    \item The homotopy inverse $\theta = (\theta_X, \theta_Y) : (X \times Y)/\mathcal{Y} \to X \times Y  $ of the canonical projection $p : X \times Y \to (X \times Y)/\mathcal{Y}$.
\end{enumerate}

The goal of this section is to define a suitable representing chain system $s_{(x,y),(x',y')}$ of $\trajbi{H}{(x,y),(x',y')}.$ First, let us fix our orientation conventions for the spaces of trajectories.

\subsubsection{Orientation conventions}

We use the same orientation conventions and rules as \cite{BDHO23}.

Let $(f,\xi)$ be a Morse-Smale pair and $x \in \Crit(f)$. The orientation $o_x$ of the unstable manifolds $W^u(x)$ is fixed by the set of DG Morse data. This induces a co-orientation of the stable manifolds $$\Coor \ W^s(x) = \Or \  W^u(x).$$ 

We orient a tranverse intersection $Z \pitchfork W$ between an oriented manifold $Z$ and a co-oriented manifold $W$ by 

$$\left( \Or \ Z \pitchfork W, \ \Coor \ W \right) = \Or \ Z.$$

Therefore, for $x,y \in \Crit(f)$, the manifold $\mathcal{M}(x,y) = W^u(x) \pitchfork W^s(y)$ is oriented via the rule 

\begin{equation}\label{eq : orientation param space of traj}
\left( \Or \ \mathcal{M}(x,y) , \ \Or \ W^u(y) \right)  = \Or \ W^u(x).
\end{equation}

Define $$S^s(y) := W^s(y) \pitchfork f^{-1}(f(y) + \epsilon)$$ for small $\epsilon>0$ the stable sphere associated to a critical point $y \in \Crit(f)$. We co-orient $S^s(y) \subset W^s(y)$ by 

$$\Coor \ S^s(y) = (-\xi, \Coor \ W^s(y)) = \left(-\xi, \Or \ W^u(y)\right).$$

There is an identification $$\traj{x,y} = W^u(x)
\pitchfork S^s(y),$$ which yields the orientation 

$$\left(\Or \ \traj{x,y}, -\xi, \Or \ W^u(y)\right) = \Or \ W^u(x). $$

We use the same orientation rules for the compactifications $\trajb{x,y}$ and $\wb{u}{x}$. 

\begin{equation}
    \left(\Or \ \trajb{x,y}, -\xi, \Or \ \wb{u}{y}\right) = \Or \ \wb{u}{x}.
\end{equation}

\subsubsection{Representing chain system on a Cartesian product}

\begin{lemme}
    If $x= x'$, $\trajbi{H}{(x,y),(x,y')} = \{x\} \times \trajbi{g}{y,y'}$ and the orientations differ by the sign $(-1)^{|x|(|y|-|y'|)}$.\\

    If $y=y'$, $\trajbi{H}{(x,y),(x',y)} = \trajbi{f}{x,x'} \times \{y\}$ and the orientations coincide.
\end{lemme}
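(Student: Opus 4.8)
The statement is a compatibility-of-orientations lemma for the product Morse function $H = f \oplus g$ in the two "degenerate" cases where one of the two critical points coincides. The plan is to reduce everything to the basic identification $\mathcal{M}_H((x,y),(x',y')) \simeq \mathcal{M}_f(x,x') \times \mathcal{M}_g(y,y')$ on the level of \emph{non-broken} trajectory spaces (which restricts correctly to the compactifications by naturality of the compactification under products), and then carefully trace the orientation rules recalled just above. Since a negative gradient flow line of $H$ has the form $t \mapsto (\gamma_X(t), \gamma_Y(t))$ with $\gamma_X$ a flow line of $\xi_X$ and $\gamma_Y$ a flow line of $\xi_Y$, when $x = x'$ the $X$-component must be the constant flow line at $x$, so $\mathcal{M}_H((x,y),(x,y')) = \{x\} \times \mathcal{M}_g(y,y')$, and symmetrically for $y = y'$; this settles the set-theoretic (and topological) part. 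The content is entirely in the signs.

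First I would handle the case $y = y'$, which should come out sign-free. Here $\overline{W^u_H}(x,y) = \overline{W^u_f}(x) \times \overline{W^u_g}(y)$ with the product orientation (item 3 of the construction of $\Xi_{X\times Y}$), and the stable sphere factors as $S^s_H((x,y)) \simeq S^s_f(x) \times \overline{W^u_g}(y)$ up to a flow direction, because the exterior normal $-\xi = (-\xi_X, -\xi_Y)$ is the first coordinate inside the $X$-factor while $\overline{W^u_g}(y)$ sits entirely inside the level set $H^{-1}(f(x)+\varepsilon + g(y))$ near $(x,y)$. Writing out the defining rule $(\Or\,\overline{\mathcal{L}}_H, -\xi, \Or\,\overline{W^u_H}(x,y)) = \Or\,\overline{W^u_H}(x,y)$ — wait, more precisely $(\Or\,\overline{\mathcal{L}}_H((x,y),(x,y)), -\xi, \Or\,\overline{W^u_H}(x,y)) = \Or\,\overline{W^u_H}(x,y)$ is vacuous; the relevant identity is $(\Or\,\overline{\mathcal{L}}_H((x,y),(x',y)), -\xi, \Or\,\overline{W^u_H}(x',y)) = \Or\,\overline{W^u_H}(x,y)$, and one substitutes the product decompositions on both sides. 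Since $-\xi_Y$ never appears (the $Y$-directions on both sides are honest unstable directions of $g$, appearing in the same order), the $-\xi$ that gets inserted is $-\xi_X$, sitting in exactly the position dictated by the $X$-trajectory rule \eqref{eq : orientation param space of traj}; all $Y$-blocks line up identically on the two sides, so no transposition sign is produced. Hence the orientations coincide.

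Then I would do the case $x = x'$, where the sign $(-1)^{|x|(|y|-|y'|)}$ appears. The mechanism is the same, but now the trajectory directions live in the $Y$-factor while a full copy of $\overline{W^u_f}(x)$ of dimension $|x|$ sits to the \emph{left} of them inside $\overline{W^u_H}(x,y)$. To extract $\Or\,\overline{\mathcal{L}}_g(y,y')$ (a manifold of dimension $|y|-|y'|-1$) one must commute the block $\Or\,\overline{W^u_f}(x)$ of dimension $|x|$ past the trajectory block together with the flow direction $-\xi_Y$ and $\Or\,\overline{W^u_g}(y')$; unwinding the rule $(\Or\,\overline{\mathcal{L}}_H((x,y),(x,y')), -\xi, \Or\,\overline{W^u_H}(x,y')) = \Or\,\overline{W^u_H}(x,y)$ with $\overline{W^u_H}(x,y) = \overline{W^u_f}(x)\times\overline{W^u_g}(y)$ and comparing to $(\Or\,\overline{\mathcal{L}}_g(y,y'), -\xi_Y, \Or\,\overline{W^u_g}(y')) = \Or\,\overline{W^u_g}(y)$, the discrepancy is exactly the Koszul sign of moving the $|x|$-dimensional block $\Or\,\overline{W^u_f}(x)$ across the $(|y|-|y'|)$-dimensional block $(\Or\,\overline{\mathcal{L}}_g(y,y'),-\xi_Y)$, i.e. $(-1)^{|x|(|y|-|y'|)}$.

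The main obstacle is bookkeeping: making sure the stable-sphere/level-set decompositions $S^s_H \simeq S^s_f \times \overline{W^u_g}$ (resp. $\overline{W^u_f} \times S^s_g$) are set up with the right co-orientation conventions so that no \emph{hidden} transposition sign is dropped, and that the identification of $\overline{\mathcal{L}}_H$ with $\overline{W^u_H} \pitchfork S^s_H$ is the product of the corresponding identifications on the factors. I expect this to be a short but sign-delicate computation; the cleanest route is probably to fix a point in the open stratum $\mathcal{L}_H((x,y),(x',y'))$, choose oriented bases adapted to the product splittings, and literally count transpositions, citing \cite[Section 3.2]{AD14} and the orientation conventions of \cite{BDHO23} for the single-factor statements.
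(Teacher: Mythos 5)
Your proposal is correct and takes essentially the same route as the paper: identify $\trajbi{H}{(x,y),(x,y')}$ (resp.\ $\trajbi{H}{(x,y),(x',y)}$) as a product, plug the product decompositions of $\wbi{u}{H}{\cdot}$ into the orientation rule $\left(\Or\,\trajbi{H}{\cdot}, -\xi, \Or\,\wbi{u}{H}{\cdot}\right) = \Or\,\wbi{u}{H}{\cdot}$, and read off the Koszul sign. In particular you locate the sign in exactly the same place the paper does — moving the block $\Or\,\wbi{u}{f}{x}$ of dimension $|x|$ past the $(|y|-|y'|)$-dimensional block $\left(\Or\,\trajbi{g}{y,y'}, -\xi_Y\right)$ — so no further comment is needed.
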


\begin{proof}
The two identifications are clear. It remains to compare orientations.

The rule \eqref{eq : orientation param space of traj} implies for $x = y$ that a constant trajectory is oriented positively.

For any two critical points $(x,y),(x,y')$ of $H$,
$$\left( \Or \ \trajbi{H}{(x,y),(x,y')}, -\xi, \Or \ \wbi{u}{H}{x,y'} \right) = \Or \ \wbi{u}{H}{x,y}.$$

If $x=x'$, the equalities
\begin{align*}
        \left( \Or \ \{x\} , \Or \ \trajbi{g}{y,y'}, - \xi, \Or \ \wbi{u}{H}{x,y'}\right) &= (-1)^{|x|(|y|-|y'|)} \left( \Or \ \wbi{u}{f}{x}, \Or \ \trajbi{g}{y,y'}, - \xi, \Or \ \wbi{u}{g}{y'}\right)\\
        &= (-1)^{|x|(|y|-|y'|)} \Or \ \wbi{u}{H}{x,y}.
\end{align*}

show the orientation difference between $\trajbi{F}{(x,y),(x,y')}$ and $ \{x\} \times \trajbi{g}{y,y'}$.\\

If $y=y'$, the equalities 
\begin{align*}
    \left(\Or \ \trajbi{f}{x,x'}, \Or \ \{y\}, - \xi, \Or \ \wbi{u}{H}{x',y}\right) = \left( \Or \ \wbi{u}{f}{x}, \Or \ \wbi{u}{g}{y'}\right) = \Or \ \wbi{u}{H}{x,y}.
\end{align*}

show that $\trajbi{H}{(x,y),(x',y)} $ and $ \trajbi{f}{x,x'} \times \{y\}$ have the same orientation.
\end{proof}

Remark that 

\begin{align*}
\mathcal{M}_H((x,y),(x',y'))&:= W^u_H(x,y) \pitchfork W^s_H(x',y')\\
&= \left(W^u_f(x) \pitchfork W^s_f(y)\right) \times \left(W^u_f(x) \pitchfork W^s_f(y)\right) =: \mathcal{M}_f(x,x') \times \mathcal{M}_g(y,y'),
\end{align*}
and therefore there exist a projection $\pi$ and a section $i$,

$$\traji{H}{(x,y),(x',y')} = \faktor{\mathcal{M}_f(x,x') \times \mathcal{M}_g(y,y')}{\R_{diag}} \underset{\pi}{\overset{i}{\leftrightarrows}}  \mathcal{M}_f(x,x') / \R \times \mathcal{M}_g(y,y')/\R = \traji{f}{x,x'} \times \traji{g}{y,y'} $$

which can be written $\pi([a,b]_{X\times Y}) = ([a]_X , [b]_Y)$ and $i(\lambda_X , \lambda_Y) = [i_X(\lambda_X), i_Y(\lambda_Y)]_{X \times Y}$, where $i_X(\lambda_X) = \lambda_X \cap f^{-1}\left(\frac{f(x)+f(x')}{2}\right)$ and $i_Y(\lambda_Y) = \lambda_Y \cap g^{-1}\left(\frac{g(y)+g(y')}{2}\right).$

Remark that $\pi = \textup{Id}$ if $x=x'$ or $y=y'$.

\begin{lemme}\label{lemme : representing chain system of a Cartesian product}
    Let $\{s^X_{x,x'}\}$ and $\{s^Y_{y,y'}\}$ be representing chain systems for the moduli spaces of trajectories $\trajbi{f}{x,x'}$ and $\trajbi{g}{y,y'}$ respectively.
    There exists a representing chain system $\{ s_{(x,y),(x',y')} \}$ for the moduli spaces $\trajbi{H}{(x,y),(x',y')}$ such that 

    \begin{enumerate}
        \item $s_{(x,y),(x',y')} = (-1)^{|x|(|y|-|y'|)} (\{x\}, s^Y_{y,y'})$ if $x=x'$.\\
        \item $s_{(x,y),(x',y')} = (s_{x,x'}^X, \{y\})$ if $y=y'$\\
        \item $\pi_{*}(s_{(x,y),(x',y')}) = 0$ if $x \neq x'$ and $y\neq y'$.
    \end{enumerate}
\end{lemme}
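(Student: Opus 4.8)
The plan is to construct the representing chain system $\{s_{(x,y),(x',y')}\}$ by induction on the dimension $|(x,y)|-|(x',y')|-1 = (|x|-|x'|) + (|y|-|y'|) - 1$, as in the standard inductive construction of representing chain systems (see Definition \ref{defi : representing chain system traj}), but with the additional prescriptions (1), (2), (3) imposed at each step. First I would treat the base cases: when $|(x,y)| - |(x',y')| = 1$, the moduli space $\trajbi{H}{(x,y),(x',y')}$ is a finite set of points; either $x=x'$ (so $|y|-|y'|=1$, and we set $s_{(x,y),(x',y')} = (-1)^{|x|(|y|-|y'|)}(\{x\}, s^Y_{y,y'})$, which makes sense since $s^Y_{y,y'}$ is then a $0$-chain), or $y=y'$ (symmetric, with no sign by the previous lemma), or both $|x|-|x'|=1$ and $|y|-|y'|=0$ is impossible together with $|y|-|y'|\ge 1$ — in fact the only way to have the difference equal to $1$ is $x=x'$ or $y=y'$, so prescription (3) is vacuous at this level and (1), (2) are consistent because when $x=x'$ \emph{and} $y=y'$ the space is empty.

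\textbf{The inductive step.} Suppose $\{s_{(z,w),(z',w')}\}$ has been constructed for all pairs with $|(z,w)| - |(z',w')| < d$ and satisfies (1), (2), (3) and the defining relation \eqref{eq : relation sys repr}. Fix $(x,y),(x',y')$ with $|(x,y)| - |(x',y')| = d$. There are three cases. If $x=x'$, define $s_{(x,y),(x',y')} := (-1)^{|x|(|y|-|y'|)}(\{x\}, s^Y_{y,y'})$; one must check that this is a cycle relative to the boundary representing the fundamental class — which follows from the corresponding property of $s^Y_{y,y'}$ together with the orientation identification $\trajbi{H}{(x,y),(x,y')} = \{x\} \times \trajbi{g}{y,y'}$ up to the sign $(-1)^{|x|(|y|-|y'|)}$ established in the preceding lemma — and that it satisfies \eqref{eq : relation sys repr}, which reduces to the relation \eqref{eq : relation sys repr} for $\{s^Y_{y,y'}\}$ after carefully matching the boundary strata $\trajbi{H}{(x,y),(x,w)} \times \trajbi{H}{(x,w),(x',y')}$ with $\{x\} \times \trajbi{g}{y,w} \times \trajbi{g}{w,y'}$ and verifying that the signs $(-1)^{|x|(|y|-|w|)}(-1)^{|x|(|w|-|y'|)} = (-1)^{|x|(|y|-|y'|)}$ combine correctly with the $(-1)^{|(x,y)|-|(x,w)|} = (-1)^{|y|-|w|}$ factor in \eqref{eq : relation sys repr}. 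The case $y=y'$ is analogous (and sign-free). The genuinely new case is $x\neq x'$ and $y\neq y'$: here I want $\pi_*(s_{(x,y),(x',y')}) = 0$. The boundary $\partial \trajbi{H}{(x,y),(x',y')} = \bigcup \trajbi{H}{(x,y),(z,w)} \times \trajbi{H}{(z,w),(x',y')}$ decomposes, and by the induction hypothesis the prescribed chain $t := \sum (-1)^{|(x,y)|-|(z,w)|} s_{(x,y),(z,w)} \times s_{(z,w),(x',y')}$ is a cycle representing $[\partial\trajbi{H}{(x,y),(x',y')}]$; I would first pick \emph{any} relative cycle $s'$ bounding $t$ representing the fundamental class (this exists by the standard argument, since $H_{d-1}(\trajbi{H}{(x,y),(x',y')}, \partial) \to H_{d-2}(\partial)$ hits $[\partial]$), then correct it by a boundary to kill $\pi_*$: since $\pi : \trajbi{H}{(x,y),(x',y')} \to \trajbi{f}{x,x'} \times \trajbi{g}{y,y'}$ collapses an $\R$-factor, the target has dimension $d-2 < d-1$, so $\pi_*(s')$ is a $(d-1)$-chain in a space of dimension $d-2$; one checks $\pi_*(s')$ is a cycle (because $\pi_* t$ telescopes to $0$ using that on mixed boundary strata at least one of $s_{(x,y),(z,w)}$, $s_{(z,w),(x',y')}$ already has vanishing $\pi$-pushforward or lies over a lower stratum — this is where prescription (3) at lower levels is used, together with the observation that $\pi = \mathrm{Id}$ on strata where $x=z$ or $z=x'$), hence $\pi_*(s') = \partial c$ for some chain $c$ downstairs, and then I lift $c$ through a section of $\pi$ (or just use that $\pi$ is a fibration with contractible fibers to choose a chain-level splitting) to get $\tilde c$ with $\pi_* \tilde c = c$, and set $s_{(x,y),(x',y')} := s' - \partial \tilde c$. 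This is still a relative cycle representing the fundamental class, still bounds $t$ up to adjusting $\tilde c$ so that $\partial \tilde c$ has no boundary component — more carefully, I would arrange $\tilde c$ to be supported in the interior so $\partial(\partial \tilde c)$ contributes nothing to the boundary relation — and now $\pi_*(s_{(x,y),(x',y')}) = \pi_*(s') - \pi_*(\partial \tilde c) = \pi_*(s') - \partial c = 0$.

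\textbf{Main obstacle.} The hard part is the simultaneous bookkeeping in the mixed case: ensuring that the correction by $\partial\tilde c$ does not destroy the boundary relation \eqref{eq : relation sys repr} (one needs $\tilde c$ chosen rel boundary, i.e. $\tilde c \in C_d(\mathrm{int}\,\trajbi{H}{(x,y),(x',y')})$ or at least with $\partial \tilde c$ contributing a relative boundary only), and that $\pi_* t = 0$ actually holds so that $\pi_* s'$ is genuinely a cycle — this requires checking that on every codimension-one stratum $\trajbi{H}{(x,y),(z,w)} \times \trajbi{H}{(z,w),(x',y')}$ with $(z,w)$ a genuinely "mixed" intermediate critical point the pushforward vanishes by the inductive prescription (3), while on strata where $z \in \{x, x'\}$ or $w \in \{y,y'\}$ the map $\pi$ restricts to a product of an identity and a lower-dimensional $\pi$, so those pieces telescope in pairs exactly as in the one-variable representing-system relation. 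Once this combinatorial/orientation verification is done, the homological inputs ($H_{\mathrm{top}} = 0$ for a compact manifold with boundary and corners, existence of relative fundamental cycles, contractibility of the $\R$-fibers of $\pi$) are all standard and I would invoke them without detailed proof.
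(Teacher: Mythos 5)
Your construction follows essentially the same inductive scheme as the paper: set the prescribed chains in the cases $x=x'$ or $y=y'$, then for the mixed case pick any fundamental representative $s'$ satisfying the boundary relation, observe (using the inductive hypothesis and the fact that $\pi$ is the identity on diagonal strata) that $\pi_*(s')$ is a cycle in a space of too-low dimension, hence bounds some $b$, and correct $s_{(x,y),(x',y')} := s' - \partial i_*(b)$ using the explicit section $i$ of $\pi$. One remark: your worry that $\tilde c$ should be arranged ``in the interior so that $\partial(\partial\tilde c)$ contributes nothing'' is unnecessary, since $\partial^2 = 0$ always, so replacing $s'$ by $s' - \partial i_*(b)$ leaves $\partial s'$ (and hence the representing-chain relation \eqref{eq : relation sys repr}) unchanged with no further adjustment.
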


\begin{proof}
    $\bullet$ If $x=x'$ or $y=y'$ we can just choose $s_{(x,y),(x,y')} = (-1)^{|x|(|y|-|y'|)} (\{x\}, s^Y_{y,y'})$, $s_{(x,y),(x',y)} = (s_{x,x'}^X, \{y\})$ and complete by induction as in Proposition 5.6 of \cite{BDHO23} in order to obtain $\{s_{(x,y),(x',y')}\}$ a representing chain system for $\trajbi{H}{(x,y),(x',y')}$ that satisfies conditions 1 and 2. We just have to check that $$\partial s_{(x,y),(x,y')} = \sum_{w \in \Crit(g)} (-1)^{|y|-|w|} s_{(x,y),(x,w)} \times s_{(x,w), (x,y')} $$ and $$\partial s_{(x,y),(x',y)} =  \sum_{z \in \Crit(f)} (-1)^{|x|-|z|} s_{(x,y),(z,y)} \times s_{(z,y), (x',y)}.$$

    We only establish the first equality since the second one is analogous,

    \begin{align*}
        \partial s_{(x,y),(x,y')} &= (-1)^{|x|(|y|-|y'|)} (\{x\}, \partial s_{y,y'})\\
        &= (-1)^{|x|(|y|-|y'|)} \sum_w (-1)^{|y|-|w|} (\{x\}, s_{y,w} \times s_{w,y'})\\
        &= \sum_{w} (-1)^{|y| - |w|} (-1)^{|x|(|y|-|w|)} (-1)^{|x|(|w|-|y'|)} (\{x\}, s_{y,w}) \times (\{x\}, s_{w,y'})\\
        &= \sum_{w} (-1)^{|y| - |w|} s_{(x,y),(x,w)} \times s_{(x,w),(x,y')}.
    \end{align*}

\noindent $\bullet$ If $x\neq x'$ and $y \neq y'$, we build $\{s_{(x,y),(x',y')}\}$ by induction on $|x|+|y|-|x'|-|y'| =l$. If $x \neq x'$, $y \neq y'$ and $l = 2$, then $|x|-|x'| = 1$ and $|y|-|y'| = 1$ or $\trajbi{H}{(x,y),(x',y')}$ is empty. Any representative $s_{(x,y),(x',y')} \in C_1(\trajbi{H}{(x,y),(x',y')})$ of the fundamental class satisfies condition 3. since $\pi_{*}(s_{(x,y),(x',y')}) \in C_1(\trajbi{f}{x,x'} \times \trajbi{g}{y,y'}) $ is a one chain in a $0$-dimensional space and is thus constant and degenerate.\\

Suppose a representing chain system $s_{(a,b),(a',b')}$ respecting the conditions 1., 2. and 3. has been constructed for every $|a|+|b|-|a'|-|b'| \leq l$. Let $|x|+|y|-|x'|-|y'| =l+1$ and $x \neq x'$, $y \neq y'$.
Proceed as in \cite[Proposition 5.2.6]{BDHO23} to build a chain $$s'_{(x,y),(x',y')} \in C_{l}(\trajbi{H}{(x,y),(x',y')})$$ that satisfies 

$$\partial s'_{(x,y),(x',y')} = \sum_{(z,w)}  (-1)^{|x|+|y|-|z|-|w|} s_{(x,y),(z,w)} \times s_{(z,w),(x',y')}.$$

Then $$\pi_*(s'_{(x,y),(x',y')})\in C_{l}(\trajbi{f}{x,x'} \times \trajbi{g}{y,y'})$$ is a cycle. Indeed,

\begin{align*}
    \partial \pi_*(s'_{(x,y),(x',y')}) &= \sum_{(z,w)}  (-1)^{|x|+|y|-|z|-|w|} \pi_*s_{(x,y),(z,w)} \times \pi_*s_{(z,w),(x',y')}\\
    &= (-1)^{|y|-|y'|} s_{(x,y),(x,y')} \times s_{(x,y'),(x',y')} + (-1)^{|x|-|x'|} s_{(x,y),(x',y)} \times s_{(x',y),(x',y')}\\
    &= (-1)^{|x|+|x'|(1 + |y|-|y'|) +1} \left((s_{x,x'},s_{y,y'}) -  (s_{x,x'},s_{y,y'})\right) = 0.
\end{align*}

Since $\trajbi{f}{x,x'} \times \trajbi{g}{y,y'}$ is a manifold of dimension $l-1$ every $l$-cycle is a boundary. Hence, there exists $b \in C_{l+1}(\trajbi{f}{x,x'} \times \trajbi{g}{y,y'})$ such that $\partial b = \pi_*(s'_{(x,y),(x',y')})$. We then define $s_{(x,y),(x',y')} = s'_{(x,y),(x',y')} - \partial i_*(b) $, which satisfies condition 3. The resulting representing chain system $\{s_{(x,y),(x',y')}\}$ satisfies all the conditions.
    
\end{proof}

\subsubsection{Künneth twisting cocycle}

We now define evaluation maps adapted to the cartesian product.

\begin{lemme}\label{lemma : q^K}
The family 
   $$\xymatrix{q^K_{(x,y),(x',y')} : \trajb{(x,y),(x',y')} \ar[r]^-{\pi} & \trajb{x,x'} \times \trajb{y,y'} \ar[rr]^{\quad (q^X_{x,x'}, q^Y_{y,y'})} & & \Omega X \times \Omega Y }.$$ 
    is a family of evaluation maps in the sense of Definition \ref{defi : Evaluation maps}, ie it satisfies the following two conditions

\begin{enumerate}
    \item If $|x| + |y| - |x'|-|y'| = 1$ and $\lambda \in \trajb{(x,y),(x',y')} = \traj{(x,y),(x',y')}$, the class $[q^K_{(x,y),(x',y')}(\lambda)] \in \pi_1(X \times Y)$ is the one associated to $\lambda$ in the lifted Morse complex.
    \item If $(\lambda, \lambda') \in \trajb{(x,y),(z,w)} \times \trajb{(z,w),(x',y')} \subset \partial\trajb{(x,y),(x',y')}$, then $$q^K_{(x,y),(x',y')}(\lambda, \lambda') = q^K_{(x,y),(z,w)}(\lambda) \# q^K_{(z,w),(x',y')}(\lambda').$$
\end{enumerate}

\end{lemme}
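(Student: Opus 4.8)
The plan is to verify directly the two defining conditions of an evaluation map (Definition \ref{defi : Evaluation maps}), using that the families $\{q^X_{x,x'}\}$ and $\{q^Y_{y,y'}\}$ already satisfy them and that the projection $\pi$ is compatible with the product structure on trajectory spaces coming from the identification $\mathcal{M}_H((x,y),(x',y')) = \mathcal{M}_f(x,x') \times \mathcal{M}_g(y,y')$. First I would record that $q^K_{(x,y),(x',y')}$ is continuous, being a composite of continuous maps, and that — as noted just before Lemma \ref{lemme : representing chain system of a Cartesian product} — the map $\pi$ is the identity when $x=x'$ or $y=y'$, so that $q^K_{(x,y),(x',y)}(\lambda_X,\{y\}) = (q^X_{x,x'}(\lambda_X),\star_Y)$ and $q^K_{(x,y),(x,y')}(\{x\},\lambda_Y) = (\star_X, q^Y_{y,y'}(\lambda_Y))$, where $\star_X,\star_Y$ denote the constant (length-$0$) loops. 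The concatenation on $\Omega X \times \Omega Y$ — the monoid playing the role of ``$\Omega$ of the base'' here — is performed componentwise, and the constant loops are its two-sided units; this is the only feature of the target monoid I shall use.

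For the concatenation relation, let $(\lambda,\lambda') \in \trajb{(x,y),(z,w)} \times \trajb{(z,w),(x',y')} \subset \partial\trajb{(x,y),(x',y')}$. The key step is to observe that $\pi$ carries this boundary face to the subspace $\big(\trajb{x,z}\times\trajb{z,x'}\big) \times \big(\trajb{y,w}\times\trajb{w,y'}\big)$ of $\trajb{x,x'}\times\trajb{y,y'}$, sending $(\lambda,\lambda')$ to $\big((\pi_X\lambda,\pi_X\lambda'),(\pi_Y\lambda,\pi_Y\lambda')\big)$, with the convention that a ``point'' factor such as $\{x\}$, occurring when $z=x$ or $z=x'$, is simply erased. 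This is essentially the definition of the product Morse--Smale data together with the compatibility of $\mathcal{M}_H = \mathcal{M}_f\times\mathcal{M}_g$ with the broken-trajectory compactifications. Applying $(q^X_{x,x'}, q^Y_{y,y'})$ and using that $q^X$ and $q^Y$ satisfy the concatenation relation yields
\[
q^K_{(x,y),(x',y')}(\lambda,\lambda') = \big(q^X_{x,z}(\pi_X\lambda)\# q^X_{z,x'}(\pi_X\lambda'),\ q^Y_{y,w}(\pi_Y\lambda)\# q^Y_{w,y'}(\pi_Y\lambda')\big),
\]
which, since concatenation on $\Omega X\times\Omega Y$ is componentwise, is exactly $q^K_{(x,y),(z,w)}(\lambda)\# q^K_{(z,w),(x',y')}(\lambda')$; the mixed cases $z=x$ or $z=x'$ use only that $\star_X$ (resp.\ $\star_Y$) is a unit.

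For the compatibility with lifting (condition $1$ of the lemma), I would note first that if $|x|+|y|-|x'|-|y'|=1$ with $\trajb{(x,y),(x',y')}=\traj{(x,y),(x',y')}$ nonempty, then necessarily $y=y'$ (and $|x|-|x'|=1$) or $x=x'$ (and $|y|-|y'|=1$), since otherwise $|x|-|x'|\geq 1$ and $|y|-|y'|\geq 1$ would force the index gap to be at least $2$. Treating the case $y=y'$ (the other being symmetric): by the preceding lemma $\trajb{(x,y),(x',y)} = \trajb{x,x'}\times\{y\}$ and $\pi=\mathrm{Id}$, so $q^K(\lambda) = (q^X_{x,x'}(\lambda_X),\star_Y)$ for $\lambda=(\lambda_X,\{y\})$, whose class in $\pi_1(X\times Y)=\pi_1(X)\times\pi_1(Y)$ is $([q^X_{x,x'}(\lambda_X)],e)$. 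On the other hand, the universal cover of $X\times Y$ is $\tilde X\times\tilde Y$, the preferred lift of a critical point $(a,b)$ is $(\tilde a,\tilde b)$, and the lift of $\lambda$ from $(\tilde x,\tilde y)$ is $(\tilde\lambda_X,\tilde y)$ where $\tilde\lambda_X$ ends at $g_X\tilde{x'}$ with $g_X=[q^X_{x,x'}(\lambda_X)]$ because $\{q^X_{x,x'}\}$ is compatible with lifting; hence the class associated to $\lambda$ in the lifted Morse complex of $X\times Y$ is $(g_X,e)=[q^K(\lambda)]$.

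I expect the only genuine subtlety to be the first assertion of the second paragraph — that $\pi$ really does respect the stratification of $\partial\trajb{(x,y),(x',y')}$ by these product faces and is continuous across them — which is a standard but slightly tedious fact about the compactified moduli spaces of a split Morse function $H = f\oplus g$; once this is granted, everything reduces to componentwise bookkeeping of properties already established for $q^X$ and $q^Y$.
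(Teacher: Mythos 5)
Your proof is correct and follows the same route as the paper's: for the concatenation relation you note that $\pi_X$ and $\pi_Y$ commute with the boundary (product) stratification of $\trajb{(x,y),(x',y')}$ and reduce to the concatenation relations of $q^X,q^Y$; for lifting compatibility you observe that an index gap of $1$ forces $x=x'$ or $y=y'$ and reduce to the one-factor statement. The paper states both reductions very tersely, whereas you spell out the lifting verification (lift lands at $(g_X\tilde{x'},\tilde y)$, matching $(g_X,e)=[q^K(\lambda)]$); the underlying argument is identical.
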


\begin{proof}
   Let $(x,y),(z,w),(x',y') \in \Crit(H)$ and  $(\lambda, \lambda') \in \trajb{(x,y),(z,w)} \times \trajb{(z,w),(x',y')} $. If we denote $\pi_X = pr_1 \circ \pi : \trajb{(x,y),(x',y')} \to \trajb{x,x'}$ and $\pi_Y = pr_2 \circ \pi : \trajb{(x,y),(x',y')} \to \trajb{y,y'}$, then $\pi_X(\lambda \# \lambda') = (\pi_X(\lambda), \pi_X(\lambda'))$ and $\pi_Y(\lambda \# \lambda') = (\pi_Y(\lambda), \pi_Y(\lambda'))$. Condition 2 is then clear because $q^X$ and $q^Y$ satisfy such concatenation relations.
    Condition 1 is a consequence of the fact that, if $|x| + |y| - |x'|-|y'| = 1$, then $x=x'$ or $y=y'$.
\end{proof}

\begin{defi}\label{defi : m^K}

The family of evaluation maps $q^K$ yields a twisting cocycle 

$m^K_{(x,y),(x',y')} = q^K_{(x,y),(x',y'),*}(s_{(x,y),(x',y')}) \in C_{|x|+|y|-|x'|-|y'|-1}(\Omega X \times \Omega Y))$ that satisfies 

$$m^K_{(x,y),(x',y')} = \left\{ \begin{array}{ll}
   (-1)^{|x|(|y|-|y'|)} (\star, m_{y,y'}) & \textup{if} \ x=x',  \\
     (m_{x,x'}, \star)& \textup{if} \ y=y', \\
     0 & \textup{otherwise}
\end{array} \right..$$

This is the preferred twisting cocycle associated to the set of DG Morse data $\Xi_{X \times Y}$. We will refer to this cocycle as the \textbf{Künneth twisting cocycle}.

\end{defi}

The evaluation map defining the Barraud-Cornea twisting cocycle $m^0_{(x,y),(x',y')}$ is $$q^0_{(x,y),(x',y')} = \theta \circ p \circ \Gamma^{X\times Y}_{(x,y),(x',y')} : \trajb{(x,y),(x',y')} \to \Omega(X \times Y) \subset \Omega X \times \Omega Y,$$ where the parametrization map $\Gamma^{X\times Y}_{(x,y),(x',y')} : \trajb{(x,y),(x',y')} \to \mathcal{P}_{(x,y) \to (x',y')} X \times Y$ is defined by 
$$\Gamma^{X\times Y}_{(x,y),(x',y')}(\lambda) : \begin{array}{ccc}
   [0, H(x,y) - H(x',y')] & \to & X \times Y   \\
    t & \mapsto &  \lambda \cap H^{-1}(H(x,y) -t).
\end{array}$$

\begin{prop}\label{prop : dg Kunneth m^0 and m^1 quasi-iso}
    For any right $\Ai$-module $\mathcal{H}$ over $C_*(\Omega X \times \Omega Y)$, there exists a chain homotopy equivalence $$C_*(X \times Y, m^0_{(x,y),(x',y')},\mathcal{H}) \underset{\Psi_{0K}}{\overset{\Psi_{K0}}{\leftrightarrows}} C_*(X \times Y, m^K_{(x,y),(x',y')}, \mathcal{H}).$$ 
\end{prop}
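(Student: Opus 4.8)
The plan is to regard both $m^0_{(x,y),(x',y')}$ and $m^K_{(x,y),(x',y')}$ as twisting cocycles for the \emph{same} Morse--Smale pair $(H,\xi)$ on $X\times Y$, and to exhibit $\Psi_{K0}$ and $\Psi_{0K}$ as continuation maps in the sense of Proposition \ref{Prop : continuation morphisms}, which (as stressed there) applies to arbitrary twisting cocycles and not only to Barraud--Cornea ones --- precisely the situation here, since $m^K$ is not of Barraud--Cornea type. Writing $z=(x,y)$ and $w=(x',y')$ for critical points of $H$, I would build an identification cocycle $\{\tau_{z,w}\in C_{|x|+|y|-|x'|-|y'|}(\Omega X\times\Omega Y)\}$ satisfying equation \eqref{eq : mu_1 de tau} with $m^0$ in the source slot and $m^1=m^K$ in the target slot, and a second one with the roles reversed. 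As a preliminary reduction I would invoke the homology invariance of representing chain systems (\cite[Proposition 5.2.8]{BDHO23}) to assume that $m^0$ is built from the \emph{same} representing chain system $\{s_{(x,y),(x',y')}\}$ as $m^K$ (the one of Lemma \ref{lemme : representing chain system of a Cartesian product}), so that $m^0=q^0_*(s)$ and $m^K=q^K_*(s)$ differ only through the chosen family of evaluation maps.

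Next I would run the continuation construction of Section \ref{section : Invariance Ai Morse} on $(X\times Y)\times[0,1]$: pick a Morse function whose critical points are $\Crit_{k-1}(H)\times\{0\}\sqcup\Crit_k(H)\times\{1\}$, together with a representing chain system for its trajectory spaces restricting to $(-1)^{|z|-|w|}s_{z,w}$ over the ``$0$'' end and to $s_{z,w}$ over the ``$1$'' end, and write $\sigma_{z,w}$ for the resulting chain on $\overline{\mathcal{L}}(z_0,w_1)$. The essential geometric input is a family of evaluation maps $Q_{z,w}\colon\overline{\mathcal{L}}(z_0,w_1)\to\Omega X\times\Omega Y$, in the sense of Definition \ref{defi : Evaluation maps} and satisfying the concatenation relation \eqref{eq : Concatenation relation evaluation map}, whose boundary restrictions interpolate between $q^0$ over the $0$-end (with values in the image of $\Omega(X\times Y)$) and $q^K$ over the $1$-end (with values in the ``split'' loops); such a family exists because $q^0$ and $q^K$ are both compatible with lifting by Lemma \ref{lemma : q^K}, hence induce the same $\pi_1(X\times Y)$-data, and because the natural map $\Omega(X\times Y)\to\Omega X\times\Omega Y$ is a homotopy equivalence, so one interpolates along the blueprint of \cite[Lemma 5.9]{BDHO23} while keeping multiplicativity in force. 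Setting $\tau_{z,w}=-Q_{z,w,*}(\sigma_{z,w})$, the relation \eqref{eq : relation sys repr} for the chain system together with the concatenation relation for $Q$ yields exactly \eqref{eq : mu_1 de tau}.

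With $\tau$ (and its reversed analogue) in hand, Proposition \ref{Prop : continuation morphisms} produces $\Psi_{K0}$ and $\Psi_{0K}$, and Proposition \ref{Prop : QIso Criterion Ai} reduces the quasi-isomorphism property to the corresponding statement for the induced map $\widetilde{\Psi}$ on lifted Morse complexes. But $\widetilde{\Psi}$ is governed by the projections in $H_0(\Omega X\times\Omega Y)=\Z[\pi_1(X\times Y)]$ of the degree-zero cocycles $\tau_{z,w}$ with $|z|=|w|$, i.e.\ by the $\sigma_{z,w}$ supported on the zero-dimensional $\overline{\mathcal{L}}(z_0,w_1)$; since the two ends carry the same Morse function $H$, the only index-$0$ continuation trajectories are the constant ones from $z$ to itself, so these projections form a signed identity matrix and $\widetilde{\Psi}$ is an isomorphism. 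Hence $\Psi_{K0}$ and $\Psi_{0K}$ are quasi-isomorphisms. To upgrade this to a chain homotopy equivalence I would argue as in the proof of Theorem \ref{thm : Continuation morphism Ai invariance}: the composites $\Psi_{0K}\circ\Psi_{K0}$ and $\Psi_{K0}\circ\Psi_{0K}$ are themselves continuation maps, associated to a doubled interpolation cocycle, and a homotopy cocycle furnished by Proposition \ref{Prop : homotopy Criterion Ai} makes each of them chain homotopic to the continuation map of the trivial (constant) data, which in turn is chain homotopic to the identity.

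The main obstacle is the second paragraph: producing the interpolating family $Q_{z,w}$ on the product moduli spaces so that it simultaneously restricts correctly at the two ends, satisfies the concatenation relation at every parameter value, and is compatible with lifting. The delicate feature is that $q^0$ and $q^K$ do not take values in the same subspace of $\Omega X\times\Omega Y$, so the homotopy between them must pass through the equivalence $\Omega(X\times Y)\simeq\Omega X\times\Omega Y$ while remaining multiplicative; carrying this out, and tracking the orientation signs of Lemma \ref{lemme : representing chain system of a Cartesian product} and Definition \ref{defi : m^K} along the way, is where the real work lies, the rest being a transcription of the invariance machinery of Section \ref{section : Invariance Ai Morse}.
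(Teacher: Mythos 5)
Your high-level strategy matches the paper's: run the continuation construction on $(X\times Y)\times[0,1]$ with the doubled representing chain system, extract a cocycle $\tau_{z,w}=-Q_{z,w,*}(\sigma_{z,w})$ satisfying \eqref{eq : mu_1 de tau}, deduce that the resulting $\Psi_{0K}$ is a quasi-isomorphism via Proposition \ref{Prop : QIso Criterion Ai} and compatibility with the lifted Morse complex, then upgrade to a chain homotopy equivalence with a homotopy cocycle coming from $[0,1]^2\times X\times Y$ as in Proposition \ref{prop : equivalence homotopie param par Morse function}. The reduction via \cite[Proposition 5.2.8]{BDHO23} is in fact unnecessary --- $m^0$ and $m^K$ are already both defined from the very same representing chain system $\{s_{(x,y),(x',y')}\}$ of Lemma \ref{lemme : representing chain system of a Cartesian product}, differing only through the evaluation maps $q^0$ and $q^K$ --- but this does no harm.

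The genuine gap is the one you flag yourself: producing the interpolating evaluation maps $Q_{z,w}$ so that the concatenation relation \eqref{eq : Concatenation relation evaluation map} holds \emph{exactly} on every boundary stratum. Your existence argument --- matching $\pi_1$-data together with the weak equivalence $\Omega(X\times Y)\simeq\Omega X\times\Omega Y$ --- would at best produce a homotopy of maps, but the concatenation relation is a strict set-theoretic identity on broken trajectories, not an up-to-homotopy condition, and it is needed on the nose for $\tau_{z,w}$ to satisfy \eqref{eq : mu_1 de tau} and hence for Proposition \ref{Prop : continuation morphisms} to apply. Appealing to the equivalence of loop spaces cannot by itself force strict multiplicativity; without the relation nothing in the argument closes.

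The paper resolves this by an explicit geometric device rather than a soft interpolation. On $\trajbi{W}{z_0,w_1}$ one defines a \emph{shifted} parametrization $\Gamma^{0K}$: on the portion of a continuation trajectory lying in $\{t\le\tfrac12\}$ it parametrizes by the coupled Morse function $W$ (producing a $\Gamma^0$-type, synchronized loop landing in the image of $\Omega(X\times Y)$), while on the portion in $\{t\ge\tfrac12\}$ it parametrizes the two factors independently by $F=f+a$ and $G=g+a$ via the maps $J_X,J_Y$ into $\trajbi{F}{x_0,x'_1}\times\trajbi{G}{y_0,y'_1}$ (producing a $\Gamma^K$-type split loop), splicing the two at the unique transverse crossing of $\{\tfrac12\}\times X\times Y$. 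Because any broken trajectory $(\lambda,\lambda')\in\trajbi{W}{z_0,v_0}\times\trajbi{W}{v_0,w_1}$ keeps $\lambda$ entirely on the $0$-end (so $\lambda$ is $q^0$-evaluated) and any $(\lambda,\lambda')\in\trajbi{W}{z_0,v_1}\times\trajbi{W}{v_1,w_1}$ keeps $\lambda'$ entirely on the $1$-end (so $\lambda'$ is $q^K$-evaluated), the concatenation relation falls out of the construction, and lifting compatibility is checked by observing that $\Gamma^{0K}(\lambda)$ and $\Gamma^0(\lambda)$ are componentwise reparametrizations of one another, hence homotopic. This concrete splicing mechanism is the missing content of your second paragraph.
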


\subsection{Invariance with respect to the parametrization}\label{subsection : Invariance with respect to the parametrization}

The proof that the cocycles $m^0$ and $m^K$ defined above induce the same homology will be given in the second subsection. We first prove that we can use any Morse function that admits the same pseudo-gradient $\xi$ to parametrize the trajectories. The subsequent complexes will be chain homotopy equivalent.

\subsubsection{Parametrizing Morse trajectories by different Morse functions}

Let $\Xi = (f,\xi, s_{x,y}, o , \mathcal{Y}, \theta)$ be a set of DG Morse data on a pointed, oriented, closed, and connected manifold $(X,\star)$ and let $h,h' : X \to \R$ be  Morse functions that admit the vector field $\xi$ as an adapted pseudo-gradient. We prove that the complex built using $h$ for parametrizing the trajectories in the construction of the twisting cocycle (see Definition \ref{defi : Evaluation maps}) is chain homotopy equivalent to the complex built using the Morse function $h'$. This is equivalent to saying that the enriched complex does not depend on the Morse function used to parametrize the trajectories of a given adapted pseudo-gradient up to chain homotopy equivalence.

This does not prove Proposition \ref{prop : dg Kunneth m^0 and m^1 quasi-iso} since there is no Morse function $h : X \times Y \to \R$ that corresponds to parametrizing independently on $X$ and on $Y$, but this invariance is worth noticing and its proof will inspire the proof of Proposition \ref{prop : dg Kunneth m^0 and m^1 quasi-iso}. 

\begin{prop}\label{prop : Parametrizing by different Morse functions}

Let $h,h' : X \to \R$ be two Morse functions and $\xi$ a pseudo-gradient adapted to both $h$ and $h'$. Let $\F$ be an $\Ai$-module over $C_*(\Omega X)$.
Let $C_*(X,\Xi_{h'}, \F)$ and $C_*(X,\Xi_{h}, \F)$ be the two complexes built using respectively $h'$ and $h$ for parameterizing the trajectories in the definition of the evaluation maps.

There exists a quasi-isomorphism $\Phi_{h,h'} : C_*(X,\Xi_h, \F) \to C_*(X,\Xi_{h'}, \F)$.

\end{prop}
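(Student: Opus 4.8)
The plan is to realize $\Phi_{h,h'}$ as a continuation morphism for the $\Ai$-Morse toolset and then apply the quasi-isomorphism criterion. Since $h$ and $h'$ admit the same adapted pseudo-gradient $\xi$, the moduli spaces $\trajb{x,y}$, the critical points and their indices, the orientations $o$, the tree $\mathcal{Y}$ and the homotopy inverse $\theta$ are literally unchanged; the \emph{only} difference between $C_*(X,\Xi_h,\F)$ and $C_*(X,\Xi_{h'},\F)$ is the twisting cocycle, $m^h_{x,y} = (\theta\circ p\circ\Gamma^h_{x,y})_*(s_{x,y})$ versus $m^{h'}_{x,y} = (\theta\circ p\circ\Gamma^{h'}_{x,y})_*(s_{x,y})$, where $\Gamma^h_{x,y}$ and $\Gamma^{h'}_{x,y}$ parametrize a broken trajectory $\lambda$ by the values of $h$, resp. of $h'$. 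Note that $\Gamma^h_{x,y}(\lambda)$ and $\Gamma^{h'}_{x,y}(\lambda)$ are the same Moore path up to an orientation-preserving reparametrization, and that both families of evaluation maps satisfy the strict concatenation relation \eqref{eq : Concatenation relation evaluation map}.

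First I would build, following \cite[Section 6.2]{BDHO23}, a Morse function $\mathbf{H}$ on $X\times[0,1]$ whose critical points are $\{x_0 : x\in\Crit(h)\}$ of index $|x|+1$ and $\{x_1 : x\in\Crit(h)\}$ of index $|x|$, with an adapted pseudo-gradient projecting to $\xi$ on $X$ and agreeing with the $h$-parametrization near $X\times\{0\}$ and with the $h'$-parametrization near $X\times\{1\}$. Choosing a representing chain system $s_{\mathbf H}$ for the trajectory moduli spaces of $\mathbf H$ with $s_{\mathbf H,x_0,y_0} = (-1)^{|x|-|y|}s_{x,y}$ and $s_{\mathbf H,x_1,y_1} = s_{x,y}$, setting $\sigma_{x,y} := s_{\mathbf H,x_0,y_1}$, and defining evaluation maps $q^{\mathbf H}_{x,y}$ by parametrizing the trajectories of $\mathbf H$ and projecting into $\Omega X$ via $\theta\circ p$ exactly as in the construction of the Barraud--Cornea cocycle, I would put $\tau_{x,y} := -q^{\mathbf H}_{x,y,*}(\sigma_{x,y}) \in C_{|x|-|y|}(\Omega X)$. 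The boundary formula \eqref{eq : relation sys repr} for $s_{\mathbf H}$ together with the concatenation relation for $q^{\mathbf H}$ then yields that $\{\tau_{x,y}\}$ satisfies \eqref{eq : mu_1 de tau} relative to $m^h$ and $m^{h'}$; this is the verbatim computation of \cite[Proposition 6.2.2]{BDHO23}. Proposition \ref{Prop : continuation morphisms} then produces the morphism of complexes $\Phi_{h,h'} = \Psi : C_*(X,\Xi_h,\F)\to C_*(X,\Xi_{h'},\F)$.

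To see that $\Phi_{h,h'}$ is a quasi-isomorphism I would invoke Proposition \ref{Prop : QIso Criterion Ai}: it suffices that the induced map $\widetilde{\Psi}$ on lifted Morse complexes be a quasi-isomorphism. But the projection of $\tau_{x,y}$ to $H_0(\Omega X) = \Z[\pi_1(X)]$ is governed by the zero-dimensional moduli spaces $\overline{\mathcal{M}}^{\mathbf H}(x_0,y_1)$, which for $|x|=|y|$ are empty unless $x=y$ and otherwise reduce to the single ``vertical'' trajectory $\{x\}\times[0,1]$; hence $\widetilde{\Psi} = \Id$ on $\widetilde{C}(f,\xi)$, trivially a quasi-isomorphism. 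Therefore $\Phi_{h,h'}$ is a quasi-isomorphism.

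I expect the main obstacle to be the construction of $\mathbf H$ and the verification that its trajectory moduli spaces behave as in the continuation setting while keeping the $X$-parametrization tied to $h$ near one end and to $h'$ near the other --- in particular, arranging compatibility with the lifting data so that $q^{\mathbf H}$ genuinely interpolates $q^h$ and $q^{h'}$. The naive alternative of directly pushing forward $[0,1]\times s_{x,y}$ under a linear interpolation of the two parametrizations fails cleanly: on a boundary stratum $\trajb{x,z}\times\trajb{z,y}$ the interpolation feeds the \emph{same} deformation parameter to both factors, so one is forced to replace the diagonal $1$-chain of $[0,1]^2$ by a staircase and to absorb a triangle correction term not present in \eqref{eq : mu_1 de tau}. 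Routing the argument through $X\times[0,1]$ with a fresh representing chain system $s_{\mathbf H}$ is exactly what makes this bookkeeping disappear, and the same device will drive the proof of Proposition \ref{prop : dg Kunneth m^0 and m^1 quasi-iso}.
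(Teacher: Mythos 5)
Your strategy --- a continuation cocycle through $X\times[0,1]$, then Propositions \ref{Prop : continuation morphisms} and \ref{Prop : QIso Criterion Ai} --- matches the paper's, and your reason for rejecting the naive homotopy on $[0,1]\times\trajb{x,y}$ (the diagonal versus the staircase) is correct. But the step you flag as the main obstacle is the crux, and it remains open: ``parametrizing the trajectories of $\mathbf{H}$'' does not by itself produce evaluation maps whose restrictions to $X\times\{0\}$ and to $X\times\{1\}$ are respectively the $h$-parametrization and the $h'$-parametrization, and you neither construct the interpolating $\mathbf{H}$ you gesture at nor verify that the resulting $q^{\mathbf{H}}$ satisfies the concatenation identities of Definition \ref{defi : Evaluation maps} relative to $q^h$ on one boundary face and $q^{h'}$ on the other. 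Without that the cocycle would not satisfy \eqref{eq : mu_1 de tau} with $m^h$ on the left and $m^{h'}$ on the right.

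The paper's device is different and avoids interpolating the Morse function at all. It takes $F = f + a$, where $a$ is strictly decreasing on $[0,1]$, so that every mixed trajectory $\lambda\in\trajb{x_0,y_1}$ crosses $\{\tfrac12\}\times X$ transversely exactly once, and then defines $\Gamma^{h,h'}_{x_0,y_1}(\lambda)$ by parametrizing the segment of $\lambda$ before that crossing by the values of $h$ and the segment after by the values of $h'$. Together with $\Gamma^h$ on the $\trajb{x_0,y_0}$ strata and $\Gamma^{h'}$ on the $\trajb{x_1,y_1}$ strata, this split parametrization makes both concatenation relations hold on the nose: the face $\trajb{x_0,z_0}\times\trajb{z_0,y_1}$ sees $q^h$ before the break, while $\trajb{x_0,z_1}\times\trajb{z_1,y_1}$ sees $q^{h'}$ after. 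Consequently $\nu^{h,h'}_{x,y}=-q^{h,h'}_{x_0,y_1,*}(\sigma^{\Id}_{x,y})$ satisfies \eqref{eq : mu_1 de tau} relative to $m^h$ and $m^{h'}$ by the boundary relation for $\sigma^{\Id}$. Once this is supplied, the rest of your argument (the quasi-isomorphism criterion applied to the lifted Morse complexes) closes the proof as you describe.
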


\begin{proof}
    We use the same construction as in the section 6.2 of \cite{BDHO23} where the authors prove that the constant homotopy between a Morse function and itself induces a continuation map that is homotopic to the identity. Let $$\pi : [0,1] \times X \to X$$ and $$p : X \to \Xq$$ be the canonical projections and let $$\theta :  \Xq \to X $$ be a homotopy inverse of $p.$
    
    We follow the proof of invariance for Morse theory \cite[Section I.3.4]{AD14} to build $a : [-\epsilon,1+\epsilon] \to \R$ a strictly decreasing function on $[0,1]$ such that $a$ is a Morse function and $\Crit(a) = \{0,1\}$. We can assume that $a$ is decreasing enough for $F = h + a : [-\epsilon, 1+\epsilon] \times X \to \R$ to be a Morse function whose critical points are, for any $k \in \N$, $$\Crit_k(F) = \{0\} \times \Crit_{k-1}(h) \cup \{1\} \times \Crit_k(h)$$ and let $\xi_F$ be an adapted pseudo-gradient for $F$ on $[0,1] \times X.$
    
    If $x \in \Crit_k(h)$, we denote $x_0 \in \{0\} \times \Crit_{k}(h) \subset \Crit_{k+1}(F)$ and $x_1 \in \{1\} \times \Crit_k(h) \subset \Crit_k(F)$ the corresponding critical points of $F$.
    Let $\{s_{x,y}\}$ be a representing chain system for the moduli spaces of Morse trajectories. There exists a representing chain system $\{s^F_{x_i,y_j}\}$ for the moduli spaces of Morse trajectories in $[0,1] \times X$ such that for any $x,y \in \textup{Crit}(h)$ :

    \begin{itemize}
        \item $s^F_{x_0,y_0} = (-1)^{|x|-|y|} s_{x,y} \in C_{|x|-|y|-1}(\trajb{x_0,y_0})$.
        \item $s^F_{x_1,y_1} = s_{x,y} \in C_{|x|-|y|-1}(\trajb{x_1,y_1})$.
        \item We denote $\sigma^{\textup{Id}}_{x,y} = s^F_{x_0, y_1} \in C_{|x|-|y|}(\trajb{x_0,y_1})$.
        \item Since $a$ strictly decreases between $0$ and $1$, then $\trajb{x_1,y_0} = \emptyset.$ \\
    \end{itemize}

    Define now an evaluation map $q^{h,h'}_{x_i,y_j} : \trajb{x_i,y_j} \to \Omega X$ such that 
    \begin{itemize}
        \item $q^{h,h'}_{x_0,y_0} := q^0_{x,y} := \theta \circ p \circ \pi \circ \Gamma^h_{x_0,y_0}$ where $\Gamma^h_{x_0,y_0} : \trajb{x_0,y_0} \to \mathcal{P}_{x_0 \to y_0} (\{0\} \times X)$ parametrizes the trajectory using the Morse function $h$.
        This is the evaluation map that arises from the data $\Xi_h$ and therefore $q^{h,h'}_{x_0,y_0,*}(s_{x,y}) = m^h_{x,y}$ is the twisting cocycle defining $C_*(X, \Xi_h, \F)$.\\
        
        \item $q^{h,h'}_{x_1,y_1} := q^1_{x,y} := \theta \circ p \circ \pi \circ \Gamma^{h'}_{x_1,y_1}$ where $\Gamma^{h'}_{x_1,y_1} : \trajb{x_1,y_1} \to \mathcal{P}_{x_1 \to y_1} (\{1\} \times X)$ parametrizes the trajectory using the Morse function $h'$. This is the evaluation map that arises from the data $\Xi_{h'}$ and therefore $q^{h,h'}_{x_1,y_1,*}(s_{x,y}) = m^{h'}_{x,y}$ is the twisting cocycle defining $C_*(X, \Xi_{h'}, \F)$.\\
    \end{itemize}

    It remains to define $q^{h,h'}_{x_0,y_1}$. Let $\lambda \in \trajb{x_0,y_1}$. Since $a$ is strictly decreasing between $0$ and $1$, the trajectory $\lambda$ transversely intersects $\{\frac{1}{2}\} \times X$ exactly once. Denote $ z \in \lambda \cap X \times \{\frac{1}{2}\}$. We define $$\Gamma_{x_0,y_1}^{h,h'} : \trajb{x_0,y_1} \to \mathcal{P}_{x_0 \to y_1} ([0,1] \times X)$$ by parametrizing the piece of $\lambda$ going from $x_0$ to $z$ using $h$ and then parametrizing the other piece of $\lambda$ going from $z$ to $y_1$ with $h'$.

    Let us reformulate this idea with some formulas:
    
    We assume w.l.o.g that $A=a(0)>0$ and $a(1)=0$.
Define
    $$\Gamma^h_{x_0,y_1}(\lambda) : [0, A + h(x)-h(y)] \to [0,1] \times X, $$
    $$\Gamma^h_{x_0,y_1}(\lambda)(t) = (h+a)^{-1}( A + h(x) -t) \cap \lambda$$

    and 

    $$\Gamma^{h'}_{x_0,y_1}(\lambda) : [0, A + h'(x)-h'(y)] \to X \times [0,1], $$
    $$\Gamma^{h'}_{x_0,y_1}(\lambda)(t) = (h'+a)^{-1}( A + h'(x) -t) \cap \lambda.$$

    There exists a unique $t_h \in [0, A + h(x)-h(y)]$ such that $\Gamma^h_{x_0,y_1}(\lambda) \in X \times \{1/2\}$ and a unique $t_{h'} \in [0, A + h'(x)-h'(y)]$ such that $\Gamma^{h'}_{x_0,y_1}(\lambda) \in X \times \{1/2\}$.

    Let,
    $$\Gamma_{x_0,y_1}^{h,h'}(\lambda) : [0, A + h'(x)-h'(y) + t_h -t_{h'}] \to X \times [0,1],$$
    $$\Gamma_{x_0,y_1}^{h,h'}(\lambda)(t) = \left\{ \begin{array}{cc}
       \Gamma^{h}_{x_0,y_1}(\lambda)(t)  & \textup{if} \ t\leq t_h  \\
        \Gamma^{h'}_{x_0,y_1}(\lambda)(t-t_h + t_h')  & \textup{if} \ t\geq t_h
    \end{array} \right..$$

    It is clear from the definition that these evaluation maps satisfy : \begin{enumerate}
        \item \underline{The concatenation relations} : \begin{itemize} \item for any $(\lambda, \lambda') \in \trajb{x_0,z_0} \times \trajb{z_0, y_1}\subset \partial \trajb{x_0,y_1}$, $q^{h,h'}_{x_0,y_1}(\lambda, \lambda') = q^0_{x,z}(\lambda) \# q^{h,h'}_{z_0, y_1}(\lambda')$. 
        \item for any $(\lambda, \lambda') \in \trajb{x_0,z_1} \times \trajb{z_1, y_1}\subset \partial \trajb{x_0,y_1}$, $q^{h,h'}_{x_0,y_1}(\lambda, \lambda') = q^{h,h'}_{x_0,z_1}(\lambda) \# q^{1}_{z, y}(\lambda')$.
    \end{itemize}
    \item \underline{Compatibility with the lifted Morse complex} : Let $\{q^F_{x_0,y_1} : \trajb{x_0,y_1} \to \Omega X, x,y \in \Crit(h)\}$ be the family of evaluation maps defined by parametrizing by the values of $F$. Since it is only the parametrization of the path that changes between the evaluation maps $q^{h,h'}$ and $q^F$, then $q^{h,h'}_{x_0,y_1}$ also satisfies the condition that, if $x$ and $y$ have the same Morse index, the homotopy class $[q^{h,h'}_{x_0,y_1}(\lambda)] \in \pi_1(X)$ is the same as the one associated with $\lambda \in \trajb{x_0,y_1}$ in the lifted Morse complex $\tilde{C}_*(F,\xi')$ (see Lemma 6.2.4 of \cite{BDHO23}).
    \end{enumerate}
    For any $x,y \in \Crit(h)$, define $\nu^{h,h'}_{x,y} = - q^{h,h'}_{x_0,y_1,*}(\sigma^{\textup{Id}}_{x,y}) \in C_{|x|-|y|}(\Omega X).$

    Since $\sigma^{\textup{Id}}_{x,y}$ satisfies the relation $\partial \sigma^{\textup{Id}}_{x,y} = \displaystyle \sum_z s_{x,z} \sigma^{\textup{Id}}_{z,y} - \sum_{w} (-1)^{|x|-|w|} \sigma^{\textup{Id}}_{x,w} s_{w,y}$, we have

    $$\partial \nu^{h,h'}_{x,y} = \sum_z m^h_{x,z} \nu^{h,h'}_{z,y} - \sum_w (-1)^{|x|-|w|} \nu^{h,h'}_{x,w} m^{h'}_{w,y}.$$

    The quasi-isomorphism criterion \ref{Prop : QIso Criterion Ai} implies that $\{\nu^{h,h'}_{x,y}\}$ yields a morphism of complexes $$\Phi_{h,h'} : C_*(X,\Xi_h,\F) \to C_*(X, \Xi_{h'}, \F)$$ which is a quasi-isomorphism.
    \newpage
    Indeed, the map between the associated lifted Morse complexes is the continuation map for the lifted Morse complexes (as in the proof \cite[Proposition 6.2.2]{BDHO23}) which is therefore a quasi-isomorphism.    
\end{proof}

In fact more is true :

\begin{prop}\label{prop : equivalence homotopie param par Morse function}
    The map $\Phi_{h,h'} : C_*(X,\Xi_h, \F) \to C_*(X,\Xi_{h'}, \F)$ built in Proposition \ref{prop : Parametrizing by different Morse functions} is actually a chain homotopy equivalence.
\end{prop}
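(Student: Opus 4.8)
The statement to prove is that the quasi-isomorphism $\Phi_{h,h'} : C_*(X,\Xi_h,\F) \to C_*(X,\Xi_{h'},\F)$ constructed in the previous proposition is in fact a chain homotopy equivalence, i.e.\ it admits a chain homotopy inverse. The natural candidate for the inverse is $\Phi_{h',h}$, built by the same construction with the roles of $h$ and $h'$ exchanged. So the plan is to show that $\Phi_{h',h}\circ\Phi_{h,h'}$ is chain homotopic to $\mathrm{Id}_{C_*(X,\Xi_h,\F)}$ and symmetrically that $\Phi_{h,h'}\circ\Phi_{h',h}$ is chain homotopic to $\mathrm{Id}_{C_*(X,\Xi_{h'},\F)}$. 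By symmetry it suffices to treat one composition.

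First I would observe that both $\Phi_{h,h'}$ and $\Phi_{h',h}$ are of the form given by Proposition \ref{Prop : continuation morphisms}, associated to the cocycles $\{\nu^{h,h'}_{x,y}\}$ and $\{\nu^{h',h}_{x,y}\}$ respectively, both relative to the same twisting cocycles $m^h$ and $m^{h'}$ (in opposite orders). The composition $\Phi_{h',h}\circ\Phi_{h,h'}$ is then, up to the combinatorics of Koszul signs, the morphism associated to the "convolution" cocycle $\{\tau_{x,y}\}$ with $\tau_{x,y} = \sum_z (-1)^{\ast}\, \nu^{h,h'}_{x,z}\cdot \nu^{h',h}_{z,y}$, which satisfies equation \eqref{eq : mu_1 de tau} for the pair $(m^h, m^h)$; and the identity on $C_*(X,\Xi_h,\F)$ is the morphism associated to the trivial continuation cocycle $\{\tau^{\mathrm{triv}}_{x,y}\}$ where $\tau^{\mathrm{triv}}_{x,y}$ is the constant loop if $x=y$ and $0$ otherwise. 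By Proposition \ref{Prop : homotopy Criterion Ai}, it is enough to produce a homotopy cocycle $\{h_{x,y}\}$ satisfying \eqref{eq : mu_1 de h} between these two cocycles.

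To construct such a homotopy cocycle I would work geometrically, in the spirit of the construction of $\Phi_{h,h'}$ itself and of \cite[Section 6.2]{BDHO23}: glue two copies of the interval cylinder to build a Morse function $G = f + b$ on $[0,1]^2\times X$ (or on a suitable concatenation $[0,2]\times X$) whose critical points are indexed by $\Crit(f)$ in four "corner" copies and whose trajectory moduli spaces, when evaluated in $\Omega X$ via a parametrization map that uses $h$ on the first segment, $h$ again (or $h'$) on the middle, and $h'$ on the last, produce exactly the desired $\{h_{x,y}\}$. The point is that the moduli space $\overline{\mathcal{M}}(x_{00}, y_{11})$ of trajectories from the "top" copy to the "bottom" copy has codimension-one boundary strata that reproduce, on one hand, the convolution $\nu^{h,h'}\cdot\nu^{h',h}$ and the two terms $m^h\cdot h$, $h\cdot m^h$, and on the other hand a stratum where the trajectory degenerates onto the diagonal giving the constant-loop term; choosing a representing chain system on $[0,1]^2\times X$ compatible with the already-fixed systems on the boundary faces (as in Lemma \ref{lemme : representing chain system of a Cartesian product} and \cite[Proposition 5.6]{BDHO23}) and evaluating yields \eqref{eq : mu_1 de h}.

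The main obstacle I anticipate is \emph{not} the abstract homotopy-cocycle formalism — that is handled once and for all by Proposition \ref{Prop : homotopy Criterion Ai} — but the bookkeeping needed to verify that the geometric boundary identification produces precisely the cocycle $\tau_{x,y} = \nu^{h,h'}\ast\nu^{h',h}$ corresponding to $\Phi_{h',h}\circ\Phi_{h,h'}$ with the correct signs, together with the constant-loop term corresponding to the identity, and nothing else. One has to be careful that the parametrization-gluing used to define the evaluation maps on the square is consistent with the ones used for $\Phi_{h,h'}$ and $\Phi_{h',h}$ on the two boundary segments, so that the induced homotopy cocycle genuinely interpolates between the two composites; this is the same kind of delicate-but-routine verification carried out in \cite[Section 6.2.2]{BDHO23} for the composition of continuation maps, and their argument adapts essentially verbatim, using the $\Ai$ homotopy Proposition \ref{Prop : homotopy Criterion Ai} in place of its DG counterpart. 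Once $\Phi_{h',h}\circ\Phi_{h,h'}\simeq \mathrm{Id}$ and $\Phi_{h,h'}\circ\Phi_{h',h}\simeq\mathrm{Id}$ are established, $\Phi_{h,h'}$ is a chain homotopy equivalence, which is the claim.
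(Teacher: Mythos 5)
Your overall strategy matches the paper's: show $\Phi_{h',h}\circ\Phi_{h,h'}\simeq\mathrm{Id}$ by building a homotopy cocycle geometrically on a thickened base $[0,1]^2\times X$ whose four corners carry copies of $\Crit(f)$, choosing a parametrization-dependent evaluation map, and then invoking Proposition~\ref{Prop : homotopy Criterion Ai}. This is exactly what the paper does (with $K = f + a_t + a_\tau$ and $C = [1/2,1]\times[0,1/2]\times X$ as the region where the parametrization switches to $h'$).

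However, one concrete claim in your plan is incorrect, and it is not just a matter of bookkeeping. You assert that besides the corner producing the convolution $\nu^{h,h'}\ast\nu^{h',h}$, there is ``a stratum where the trajectory degenerates onto the diagonal giving the constant-loop term,'' i.e.\ the trivial cocycle $\tau^{\mathrm{triv}}_{x,y}$. No such stratum arises. In the paper's construction the second corner $(0,1)$ lies entirely outside the region $C$, so both segments of a broken trajectory through it are parametrized by $h$, and the corresponding boundary contribution is $\nu^{h,h}\ast\nu^{h,h}$ — the cocycle of the composite $\Psi_{00}\circ\Psi_{00}$ of constant-homotopy continuation maps, \emph{not} the trivial cocycle of the identity. (These differ: $\nu^{h,h}_{x,y}$ for $|x|>|y|$ is in general nonzero.) Consequently the homotopy cocycle you get from Proposition~\ref{Prop : homotopy Criterion Ai} only proves $\Phi_{h',h}\circ\Phi_{h,h'}\simeq\Psi_{00}\circ\Psi_{00}$. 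The paper closes the gap by one more step: $\Psi_{00}\circ\Psi_{00}$ is chain homotopic to the identity by the invariance Theorem~\ref{thm : Continuation morphism Ai invariance} (part 2). Adding that citation repairs your argument. Your alternative suggestion of working on $[0,2]\times X$ does not obviously give a second boundary corner to compare against, so I would stay with the square $[0,1]^2\times X$.
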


\begin{proof}
    We prove here that $\Phi_{h,h'} \circ \Phi_{h',h} : C_*(X,\Xi_h, \F) \to C_*(X,\Xi_h, \F)$ is homotopic to the composition $$ C_*(X,\Xi_h, \F) \overset{\Id_*}{\to} C_*(X,\Xi_{h'}, \F) \overset{\Id_*}{\to} C_*(X,\Xi_{h}, \F).$$ The last map being chain homotopic to the identity, the proposition follows.

    We will use the notations introduced in the previous proof and follow the idea of the proof of \cite[Proposition 6.3.8]{BDHO23} to construct a homotopy cocycle (see Proposition \ref{Prop : homotopy Criterion Ai}).

    Define $K = h + a_t + a_{\tau} : [0,1]^2 \times X  \to \R$ by $K(t,\tau,x) = h(x) + a(t) + a(\tau)$. This is a Morse function whose critical points are 
    $$\Crit_{k+1}(K) =  \{(0,0)\} \times \Crit_{k-1}(h)  \cup  \{(0,1)\} \times \Crit_{k}(h)  \cup  \{(1,0)\} \times \Crit_{k}(h)  \cup \{(1,1)\} \times \Crit_{k+1}(h).$$

    For each critical point $x \in \Crit(K)$, we use the notation $x_{ij}$ if $x \in \{(i,j)\} \times  \Crit(h)$  for $i,j \in \{0,1\}$.

    Following the proof of \cite[Lemma 6.3.9]{BDHO23}, we can construct a representing chain system $(s^K_{x,y})$ for the moduli spaces of trajectories $\trajbi{K}{x,y}$ such that,
    
        if we denote $S_{x,y} = s^K_{x_{00}, y_{11}} \in C_{|x|-|y|+1}(\trajbi{K}{x_{00}, y_{11}})$,

        \begin{align*}
            \partial S_{x,y} &= \sum_{z_{00}} (-1)^{|x|-|y|} s^f_{x,z}\cdot S_{z,y} + \sum_{w_{11}} (-1)^{|x|-|w|+2} S_{x,w} \cdot s^f_{w,y}\\
            &- \sum_{u_{10}} \sigma^{Id}_{x,u} \cdot \sigma^{Id}_{u,y} + \sum_v \sigma^{Id}_{x,v}\cdot\sigma^{Id}_{v,y}.\\
        \end{align*}
   
We now want to evaluate these chains into $C_*(\Omega X)$. 

Define now for each $x,y \in \Crit(K)$ a parametrization map $\Gamma_{x,y} : \trajbi{K}{x, y} \to \mathcal{P}_{x \to y}([0,1]^2 \times X )$. Let $\lambda \in \trajbi{K}{x, y}$ and $C = [1/2,1] \times [0,1/2] \times X $. The trajectory $\lambda$ intersects $C$ transversely since $a$ is strictly decreasing and $\Gamma_{x,y}(\lambda)$ is the path parametrizing $\lambda$ by $h$ outside of $C$ and by $h'$ inside $C$ (see the following figure).\\
    
\begin{center}
\begin{overpic}[scale=0.65]{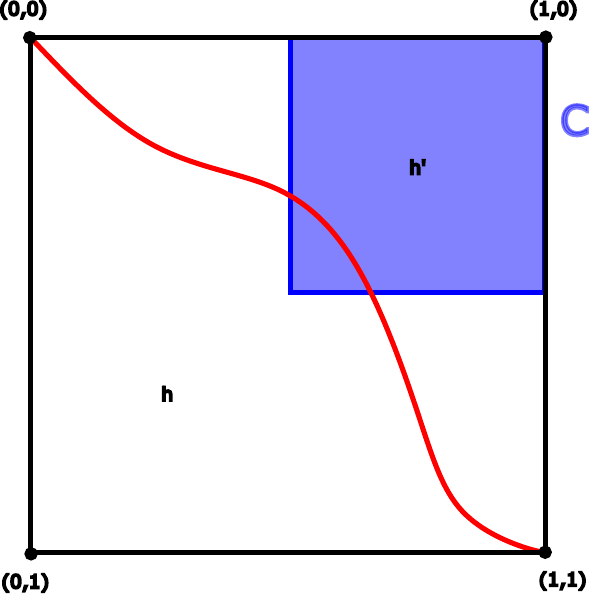}
    \put(38,59){\red{$\lambda$}}
\end{overpic}    

    Figure 3 - Parametrization map $\Gamma_{x,y}$\\
    
\end{center}

\noindent We define for each $x,y \in \Crit(K)$ the evaluation maps $$Q_{x, y} = \pi \circ (Id_{[0,1]^2} \times \theta) \circ (Id_{[0,1]^2} \times p) \circ \Gamma_{x,y}: \trajbi{K}{x, y} \to \Omega X,$$ where $\pi : [0,1]^2 \times X \to X $ is the canonical projection.
In particular, if $\lambda \in \trajbi{K}{x_{00}, y_{10}}$ and $\lambda' \in \trajbi{K}{x_{10}, y_{11}}$, then $Q_{x,y}(\lambda) = q^{h,h'}_{x,y}(\lambda)$ and  $Q_{x,y}(\lambda') = q^{h',h}_{x,y}(\lambda')$.\\

These evaluation maps again satisfy the concatenation relations and compatibility with the lifted Morse complex.

Denote $H_{x,y} = Q_{x,y,*}(S_{x,y}) \in C_{|x|-|y|+1}(\Omega X),$

\begin{align*}
\partial H_{x,y} &= \sum_z (-1)^{|x|-|z|} m^h_{x,z} \cdot H_{z,y} + \sum_w (-1)^{|x|-|w|} H_{x,w} \cdot m^h_{w,y} \\
&- \sum_u \nu^{h,h'}_{x,u} \cdot \nu^{h',h}_{u,y} + \sum_v \nu^{h,h}_{x,v} \cdot \nu^{h,h}_{v,y}.
\end{align*}

Using Proposition \ref{Prop : homotopy Criterion Ai}, this proves that the map $$\deffct{H}{C_*(X, \Xi_h, \F)}{C_{*+1}(X, \Xi_h, \F)}{\alpha \otimes x}{\displaystyle \sum_y \alpha \cdot H_{x,y} \otimes y}$$ is a chain homotopy between the map $$\deffct{\Phi_{h',h} \circ \Phi_{h,h'}}{C_*(X, \Xi_h, \F)}{C_*(X, \Xi_h, \F)}{\alpha \otimes x}{ \displaystyle \sum_y \alpha \cdot \left( \sum_u \nu^{h,h'}_{x,u} \cdot \nu^{h',h}_{u,y} \right) \otimes y }$$

and the map $$\deffct{\textup{Id}_* \circ \textup{Id}_*}{C_*(X, \Xi_h, \F)}{C_*(X, \Xi_h, \F)}{\alpha \otimes x}{\displaystyle \sum_y \alpha \cdot \left( \sum_v \nu^{h,h}_{x,v} \cdot \nu^{h,h}_{v,y} \right) \otimes y}.$$

\end{proof}

\subsubsection{Parametrizing independently on a Cartesian product}\label{subsubsection : Parametrizing independantly on a Cartesian product}

Let $f : X \to \R$, $g: Y \to \R$ and $H = f\oplus g : X \times Y \to \R$ be Morse functions.

Let $\{s_{z,w} \in C_{|z|-|w|-1}(\trajbi{H}{z,w}), \ z,w \in \Crit(H)\}$ be the twisting cocycle constructed in Section \ref{subsection : Morse data on a Cartesian product}.

Recall the notation of the previous section : $m^K_{z,w} = q^K_{z,w,*}(s_{z,w})$ is the Künneth twisting cocycle from Definition \ref{defi : m^K} and $m^0_{z,w} = q^0_{z,w,*}(s_{z,w})$ is the Barraud-Cornea twisting cocycle associated with the DG Morse data set $\Xi_{X\times Y}.$

\begin{prop}
    For any $\Ai$-module $\mathcal{H}$ over $C_*(\Omega X \times \Omega Y)$, there exists a quasi-isomorphism $$\Psi^{0K} : C_*(X\times Y, \Xi_{X \times Y},\mathcal{H}) \to C_*(X\times Y, m^K,\mathcal{H}).$$
\end{prop}

\begin{proof}
Proceed as in the proof of Proposition \ref{prop : Parametrizing by different Morse functions} to define $W : [0,1] \times X \times Y \to \R$ a Morse function such that $\Crit_k(W) = \{0\} \times \Crit_{k-1}(H) \cup \{1\} \times \Crit_k(H)$, a pseudo-gradient $\xi_W$ and a representing chain system $\{s^W_{z_i, w_j}, \ z,w \in \Crit(H), \ i,j \in \{0,1\}\}$ such that

\begin{itemize}
        \item $s^W_{z_0,w_0} = (-1)^{|z|-|w|} s_{z,w} \in C_{|z|-|w|-1}(\trajbi{W}{z_0,w_0})$.
        \item $s^W_{z_1,w_1} = s_{z,w} \in C_{|z|-|w|-1}(\trajbi{W}{z_1,w_1})$.
        \item We denote $\sigma_{z,w} = s^W_{z_0, w_1} \in C_{|z|-|w|}(\trajbi{W}{z_0,w_1})$.
\end{itemize}

Denote $\pi^{X \times Y} : [0,1] \times X \times Y  \to X \times Y$, $\pi^X : [0,1] \times X \to X$ and $\pi^Y : [0,1] \times Y \to Y$ the canonical projections. We can assume w.l.o.g that $F = f+a : [0,1] \times X \to \R$ and $G = g +a : [0,1] \times Y \to \R$ are Morse functions. Denote $\xi_F = \xi_f - a' \frac{\partial}{\partial t}$ and $\xi_G = \xi_g - a' \frac{\partial}{\partial t}$ adapted pseudo-gradients for $F$ and $G$ respectively.

There is an inclusion $$ I : \mathcal{M}_W((x,y)_0, (x',y')_1) \to \mathcal{M}_F(x_0,x'_1) \times \mathcal{M}_G(y_0,y'_1), \quad I(\tau,(a,b))= \left((\tau,a), (\tau,b) \right)$$

which induces a map $$J = (J_X, J_Y) : \trajbi{W}{(x,y)_0, (x',y')_1} \to \trajbi{F}{x_0,x'_1} \times \trajbi{G}{y_0,y'_1}.$$

Define $\Gamma^K_{(x,y)_0,(x',y')_1}: \trajbi{W}{(x,y)_0, (x',y')_1} \to \mathcal{P}_{x_0 \to x'_1} ([0,1] \times X) \times \mathcal{P}_{y_0 \to y'_1} ([0,1] \times Y)  $ by 
$$\Gamma^K_{(x,y)_0,(x',y')_1}(\lambda) = \left(\Gamma^F_{x_0,x'_1}\circ J_X(\lambda), \Gamma^G_{y_0,y'_1}\circ J_Y(\lambda) \right),$$

where $\Gamma^F_{x_0,x'_1} : \trajbi{F}{x_0,x_1} \to \mathcal{P}_{x_0 \to x'_1} ([0,1] \times X)$ and $\Gamma^G_{y_0,y'_1} : \trajbi{G}{y_0,y'_1}\to \mathcal{P}_{y_0 \to y'_1}([0,1] \times Y)$ are the parametrization maps by the values of the Morse functions $F$ and $G$ respectively. See Definition \ref{defi : Evaluation maps}.\\

Denote $$\Gamma^0_{(x,y)_0, (x',y')_1} : \trajbi{W}{(x,y)_0, (x',y')_1} \to \mathcal{P}_{x_0 \to x'_1} ([0,1] \times X) \times \mathcal{P}_{y_0 \to y'_1} ([0,1] \times Y)$$
the composition of the map $$\Gamma^W_{(x,y)_0, (x',y')_1} : \trajbi{W}{(x,y)_0, (x',y')_1} \to \mathcal{P}_{(x,y)_0 \to (x',y')_1} ([0,1] \times X \times Y )$$ which parametrizes by the values of $W$ and  the inclusion $$\mathcal{P}_{(x,y)_0 \to (x',y')_1} ([0,1] \times X \times Y ) \hookrightarrow \mathcal{P}_{x_0 \to x'_0} ([0,1] \times X ) \times \mathcal{P}_{y_0 \to y'_1} ([0,1] \times Y).$$

Now define $$\Gamma^{0K}_{(x,y)_0, (x',y')_1} : \trajbi{W}{(x,y)_0, (x',y')_1} \to \mathcal{P}_{x_0 \to x'_1} ([0,1] \times X ) \times \mathcal{P}_{y \to y'} ([0,1] \times Y)$$  the parametrization map that shifts the parametrization from $\Gamma^0$ to $\Gamma^K$ when the trajectory crosses $X \times Y \times \{\frac{1}{2}\}$.

\begin{lemme}

The family $q^{0K}_{(x,y)_0, (x',y')_1} =  \theta \circ p \circ (\pi^X, \pi^Y) \circ \Gamma^{0K}_{(x,y)_0, (x',y')_1} : \trajb{(x,y)_0, (x',y')_1} \to \Omega X \times \Omega Y$ is a family of evaluation maps, ie  it satisfies

\begin{enumerate}
    \item \underline{The concatenation relations} : For any $(\lambda,\lambda') \in \trajb{(x,y)_0, (z,w)_0} \times \trajb{(z,w)_0 , (x',y')_1}$, $$ q^{0K}_{(x,y)_0, (x',y')_1}(\lambda, \lambda') = q^0_{(x,y), (z,w)}(\lambda) \# q^{0K}_{(z,w)_0, (x',y')_1}(\lambda') $$
and for any $(\lambda,\lambda') \in \trajb{(x,y)_0, (z,w)_1} \times \trajb{(z,w)_1 , (x',y')_1}$, $$q^{0K}_{(x,y)_0, (x',y')_1}(\lambda, \lambda') = q^{0K}_{(x,y)_0, (z,w)_1}(\lambda) \# q^{K}_{(z,w)_1, (x',y')_1}(\lambda'). $$
\item \underline{The compatibility with the lifted Morse complex} : if $(x,y)$ and $(x',y')$ have the same index, then the homotopy class $[q^{0K}_{(x,y)_0,(x',y')_1}(\lambda)] \in \pi_1(X \times Y)$ is the same as the one associated with $\lambda \in \trajb{(x,y)_0,(x',y')_1}$ in the lifted Morse complex of $X \times Y \times [0,1]$.
\end{enumerate}
\end{lemme}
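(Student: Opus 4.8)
The plan is to verify the two conditions of Definition \ref{defi : Evaluation maps} for the family $q^{0K}$, everything hinging on the monotonicity of the $[0,1]$-coordinate. Since $a$ is strictly decreasing on $[0,1]$, the pseudo-gradient $\xi_W = \xi_H - a'\frac{\partial}{\partial t}$ has $-a'>0$ on $(0,1)$, so $t$ strictly increases along every non-constant negative gradient line of $W$. Hence each $\lambda \in \trajbi{W}{(x,y)_0,(x',y')_1}$ meets the slice $X\times Y\times\{\frac12\}$ transversely and exactly once, with crossing time depending continuously on $\lambda$ and extending continuously to the compactification; this makes $\Gamma^{0K}$ well-defined and continuous on the whole moduli space, coinciding with $\Gamma^0$ near $(x,y)_0$ and with $\Gamma^K$ near $(x',y')_1$.

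For the concatenation relations I would analyse the boundary strata of $\trajbi{W}{(x,y)_0,(x',y')_1}$. Monotonicity of $t$ forces every broken configuration to have all its intermediate critical points either on $\{0\}\times\Crit(H)$ or all on $\{1\}\times\Crit(H)$, so the two stratum types are $\trajb{(x,y)_0,(z,w)_0}\times\trajb{(z,w)_0,(x',y')_1}$ and $\trajb{(x,y)_0,(z,w)_1}\times\trajb{(z,w)_1,(x',y')_1}$. On a trajectory lying entirely over $t=0$ (resp. over $t=1$) the coordinate never reaches $\frac12$, so $\Gamma^{0K}$ restricts to $\Gamma^0$ (resp. to the Künneth-type parametrization defining $q^K$), whence $q^{0K}$ restricts to $q^0_{(x,y),(z,w)}$ on the first factor of the first type and to $q^K_{(z,w),(x',y')}$ on the second factor of the second type; using that the projections $J_X,J_Y$ intertwine the breaking of $W$-trajectories with that of $F$- and $G$-trajectories. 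On the remaining factor of each stratum the crossing of $\{\frac12\}$ lies in the interior, so $\Gamma^{0K}$ (and hence $q^{0K}$) restricts to $q^{0K}$ for the shorter trajectory. The two claimed identities then follow by combining the elementary fact that $\Gamma^{0K}$ of a concatenated trajectory is the concatenation of the parametrizations of its two pieces (built into the definitions of $\Gamma^0$, $\Gamma^K$ and the shift) with the concatenation relations already known for $q^0$ — it is the Barraud–Cornea evaluation map on the product $X\times Y$, viewed through $\Omega(X\times Y)\subset\Omega X\times\Omega Y$ — and for $q^K$ (Lemma \ref{lemma : q^K}).

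For compatibility with lifting I would compare $q^{0K}$ with the standard Barraud–Cornea evaluation map $q^0$ of $[0,1]\times X\times Y$, which parametrizes by the values of $W$ and is compatible with the lifted Morse complex by the construction recalled in \cite[Lemma 5.9 and Lemma 6.2.4]{BDHO23}. The only difference between $\Gamma^{0K}(\lambda)$ and $\Gamma^0(\lambda)$ is the speed of traversal: in both cases the $X$-component is the projection to $X$ of the curve underlying $\lambda$ — here one uses that trajectories of $W$ project to trajectories of $F=f+a$, so $J_X(\lambda)$ has the same image in $X$ as $\lambda$ — and likewise for $Y$. Since the class of a loop in $\pi_1(X\times Y)=\pi_1(X)\times\pi_1(Y)$ depends only on its image up to reparametrization, $[q^{0K}_{(x,y)_0,(x',y')_1}(\lambda)] = [q^0_{(x,y)_0,(x',y')_1}(\lambda)]$, which is the element attached to $\lambda$ in the lifted Morse complex $\tilde C_*(W,\xi_W)$; this gives condition 2. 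With the lemma in hand, the family $q^{0K}$ evaluated on $\sigma_{z,w}$ produces the identification cocycle $\nu^{0K}_{(x,y),(x',y')}$ linking $m^0$ and $m^K$, to which the quasi-isomorphism criterion (Proposition \ref{Prop : QIso Criterion Ai}) then applies as in the proofs of the preceding subsection.

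The main obstacle will be the bookkeeping in the concatenation step: one must pin down exactly how $J_X$, $J_Y$ behave on broken configurations so that on a stratum lying over $t\equiv 0$ or $t\equiv 1$ the restricted parametrization genuinely equals the one defining $q^0$, respectively $q^K$, and then match the Moore-path concatenations on the nose. There are no signs involved, but the interplay of the three parametrization conventions and the shift at $\{\frac12\}$ needs careful handling; the lifting statement, by contrast, is essentially immediate once one observes that only the parametrization is altered.
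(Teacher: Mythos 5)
Your argument is correct and follows essentially the same path as the paper: the concatenation relations are read off from the two codimension-one stratum types (the paper declares this ``clearly satisfied''), and the lifting condition is obtained by comparing $\Gamma^{0K}$ to the standard parametrization $\Gamma^0$, noting that each coordinate differs only by a reparametrization. Your appeal to $\pi_1(X\times Y)\cong\pi_1(X)\times\pi_1(Y)$ is a clean packaging of what the paper does by explicitly reparametrizing the $Y$-component to line up with the $X$-component; just note that the phrase ``depends only on its image up to reparametrization'' is imprecise for loops in general -- what makes the component-by-component comparison legitimate here is precisely the product decomposition of $\pi_1$, not the shape of the image.
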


\begin{proof}
    
Condition 1. is clearly satisfied.

Condition 2. is satisfied because, even if $\Gamma^{0K}_{(x,y)_0, (x',y')_1}(\lambda)$ and $\Gamma^{0}_{(x,y)_0, (x',y')_1}(\lambda)$ do not have the same image, they are homotopic. Indeed, if we denote $(\gamma^{0K}_X, \gamma^{0K}_Y) = \Gamma^{0K}_{(x,y)_0, (x',y')_1}(\lambda)$ and $(\gamma^0_X, \gamma^0_Y) = \Gamma^{0}_{(x,y)_0, (x',y')_1}(\lambda)$, then $\gamma^{0K}_X = \gamma_X^0$ and $\gamma^{0K}_Y = \gamma_Y^0$ up to reparametrization but $(\gamma^{0K}_X, \gamma^{0K}_Y)$ and $ (\gamma^{0}_X, \gamma^{0}_Y)$ do not have the same image. However, it is enough to reparametrize $\gamma^{0K}_Y$ in $\gamma'^{0K}_Y$ so that $(\gamma^{0K}_X, \gamma'^{0K}_Y) = (\gamma^{0}_X, \gamma^{0}_Y)$ up to parametrization. Therefore, since $q^0$ satisfies the 
condition 2, then so does $q^{0K}$.
\end{proof}

From the concatenation relations, it follows that $$\nu^{0K}_{(x,y),(x',y')} = q^{0K}_{(x,y)_0, (x',y')_1,*}(\sigma_{(x,y), (x',y')}) \in C_{|x|+|y|-|x'|-|y'|}(\Omega X \times \Omega Y)$$ satisfies equation \eqref{eq : mu_1 de tau}

$$\partial \nu^{0K}_{(x,y),(x',y')} = \sum_{z,w} m^{0}_{(x,y),(z,w)} \nu^{0K}_{(z,w),(x',y')} - \sum_{z',w'}(-1)^{|x|+|y|-|z'|-|w'|} \nu^{0K}_{(x,y),(z',w')} m^{K}_{(z',w'),(x',y')}.$$

This concludes the proof because the compatibility with the lifted Morse complex ensures that the continuation cocycle satisfies the quasi-isomorphism criterion \ref{Prop : QIso Criterion Ai}.

Therefore

$$\deffct{\Psi_{0K}}{C_*(X\times Y, \Xi_{X\times Y},\mathcal{H})}{C_*(X\times Y, m^K, \mathcal{H})}{\alpha \otimes (x,y)}{\displaystyle\sum_{(x',y')} \alpha \cdot \nu^{0K}_{(x,y),(x',y')} \otimes (x',y')}$$ is a quasi-isomorphism for any right $\Ai$-module $\mathcal{H}$ over $C_*(\Omega X \times \Omega Y)$.

\end{proof}

The following lemma concludes the proof of Proposition \ref{prop : dg Kunneth m^0 and m^1 quasi-iso}.

\begin{prop}\label{lemme : Psi0K is a chain homotopy equivalence}
    The map $\Psi_{0K}$ is a chain homotopy equivalence.
\end{prop}

\begin{proof}
    We will use the notation introduced in the previous proof.
    
    We can define for any $x,x'\in \Crit(f)$, $y,y'\in \Crit(g)$ the parametrization map 

    $$\Gamma^{K0}_{(x,y),(x',y')} : \trajbi{W}{(x,y)_0, (x',y')_1} \to \mathcal{P}_{x_0 \to x'_1}([0,1] \times X) \times \mathcal{P}_{y_0 \to y'_1}([0,1] \times Y)$$ that shifts the parametrization from $\Gamma^K$ to $\Gamma^0$ when $\lambda$ crosses $\{\frac{1}{2}\} \times X \times Y$.\\

    Let us describe this map in more detail. Let $\lambda \in \trajbi{W}{(x,y)_0, (x',y')_1}$ and let $(\frac{1}{2},z_X,z_Y) \in \{\frac{1}{2}\} \times X \times Y$ be the point where $\lambda$ crosses transversely $\{\frac{1}{2}\} \times X \times Y$. The paths $(\gamma^K_X,\gamma^K_Y):=\Gamma^{K}_{(x,y)_0, (x',y')_1}(\lambda)$ can be restricted to paths that we still denote $$(\gamma^K_X,\gamma^K_Y) \in \mathcal{P}_{x_0\to (\frac{1}{2},z_X)}([0,1] \times X) \times \mathcal{P}_{y_0 \to (\frac{1}{2},z_Y)}([0,1] \times Y).$$
    
    Denote $$(\gamma^0_X,\gamma^0_Y) \in \mathcal{P}_{(\frac{1}{2},z_X) \to x'_1}([0,1] \times X) \times \mathcal{P}_{(\frac{1}{2},z_Y) \to y'_1}([0,1] \times Y),$$ the paths obtained by parametrizing the half-infinite gradient line starting at $(\frac{1}{2},z_X,z_Y)$ using $\Gamma^0$, or equivalently, obtained by restricting $\Gamma^W_{(x,y)_0, (x',y')_1}(\lambda)$ to start at $(\frac{1}{2},z_X,z_Y)$ and applying the inclusion $\mathcal{P}_{ (\frac{1}{2},z_X,z_Y) \to (x',y')_1} ([0,1] \times X \times Y ) \hookrightarrow \mathcal{P}_{(\frac{1}{2},z_X) \to x'_1} ([0,1] \times X ) \times \mathcal{P}_{(\frac{1}{2},z_Y) \to y'_1} ([0,1] \times Y)$.

    We define $$\Gamma^{K0}_{(x,y),(x',y')}(\lambda)= (\gamma^{K}_X \cdot \gamma^0_X,\gamma^{K}_Y \cdot \gamma^0_Y) \in \mathcal{P}_{x_0 \to x'_1}([0,1] \times X) \times \mathcal{P}_{y_0 \to y'_1}([0,1] \times Y)  .$$

    We can therefore construct $\nu^{K0}_{(x,y),(x',y')} \in C_{|x|+|y|-|x'|-|y'|}(\Omega X \times \Omega Y)$ such that $$\partial \nu^{K0}_{(x,y),(x',y')} = \sum_{z,w} m^{K}_{(x,y),(z,w)} \nu^{K0}_{(z,w),(x',y')} - \sum_{z',w'}(-1)^{|x|+|y|-|z'|-|w'|} \nu^{K0}_{(x,y),(z',w')} m^{0}_{(z',w'),(x',y')}$$

    and obtain a quasi-isomorphism $$\deffct{\Psi_{K0}}{C_*(X\times Y, m^K,\mathcal{H})}{C_*(X\times Y, \Xi_{X \times Y}, \mathcal{H})}{\alpha \otimes (x,y)}{\displaystyle\sum_{(x',y')} \alpha \cdot \nu^{K0}_{(x,y),(x',y')} \otimes (x',y')}$$ for any  right $\Ai$-module $\mathcal{H}$ over $C_*(\Omega X \times \Omega Y)$.
    From there, the same proof as Proposition \ref{prop : equivalence homotopie param par Morse function} shows that $\Psi_{K0} \circ \Psi_{0K} \simeq \Id$ and $\Psi_{0K} \circ \Psi_{K0} \simeq \Id$.
\end{proof}

In view of Proposition \ref{prop : dg Kunneth m^0 and m^1 quasi-iso}, we may and will assume that the complex $C_*(X\times Y, \Xi_{X \times Y}, \mathcal{H})$ is constructed using the Künneth twisting cocycle for any right $\Ai$-module $\mathcal{H}$ over $C_*(\Omega X \times \Omega Y).$

\subsection{Künneth formula. Proof of Theorem C}\label{subsection : Künneth map}

The goal of this section is to define an analogue of the Künneth formula for enriched
 Morse homology using the twisting cocycle defined above. In Section
\ref{subsection : Morse data on a Cartesian product}
and \ref{subsection : Invariance with respect to the parametrization}
we dealt with how sets of DG Morse data $\Xi_X$ and $\Xi_Y$ on $X$ and $Y$
induce a preferred set of DG Morse data $\Xi_{X \times Y}$ on $X \times Y$
and how to construct the preferred Künneth twisting cocycle on $X \times Y$ computing the same homology as the Barraud-Cornea twisting cocycle. It remains to understand how DG systems
  $\F$ over $C_*(\Omega X)$ and $\G$ over $C_*(\Omega Y)$
   induce a DG system over $C_*(\Omega( X \times Y)).$ \\

\subsubsection{The cross products \texorpdfstring{$K^{alg}$}{Kalg} and \texorpdfstring{$K^{top}$}{Ktop}}

We will start by an algebraic construction $\F \otimes_{\Z} \G$ which can be performed for any DG
systems $\F$ and $\G$.

\begin{defi}
    Let $\F$ be a right $C_*(\Omega X)$-module and $\G$ be a right $C_*(\Omega Y)$-module. The tensor product $\F \otimes_{\Z} \G$ has a natural $C_*(\Omega X) \otimes C_*(\Omega Y)$-module structure $(\alpha \otimes \beta)\cdot (\gamma_X \otimes \gamma_Y) = 
    (-1)^{|\beta||\gamma_{X}|} (\alpha \cdot \gamma_X \otimes \beta \cdot  \gamma_Y)$. 

    Consider the Serre diagonal 
    $\Delta :  C_*(\Omega X \times \Omega Y)  \to C_*(\Omega X) \otimes C_*(\Omega Y)$, ie the cubic equivalent to the Alexander-Whitney diagonal for singular complexes (see \cite[Section 2.4]{KASA05} and \cite{Serre51}).

 This can be used to transfer the natural module
     structure of $\F \otimes \G$ over $C_*(\Omega X) \otimes C_*(\Omega Y)$ described
      above to a module structure over $C_*(\Omega (X \times Y)) \hookrightarrow
       C_*(\Omega X \times \Omega Y)$ : 
    $$(\alpha \otimes \beta) \cdot (\omega_X, \omega_Y) := (\alpha \otimes \beta) \cdot 
    \Delta(\omega_X, \omega_Y).$$
\end{defi}

On the other hand, if $F \hookrightarrow E \to X$ and $G \hookrightarrow E' \to Y$
are two fibrations with respective transitive functions $\Phi_X$ and $\Phi_Y$,
then the fibration $F \times G \hookrightarrow E \times E' \to X \times Y$ is
naturally endowed with the transitive lifting function $(\Phi_X, \Phi_Y)$.
Therefore, to study fibrations, if $\F = C_*(F)$ and $\G = C_*(G)$, it is natural
to consider the complex $C_*(X\times Y, C_*(F \times G))$ where the module structure
is defined by $(\Phi_X, \Phi_Y)$. The main goal of this paper is to define and study
the DG Chas-Sullivan product, and this heavily uses considerations on fibrations.
Therefore we will prefer the DG system  $C_*(F \times G)$ in this case so
that we can use morphisms of fibrations to study this product. 

\begin{defi}\label{def : module structure on F times G}
    Let $\mathcal{F} = C_*(F)$ and $\G = C_*(G)$ be the cubical complexes of
    some topological spaces $F$ and $G$, where $F$ has an associative topological module structure
    $F \times \Omega X \to F$ and $G$ has an associative topological module structure
    $G \times \Omega Y \to G$. Let $\pi_X : X \times Y \to X$ and $\pi_Y : X \times Y \to Y$
    be the canonical projections. For a loop $\gamma \in \Omega(X \times Y)$,
    we denote $\gamma_X = \pi_X(\gamma)$ and $\gamma_Y = \pi_Y(\gamma)$.\\

    For any topological space $A,B$, we use the notation $\EZ : C_*(A) \otimes C_*(B) \to C_*(A \times B)$ for the Eilenberg-Zilber map $\EZ(a \otimes b) = (a,b)$ on cubic complexes.
    
    Topologically, the right $\Omega(X \times Y)$-module $F \times G$ is defined by $$
    \begin{array}{ccccc}
        F \times G \times \Omega(X \times Y) & \to & F \times \Omega X \times G \times \Omega Y & \to & F \times G  \\
        (\alpha, \beta , \gamma) &\mapsto & (\alpha,\gamma_X, \beta, \gamma_Y) & \mapsto & (\alpha.\gamma_X, \beta.\gamma_Y).
    \end{array} $$

    This gives rise to a module structure 

    $$C_*(F \times G) \otimes C_*(\Omega(X \times Y)) \overset{\EZ}{\to} C_*(F \times G \times \Omega(X \times Y)) \to C_*(F \times \Omega X \times G \times \Omega Y) \to C_*(F \times G).$$
\end{defi}

\begin{rem}\label{rem : on the module structure over F times G}
\textbf{1.} Let $F$ and $G$ be fibers of fibrations $F \hookrightarrow E_X \to X$ and $G \hookrightarrow E_Y \to Y$ equipped with transitive lifting functions $$\Phi_X : E_X \ftimes{\pi}{\ev_0} \mathcal{P} X, \textup{ and } \Phi_Y : E_Y \ftimes{\pi}{\ev_0} \mathcal{P} Y.$$

The module structure described above corresponds to the one that arises from the natural transitive lifting function $$\deffct{\Phi_{X \times Y}}{ E_X \times E_Y \ftimes{(\pi_X,\pi_Y)}{\ev_0} \mathcal{P}( X \times Y)}{E_X \times E_Y}{(\alpha, \beta), (\gamma, \sigma)}{\left(\Phi_X(\alpha, \gamma), \Phi_Y(\beta, \sigma) \right)}$$ of the fibration $F \times G \hookrightarrow E_X \times E_Y \to X \times Y.$\\

\textbf{2.} The general definition of this module structure is better understood at the topological level. Indeed, we cannot really write explicit formulas for every chain in $ C_*(\Omega(X \times Y))$ due to the fact that the cubical chains on a Cartesian product need not have split parametrizations. Therefore, at the chain level, a sign would appear to keep track of the Kozsul signs corresponding to the switch of parameters.\\

However, if $\gamma = \EZ( \gamma_1 \otimes \gamma_2) = (\gamma_1, \gamma_2) \in C_*(\Omega (X \times Y))$, then for every $(\alpha, \beta) = \EZ (\alpha \otimes \beta) \in C_*(F \times G)$, 

\begin{equation}\label{eq : module structure when splitted}
    (\alpha, \beta).\gamma = (-1)^{|\beta||\gamma_1|} (\alpha.\gamma_1, \beta.\gamma_2).
\end{equation} 

In this case, we will say that $\gamma \in C_*(\Omega (X \times Y))$ and $(\alpha , \beta) \in C_*(F \times G)$ are \textbf{split}. Remark that if $\{m^X_{x,x'} \in C_{|x|-|x'|-1}(\Omega X)\}$ and $\{m^Y_{y,y'} \in C_{|y|-|y'|-1}(\Omega Y)\}$ are Barraud-Cornea cocycles respectively on $X$ and $Y$, the Künneth twisting cocycle $\{m^K_{(x,y),(x',y')} \in C_{|x|+|y|-|x'|-|y'|-1} (\Omega(X \times Y))\}$ on $X \times Y$  is a family of split chains.

\end{rem}

We now restate and prove Theorem C. 

\begin{thm}\label{thm : Künneth formula}
Let $\Xi_X$, $\Xi_Y$ be sets of DG Morse data on $X$ and $Y$ with respective Morse functions $f : X \to \R$ and $g : Y \to \R$.\\

i) The Künneth twisting cocycle $\{m^K_{z,w} \in C_{|z|-|w|-1}(\Omega X \times \Omega Y), \ z,w \in \Crit(f \oplus g)\}$ associated to $\Xi_X$ and $\Xi_Y$ computes the same homology as the Barraud-Cornea cocycle associated to the set of DG data $\Xi_{X \times Y}$ constructed in Section \ref{subsection : Morse data on a Cartesian product}. \\

ii) Let $\F$ and $\G$ be DG modules over $C_*(\Omega X)$ and  $C_*(\Omega Y)$. Then, $$\deffct{K^{alg}}{C_*(X,\Xi_X, \F) \otimes C_*(Y, \Xi_Y, \G)}{C_*(X \times Y, m^K_{z,w}, \F \otimes \G)}{(\alpha \otimes x) \otimes (\beta \otimes y)}{(-1)^{|\beta||x|}(\alpha \otimes \beta) \otimes (x,y)}$$ is an isomorphism of complexes.\\

iii) If $C_*(F)$ and $C_*(G)$ are complexes of cubical chains of topological spaces equipped with topological module structures respectively over $\Omega X$ and $\Omega Y$, then $$\deffct{K^{top}}{C_*(X, \Xi_X, C_*(F)) \otimes C_*(Y,\Xi_Y, C_*(G))}{C_*(X \times Y, m^K_{z,w}, C_*(F \times G))}{(\alpha \otimes x) \otimes (\beta \otimes y)}{(-1)^{|\beta||x|}(\alpha , \beta) \otimes (x,y)}$$ is a quasi-isomorphism of complexes.\\
\end{thm}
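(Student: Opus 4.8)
The proof splits according to the three assertions. Assertion i) has already been established: Propositions \ref{prop : dg Kunneth m^0 and m^1 quasi-iso} and \ref{prop : Psi0K is a chain homotopy equivalence} provide, for every right $\Ai$-module $\mathcal{H}$ over $C_*(\Omega X\times\Omega Y)$, a chain homotopy equivalence $\Psi_{0K}$ between the complex $C_*(X\times Y,\Xi_{X\times Y},\mathcal{H})$ built with the Barraud--Cornea cocycle of $\Xi_{X\times Y}$ and the complex $C_*(X\times Y,m^K_{z,w},\mathcal{H})$ built with the Künneth cocycle of Definition \ref{defi : m^K}; in particular this applies to $\mathcal{H}=\F\otimes\G$ and to $\mathcal{H}=C_*(F\times G)$.

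For ii), $K^{alg}$ is visibly an isomorphism of $\Z$-modules: it is the Koszul reordering isomorphism $\F\otimes\Z\Crit(f)\otimes\G\otimes\Z\Crit(g)\xrightarrow{\ \sim\ }\F\otimes\G\otimes(\Z\Crit(f)\otimes\Z\Crit(g))$ followed by the identification $\Crit(f)\times\Crit(g)=\Crit(f+g)$. To see that it is a chain map I would compute both differentials termwise. The key structural fact is the explicit shape of the Künneth cocycle (Definition \ref{defi : m^K}): $m^K_{(x,y),(x',y')}$ vanishes unless $x=x'$, where it equals $(-1)^{|x|(|y|-|y'|)}(\star,m^Y_{y,y'})$, or $y=y'$, where it equals $(m^X_{x,x'},\star)$. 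These are split chains, so the module action of $m^K$ on $\F\otimes\G$ is computed by \eqref{eq : module structure when splitted}, and the pure-$X$ and pure-$Y$ parts of the twisted differential of $C_*(X\times Y,m^K,\F\otimes\G)$ match term by term the two summands $\partial_X\otimes 1$ and $(-1)^{|\cdot|}\,1\otimes\partial_Y$ of the tensor differential. The only delicate point is the Koszul-sign bookkeeping, which the sign $(-1)^{|\beta||x|}$ built into $K^{alg}$ is precisely designed to absorb; a short computation confirms all signs agree.

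For iii), I would factor $K^{top}=\widetilde{\EZ}\circ K^{alg}$, where $\widetilde{\EZ}$ is induced by the Eilenberg--Zilber chain map $\EZ\colon C_*(F)\otimes C_*(G)\to C_*(F\times G)$, $\EZ(\alpha\otimes\beta)=(\alpha,\beta)$. For cubical chains $\EZ$ is an honest chain map (no shuffling is required), and by \eqref{eq : module structure when splitted} it is strictly equivariant against split chains. Since the enriched differential only ever pairs coefficients with the split chains $m^K_{z,w}$, this is enough to make $\widetilde{\EZ}(h\otimes z):=\EZ(h)\otimes z$ a well-defined morphism of complexes $C_*(X\times Y,m^K,C_*(F)\otimes C_*(G))\to C_*(X\times Y,m^K,C_*(F\times G))$, and the formulas give $\widetilde{\EZ}\circ K^{alg}=K^{top}$ directly. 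To see that $\widetilde{\EZ}$ is a quasi-isomorphism, note that it preserves the canonical filtration by critical-point index of Section \ref{subsection : Filtration and spectral sequence} and induces on the $E^1$-page the map $\EZ_*\otimes\Id\colon H_q(C_*(F)\otimes C_*(G))\otimes\Z\Crit_p(f+g)\to H_q(C_*(F\times G))\otimes\Z\Crit_p(f+g)$, which is an isomorphism by the Eilenberg--Zilber theorem. The filtration is bounded (the critical-point index is bounded and cubical chains are non-negatively graded), so both spectral sequences converge; hence $\widetilde{\EZ}$, and therefore $K^{top}=\widetilde{\EZ}\circ K^{alg}$, is a quasi-isomorphism.

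The main obstacle is the interaction of the Eilenberg--Zilber map with the module structures: $\EZ$ is \emph{not} a strict morphism of $C_*(\Omega X\times\Omega Y)$-modules from $(\F\otimes\G,\Delta)$ to $C_*(F\times G)$ of Definition \ref{def : module structure on F times G}, because the Serre diagonal $\Delta$ fails to commute with $\EZ$ on non-split chains. What rescues the argument is that the enriched differential is assembled solely from the twisting cocycle $m^K$, all of whose components are split, and on split chains $\EZ$ \emph{is} strictly equivariant; thus full equivariance is never needed. Granting this, parts i) and ii) are essentially bookkeeping, and part iii) reduces to the spectral-sequence comparison above.
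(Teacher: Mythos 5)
Your proposal is correct and follows essentially the same path as the paper: i) is delegated to Propositions~\ref{prop : dg Kunneth m^0 and m^1 quasi-iso} and \ref{prop : Psi0K is a chain homotopy equivalence}, ii) is the direct sign check using the split form of $m^K$, and iii) comes down to a spectral-sequence comparison along the Morse-index filtration whose $E^1$-page map is the cubical Eilenberg--Zilber map. The one organizational difference is that you factor $K^{top}=\widetilde{\EZ}\circ K^{alg}$ and reduce iii) to showing $\widetilde{\EZ}$ is a quasi-isomorphism, whereas the paper proves directly (Lemma~\ref{lemme : K is limit of morphism of spectral sequences} and Corollary~\ref{cor : K top is quasi iso}) that $K^{top}$ itself is a filtered map inducing $\EZ$ on the first page; these are the same argument, but your factorization makes explicit exactly where the split-chain observation is needed — namely that $\widetilde{\EZ}$ is a chain map because the twisted differential only pairs the coefficient module against the split chains $m^K_{z,w}$, on which $\EZ$ is strictly equivariant even though it is not a strict module map on all of $C_*(\Omega X\times\Omega Y)$. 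This is exactly the subtlety the paper glosses with ``every chain in this computation is split,'' and your phrasing isolates it more cleanly.
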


\begin{proof}

   i) We proved in Proposition \ref{prop : dg Kunneth m^0 and m^1 quasi-iso} that the complexes constructed with the Barraud-Cornea cocycle and the Künneth cocycle are chain homotopy equivalent.
   
   ii) The map $K^{alg}$ is clearly bijective. We now check that it is a morphism of complexes :

    \begin{align*}
        K \partial\left((\alpha \otimes x) \otimes (\beta \otimes y)\right) &= K\left[ \left( \left(\partial \alpha \otimes x + (-1)^{|\alpha|} \sum_{x'} \alpha \cdot m_{x,x'} \otimes x'\right) \otimes (\beta \otimes y) \right) \right.\\
        & + \left. (-1)^{|\alpha| + |x|}\left( (\alpha \otimes x) \otimes \left(\partial \beta \otimes y + (-1)^{|\beta|}  \sum_{y'} \beta \cdot m_{y,y'} \otimes y' \right) \right) \right]\\
        &= (-1)^{|\beta||x|} (\partial \alpha \otimes \beta) \otimes (x,y) + (-1)^{|\alpha| + |x||\beta|} (\alpha \otimes \partial \beta) \otimes (x,y) \\
        &+ \sum_{x'} (-1)^{|\alpha| + |\beta||x'|} (\alpha \cdot m_{x,x'} \otimes \beta) \otimes (x',y) \\\
        &+ \sum_{y'} (-1)^{|\alpha| + |\beta| + |\beta||x| + (|y| - |y'|)|x|} (\alpha \otimes \beta \cdot m_{y,y'}) \otimes (x,y').
    \end{align*}

    \begin{align*}
        \partial K\left((\alpha \otimes x) \otimes (\beta \otimes y) \right) &= (-1)^{|\beta||x|} (\partial \alpha \otimes \beta)\otimes (x,y) + (-1)^{|\beta||x|}(-1)^{|\alpha|}(\alpha \otimes \partial \beta) \otimes (x,y) \\
        &+  (-1)^{|\beta||x|}(-1)^{|\alpha| +|\beta|} \sum_{x',y'} (\alpha \otimes \beta) \cdot m^K_{(x,y),(x',y')} \otimes(x',y')\\
        &= (-1)^{|\beta||x|} (\partial \alpha \otimes \beta)\otimes (x,y) + (-1)^{|\beta||x|}(-1)^{|\alpha|}(\alpha \otimes \partial \beta) \otimes (x,y) \\
        &+ \sum_{x'}(-1)^{|\beta||x| + |\beta|(|x|-|x'|-1)}(-1)^{|\alpha| + |\beta|} (\alpha \cdot m_{x,x'} \otimes \beta)\otimes(x',y)\\
        &+ \sum_{y'}(-1)^{|\alpha| + |\beta| + |\beta||x| + |x|(|y|-|y'|)}(\alpha \otimes \beta \cdot m_{y,y'})\otimes (x,y')\\
        &= K \partial \left((\alpha \otimes x) \otimes (\beta \otimes y) \right).
    \end{align*}

    iii) The proof that $K^{top}$ is a morphism of complexes is exactly the same as for $K^{alg}$ since every chain in this computation is split and we can use equation \eqref{eq : module structure when splitted} for every multiplication. It will be proven in Corollary  \ref{cor : K top is quasi iso} that $K^{top}$ is a quasi-isomorphism.
\end{proof}

 In particular, this shows the following corollary.

\begin{cor}\label{cor : coeff C(F) otimes C(G) equals C(F times G)}
    If $C_*(F)$ and $C_*(G)$ are complexes of cubical chains of topological spaces equipped with topological module structures respectively over $\Omega X$ and $\Omega Y$, then $H_*(X \times Y, C_*(F \times G))$ and $ H_*(X\times Y, C_*(F) \otimes C_*(G))$ are both isomorphic to $H_*\left(C_*(X,\Xi_X,C_*(F)) \otimes C_*(Y,\Xi_Y,C_*(G))\right)$ and therefore there is an isomorphism

    $$H_*(X \times Y, C_*(F \times G)) \simeq  H_*(X\times Y, C_*(F) \otimes C_*(G)).$$
\end{cor}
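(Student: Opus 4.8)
The plan is to assemble the three parts of Theorem \ref{thm : Künneth formula}; the corollary is then essentially formal, since the substantive work — the chain homotopy equivalence relating the complexes built from $m^0$ and $m^K$, and the fact that $K^{top}$ is a quasi-isomorphism — has already been carried out in the proof of that theorem.

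First I would specialize part (ii) of Theorem \ref{thm : Künneth formula} to $\F = C_*(F)$ and $\G = C_*(G)$, so that the source complex $C_*(X,\Xi_X,C_*(F)) \otimes C_*(Y,\Xi_Y,C_*(G))$ is literally the same for $K^{alg}$ and $K^{top}$. Since $K^{alg}$ is an isomorphism of complexes it induces an isomorphism
$$H_*\!\left(C_*(X,\Xi_X,C_*(F)) \otimes C_*(Y,\Xi_Y,C_*(G))\right) \;\xrightarrow{\ \sim\ }\; H_*(X \times Y, m^K_{z,w}, C_*(F) \otimes C_*(G)),$$
where $C_*(F) \otimes C_*(G)$ carries the $C_*(\Omega X \times \Omega Y)$-module structure obtained from the Serre diagonal. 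Likewise, part (iii) gives that $K^{top}$ is a quasi-isomorphism, hence an isomorphism
$$H_*\!\left(C_*(X,\Xi_X,C_*(F)) \otimes C_*(Y,\Xi_Y,C_*(G))\right) \;\xrightarrow{\ \sim\ }\; H_*(X \times Y, m^K_{z,w}, C_*(F \times G)),$$
with $C_*(F \times G)$ carrying the module structure of Definition \ref{def : module structure on F times G}.

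Next I would invoke part (i) — concretely Proposition \ref{prop : dg Kunneth m^0 and m^1 quasi-iso} — to replace the Künneth cocycle $m^K$ by the Barraud-Cornea cocycle $m^0$: applying the chain homotopy equivalence $C_*(X\times Y, m^0, \mathcal{H}) \simeq C_*(X\times Y, m^K, \mathcal{H})$ once with $\mathcal{H} = C_*(F) \otimes C_*(G)$ and once with $\mathcal{H} = C_*(F\times G)$ yields
$$H_*(X \times Y, m^K, C_*(F) \otimes C_*(G)) \cong H_*(X\times Y, C_*(F) \otimes C_*(G)), \qquad H_*(X \times Y, m^K, C_*(F\times G)) \cong H_*(X\times Y, C_*(F\times G)).$$
Composing the four isomorphisms along the resulting zig-zag
$$H_*(X\times Y, C_*(F)\otimes C_*(G)) \;\cong\; H_*\!\left(C_*(X,\Xi_X,C_*(F)) \otimes C_*(Y,\Xi_Y,C_*(G))\right) \;\cong\; H_*(X\times Y, C_*(F\times G))$$
then proves both claimed identifications, and in particular the displayed isomorphism of the statement.

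The only point requiring care — and it is genuinely just bookkeeping — is verifying that the $C_*(\Omega X \times \Omega Y)$-module structures occurring in each of these complexes are precisely the ones for which the cited statements were proved: the Serre-diagonal structure on $C_*(F)\otimes C_*(G)$ on the one hand, and on the other the structure on $C_*(F\times G)$ coming from $F \times G \times (\Omega X \times \Omega Y) \to F \times \Omega X \times G \times \Omega Y \to F \times G$, which by Remark \ref{rem : on the module structure over F times G} agrees with the structure induced by the product transitive lifting function $(\Phi_X,\Phi_Y)$ when $F$ and $G$ are fibers of fibrations. Beyond this consistency check there is no real obstacle, since all the homotopical input is already contained in Theorem \ref{thm : Künneth formula}.
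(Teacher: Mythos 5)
Your proof is correct and is precisely the argument the paper intends: the corollary is stated with no written proof because it is an immediate consequence of the three parts of Theorem \ref{thm : Künneth formula}, and you have spelled out that chain of deductions — $K^{alg}$ an isomorphism to the $m^K$-complex with the Serre-diagonal module structure, $K^{top}$ a quasi-isomorphism to the $m^K$-complex with the product module structure, and Proposition \ref{prop : dg Kunneth m^0 and m^1 quasi-iso} to trade $m^K$ for the Barraud-Cornea cocycle — exactly as the author has set things up. Your parenthetical care about matching the module structures is the right thing to flag but, as you say, is already guaranteed by Definition \ref{def : module structure on F times G} and Remark \ref{rem : on the module structure over F times G}.
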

\begin{flushright}
    $\blacksquare$
\end{flushright}

By coupling Theorem \ref{thm : Künneth formula} with the general Künneth short exact sequence, we obtain a Künneth formula for enriched Morse complexes.

\begin{cor}
   There is a short exact sequence that splits,
    $$0 \to \bigoplus_{k+k'=l} H_k(X,\F) \otimes_{\Z} H_{k'}(Y,\G) \overset{K}{\to} H_l(X \times Y, \mathcal{F} \otimes_{\Z} \G) \to \bigoplus_{k+k'=l-1} \textup{Tor}^{\Z}_1( H_k(X,\F), H_{k'}(Y,\G)) \to 0.$$        
\end{cor}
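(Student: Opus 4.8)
The plan is to deduce this from Theorem~\ref{thm : Künneth formula} together with the classical algebraic Künneth short exact sequence for a tensor product of chain complexes over the principal ideal domain $\Z$.

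First I would use Theorem~\ref{thm : Künneth formula}(ii), which provides an isomorphism of complexes
$$K^{alg} : C_*(X,\Xi_X,\F) \otimes_{\Z} C_*(Y,\Xi_Y,\G) \longrightarrow C_*(X \times Y, m^K_{z,w}, \F \otimes_{\Z} \G),$$
hence an isomorphism on homology. Combined with Theorem~\ref{thm : Künneth formula}(i) (equivalently Proposition~\ref{prop : dg Kunneth m^0 and m^1 quasi-iso} and Proposition~\ref{prop : Psi0K is a chain homotopy equivalence}), which identify $H_*(X\times Y, m^K, \F\otimes\G)$ with $H_*(X\times Y, \F\otimes\G)$, this yields an identification
$$H_l\big(X \times Y, \F \otimes_{\Z} \G\big) \;\cong\; H_l\big(C_*(X,\Xi_X,\F) \otimes_{\Z} C_*(Y,\Xi_Y,\G)\big),$$
under which the composite $\bigoplus_{k+k'=l} H_k(X,\F)\otimes H_{k'}(Y,\G) \to H_l(X\times Y,\F\otimes\G)$ sending a product of cycles $[\alpha]\otimes[\beta]$ to $[K^{alg}(\alpha\otimes\beta)]$ is precisely the map $K$ in the statement.

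Next I would apply the algebraic Künneth theorem to $C_* = C_*(X,\Xi_X,\F)$ and $D_* = C_*(Y,\Xi_Y,\G)$. Since $C_*(X,\Xi_X,\F) = \F_*\otimes_{\Z}\Z\Crit(f)$ and $C_*(Y,\Xi_Y,\G) = \G_*\otimes_{\Z}\Z\Crit(g)$ are direct sums of copies of $\F_*$ and $\G_*$, which we take to be $\Z$-free degreewise, these are complexes of free abelian groups in each degree, so the hypotheses of the Künneth theorem over $\Z$ are met. It produces, for every $l$, a natural short exact sequence with left term $\bigoplus_{k+k'=l} H_k(C_*)\otimes_{\Z} H_{k'}(D_*)$, middle term $H_l(C_*\otimes D_*)$, and right term $\bigoplus_{k+k'=l-1}\mathrm{Tor}^{\Z}_1(H_k(C_*),H_{k'}(D_*))$, and this sequence splits non-canonically. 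Since $H_*(C_*) = H_*(X,\F)$ and $H_*(D_*) = H_*(Y,\G)$, transporting the middle term through the identification of the previous paragraph gives exactly the asserted sequence, with first map $K$.

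The argument is essentially bookkeeping: the substantive inputs — the chain isomorphism $K^{alg}$ and the invariance of the homology under the change from the Barraud--Cornea to the Künneth twisting cocycle — are already established in Section~\ref{section : DG Künneth formula}. The only points requiring care are the degreewise $\Z$-freeness of the coefficient modules (which makes the algebraic Künneth theorem applicable) and the identification of the Künneth inclusion with the map $K$; the splitting is inherited verbatim from the algebraic statement and is not natural.
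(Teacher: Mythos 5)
Your argument is correct and follows exactly the route the paper intends: compose the chain isomorphism $K^{alg}$ of Theorem~\ref{thm : Künneth formula}(ii) with the chain homotopy equivalence of Theorem~\ref{thm : Künneth formula}(i) to identify $H_l(X\times Y,\F\otimes_{\Z}\G)$ with $H_l\big(C_*(X,\Xi_X,\F)\otimes_{\Z} C_*(Y,\Xi_Y,\G)\big)$, then apply the classical algebraic Künneth short exact sequence over~$\Z$. One point you rightly make explicit that the corollary's statement leaves tacit is the degreewise $\Z$-freeness of $\F$ and $\G$ (needed both for exactness and for the splitting); this is automatic in the intended case $\F = C_*(F)$, $\G = C_*(G)$.
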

\begin{flushright}
    $\blacksquare$
\end{flushright}

\subsubsection{Properties of \texorpdfstring{$K^{alg}$}{Kalg} and \texorpdfstring{$K^{top}$}{Ktop}}

We now prove that the Künneth maps are compatible with the direct and shriek maps (Lemma \ref{lemme : commutativité K avec direct et shriek}), with maps induced by a morphism of fibrations (Lemma \ref{lemme : induced maps on a Cartesian product commutes with K}), and with the Fibration Theorem (Lemma \ref{lemme : Compatibility K and Fibration Theorem}). We will also prove Corollary \ref{cor : K top is quasi iso} stating that $K^{top}$ is a quasi-isomorphism, and this will complete the proof of Theorem \ref{thm : Künneth formula} and its Corollary \ref{cor : coeff C(F) otimes C(G) equals C(F times G)}.

The next Lemmas and Corollary are stated and proved using $K^{alg}$ but also apply for $K^{top}$ if $\F_* = C_*(F)$, $\G_* = C_*(G)$ since the module structure will only be applied to split chains and we can therefore use equation \eqref{eq : module structure when splitted}. We will therefore use the notation $K$.

\begin{lemme}\label{lemme : commutativité K avec direct et shriek}
    Let $\varphi : Y^{n_Y} \to X^{n_X}$ and $\psi: Z^{n_Z} \to W^{n_W} $ be two continuous maps. Then the following diagram for the direct maps commutes at the chain level 

\[
\xymatrix{
C_*(Y, \varphi^*\mathcal{F}) \otimes C_*(Z, \psi^*\G) \ar[r]^-{K} \ar[d]_{\varphi_* \otimes \psi_*} & C_*(Y \times Z, \varphi^*\mathcal{F} \otimes \psi^*\G) \ar[d]^{(\varphi \times \psi)_*} \\
C_*(X, \mathcal{F}) \otimes C_*(W, \G) \ar[r]_{K} & C_*(X \times W, \mathcal{F} \otimes \G)
}
\]

and the following diagram for the shriek maps commutes at the chain level up to sign :

\[
\xymatrix{
C_{i+n_Y-n_X}(Y, \varphi^*\mathcal{F}) \otimes C_{j+n_Z - n_W}(Z, \psi^*\G) \ar[r]^-{K}  & C_{i+j +n_Y + n_Z -n_X -n_W}(Y \times Z, \varphi^*\mathcal{F} \otimes \psi^*\G)  \\
C_i(X , \mathcal{F}) \otimes C_j(W, \G) \ar[u]^{\varphi_! \otimes \psi_!} \ar[r]_{K} & C_{i+j}(X \times W, \mathcal{F} \otimes \G) \ar[u]_{(\varphi\times \psi)_!}.
}
\]

More precisely, $$K( \varphi_!(\alpha \otimes x) \otimes \psi_!(\beta \otimes w)) = (-1)^{(n_Y-n_X)(n_W  -|\beta|-|w|) + (n_Z-n_W)(|\alpha| + |x|)} (\varphi_! \times \psi_!)K(\alpha \otimes x \otimes \beta \otimes w).$$

\end{lemme}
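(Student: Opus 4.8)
The plan is to prove the statement for smooth $\varphi$ and $\psi$ and to deduce the continuous case from the compatibility of $K$ with the identification morphisms $\Psi^H$ of Proposition \ref{Prop : continuation morphisms}, which is obtained by exactly the same argument applied to the continuation cocycle of a basepoint-preserving homotopy that is $C^0$-close to the constant one. So assume $\varphi : Y \to X$ and $\psi : Z \to W$ are smooth and transverse to the relevant stable and unstable manifolds (which can be arranged by perturbing the pseudo-gradients on $X$ and $W$). The geometric input is that Latour cells split over a Cartesian product, $\wb{u}{(x,w)} \cong \wb{u}{x}\times\wb{u}{w}$ and $\wb{s}{(y,z)}\cong\wb{s}{y}\times\wb{s}{z}$ with our orientation conventions from Section \ref{subsection : Morse data on a Cartesian product}, and that $(\varphi\times\psi)^{-1}(A\times B)=\varphi^{-1}(A)\times\psi^{-1}(B)$. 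Hence, for critical points $x\in\Crit(f_X)$, $w\in\Crit(f_W)$, $y\in\Crit(f_Y)$, $z\in\Crit(f_Z)$, the moduli spaces of the direct and shriek maps split:
\[
\overline{\mathcal{M}}^{(\varphi\times\psi)_!}\big((x,w),(y,z)\big)=\wb{s}{(y,z)}\cap(\varphi\times\psi)^{-1}\big(\wb{u}{(x,w)}\big)=\overline{\mathcal{M}}^{\varphi_!}(x,y)\times\overline{\mathcal{M}}^{\psi_!}(w,z),
\]
and similarly $\overline{\mathcal{M}}^{(\varphi\times\psi)_*}\big((y,z),(x,w)\big)=\overline{\mathcal{M}}^{\varphi_*}(y,x)\times\overline{\mathcal{M}}^{\psi_*}(z,w)$.

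Comparing the intrinsic orientation of these moduli spaces (fixed by the rules of Section \ref{subsection : Morse data on a Cartesian product}) with the product orientation produces, as in the proof of Lemma \ref{lemme : representing chain system of a Cartesian product}, a sign $\varepsilon(x,w,y,z)$ depending only on the indices; an inductive construction identical to that of Lemma \ref{lemme : representing chain system of a Cartesian product} then yields representing chain systems for the product moduli spaces of the form $\varepsilon(x,w,y,z)\,(s^{\varphi_!}_{x,y},s^{\psi_!}_{w,z})$, and likewise for the direct ones with its own orientation sign. Evaluating these on $\Omega X\times\Omega W$ through the split evaluation maps $(q^{\varphi_!}_{x,y},q^{\psi_!}_{w,z})$ — in the spirit of the maps $q^K$ of Lemma \ref{lemma : q^K} — gives split cocycles
\[
\tau^{(\varphi\times\psi)_!}_{(x,w),(y,z)}=\varepsilon(x,w,y,z)\,\big(\tau^{\varphi_!}_{x,y},\,\tau^{\psi_!}_{w,z}\big),\qquad \tau^{(\varphi\times\psi)_*}_{(y,z),(x,w)}=\pm\,\big(\tau^{\varphi_*}_{y,x},\,\tau^{\psi_*}_{z,w}\big).
\]
Since the Künneth twisting cocycle $m^K$ is itself split (Definition \ref{defi : m^K}) and the families $\{\tau^{\varphi_\bullet}\}$, $\{\tau^{\psi_\bullet}\}$ satisfy \eqref{eq : mu_1 de tau} over $X$ and $W$, these split families satisfy \eqref{eq : mu_1 de tau} over $X\times W$ for the Künneth cocycle and its pullback under $\varphi\times\psi$, and their $H_0$-projections are the tensor products of the $H_0$-projections of the factors; by the quasi-isomorphism criterion \ref{Prop : QIso Criterion Ai} the resulting morphisms of complexes compute $(\varphi\times\psi)_!$ and $(\varphi\times\psi)_*$ (here we use Proposition \ref{prop : dg Kunneth m^0 and m^1 quasi-iso} in order to work with the Künneth cocycle on $X\times W$).

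It then remains to insert these split cocycles into the chain-level formulas for the direct and shriek maps recalled in Section \ref{subsection : Compatibility with direct and shriek maps} and to compute both composites of each square. Because $m^K$, the $\tau^{(\varphi\times\psi)_\bullet}$'s and the elements $\alpha\otimes\beta$ (resp. $(\alpha,\beta)$) are all split in the sense of Remark \ref{rem : on the module structure over F times G}, every module action occurring in these formulas factors through the two factors via equation \eqref{eq : module structure when splitted}, so the computation becomes a reorganization of tensor factors together with the collection of the signs produced: the $(-1)^{|\beta||x|}$ built into $K$, the Koszul signs of \eqref{eq : module structure when splitted}, and the orientation signs. For the direct square these conspire to cancel, giving strict commutativity; for the shriek square the degree shifts $n_Y-n_X$ on the first factor and $n_Z-n_W$ on the second enter through the gradings of the $\tau^{\varphi_!}$'s and $\tau^{\psi_!}$'s, and the total sign works out to be exactly $(-1)^{(n_Y-n_X)(n_W-|\beta|-|w|)+(n_Z-n_W)(|\alpha|+|x|)}$. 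The genuinely delicate point is this last sign bookkeeping: one must keep the orientation conventions of Section \ref{subsection : Morse data on a Cartesian product} and the Koszul rule for the split coefficient module perfectly aligned, and check that the product orientation on the compactified moduli spaces (manifolds with corners) agrees with their boundary orientation, so that the split representing chain systems indeed satisfy the required system-of-representatives relations (as in \cite{AD14}); everything else is formal.
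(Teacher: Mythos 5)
Your approach is essentially the same as the paper's: both proofs start from the observation that $\overline{\mathcal{M}}^{(\varphi\times\psi)_!}\big((x,w),(y,z)\big)=\overline{\mathcal{M}}^{\varphi_!}(x,y)\times\overline{\mathcal{M}}^{\psi_!}(w,z)$, compare the intrinsic orientation of the product moduli space with the factorwise orientations to extract the sign, observe that the evaluation maps split, and then unwind both composites around the square at the chain level using the splitness of the Künneth cocycle and of the module action. The paper's version differs in that it does \emph{not} appeal to the inductive construction of Lemma \ref{lemme : representing chain system of a Cartesian product}: that induction is needed for the moduli of \emph{trajectories} because of the quotient by the $\R$-translation (which destroys the product structure except in the degenerate cases), whereas the direct/shriek moduli $\wb{s}{(y,z)}\cap(\varphi\times\psi)^{-1}\wb{u}{(x,w)}$ are literal intersections of stable/unstable manifolds and split on the nose, so the product of representing chains is already a valid (signed) representing chain system with no correction step. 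Your framing as an analogue of Lemma \ref{lemme : representing chain system of a Cartesian product} is therefore slightly off, though harmless. On the other hand, your first paragraph about reducing the continuous case to the smooth/transverse case via the identification morphisms $\Psi^H$ is a real point that the paper's proof takes for granted — the chain-level formulas used do require smoothness and transversality, so spelling out that reduction is a genuine (small) improvement in rigor. Overall the proposal is correct, modulo actually carrying out the sign bookkeeping you acknowledge deferring.
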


\begin{proof}
    We prove the result for the shriek map, the same arguments apply for the direct map.

    Let $f_X,f_Y,f_Z,f_W$ be Morse functions respectively on $X,Y,Z,W$ and $x,y,z,w$ critical points.

    We start by computing $(\varphi \times \psi)_!$:
    
    \begin{align*}
        \mathcal{M}^{(\varphi \times \psi)_!}(x,w,y,z) &= \wb{s}{y,z} \cap (\varphi \times \psi)^{-1}(\wb{u}{x,w}) \\
        &=\wb{s}{y} \times \wb{s}{z} \cap \varphi^{-1}\wb{u}{x} \times \psi^{-1}\wb{u}{w}\\
        &= \mathcal{M}^{\varphi_!}(x,y) \times \mathcal{M}^{g_!}(w,z).
    \end{align*}

    Let's now compare orientations : 

    $$\left( \Or \ \wb{s}{x,w}, \Or \ \mathcal{M}^{(\varphi \times \psi)_!}(x,w,y,z) \right) = \Or \ \wb{s}{y,z}.$$

    Since $$\left( \Or \ \wb{s}{x,w}, \Or \ \wb{u}{x,w} \right) = \Or \ X \times W,$$
and
    \begin{align*}
        \left(\Or \ \wb{s}{x}, \Or \ \wb{s}{w}, \Or \ \wb{u}{x,w} \right) &= (-1)^{|x|(n_W -|w|)} \left(\Or \ \wb{s}{x}, \Or \ \wb{u}{x}, \Or \ \wb{s}{w}, \Or \ \wb{u}{w} \right)\\
        &= (-1)^{|x|(n_W -|w|)}  \Or \ X \times W,
    \end{align*}

    it follows that $$ \Or \ \wb{s}{x,w} =  (-1)^{|x|(n_W -|w|)} \left(\Or \ \wb{s}{x}, \Or \ \wb{s}{w} \right).$$ For the same reason, $$ \Or \ \wb{s}{y,z} =  (-1)^{|y|(n_Z -|z|)} \left(\Or \ \wb{s}{y}, \Or \ \wb{s}{z} \right).$$

We then compute
    \begin{align*}
        &\left(\Or \ \wb{s}{x,w}, \Or \ \mathcal{M}^{\varphi_!}(x,y), \Or \ \mathcal{M}^{\psi_!}(w,z) \right)\\
        &= (-1)^{|x|(n_W-|w|)} (-1)^{(n_W - |w|)(|x|-|y|+ n_Y -n_X)} \left(\Or \ \wb{s}{y}, \Or \ \wb{s}{z}\right)\\
        &= (-1)^{(n_W - |w|)(|y|+ n_Y -n_X)} (-1)^{|y|(n_Z-|z|)} \Or \ \wb{s}{y,z}.
    \end{align*}

    Therefore, if we choose $\sigma_{x,y}^{\varphi_!}$ and $\sigma_{w,z}^{\psi_!}$ two representing chain systems corrected by a sign (see \cite[section 10.4]{BDHO23}) of respectively $\mathcal{M}^{\varphi_!}(x,y)$ and $\mathcal{M}^{\psi_!}(w,z)$, then $$\sigma^{(\varphi \times \psi)_!}_{(x,w),(y,z)} = (-1)^{(n_W - |w|)(|y|+ n_Y -n_X) +1} (-1)^{|y|(n_Z-|z|)} (\sigma_{x,y}^{\varphi_!}, \sigma_{w,z}^{\psi_!}) $$ is a corrected representing chain system of $\mathcal{M}^{(\varphi \times \psi)_!}(x,w,y,z)$. Moreover, it is clear from the definition of the evaluation maps \cite[Section 10.4]{BDHO23} that for every $x \in \Crit(f_X), y \in \Crit(f_Y),w \in \Crit(f_W), z \in \Crit(f_Z)$, 
$q^{(\varphi \times \psi)_!}_{(x,w),(y,z)} =(q^{\varphi_!}_{x,y},q^{\psi_!}_{w,z}).$

Therefore $$\nu^{(\varphi \times \psi)_!}_{(x,w),(y,z)} = (-1)^{(n_W - |w|)(|y|+ n_Y -n_X)} (-1)^{|y|(n_Z-|z|)} (\nu_{x,y}^{\varphi_!},\nu_{w,z}^{\psi_!}) $$ where $\nu_{x,y}^{\varphi_!} := -q^{\varphi_!}_{x,y,*}(\sigma_{x,y}^{\varphi_!})$, $\nu_{w,z}^{\psi_!} := -q^{\psi_!}_{w,z,*}(\sigma_{z,w}^{\psi_!})$ and $ \nu^{(\varphi \times \psi)_!}_{(x,w),(y,z)} = -q^{(\varphi \times \psi)_!}_{(x,w),(y,z),*}(\sigma^{(\varphi \times \psi)_!}_{(x,w),(y,z)}). $

We can now compute 
    \begin{align*}
        (\varphi \times \psi)_!K(\alpha \otimes x \otimes \beta \otimes w) &= (-1)^{|\beta||x|} \sum_{y,z} (-1)^{(n_W - |w|)(|y|+ n_Y -n_X)} (-1)^{|y|(n_Z-|z|)} (\alpha \otimes \beta). (\nu_{x,y}^{\varphi_!}, \nu_{w,z}^{\psi_!}) \otimes (y,z) \\
        &= (-1)^{|y|(|w|+|z|+|\beta| + n_Z-n_W)}(-1)^{(n_Y-n_X)(n_W - |\beta|-|w|)} \sum_{y,z} (\alpha.\nu_{x,y}^{\varphi_!} \otimes \beta . \nu_{z,w}^{\psi_!}) \otimes (y,z).
    \end{align*}

    \begin{align*}
        K (\varphi_! \otimes \psi_!)(\alpha \otimes x \otimes \beta \otimes w) &= (-1)^{(n_Z -n_W)(|\alpha| + |x|)}  \sum_{y,z} K \left((\alpha \cdot \nu^{\varphi_!}_{x,y} \otimes y) \otimes (\beta \cdot \nu^{\psi_!}_{w,z} \otimes z) \right)\\
        &= (-1)^{(n_Z -n_W)(|\alpha| + |x|)} (-1)^{|y|(|w|+|z|+|\beta| + n_Z-n_W)} \sum_{y,z} (\alpha \cdot \nu_{x,y}^{\varphi_!} \otimes \beta \cdot  \nu_{z,w}^{\psi_!}) \otimes (y,z).
    \end{align*}
\end{proof}

\begin{cor}
    The maps $K^{alg}$ and $K^{top}$ are well-defined in homology.
\end{cor}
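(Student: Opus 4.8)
The plan is to read ``well-defined in homology'' as independence of the chosen sets of DG Morse data. By the invariance Theorem \ref{thm : continuation morphism Psi_01}, for any two sets $\Xi^0_X,\Xi^1_X$ on $X$ (resp. $\Xi^0_Y,\Xi^1_Y$ on $Y$) the homologies $H_*(X,\F)$ and $H_*(Y,\G)$ are canonically identified via continuation maps, and likewise $H_*(X\times Y,\F\otimes\G)$ for the induced product data $\Xi^i_{X\times Y}=\Xi^i_X\times\Xi^i_Y$ (computed, by our convention and Proposition \ref{prop : Psi0K is a chain homotopy equivalence}, with the Künneth twisting cocycle). So the content is that $K$ intertwines these continuation maps in homology, i.e. that the square
$$
\xymatrix{
H_*(X,\Xi^0_X,\F)\otimes H_*(Y,\Xi^0_Y,\G) \ar[r]^-{K} \ar[d] & H_*(X\times Y,\Xi^0_{X\times Y},\F\otimes\G) \ar[d] \\
H_*(X,\Xi^1_X,\F)\otimes H_*(Y,\Xi^1_Y,\G) \ar[r]^-{K} & H_*(X\times Y,\Xi^1_{X\times Y},\F\otimes\G)
}
$$
commutes, the vertical arrows being the continuation maps.

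First I would use \cite[Proposition 10.2.1]{BDHO23}, which identifies each such continuation map, in homology, with the direct map $\Id_*$ induced by the identity of the underlying manifold; applied on $X$, on $Y$ and on $X\times Y$, the vertical arrows become $\Id_{X,*}\otimes\Id_{Y,*}$ and $\Id_{X\times Y,*}$ respectively. Since $\Id_{X\times Y}=\Id_X\times\Id_Y$, Lemma \ref{lemme : commutativité K avec direct et shriek}, applied with $\varphi=\Id_X$ and $\psi=\Id_Y$, gives the chain-level equality
$$
K\circ(\Id_{X,*}\otimes\Id_{Y,*}) \;=\; (\Id_X\times\Id_Y)_*\circ K \;=\; \Id_{X\times Y,*}\circ K ,
$$
so the square commutes. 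The argument for $K^{top}$ is word for word the same, since the direct-map half of Lemma \ref{lemme : commutativité K avec direct et shriek} holds for $K^{top}$ as well, all the chains occurring being split (equation \eqref{eq : module structure when splitted}).

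The subtle point in this approach is the bookkeeping at the source/target level: one must check that the target of $K$ associated to $\Xi^i_X,\Xi^i_Y$ really is the complex on $X\times Y$ built from the product data $\Xi^i_X\times\Xi^i_Y$ with its Künneth cocycle (which is how $K$ was defined), and that the identity ``$\Psi_{01}=\Id_*$'' remains valid there after identifying the Barraud--Cornea and Künneth models via Proposition \ref{prop : Psi0K is a chain homotopy equivalence} and using the conventions of Section \ref{subsection : Invariance with respect to the parametrization}. A more self-contained alternative, bypassing \cite[Proposition 10.2.1]{BDHO23} altogether, would be to assemble continuation data on $X\times[0,1]$ and $Y\times[0,1]$ into a ``Künneth'' continuation cocycle on $X\times Y$ and rerun the computation of the proof of Theorem \ref{thm : Künneth formula} to show directly that $K$ commutes with the two continuation maps up to chain homotopy; I expect this alternative route to be the main obstacle, as it requires the same orientation and representing-chain-system analysis as in Lemma \ref{lemme : representing chain system of a Cartesian product}.
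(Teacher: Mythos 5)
Your argument is correct and is essentially the paper's own proof: both invoke \cite[Proposition 10.2.1]{BDHO23} to replace the continuation maps by $\Id_{X,*}$, $\Id_{Y,*}$, $\Id_{X\times Y,*}$ in homology, and then use the direct-map half of Lemma \ref{lemme : commutativité K avec direct et shriek} (in the special case $\varphi=\Id_X$, $\psi=\Id_Y$) to show $K$ intertwines them. The extra caveat you raise and the proposed bypass via a ``Künneth'' continuation cocycle are not part of the paper's argument, which simply relies on the convention, justified by Proposition \ref{prop : Psi0K is a chain homotopy equivalence}, that $C_*(X\times Y,\Xi_{X\times Y},\cdot)$ is taken with the Künneth cocycle.
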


\begin{proof}
    Let $\Xi_0^X$ and $\Xi_1^X$ be Morse data sets on $X$ and $\Xi_0^Y$ and $\Xi_1^Y$ be Morse data sets on $Y$. Let $\F$ and $\G$ be DG systems on respectively $X$ and $Y$. 
    \cite[Proposition 10.2.1]{BDHO23} states that $\Id_{X,*} : C_*(X,\Xi_0^X,\F) \to C_*(X,\Xi_1^X, \F)$, $\Id_{Y,*} : C_*(Y,\Xi_0^Y,\G) \to C_*(Y,\Xi_1^Y, \G)$ and $\Id_{X\times Y,*} : C_*(X\times Y, \Xi_0^{X \times Y}, \F \otimes \G) \to C_*(X\times Y, \Xi_1^{X \times Y}, \F \otimes \G)$ are respectively chain homotopic to the continuation maps $$\Psi_{01}^X :C_*(X,\Xi_0^X,\F) \to C_*(X,\Xi_1^X, \F), $$ 
    $$ \Psi_{01}^{Y} : C_*(Y,\Xi_0^Y,\G) \to C_*(Y,\Xi_1^Y, \G),$$
    $$\Psi_{01}^{X \times Y} : C_*(X\times Y, \Xi_0^{X \times Y}, \F \otimes \G) \to C_*(X\times Y, \Xi_1^{X \times Y}, \F \otimes \G)$$
    
    of Theorem \ref{thm : continuation morphism Psi_01}. Since $K \circ (\Id_{X,*} \otimes \Id_{Y,*}) = \Id_{X \times Y,*} \circ K$, it follows that $$K : H_*(X, \F) \otimes H_*(Y,\G) \to H_*(X\times Y, \F \otimes \G)$$ is well-defined in the sense that the diagram

    $$\xymatrix{
    H_*(X, \Xi_0^X, \F) \otimes H_*(Y, \Xi_0^Y,\G) \ar[d]^{\Psi_{01}^X \otimes \Psi_{01}^Y} \ar[r]^{K}& H_*(X \times Y, \Xi^{X \times Y}_0, \F \otimes \G) \ar[d]_{\Psi_{01}^{X \times Y}} \\
    H_*(X, \Xi_1^X, \F) \otimes H_*(Y, \Xi_1^Y,\G)  \ar[r]^{K}& H_*(X \times Y, \Xi^{X \times Y}_1, \F \otimes \G)
    }$$
    commutes.
\end{proof}

\begin{lemme}\label{lemme : K is limit of morphism of spectral sequences}
    Let $\Xi_X$ and $\Xi_Y$ be sets of DG Morse data respectively on $X$ and $Y$.
    The maps $$K^{alg} : H_*\left(C_*(X,\Xi_X,\F)\otimes C_*(Y,\Xi_Y,\G)\right) \to
    H_*(X \times Y, \F \otimes \G)$$ and
    $$K^{top} : H_*\left(C_*(X,\Xi_X,C_*(F)\right)\otimes C_*(Y,\Xi_Y,C_*(G))) \to
    H_*(X \times Y, C_*(F\times \G))$$
    are limits of morphisms of spectral
    sequences $$K^{alg,(r)} : E^r_{s,t}((X,\F) \otimes (Y,\G))
    \to E^r_{s,t}(X \times Y, \F \otimes \G)$$ and
     $$K^{top,(r)} : E^r_{s,t}((X,F) \otimes (Y,G))
    \to E^r_{s,t}(X \times Y, F \times \G).$$
\end{lemme}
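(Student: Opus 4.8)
The plan is to exhibit $K := K^{alg}$ (resp. $K^{top}$) as a morphism of \emph{filtered} chain complexes and then appeal to the standard comparison theorem for spectral sequences associated to convergent filtrations, exactly as in the proof of Proposition~\ref{prop : induced morphism is limit of spectral sequence maps} and \cite[Proposition 4.4.1]{BDHO23}.

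First I would recall the relevant filtrations. The complex $C_*(X,\Xi_X,\F)$ carries its canonical filtration $F_a = \bigoplus_{i\leq a}\F_*\otimes\Z\Crit_i(f)$, and $C_*(Y,\Xi_Y,\G)$ carries $F_b = \bigoplus_{j\leq b}\G_*\otimes\Z\Crit_j(g)$; their tensor product is then filtered by the convolution filtration $F_p = \sum_{a+b=p}F_aC_*(X,\Xi_X,\F)\otimes F_bC_*(Y,\Xi_Y,\G)$, whose associated spectral sequence is $E^r_{s,t}((X,\F)\otimes(Y,\G))$. On the other side, $C_*(X\times Y,m^K,\F\otimes\G) = (\F\otimes\G)_*\otimes\Z\Crit(f+g)$ carries the canonical filtration by the Morse index of $H=f+g$, and one has $|(x,y)| = |x|+|y|$ together with $\Z\Crit_p(H) = \bigoplus_{a+b=p}\Z\Crit_a(f)\otimes\Z\Crit_b(g)$. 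All three filtrations are exhaustive and, in each total degree, bounded below (Morse indices are nonnegative), so each gives rise to a convergent spectral sequence.

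Next I would check that $K$ is filtration preserving. On a generator $(\alpha\otimes x)\otimes(\beta\otimes y)$ with $|x|\leq a$ and $|y|\leq b$, the formula of Theorem~\ref{thm : Künneth formula} gives $K((\alpha\otimes x)\otimes(\beta\otimes y)) = \pm(\alpha\otimes\beta)\otimes(x,y)$ (resp. $\pm(\alpha,\beta)\otimes(x,y)$), which lies in filtration level $|x|+|y|\leq a+b$ of the target; hence $K(F_p)\subseteq F_p$. Being also a chain map (Theorem~\ref{thm : Künneth formula}), $K$ therefore induces for every $r\geq 0$ a morphism $K^{(r)}$ between the corresponding pages, compatible with all differentials, and, the filtrations being convergent, passing to the limit recovers $K$ in homology. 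This is precisely the statement of the lemma; the argument for $K^{top}$ is word for word the same, since its defining formula on generators is identical to that of $K^{alg}$.

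Finally, although not strictly needed for the lemma itself, I would identify the induced map on the initial page, as this is what feeds into Corollary~\ref{cor : K top is quasi iso}. Since the twisting operator $\m$ strictly lowers the Morse index, the $E^0$-differential is, on the source, the tensor differential $\partial_\F\otimes 1\pm 1\otimes\partial_\G$ (tensored with the identity on the critical-point modules, this being the only index-preserving part of the total differential), and on the target it is $\partial_{\F\otimes\G}\otimes 1$; under the canonical identifications of the associated gradeds, $K^{(0)}$ is the sign-twisted reordering $\alpha\otimes x\otimes\beta\otimes y\mapsto(-1)^{|\beta||x|}\alpha\otimes\beta\otimes x\otimes y$ tensored with the identity on the critical-point factors. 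For $K^{alg}$ this is an isomorphism, so $K^{alg,(r)}$ is an isomorphism for $r\geq 1$ (consistent with $K^{alg}$ being an isomorphism of complexes). For $K^{top}$, with $\F=C_*(F)$, $\G=C_*(G)$ and the coefficient $C_*(F\times G)$ on the target equipped with the module structure of Definition~\ref{def : module structure on F times G}, the same identification shows that $K^{top,(0)}$ is $\EZ\otimes 1$, the Eilenberg--Zilber map $\EZ:C_*(F)\otimes C_*(G)\to C_*(F\times G)$ tensored with the identity on $\Z\Crit_a(f)\otimes\Z\Crit_b(g)$; by the Eilenberg--Zilber theorem this is a quasi-isomorphism, hence $K^{top,(1)}$ is an isomorphism on every $E^1_{s,t}$, and convergence then yields that $K^{top}$ is a quasi-isomorphism. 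The only genuinely delicate point is this last step: pinning down the $E^0$-page differentials on both sides and recognizing $K^{top,(0)}$ as the Eilenberg--Zilber map, while keeping track of the Koszul signs inherited from Definition~\ref{def : module structure on F times G}.
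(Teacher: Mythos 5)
Your proof is correct and follows essentially the same path as the paper's: filter the tensor product $C_*(X,\Xi_X,\F)\otimes C_*(Y,\Xi_Y,\G)$ by total Morse index, observe from the explicit formula for $K$ on generators that $K$ preserves this filtration (since $|(x,y)|=|x|+|y|$ and $\Z\Crit_i(f+g)=\bigoplus_{p+p'=i}\Z\Crit_p(f)\otimes\Z\Crit_{p'}(g)$), and invoke the standard comparison theorem for filtered chain maps. Your extra paragraph identifying $K^{(0)}$ with the sign-twisted reordering (resp.\ the Eilenberg--Zilber map for $K^{top}$) goes slightly beyond what the lemma requires, but it is exactly the content the paper defers to Corollary~\ref{cor : K top is quasi iso} and is correct as stated.
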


\begin{proof}

    Let $f: X \to \R$ and $g : Y \to \R$ be the respective Morse
    functions of $\Xi_X$ and $\Xi_Y$.
    Let $\Xi_{X \times Y}$ be the DG Morse data set on $X \times Y$ defined
    in Section \ref{subsection : Morse data on a Cartesian product}.

    Recall that the spectral sequence $E^r_{s,t}(X\times Y,\F \otimes \G)$
    is induced by the filtration 

    $$F_s(C_k(X\times Y,\Xi_{X\times Y},\F \otimes \G)) =
     \bigoplus_{\substack{i +j = k\\ i \leq s}} 
    (\F \otimes \G)_j \otimes \Z\Crit_i(f \oplus g).$$

    Define now define a filtration of the complex
    $C_*(X,\Xi_X,\F) \otimes C_*(Y,\Xi_Y,\G)$. Let $s,k \in \N$,

    $$F_s((C_*(X,\Xi_X,\F) \otimes C_*(Y,\Xi_Y,\G))_k) =
    \bigoplus_{\substack{p+p'+q+q'=k\\ p+p' \leq s}} \left(
     (\F_q \otimes \Z \Crit_p(f)) \otimes
     (\G_{q'} \otimes \Z \Crit_{p'}(g)) \right).$$

    This induces a spectral sequence $E^r_{p,q}((X,\F) \otimes (Y,\G))$
    converging towards $H_{p+q}(C_*(X,\Xi_X,\F) \otimes C_*(Y,\Xi_Y,\G))$
    whose first page is 

    $$E^1_{s,t}((X,\F) \otimes (Y,\G)) \simeq \bigoplus_{p+p' = s}
     H_{t}(\F \otimes \G) \otimes \Z\Crit_p(f) \otimes
    \Z \Crit_{p'}(g).$$

    Moreover, since 
    $$(\F \otimes \G)_j = \bigoplus_{q + q'=j} \F_q \otimes \G_{q'}
    \textup{ and } \Crit_i(f \oplus g) = \bigoplus_{p+p'=i} 
     \Crit_p(f) \times \Crit_{p'}(g),$$
    it follows that 
     $$K^{alg}(F_s((C_*(X,\Xi_X,\F) \otimes C_*(Y,\Xi_Y,\G))_k))
     = F_{s}(C_k(X\times Y, \Xi_{X\times Y}, \F \otimes \G)).$$

    The same approach applies for $K^{top}$ for
    $\F = C_*(F)$ and $\G = C_*(G)$.
    Indeed, since $$\EZ\left(\bigoplus_{q+q'=j} C_q(F) \otimes C_{q'}(G)\right)
    \subset C_j(F\times G),$$ 
    $$K^{top}\left(F_s((C_*(X,\Xi_X,C_*(F)) \otimes C_*(Y,\Xi_Y,\G))_k)\right)
     \subset F_{s}(C_k(X\times Y, \Xi_{X\times Y}, C_*(F \times G))).$$

\end{proof}

\begin{cor}\label{cor : K top is quasi iso}
    For any sets of DG Morse data $\Xi_X$ on $X$ and $\Xi_Y$ on $Y$, the map $K^{top} : C_*(X,\Xi_X,C_*(F))\otimes C_*(Y,\Xi_Y,C_*(G)) \to C_*(X \times Y, \Xi_{X \times Y}, C_*(F \times G))$ is a quasi-isomorphism.
\end{cor}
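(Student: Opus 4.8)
The plan is to deduce this from the Eilenberg--Zilber theorem by comparing the two spectral sequences provided by Lemma \ref{lemme : K is limit of morphism of spectral sequences}.

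First I would record that both filtrations appearing in the proof of that lemma are bounded: the filtration on $C_*(X,\Xi_X,C_*(F))\otimes C_*(Y,\Xi_Y,C_*(G))$ is by $p+p'$, with $0\le p\le\dim X$ and $0\le p'\le\dim Y$, and the filtration on $C_*(X\times Y,\Xi_{X\times Y},C_*(F\times G))$ is by the Morse index $i$ of a critical point of $f+g$, with $0\le i\le\dim X+\dim Y$. Hence both spectral sequences converge, the first towards $H_*\big(C_*(X,\Xi_X,C_*(F))\otimes C_*(Y,\Xi_Y,C_*(G))\big)$ and the second towards $H_*(X\times Y,C_*(F\times G))$, and by Lemma \ref{lemme : K is limit of morphism of spectral sequences} the map $K^{top}$ is the limit of a morphism $K^{top,(\bullet)}$ between them.

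Next I would identify the first pages and the induced map. Since the Barraud--Cornea twisting cocycle strictly lowers the Morse index, the only part of the enriched differential that preserves the filtration is the internal differential of the coefficient module, so the $E^0$--differential is $\partial_{C_*(F)}\otimes 1$, respectively $\partial_{C_*(G)}\otimes 1$, on the two tensor factors of the source, and $\partial_{C_*(F\times G)}\otimes 1$ on the target. Using that $\Z\Crit_i(f+g)\simeq\bigoplus_{p+p'=i}\Z\Crit_p(f)\otimes\Z\Crit_{p'}(g)$ is free, one obtains
\[
\begin{aligned}
E^1_{s,t}(\textup{source}) &\cong \bigoplus_{p+p'=s} H_t\big(C_*(F)\otimes C_*(G)\big)\otimes\Z\Crit_p(f)\otimes\Z\Crit_{p'}(g),\\
E^1_{s,t}(\textup{target}) &\cong H_t\big(C_*(F\times G)\big)\otimes\Z\Crit_s(f+g),
\end{aligned}
\]
exactly as in the proof of Lemma \ref{lemme : K is limit of morphism of spectral sequences}. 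By construction of $K^{top}$, the induced map $K^{top,(1)}$ is, up to the Koszul sign $(-1)^{|\beta|p}$, the direct sum over $p+p'=s$ of $\EZ_*\otimes\Id\otimes\Id$, where $\EZ_*:H_t(C_*(F)\otimes C_*(G))\to H_t(C_*(F\times G))$ is induced by the Eilenberg--Zilber chain map $\EZ(\alpha\otimes\beta)=(\alpha,\beta)$. As $\EZ$ is a quasi-isomorphism, $K^{top,(1)}$ is an isomorphism on every $E^1_{s,t}$.

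Finally I would invoke the comparison theorem for convergent spectral sequences: a morphism of spectral sequences that is an isomorphism on $E^1$ and relates two convergent spectral sequences is an isomorphism on the abutments. This yields that $K^{top}$ is a quasi-isomorphism, which also completes the proofs of Theorem \ref{thm : Künneth formula} and of Corollary \ref{cor : coeff C(F) otimes C(G) equals C(F times G)}. The only step requiring genuine care is the identification in the previous paragraph --- pinning down the $E^0$--differentials, checking compatibility of $K^{top}$ with the identification $\Z\Crit(f+g)\simeq\Z\Crit(f)\otimes\Z\Crit(g)$, and verifying that the residual Koszul sign does not obstruct the isomorphism on $E^1$ --- and this is routine bookkeeping rather than a real obstacle.
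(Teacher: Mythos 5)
Your proposal follows essentially the same approach as the paper: both invoke Lemma \ref{lemme : K is limit of morphism of spectral sequences} to view $K^{top}$ as a morphism of (bounded, hence convergent) spectral sequences, identify the $E^1$ pages, and conclude from the Eilenberg--Zilber theorem that $K^{top,(1)}$ is an isomorphism, hence $K^{top}$ a quasi-isomorphism. Your version adds useful bookkeeping (boundedness of the filtrations, the $E^0$-differentials, the residual Koszul sign $(-1)^{|\beta|p}$, which indeed commutes with the $E^0$-differential and so causes no obstruction), but the structure of the argument is identical.
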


\begin{proof}
Denote $f : X \to \R$ the Morse function of $\Xi_X$ and $g : Y \to \R$ the Morse function of $\Xi_Y$.
The map $K^{top}$ induces an isomorphism on the first pages 

    $$K^{top,(1)} : H_{t}(C_*(F) \otimes C_*(G)) \otimes
    \left(\bigoplus_{p+p' = s} \Z\Crit_p(f) \otimes
   \Z \Crit_{p'}(g)\right) \to H_{t}(F\times G)
    \otimes \Z\Crit_{s}(f \oplus g).$$

Indeed, $$\EZ : H_{t}(C_*(F) \otimes C_*(G)) \to H_{t}(F\times G)$$
is an isomorphism.
\end{proof}

We finished to prove that $H_*(X\times Y, C_*(F) \otimes C_*(G)) \simeq H_*(X \times Y, C_*(F \times G))$. We will now denote $K = K^{top}$ for the two next lemmas that state compatibility properties of $K$ with the Fibration Theorem and with maps induced by morphisms of fibrations.

\begin{lemme}\label{lemme : Compatibility K and Fibration Theorem}
    Let $F \hookrightarrow E_X \to X$ and $G \hookrightarrow E_Y \to Y$ be two fibrations over two pointed oriented closed and connected manifolds equipped with transitive lifting functions $\Phi^X$ and $\Phi^Y$. Let $\Phi^{X \times Y} = (\Phi^X, \Phi^Y)$ be a transitive lifting function for the fibration $F \times G \hookrightarrow E_X \times E_Y \to X \times Y$. Then the following diagram commutes

    \[
    \xymatrix{
    H_*(X, C_*(F)) \otimes H_*(Y,C_*(G)) \ar[r]^-{K} \ar[d]^{\Psi_{E_X} \otimes \Psi_{E_Y}} & H_*(X \times Y, C_*(F \times G)) \ar[d]^{\Psi_{E_{X} \times E_{Y}}} \\
    H_*(E_X) \otimes H_*(E_Y) \ar[r]^-{\EZ} & H_*(E_X \times E_Y),
    }
    \]
    where $\Psi_{E_X}$, $\Psi_{E_Y}$ and $\Psi_{E_X \times E_Y}$ are the isomorphisms given by the Fibration Theorem using the transitive lifting functions $\Phi^X$, $\Phi^Y$ and $\Phi^{X \times Y}$ respectively.
    \end{lemme}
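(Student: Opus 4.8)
The plan is to prove commutativity at the chain level after fixing compatible choices on all three spaces, and then to invoke the invariance statements already available. First I would equip $X\times Y$ with the product DG Morse data $\Xi_{X\times Y}$ of Section \ref{subsection : Morse data on a Cartesian product} and replace the Barraud--Cornea cocycle by the Künneth twisting cocycle $m^K$ of Definition \ref{defi : m^K}, which is legitimate in view of Proposition \ref{prop : Psi0K is a chain homotopy equivalence}. Since the homotopy inverse of $\Xi_{X\times Y}$ is $\theta=(\theta_X,\theta_Y)$, the pullback of $F\times G\hookrightarrow E_X\times E_Y\to X\times Y$ by $\theta$ is canonically $E_{X,\mathcal{Y}}\times E_{Y,\mathcal{Y}}$ with transitive lifting function $(\Phi^X,\Phi^Y)$; hence every identification step hidden in $\Psi_{E_X}$, $\Psi_{E_Y}$ and $\Psi_{E_{X\times Y}}$ is itself the product of the corresponding steps for $X$ and for $Y$, and it suffices to compare the core maps $\Psi_{E'}$ of Definition \ref{defi : for the fibration thm}, each of the shape $\alpha\otimes z\mapsto \Phi_*(\alpha\otimes m_z)$ where $m_z$ evaluates a compatible representing chain system of the Latour cell of $z$.

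The key preliminary step is to choose these Latour-cell chains compatibly. Let $\{m_x^X\in C_{|x|}(\mathcal{P}_{\star\to X/\mathcal{Y}_X}(X/\mathcal{Y}_X))\}$ and $\{m_y^Y\}$ be the evaluated Latour-cell chains for $\Xi_X$ and $\Xi_Y$, so that $\partial m_x^X=\sum_{x'}m'_{x,x'}\,m_{x'}^X$ and similarly for $Y$, by \eqref{eq : m_x}. I claim that the split family $m_{(x,y)}:=(m_x^X,m_y^Y)$ satisfies $\partial m_{(x,y)}=\sum_{(x',y')}m^K_{(x,y),(x',y')}\,m_{(x',y')}$. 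Indeed $\partial(m_x^X,m_y^Y)=(\partial m_x^X,m_y^Y)+(-1)^{|x|}(m_x^X,\partial m_y^Y)$, and expanding with the explicit form of $m^K$ from Definition \ref{defi : m^K} together with the Koszul sign rule for concatenation of split chains, one checks that only the terms with $x'=x$ or $y'=y$ contribute and that the signs $(-1)^{|x|(|y|-|y'|)}$ appearing in $m^K$ cancel exactly the Koszul signs produced by reordering the split factors. Since $(m_x^X,m_y^Y)$ has the correct degree $|x|+|y|$ and induces the expected morphism on the associated $E^2$-pages, $\Psi_{E_{X\times Y}}$ defined with this choice is still a quasi-isomorphism, and by the invariance of the Fibration Theorem with respect to the auxiliary data it computes the same map in homology as the canonical one.

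With these choices the computation is immediate. On a generator, $\EZ\circ(\Psi_{E_X}\otimes\Psi_{E_Y})\big((\alpha\otimes x)\otimes(\beta\otimes y)\big)=\big(\Phi^X_*(\alpha\otimes m_x^X),\,\Phi^Y_*(\beta\otimes m_y^Y)\big)$, while $\Psi_{E_{X\times Y}}\circ K\big((\alpha\otimes x)\otimes(\beta\otimes y)\big)=(-1)^{|\beta||x|}\,\Phi^{X\times Y}_*\big((\alpha,\beta)\otimes(m_x^X,m_y^Y)\big)$. As $(\alpha,\beta)$ and $(m_x^X,m_y^Y)$ are split chains, the module structure on $C_*(F\times G)$ described in Definition \ref{def : module structure on F times G}, namely equation \eqref{eq : module structure when splitted}, gives $\Phi^{X\times Y}_*\big((\alpha,\beta)\otimes(m_x^X,m_y^Y)\big)=(-1)^{|\beta||x|}\big(\Phi^X_*(\alpha\otimes m_x^X),\,\Phi^Y_*(\beta\otimes m_y^Y)\big)$. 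The two signs $(-1)^{|\beta||x|}$ cancel, so the two composites coincide on the nose; passing to homology proves the lemma. This also explains the sign $(-1)^{|\beta||x|}$ in the definition of $K$: it is precisely what absorbs the Koszul sign of the split module action.

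The main obstacle is the sign and compatibility bookkeeping of the second paragraph: verifying that the product family $(m_x^X,m_y^Y)$ is a valid Latour-cell chain system for $\Xi_{X\times Y}$ with the Künneth cocycle rests on the orientation conventions fixed in Section \ref{subsection : Morse data on a Cartesian product} and on the explicit representing chain systems of the moduli spaces of Lemma \ref{lemme : representing chain system of a Cartesian product}, and on invoking the homotopy-invariance of the Fibration Theorem's quasi-isomorphism under changes of this data. Everything else reduces to the routine Koszul-sign identities already used throughout the paper.
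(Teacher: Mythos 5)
Your proof is correct and follows essentially the same route as the paper: reduce to the pullback fibrations by $\theta = (\theta_X,\theta_Y)$, choose the split Latour-cell representing chain system $m_{(x,y)}=(m_x^X,m_y^Y)$ using $\wb{u}{x,y}=\wb{u}{x}\times\wb{u}{y}$, apply the split-chain module formula \eqref{eq : module structure when splitted}, and observe that the Koszul sign from the module action cancels the sign $(-1)^{|\beta||x|}$ built into $K$. The only addition you make over the paper's version is to spell out the verification that the split family $(m_x^X,m_y^Y)$ actually satisfies $\partial m_{(x,y)}=\sum m^K_{(x,y),(x',y')}m_{(x',y')}$ for the Künneth cocycle — a check the paper leaves implicit (it relies on the convention, set after Proposition \ref{prop : Psi0K is a chain homotopy equivalence}, that the Künneth cocycle is used on products) — which is a harmless and indeed helpful elaboration.
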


\begin{proof}
Let $f : X \to \R$ and $g : Y \to \R$ be Morse functions. Consider the Morse function $H  = f + g : X \times Y \to \R$.
Once again, it suffices to prove the statement for the pullback fibrations by the chosen homotopy inverses $\theta_X,\theta_Y$ and $\theta_{X \times Y}$  of the canonical projections $p_X : X \to X/\mathcal{Y}_X, \ p_Y : Y \to Y /\mathcal{Y}_Y$ and $p_{X \times Y} : X \times Y \to (X \times Y) /\mathcal{Y}$ (see Section \ref{subsection : Fibration Theorem} for explanation about the Fibration Theorem or \cite[Sections 7.2, 7.3]{BDHO23} for more details). We therefore only have to check that, for any $\alpha \in C_*(F), \ \beta \in C_*(G)$, $x \in \textup{Crit}(f)$ and $y \in \textup{Crit}(g)$, 
$$K(\Phi^X_*(\alpha \otimes m_x) \otimes \Phi^Y_*(\beta \otimes m_y)) = (-1)^{|\beta||x|} \Psi_{E_{X} \times E_Y}((\alpha,\beta) \otimes (x,y)).$$

   Since $\wb{u}{x,y} = \wb{u}{x} \times \wb{u}{y}$, we can choose the representing chain system $$m_{(x,y)}= (m_x, m_y) \in C_{|x|+|y|}(\mathcal{P}_{\star \to X \times Y/\mathcal{Y}}( X \times Y / \mathcal{Y}))$$ of the Latour cells in $X \times Y$. Moreover, $\Phi^{X \times Y}_*$ is given at the chain-level by $$\Phi_*^{X \times Y}((\alpha,\beta), (m_x, m_Y)) = (-1)^{|\beta||m_x|} (\Phi^X_*(\alpha, m_x), \Phi^Y_*(\beta, m_y)).$$ Since $|m_x|=|x|$, 
   $$ (\Phi^X_*(\alpha \otimes m_x), \Phi^Y_*(\beta \otimes m_y)) = (-1)^{|\beta||x|} \Phi^{X \times Y}_*((\alpha, \beta) \otimes m_{(x,y)}) = (-1)^{|\beta||x|} \Psi_{E_{X} \times E_Y}((\alpha,\beta) \otimes (x,y)),$$ and this concludes the proof.
\end{proof}

\begin{lemme}\label{lemme : induced maps on a Cartesian product commutes with K}
    Let $F_i \hookrightarrow E^X_i \to X$ and $G_i \hookrightarrow E^Y_i \to Y$ be fibrations over two pointed oriented closed and connected manifolds $(X,\star)$ and $(Y,\star_Y)$ with respective transitive lifting functions $\Phi^X_i$ and $\Phi^Y_i$ for $i \in \{1,2\}$.
    Let $\varphi : E^X_1 \to E^X_2$ and $\psi: E^Y_1 \to E^Y_2$ be morphisms of fibrations over $X$ and $Y$ respectively. 
    Endow the fibrations $E^X_i \times E^Y_i \to X \times Y$ with the transitive lifting functions $$\Phi_i^{X \times Y} = (\Phi_i^X,\Phi_i^Y) \ \textup{for } i\in \{1,2\}.$$
     Then $\varphi \times \psi : E^X_1 \times E^Y_1 \to  E^X_2 \times E^Y_2$ is a morphism of fibrations and the following diagram commutes
$$\xymatrix{
H_*(X,C_*(F_1)) \otimes H_*(Y, C_*(G_1)) \ar[d]^{\tilde{\varphi} \otimes \tilde{\psi}} \ar[r]^-K & H_*(X \times Y, C_*(F_1 \times G_1)) \ar[d]^{\widetilde{\varphi \times \psi}} \\
H_*(X,C_*(F_2)) \otimes H_*(Y, C_*(G_2)) \ar[r]^-{K} & H_*(X \times Y, C_*(F_2 \times G_2)).
}$$

\end{lemme}

\begin{proof}

Using the compatibility between $K$ and the Fibration Theorem (Lemma \ref{lemme : Compatibility K and Fibration Theorem}) and the compatibility between induced maps and the Fibration Theorem (Theorem \ref{thm : morphisme induit commute avec iso}), we obtain the commutativity of the diagram 

$$\xymatrix{
H_*(X,C_*(F_1)) \otimes H_*(Y,C_*(G_1)) \ar[r]^-K \ar@/_5pc/[ddd]_{\tilde{\varphi} \otimes \tilde{\psi}} \ar[d]^{\Psi_{E_1^X} \otimes \Psi_{E^Y_1}} & H_*(X \times Y, C_*(F_1 \times G_1)) \ar[d]_{\Psi_{E^X_1 \times E_1^Y}} \ar@/^5pc/[ddd]^{\widetilde{\varphi \times \psi}}\\
H_*(E_1^X) \otimes H_*(E^Y_1) \ar[r]^-{\EZ} \ar[d]^{\varphi_* \otimes \psi_*} & H_*(E_1^X \times E^Y_1) \ar[d]_{(\varphi \times \psi)_*}\\
H_*(E_2^X) \otimes H_*(E^Y_2) \ar[r]^-{\EZ} \ar[d]^{\Psi^{-1}_{E^X_2} \otimes \Psi^{-1}_{E_2^Y}} & H_*(E_2^X \times E_2^Y) \ar[d]_{\Psi^{-1}_{E_2^X \times E_2^Y}} \\
H_*(X,C_*(F_2)) \otimes H_*(Y, C_*(G_2)) \ar[r]^-{K} & H_*(X \times Y, C_*(F_2 \times G_2)),
}$$

where, for $i \in \{1,2\}$, $\Psi_{E^X_i}$, $\Psi_{E^Y_i}$ and $\Psi_{E^X_i \times E^Y_i}$ are the isomorphisms given by the Fibration Theorem using the transitive lifting functions $\Phi^X_i$, $\Phi^Y_i$ and $\Phi^{X \times Y}_i$ respectively.

\end{proof}

\section{Morse description and generalization of the Chas-Sullivan product. Proof of Theorem A}\label{section : Morse description and generalization of the Chas-Sullivan product}

In this section we will define a product on the homology of the total space $E$ of a fibration $F \hookrightarrow E \overset{\pi}{\to} X^n$ over a $n$-dimensional closed, connected, oriented and smooth manifold $X$, endowed with a morphism of fibrations $m : E \ftimes{\pi}{\pi} E \to E$ using DG Morse theory. For that, we will use : \begin{itemize}
    \item The cross-product $K := K^{top} : H_*(X,C_*(F))^{\otimes 2} \to H_*(X^2, C_*(F^2))$ defined in Theorem \ref{thm : Künneth formula}.
    \item The shriek map $\Delta_! : H_*(X^2, C_*(F^2)) \to H_{*-n}(X, \Delta^*C_*(F^2))$ of the diagonal $\Delta : X \to X^2$. As proved by Theorem \ref{thm : DG Thom iso}, this map is the equivalent in our model to the Pontryagin-Thom construction used in \cite{GS07}.
    \item The morphism $\tilde{m} : H_*(X,\Delta^*C_*(F^2)) \to H_*(X,C_*(F))$ induced by the morphism of fibrations $m : E \ftimes{\pi}{\pi} E \to E$. We proved in Theorem \ref{thm : morphisme induit commute avec iso} that this morphism is the equivalent in our model to the map $m_* : H_*(E \ftimes{\pi}{\pi} E) \to H_*(E).$
\end{itemize}

\subsection{Definition}
Let $E \overset{\pi}{\to} X$ a fibration over a closed, connected, oriented and smooth manifold $X$ and let $\star\in X$ a preferred point. Denote $F = \pi^{-1}(\star).$
Let $\Phi : E \ftimes{\pi}{\ev_0} \mathcal{P} X \to E$ be a transitive lifting function and consider $\F = C_*(F)$, the cubical complex of the fiber $F$ endowed with the DG module structure over $C_*(\Omega X)$ induced by $\Phi$. Suppose that there exists a morphism of fibrations $m : E \ftimes{\pi}{\pi} E \to E$ (see Definition \ref{defi : Ai morphism of fibration}).

We restate Theorem A and we will prove it in the next section.

\begin{thm}\label{thm : A}

    The  morphism of fibrations $m : E \ftimes{\pi}{\pi} E \to E$ induces a degree $-n$ product

$$\CS : H_*(X, \F)^{\otimes 2} \to H_*(X, \F).$$

such that the following properties hold :
    \begin{itemize}
    \item \textbf{\textup{Associativity :}}  If $m_*$ is associative in homology, then so is $\CS$.
    \item \textbf{\textup{Commutativity :}}  If $m_*$ is commutative in homology, then $\CS$ is commutative up to sign $$\CS(\gamma \otimes \tau) = (-1)^{(n-|\gamma|)(n-|\tau|)} \CS(\tau \otimes \gamma).$$
    \item \textbf{\textup{Neutral element :}}  If $\pi$ admits a section $s : X \to E$ such that $m(s(\pi(e)), e) = m(e, s(\pi(e))) = e$ for all $e \in E$, then $\CS$ admits a neutral element.
    \item \textbf{\textup{Functoriality :}}\\ $\bullet$ For any pointed, oriented, closed and connected manifold $Y^k$, any continuous map $g : Y \to X$ induces a degree $-k$ product for the fibration $F \hookrightarrow g^*E \to Y$

    $$\CS^Y : H_i(Y,g^*\F)\otimes H_j(Y, g^*\F) \to H_{i+j-k}(Y,g^*\F),$$ such that $g_! : H_*(X,\F) \to H_{*+n-k}(Y, g^*\F)$ is a morphism of rings up to sign.

     $\bullet$ If $g$ is an orientation-preserving homotopy equivalence then, $g_!$ and
     $g_* : H_*(Y,g^*\F) \to H_{*}(X, \F)$ are  isomorphisms of rings inverse of each other.
     \item \textbf{\textup{Spectral sequence :}} Let $\Xi$ be a set of DG Morse data on $X$. The canonical filtration
    $$F_p(C_*(X,\Xi, \F)) = \bigoplus_{\substack{i +j = k\\ i \leq p}} \F_j \otimes \Z\Crit_i(f)$$
    induces a spectral sequence $E^r_{p,q}$ that is endowed with an algebra structure
    $$E^r_{p,q} \otimes E^r_{l,m} \to E^r_{p+l-n, q+m}$$ induced by a chain-level model $\CS : C_*(X,\Xi,\F)^{\otimes 2} \to C_*(X,\Xi,\F)$ and converges towards $H_*(X, \F)$ as algebras.  For $s,t \geq 0$ $E^2_{s,t} = H_s(X,H_t(\F))$ and the algebra structure is given, up to sign, by the intersection product on $X$ with coefficient in $H_t(\F)$.\\
    
    \end{itemize}

This product corresponds in homology, via the Fibration Theorem, to the product $\mu_* : H_i(E) \otimes H_j(E) \to H_{i+j-n}(E)$ defined in \cite{GS07} and therefore (re)proves that the product $\mu_*$ satisfies those properties. In particular, if the considered fibration is the loop-loop fibration $\Omega X \hookrightarrow \mathcal{L}X \to X$ and the morphism of fibration $m : \ls{X} \ftimes{\ev}{\ev} \ls{X} \to \ls{X}$ is the concatenation, then $\CS$ corresponds to the Chas-Sullivan product.

\end{thm}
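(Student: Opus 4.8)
Given the structure laid out in the introduction, the plan is to assemble Theorem A from the tools developed in the earlier sections. I would begin by \emph{defining} the product at the chain level (or rather at the level of homology) via the composition
$$\CS(\gamma \otimes \tau) = (-1)^{n(n-|\tau|)}\, \tilde{m} \circ \Delta_! \circ K^{top}(\gamma \otimes \tau),$$
where $K^{top}$ is the Künneth cross product from Theorem C (\ref{thm : Künneth formula}), $\Delta_!$ is the shriek map of the diagonal $\Delta : X \to X^2$ from \cite[Theorem 8.1.1]{BDHO23}, and $\tilde{m}$ is the morphism of complexes induced by the morphism of fibrations $m$ via Theorem B (\ref{thm : morphisme induit commute avec iso}) and Proposition \ref{prop : Morphisme de Ai module en morphisme de complexe}. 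The degree count is immediate: $K^{top}$ preserves degree, $\Delta_!$ drops degree by $n = \dim X$, and $\tilde{m}$ preserves degree, so $\CS$ has degree $-n$. Well-definedness in homology follows by combining the fact that $K^{top}$ is well-defined in homology (Corollary following Lemma \ref{lemme : commutativité K avec direct et shriek}), that $\Delta_!$ is well-defined in homology (part of \cite[Theorem 8.1.1]{BDHO23}), and that $\tilde{m}$ is well-defined in homology (Corollary \ref{cor : induced map morphism of fibration well-defined in homology} or \ref{cor : induced morphism well-defined in homology}).

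The heart of the theorem is the correspondence with $\mu_*$ from \cite{GS07} via the Fibration Theorem; I would establish this first, since several of the listed properties are then inherited from properties of $\mu_*$. The strategy is to show that the large diagram in the introduction (the one with the $\EZ$, $\tau_*$, $u_*$, $m_*$ row) commutes. The first square commutes by Lemma \ref{lemme : Compatibility K and Fibration Theorem} (compatibility of $K^{top}$ with the Fibration Theorem, matching $\EZ$). The second square — that $\Delta_!$ corresponds to the Pontryagin--Thom collapse followed by the Thom isomorphism — is exactly Theorem D (\ref{thm : DG Thom iso}) applied to the embedding $\Delta : X \hookrightarrow X^2$ with normal bundle $TX$ (so $k = n$), pulled back along the fibration $E^2 \to X^2$. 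The third square — that $\tilde{m}$ corresponds to $m_*$ — is precisely Theorem B (\ref{thm : morphisme induit commute avec iso}), using that $m$ is a morphism of fibrations. Chasing through, $\CS$ corresponds to $m_* \circ u_* \circ \tau_* \circ \EZ$, which is the Gruher--Salvatore product $\mu_*$ (up to the Dold sign, which is a bookkeeping convention). For the loop-loop fibration with $m$ the Moore concatenation, this is the Chas--Sullivan product by \cite{GS07, cohen2002homotopy}.

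The individual properties then follow either directly from the corresponding properties of $m_*$ transported across the Fibration Theorem isomorphism, or by direct arguments in the DG model. \textbf{Associativity} and \textbf{Commutativity}: once $\CS$ corresponds to $\mu_*$, associativity/commutativity of $m_*$ in homology pass to $\mu_*$ (the Dold sign is exactly engineered to produce the stated graded-commutativity sign $(-1)^{(n-|\gamma|)(n-|\tau|)}$, by the same computation as in \cite{Lau11}), hence to $\CS$; I would also record the Morse-side proof using that $K^{top}$, $\Delta_!$ satisfy the relevant compatibilities (Lemma \ref{lemme : commutativité K avec direct et shriek} for the $\Delta_!$ side, composition property of shriek maps). \textbf{Neutral element}: the section $s$ with $m(s(\pi(e)),e) = e$ gives a class in $H_0(F)$ (the class of the fiber point $s(\star)$), and one checks $\tilde{s}_* [\text{pt}]$ serves as the unit, either via the corresponding statement for $\mu_*$ or directly. \textbf{Functoriality}: for $g : Y \to X$, one uses Corollary \ref{cor : compatibility induced map by a morphism of fibration and direct maps} and Proposition \ref{prop : morphisme Ai commute avec direct et shriek} (compatibility of $\tilde{m}$ with shriek maps), the compatibility of $K$ with shriek maps (Lemma \ref{lemme : commutativité K avec direct et shriek}), and the composition/base-change properties of shriek maps from \cite[Theorem 8.1.1]{BDHO23}; for $g$ an orientation-preserving homotopy equivalence, $g_*$ and $g_!$ are mutually inverse isomorphisms by the identity and composition properties, and they are ring maps by the first bullet. \textbf{Spectral sequence}: since all three maps $K^{top}$, $\Delta_!$, $\tilde{m}$ respect the canonical filtration (shown in Lemma \ref{lemme : K is limit of morphism of spectral sequences}, \cite[Theorem 8.1.1]{BDHO23}, and Proposition \ref{prop : induced morphism is limit of spectral sequence maps} respectively), $\CS$ induces a filtered chain-level map, hence a pairing of spectral sequences converging to the algebra structure on $H_*(X,\F)$; identifying the $E^2$-pairing as the intersection product on $X$ with coefficients in $H_t(\F)$ reduces to the classical computation of $\Delta_!$ on the second page, which is in \cite[Theorem 8.1.1]{BDHO23}.

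The main obstacle I anticipate is \emph{bookkeeping the signs}, particularly verifying that the Dold sign $(-1)^{n(n-|\tau|)}$ inserted in the definition of $\CS$ is exactly what is needed to (a) make the large correspondence diagram commute on the nose (the $\EZ$ map and the shriek maps carry their own Koszul signs, tracked in Lemma \ref{lemme : commutativité K avec direct et shriek} and Theorem D) and (b) produce the stated graded-commutativity sign $(-1)^{(n-|\gamma|)(n-|\tau|)}$. A secondary technical point is that $\tilde m$ as produced by Theorem B depends \emph{a priori} on a choice of coherent homotopy, so one must invoke Corollary \ref{cor : equality of Ai morphism on total space gives equality of induced maps} to know the resulting product depends only on $m_* : H_*(E^2) \to H_*(E)$; likewise one needs Corollary \ref{cor : homotopy euqivalence if different lifting functions} to see the whole construction is independent of the chosen transitive lifting function. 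None of these are conceptually deep given the machinery already in place, but they require care.
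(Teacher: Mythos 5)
Your overall architecture matches the paper: you define $\CS = (-1)^{n(n-|\tau|)}\,\tilde{m}\circ\Delta_!\circ K^{top}$, prove the correspondence with the Gruher--Salvatore product by gluing together Lemma \ref{lemme : Compatibility K and Fibration Theorem}, Theorem D applied to $\Delta : X \hookrightarrow X^2$, and Theorem B, and then harvest the listed properties. That much is faithful to the paper's Proposition \ref{prop : CS < CSDG}.

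Where you diverge methodologically is in deriving the properties. You lean on transporting associativity, commutativity, etc.\ from $\mu_*$ across $\Psi_E$, with the Morse-side proof relegated to a parenthetical. The paper does the opposite: it proves each property \emph{intrinsically} in the enriched Morse model (Propositions \ref{prop : associativité CSDG}, \ref{prop : commutativité CSDG}, \ref{prop : Neutral element}, \ref{prop : Functoriality property}, \ref{prop : spectral sequence CSDG}) and then \emph{deduces} the corresponding statements for $\mu_*$ via Proposition \ref{prop : CS < CSDG}. This order matters because the paper explicitly claims to ``(re)prove'' these properties for $\mu_*$; your route would make that claim circular unless you separately cite \cite{GS07}, in which case you get the theorem but lose the paper's selling point of giving a new finite-dimensional proof.

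Two genuine gaps in the direct-Morse half of your sketch. First, the \textbf{neutral element} is misidentified: you propose $\tilde{s}_*[\mathrm{pt}]$, but a unit for a degree-$-n$ product must live in degree $n$, not degree $0$; the correct class is $\tilde{s}([X]) \in H_n(X,\F)$, which the paper realises at chain level as $\star \otimes x_{\max}$ for $f$ with a unique maximum, and verifies by computing $\overline{\mathcal{M}}^{\Delta_!}((x,x_{\max}),y) = \overline{\mathcal{M}}^{\mathrm{Id}_!}(x,y)$ with matching orientations. Second, your sketch of \textbf{commutativity} ``on the Morse side'' gives no mechanism for the sign $(-1)^{(n-|\gamma|)(n-|\tau|)}$: the paper's proof hinges on a careful analysis of the switch map $\tau_! : C_*(X^2,\Xi_1,\F^2) \to C_*(X^2,\Xi_2,\tau^*\F^2)$, showing $\tau_!((\alpha,\beta)\otimes(x,x')) = (-1)^{|x||x'|+n}(\alpha,\beta)\otimes(x',x)$ up to chain homotopy, where the $(-1)^{|x||x'|}$ comes from comparing the orientation $o_1$ of $\Xi_1$ to $\tau^* o_2$ via a continuation map; without this one cannot see where the claimed sign originates. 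Similarly for the spectral-sequence statement, the chain-level model that actually preserves the filtration requires a roundtrip $\Xi_0 \to \Xi_1 \to \Xi_{01} \to \Xi_2 \to \Xi_0$ through several Morse data sets and continuation maps, because $\Delta_!$ needs transversality of $\Delta|_{W^s_{f_2}}$ to the unstable manifolds of $f_0+f_1$, which forces a change of data; this is elided in your sketch.

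Your final remarks about the Dold sign, the independence of $\tilde{m}$ from the coherent homotopy (Corollary \ref{cor : equality of Ai morphism on total space gives equality of induced maps}), and independence from the lifting function (Corollary \ref{cor : homotopy euqivalence if different lifting functions}) correctly identify the points where one must invoke earlier lemmas to make the construction well defined.
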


We defined in Section \ref{subsection : Künneth map} a $C_*(\Omega X^2)$-module structure on $\F^2:=C_*(F^2)$, which is in this case the module structure defined by the natural holonomy of the fibration $F^2 \hookrightarrow E^2 \to X^2$. We also denote, when the context is clear, $\F^2 = \Delta^* \F^2$ the $C_*(\Omega X)$-module structure on $\F^2$ obtained by pulling back by the diagonal $\Delta : X \to X^2$. It corresponds to the natural holonomy of the pullback fibration $F^2 \hookrightarrow E \ftimes{\pi}{\pi} E \to X$. The morphism of fibrations $m : E \ftimes{\pi}{\pi} E \to E$ induces a morphism of complexes $\Tilde{m} : C_*(X, \Xi, \F^2) \to C_*(X, \Xi, \F)$ that is compatible in homology with the Fibration Theorem (see Theorem \ref{thm : morphisme induit commute avec iso}).

The product $$\CS : H_*(X, \F)^{\otimes 2} \to H_*(X, \F)$$ is defined in homology by the composition
\[
\xymatrix
@C=10pt
{
H_i(X, \F) \otimes H_j(X, \F) \ar[r]^-{K} &  H_{i+j}(X \times X , \F^2) \ar[r]^-{\Delta_!} &
H_{i+j-n}(X, \Delta^*\F^2) \ar[r]^-{\Tilde{m}}  & H_{i+j-n}(X, \F)
}
\]

with the Dold sign. In other words,

$$\deffct{\CS}{H_i(X, \F) \otimes H_j(X, \F)}{H_{i+j-n}(X, \F)}{\gamma \otimes \tau}{(-1)^{n(n-|\tau|)}\Tilde{m}\circ \Delta_! \circ K(\gamma \otimes \tau).} $$

\subsection{Properties of the product}

\subsubsection{Associativity}

\begin{prop}\label{prop : associativité CSDG}

Suppose that $m : E \ftimes{\pi}{\pi} E \to E$ is associative in homology, ie the diagram

$$\xymatrix{
 & H_*(E \ftimes{\pi}{\pi} E \ftimes{\pi}{\pi} E) \ar[dl]^{(\Id \times m)_*} \ar[dr]_{(m \times \Id)_*} & \\
H_*( E \ftimes{\pi}{\pi} E ) \ar[dr]^{m_*} & & H_*( E \ftimes{\pi}{\pi} E) \ar[dl]_{m_*} \\
 & H_*(E) &
}$$

is commutative. Then, the product $\CS$ is associative in homology.
\end{prop}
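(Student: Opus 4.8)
The plan is to express each of the two iterated products as one and the same \emph{triple} operation on $H_*(X,\F)^{\otimes 3}$ and then to reduce the associativity of $\CS$ to that of $m_*$ via Corollary~\ref{cor : equality of Ai morphism on total space gives equality of induced maps}. Throughout, $\delta\colon X\to X^3$ denotes the total diagonal $x\mapsto(x,x,x)$, $\delta_!$ its shriek map, $K^{(3)}\colon H_*(X,\F)^{\otimes 3}\to H_*(X^3,C_*(F^3))$ the iterated cross product, and $m_L,m_R\colon E\ftimes{\pi}{\pi}E\ftimes{\pi}{\pi}E\to E$ the two morphisms of fibrations $(e_1,e_2,e_3)\mapsto m(m(e_1,e_2),e_3)$ and $(e_1,e_2,e_3)\mapsto m(e_1,m(e_2,e_3))$.

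First I would unravel $\CS(\CS(\alpha\otimes\beta)\otimes\gamma)=\pm\,\tilde m\circ\Delta_!\circ K\big((\tilde m\circ\Delta_!\circ K(\alpha\otimes\beta))\otimes\gamma\big)$ and push the inner triple $(\tilde m,\Delta_!,K)$ outward one factor at a time. Lemma~\ref{lemme : induced maps on a Cartesian product commutes with K}, applied to the morphism of fibrations $m\times\Id_E$ over $X^2$, commutes $\tilde m$ through $K$; Lemma~\ref{lemme : commutativité K avec direct et shriek} commutes $\Delta_!$ through $K$, replacing it by $(\Delta\times\Id_X)_!$; and the chain-level associativity of $K^{top}$ — which follows directly from the formula $K((\alpha\otimes x)\otimes(\beta\otimes y))=(-1)^{|\beta||x|}(\alpha,\beta)\otimes(x,y)$ together with the compatibility of the Künneth twisting cocycle of Definition~\ref{defi : m^K} with iterated Cartesian products — identifies $K\big(K(\alpha\otimes\beta)\otimes\gamma\big)$ with $K^{(3)}(\alpha\otimes\beta\otimes\gamma)$. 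The outer part has now become $\tilde m\circ\Delta_!\circ\widetilde{m\times\Id}\circ(\Delta\times\Id_X)_!$. Proposition~\ref{prop : morphisme Ai commute avec direct et shriek} commutes the remaining $\Delta_!$ past $\widetilde{m\times\Id}$ at the cost of pulling the latter back along $\Delta$; since a morphism of fibrations stays one under pullback, $\Delta^*(m\times\Id_E)$ is the morphism $E\ftimes{\pi}{\pi}E\ftimes{\pi}{\pi}E\to E\ftimes{\pi}{\pi}E$, $(e_1,e_2,e_3)\mapsto(m(e_1,e_2),e_3)$. The Composition property of shriek maps (\cite[Theorem~8.1.1]{BDHO23}) collapses $\Delta_!\circ(\Delta\times\Id_X)_!$ into $\delta_!$, and Corollary~\ref{cor : composition of morphism of fibrations} collapses the two composed induced maps into $\widetilde{m_L}$, giving
$$\CS(\CS(\alpha\otimes\beta)\otimes\gamma)=\pm\,\widetilde{m_L}\circ\delta_!\circ K^{(3)}(\alpha\otimes\beta\otimes\gamma).$$
Applying exactly the same sequence of moves to $\CS(\alpha\otimes\CS(\beta\otimes\gamma))$ — now factoring $\delta$ through $\Id_X\times\Delta$ rather than $\Delta\times\Id_X$, still the total diagonal — yields $\pm\,\widetilde{m_R}\circ\delta_!\circ K^{(3)}(\alpha\otimes\beta\otimes\gamma)$.

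The associativity hypothesis says exactly that $(m_L)_*=(m_R)_*$ on $H_*(E\ftimes{\pi}{\pi}E\ftimes{\pi}{\pi}E)$, so Corollary~\ref{cor : equality of Ai morphism on total space gives equality of induced maps} gives $\widetilde{m_L}=\widetilde{m_R}$ on $H_*(X,C_*(F^3))$ and the two iterated products agree up to sign. The main obstacle is the sign: one must check that the Koszul and Dold signs coming from the definition of $\CS$ (notably the factor $(-1)^{n(n-|\tau|)}$), from the two uses of the Künneth map, and from Lemma~\ref{lemme : commutativité K avec direct et shriek}, combine to $+1$ on both bracketings. This is precisely what the Dold sign was introduced for (cf. \cite[Ch.~VIII, \textsection 13.3]{Dold72} and \cite{Lau11}), and verifying it is a direct, if tedious, computation of exponents modulo $2$ using $n^2\equiv n$. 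A secondary, purely bookkeeping point is to track the coefficient modules through the successive pullbacks $\Delta^*$, $(\Delta\times\Id_X)^*$, $(\Id_X\times\Delta)^*$ and to identify $\Delta^*(E\times E)$ with $E\ftimes{\pi}{\pi}E$ and the triple pullbacks with $E\ftimes{\pi}{\pi}E\ftimes{\pi}{\pi}E$, so that the cited results apply verbatim.
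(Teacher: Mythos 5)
Your proposal is correct and follows essentially the same route as the paper's proof: both reduce each bracketing of $\CS$ to the single triple expression $\widetilde{m_L}\circ\delta_!\circ K^{(3)}$ (resp.\ $\widetilde{m_R}$) using the identical list of ingredients — chain-level associativity of $K$, Lemma~\ref{lemme : commutativité K avec direct et shriek}, Lemma~\ref{lemme : induced maps on a Cartesian product commutes with K}, Proposition~\ref{prop : morphisme Ai commute avec direct et shriek}, the composition property of shriek maps, and Corollaries~\ref{cor : composition of morphism of fibrations} and~\ref{cor : equality of Ai morphism on total space gives equality of induced maps} — and then invoke $(m_L)_*=(m_R)_*$. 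The paper organizes these steps as the commutativity of one large hexagonal diagram while you thread them through linearly, but the content and the sign-tracking burden are the same.
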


\begin{proof} 
Let $\sigma, \omega, \delta \in H_*(X,\F).$ Recall that if $\varphi : Y \to Z$ is a continuous function and $\G$ is a DG-module over $C_*(\Omega Z)$, then $\varphi^*\G$ is $\G$ endowed with the following DG-module structure over $C_*(\Omega Y)$ 

$$\forall \alpha \in \G, \ \forall \gamma \in C_*(\Omega Y), \ \alpha \cdot \gamma := \alpha \cdot \underbrace{\varphi_*(\gamma)}_{\in C_*(\Omega Z)}.$$

Consider now the diagram 
\[
\xymatrix@C=0pt{
 & & H_*(X , \F)^{\otimes 3}\ar[dl]_{\textup{Id} \otimes K} \ar[dr]^{K \otimes \textup{Id}} & & \\
 & H_*(X , \F) \otimes H_{*}(X^2, \F^2) \ar[dr]_{K} \ar[dl]^{\textup{Id} \otimes \Delta_!} & 1. & H_{*}(X^2, \F^2) \otimes H_*(X , \F) \ar[dl]^{K} \ar[dr]_{\Delta_! \otimes \textup{Id}} & 
\\
 H_*(X,\F) \otimes H_*(X,\F^2) \ar[dr]^K \ar[d]_{\textup{Id} \otimes \Tilde{m}} & 2. & H_*(X^3,  \F^3) \ar[dl]_{(\textup{Id} \times \Delta)_!} \ar[dr]^{(\Delta \times \textup{Id})_!} & 2'. & H_*(X,\F^2) \otimes H_*(X,\F) \ar[d]^{ \Tilde{m} \otimes \textup{Id}} \ar[dl]_{K} \\
 H_*(X,\F)^{\otimes 2} \ar[d]_{K} & H_{*}(X^2, (\textup{Id} \times \Delta)^*\F^3) \ar[dr]_{\Delta_!} \ar[dl]^{\widetilde{\textup{Id} \times m}} & 4. & H_{*}(X^2, (\Delta \times \textup{Id})^*\F^3) \ar[dl]^{\Delta_!} \ar[dr]_{\widetilde{m\times \textup{Id}}} & H_*(X,\F)^{\otimes 2} \ar[d]^{K}\\
 H_*(X^2,\F^2) \ar[dr]^{\Delta_!} & 5. & H_*(X,\F^3) \ar[dl]_{\Delta^*(\widetilde{\textup{Id} \times m})} \ar[dr]^{\Delta^*(\widetilde{m \times \textup{Id}})} & 5. & H_*(X^2, \F^2)\ar[dl]_{\Delta_!}\\
 & H_*(X,\F^2) \ar[dr]_{\Tilde{m}} & 6. & H_*(X, \F^2)\ar[dl]^{\Tilde{m}} &\\
 & & H_*(X,\F). & & 
}
\]

The associativity of $\CS$ is equivalent to the fact that this diagram commutes up to the Kozsul sign $(-1)^{n(|\sigma|)}$ and the Dold sign $(-1)^{n(n-|\delta|)}$. Indeed, we can compute

\begin{align*}
    &\CS(\sigma \otimes \CS(\omega \otimes \delta)) \\
    &=  (-1)^{n(n-(|\omega|+|\delta|-n))} (-1)^{n(n-|\delta|)} \Tilde{m} \circ \Delta_! \circ K\left( \sigma \otimes \left( \Tilde{m} \circ \Delta_! \circ K(\omega \otimes \delta)\right) \right)\\
    &= (-1)^{n|\sigma|} (-1)^{n(n-|\omega|)} \Tilde{m} \circ \Delta_! \circ K \circ (\textup{Id} \otimes \Tilde{m}) \circ (\textup{Id} \otimes \Delta_!) \circ (\textup{Id} \otimes K)(\sigma \otimes \omega \otimes \delta)
\end{align*}

and, 

\begin{align*}
    &\CS(\CS(\sigma \otimes \omega) \otimes \delta) \\
    &= (-1)^{n(n-|\omega|)} (-1)^{n(n-|\delta|)} \Tilde{m} \circ \Delta_! \circ K\left(\left( \Tilde{m} \circ \Delta_! \circ K(\sigma \otimes\omega) \right) \otimes \delta\right)\\
    &=  (-1)^{n(|\omega|+|\delta|)} \Tilde{m} \circ \Delta_! \circ K \circ (\Tilde{m} \otimes \textup{Id}) \circ  (\Delta_! \otimes \textup{Id}) \circ (K\otimes \textup{Id}) (\sigma \otimes \omega \otimes \delta).
\end{align*}

We then prove the commutativity of this diagram up to the wanted sign in 6 steps :

$\bullet$ \textit{\textbf{Step 1 : The square 1. is commutative.}}

This is an easy computation at the chain level. Let $\Xi_1,\Xi_2,\Xi_3$ be sets of DG Morse data on $X$. Let $\alpha \otimes x \in C_*(X,\Xi_1,\F)$, $\beta \otimes y \in C_*(X,\Xi_2, \F)$ and  $\epsilon \otimes z \in C_*(X,\Xi_3,\F).$ \begin{align*}
    K \circ \left( \Id \otimes K\right) ((\alpha \otimes x) \otimes (\beta \otimes y) \otimes (\gamma \otimes z)) &= (-1)^{|\gamma||y|} K\left( \alpha \otimes x \otimes ((\beta, \gamma) \otimes (y,z)) \right)\\
    &= (-1)^{(|\beta|+|\gamma|)|x| + |\gamma||y|} (\alpha, \beta, \gamma) \otimes (x,y,z).
\end{align*}

\begin{align*}
    K \circ \left( K \otimes \Id \right) ((\alpha \otimes x) \otimes (\beta \otimes y) \otimes (\gamma \otimes z)) &= (-1)^{|\beta||x|} K\left( (\alpha, \beta) \otimes (x,y)) \otimes \gamma \otimes z \right)\\
    &= (-1)^{|\beta||x| + |\gamma|(|x|+|y|)} (\alpha,\beta,\gamma) \otimes (x,y,z)\\
    &= K \circ \left( \Id \otimes K\right) ((\alpha \otimes x) \otimes (\beta \otimes y) \otimes (\gamma \otimes z))
\end{align*}

$\bullet$ \textit{\textbf{Step 2 : The square 2'. is commutative up to the Dold sign and 2. is commutative up to the Kozsul sign.}}

This is a direct application of Lemma \ref{lemme : commutativité K avec direct et shriek}.\\

$\bullet$ \textit{\textbf{Step 3 : The rightmost and leftmost triangles are commutative.}}

This is a direct application of Lemma \ref{lemme : induced maps on a Cartesian product commutes with K} since $\Id : E \to E$ is a morphism of fibrations and $\widetilde{\Id} = \Id : C_*(X, \F) \to C_*(X,\F).$\\

$\bullet$ \textit{\textbf{Step 4 : The square 4. is commutative.}}

This directly follows from the composition property of shriek maps \cite[Theorem 8.2]{BDHO23} since $$(\Delta \times \textup{Id})\Delta = (\textup{Id} \times \Delta)\Delta.$$

$\bullet$ \textit{\textbf{Step 5 : The squares 5. are commutative.}}

This is a particular case of Proposition \ref{prop : morphisme Ai commute avec direct et shriek}.\\

$\bullet$ \textit{\textbf{Step 6 : The square 6. is commutative.}}

 We proved in Lemma \ref{lemme : morphism induit poussé par une application continue} that $\Delta^*(m\times \textup{Id})$ and $\Delta^*(\textup{Id} \times m)$ are morphisms of fibrations. This step is therefore a consequence of Corollary \ref{cor : composition of morphism of fibrations} and Corollary \ref{cor : equality of Ai morphism on total space gives equality of induced maps}. Indeed, since we assumed $m(m\times \textup{Id})_* = m(\textup{Id} \times m)_* : H_*( E \ftimes{\pi}{\pi} E \ftimes{\pi}{\pi} E) \to H_*(E)$, \begin{align*}
     \tilde{m} \circ \Delta^* \widetilde{m \times \Id} &= \widetilde{m (m\times \Id)}\\
     &= \widetilde{m(\Id \times m)}\\
     &= \tilde{m} \circ \Delta^* \widetilde{\Id \times m}.
 \end{align*}

\end{proof}

\subsubsection{Commutativity}

We will use the following notations for the switches of coordinates :

\begin{itemize}
    \item[$\bullet$] $\tau : X^2 \to X^2$.\\
    \item[$\bullet$] Consider $\tau^*F^2 \to \tau^*E^2 \to X^2$ the pullback fibration by $\tau$ whose natural transitive lifting function is $$\tau^*\Phi^2((\alpha, \beta)\cdot(\gamma_1,\gamma_2)) = (\Phi(\alpha, \gamma_2), \Phi(\beta, \gamma_1))$$ for all $(\alpha, \beta) \in F^2$ and $(\gamma_1, \gamma_2) \in \mathcal{P}_{\star \to X}X$. The switch of coordinates $\tau_E : \tau^*E^2 \to E^2$ is an isomorphism of fibrations (bijective morphism of fibrations) that commutes with the transitive lifting functions $\tau^*\Phi_2$ and $$\Phi_2=(\Phi,\Phi) : E^2 \ftimes{(\pi,\pi)}{\ev_0} \mathcal{P}X^2 \to E^2$$ Therefore, $$\tilde{\tau_E} : C_*(X^2,\tau^*C_*(F^2)) \to C_*(X^2,C_*(F^2)), \quad  \tilde{\tau_E}((\alpha, \beta) \otimes (x,y)) = \tau_{E,*}(\alpha, \beta) \otimes (x,y).$$  If $(\alpha, \beta) = \EZ(\alpha \otimes \beta)$, then $\tilde{\tau_E}((\alpha, \beta) \otimes (x,y)) = (-1)^{|\alpha||\beta|} (\beta, \alpha) \otimes (x,y).$\\
    \item[$\bullet$] $\tau_F : E \ftimes{\pi}{\pi} E \to E \ftimes{\pi}{\pi} E$ is also an isomorphism of fibrations and $$\tilde{\tau_F} = \Delta^*\tilde{\tau_E} : C_*(X, \Delta^*C_*(F^2)) \to C_*(X, \Delta^*C_*(F^2)).$$
\end{itemize}

\begin{prop}\label{prop : commutativité CSDG}
     We suppose that $m$ is commutative in homology, ie $m_*\tau_{E,*} = m_* : H_*(E \ftimes{\pi}{\pi} E) \to H_*(E)$.
     Then, for any $\sigma,\omega \in H_*(X,\F)$,  
    $$\CS(\sigma, \omega) = (-1)^{(n-|\sigma|)(n-|\omega|)} \CS(\omega, \sigma).$$ 
\end{prop}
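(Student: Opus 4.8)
The plan is to reduce the graded-commutativity of $\CS$ to the commutativity of $m$ by tracking how each of the three building blocks ($K$, $\Delta_!$, $\tilde m$) interacts with the coordinate switch $\tau$. The key observation is that $\Delta = \tau \circ \Delta : X \to X^2$, so that the Gysin map $\Delta_!$ is unchanged when we precompose the target $X^2$ with $\tau$, up to the functoriality/identification morphism $\tilde{\tau_E}$ on the coefficients. Concretely, I would first establish the following chain-level (or homology-level) identity: the cross product $K$ is graded-commutative with respect to $\tau$ in the sense that
\[
\tilde{\tau_E} \circ K(\sigma \otimes \omega) = (-1)^{|\sigma||\omega|} \, \tau_* \big( K(\omega \otimes \sigma) \big),
\]
where $\tau_* : H_*(X^2, C_*(F^2)) \to H_*(X^2, \tau^*C_*(F^2))$ is the direct map induced by $\tau$ (using that $\tau$ is a diffeomorphism, so $\tau_*$ is computed by the simple formula from \cite[Section 9]{BDHO23}). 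This amounts to a sign bookkeeping using the definition of $K^{top}$, the Koszul sign $(-1)^{|\beta||x|}$ in its formula, and the Eilenberg--Zilber commutativity $\EZ \circ \text{switch} = \text{switch} \circ \EZ$ up to the usual sign, together with the $n^2$ contributions coming from the fact that $\tau$ has degree $(-1)^{n}$ on each factor, i.e.\ $\deg \tau = (-1)^{n^2}$ on $X^2$.

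Next I would use the compatibility of the Gysin map with diffeomorphisms and the composition property of shriek maps from \cite[Theorem 8.2]{BDHO23}: since $\Delta = \tau \circ \Delta$, one has $\Delta_! = \Delta_! \circ \tau_!$ where $\tau_! = (\deg \tau) \, \tau_*^{-1}$ on homology, hence
\[
\Delta_! \circ \tau_* = (-1)^{n^2} \, \Delta_! = (-1)^{n} \, \Delta_! : H_*(X^2, C_*(F^2)) \to H_{*-n}(X, \Delta^*C_*(F^2))
\]
after identifying coefficients via $\tilde{\tau_E}$ and $\tilde{\tau_F} = \Delta^*\tilde{\tau_E}$. Combining this with the previous step and with the commutativity hypothesis $m_* \tau_{E,*} = m_*$, which at the level of the induced morphisms of complexes reads $\tilde m \circ \tilde{\tau_F} = \tilde m$ in homology (this uses Corollary \ref{cor : equality of Ai morphism on total space gives equality of induced maps}, that $\tilde{(\cdot)}$ in homology depends only on the map on total spaces), I would get
\[
\tilde m \circ \Delta_! \circ K(\sigma \otimes \omega)
= (-1)^{|\sigma||\omega| + n} \, \tilde m \circ \Delta_! \circ K(\omega \otimes \sigma).
\]

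Finally I would insert the Dold signs. By definition $\CS(\sigma \otimes \omega) = (-1)^{n(n-|\omega|)} \tilde m \circ \Delta_! \circ K(\sigma \otimes \omega)$ and $\CS(\omega \otimes \sigma) = (-1)^{n(n-|\sigma|)} \tilde m \circ \Delta_! \circ K(\omega \otimes \sigma)$, so the claimed relation $\CS(\sigma \otimes \omega) = (-1)^{(n-|\sigma|)(n-|\omega|)} \CS(\omega \otimes \sigma)$ becomes, after cancelling the common factor $\tilde m \circ \Delta_! \circ K(\omega \otimes \sigma)$, an identity purely between the exponents: $n(n-|\omega|) + |\sigma||\omega| + n \equiv (n-|\sigma|)(n-|\omega|) + n(n-|\sigma|) \pmod 2$, which one checks expands to $|\sigma||\omega| \equiv |\sigma||\omega| \pmod 2$ --- exactly the point of the Dold sign, as in \cite{Lau11}. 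The main obstacle I expect is the first step: getting every sign right in the interaction of $K^{top}$ with $\tau$, since it mixes the internal Koszul sign of the Eilenberg--Zilber shuffle, the degree of the switch diffeomorphism on $X^n \times X^n$, and the module-structure sign \eqref{eq : module structure when splitted} for split chains; I would handle it by working with split representatives $(\alpha,\beta) = \EZ(\alpha \otimes \beta)$ throughout, where all signs are explicit, and invoking Lemma \ref{lemme : commutativité K avec direct et shriek} and the compatibility of $K$ with the Fibration Theorem (Lemma \ref{lemme : Compatibility K and Fibration Theorem}) to reduce, if needed, to the already-known graded commutativity of the Pontryagin--Thom description of $\mu_*$ on $H_*(E)$ via the Eilenberg--Zilber map.
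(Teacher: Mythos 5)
Your overall strategy is the same as the paper's: use $\tau \circ \Delta = \Delta$ and the composition property $\Delta_! = \Delta_! \circ \tau_!$, push the coordinate switch through $K$, $\Delta_!$, $\tilde m$ in turn, and reduce to a $\bmod 2$ identity in the exponents. The step that uses $\tilde m \circ \tilde{\tau_F} = \tilde m$ (via Corollary~\ref{cor : equality of Ai morphism on total space gives equality of induced maps}) is exactly right, as is the final exponent check.

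There is, however, a genuine gap in your step~1 as written. You propose to compute $\tau_*$ (or equivalently $\tau_!$) on the Künneth complex by ``the simple formula from \cite[Section 9]{BDHO23}'', i.e.\ $\varphi_*(\alpha \otimes x) = \alpha \otimes \varphi(x)$ and $\varphi_!(\alpha \otimes x) = \deg(\varphi)\, \alpha \otimes \varphi^{-1}(x)$, and attribute all remaining signs to the Koszul sign in $K^{top}$, the EZ graded commutativity, and $\deg \tau = (-1)^{n^2} = (-1)^{n}$. But those formulas hold only when the DG Morse data on the target is $\varphi_*$ of the data on the source. The Künneth data set $\Xi_{X\times Y}$ is \emph{not} fixed by the swap: its orientation of the unstable manifold of $(x,x')$ is the product orientation
$\left( \Or \ \overline{W}^u(x), \Or \ \overline{W}^u(x')\right)$,
so that $o_1(x,x') = (-1)^{|x||x'|}\, \tau^* o_2(x,x')$.
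The continuation morphism identifying $C_*(X^2, \Xi_1, \cdot)$ with $C_*(X^2, \tau^* \Xi_2, \cdot)$ is therefore $(\alpha,\beta)\otimes(x,x')\mapsto (-1)^{|x||x'|}(\alpha,\beta)\otimes(x,x')$ up to chain homotopy, and the correct formula is
\[
\tau_!\bigl((\alpha,\beta)\otimes(x,x')\bigr) = (-1)^{|x||x'| + n}\,(\alpha,\beta)\otimes(x',x),
\]
which is precisely what the paper proves in a preliminary lemma. If you use the naive formula without the $(-1)^{|x||x'|}$ correction, your claimed identity
$\tilde{\tau_E}\circ K(\sigma\otimes\omega) = (-1)^{|\sigma||\omega|}\tau_*(K(\omega\otimes\sigma))$
fails by that sign: expanding on split representatives $\sigma = \alpha\otimes x$, $\omega = \beta\otimes x'$, both sides agree only if $\tau_*$ carries the $(-1)^{|x||x'|}$. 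Note that $(-1)^{|\sigma||\omega|}$ contains $(-1)^{|x||x'|}$ in its expansion, which is why your final exponent check still balances --- but that sign has to actually \emph{come from somewhere}, namely the orientation comparison between $\Xi_1$ and $\tau^*\Xi_2$, not from the Koszul sign of $K^{top}$ nor from $\deg\tau$.

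Once you insert the orientation-corrected formula for $\tau_!$ (or, equivalently, justify your step~1 at the homology level by pushing everything through the Fibration Theorem and Lemma~\ref{lemme : Compatibility K and Fibration Theorem}, where $K$ corresponds to $\EZ$ and the sign is the familiar graded commutativity of $\EZ$), your argument closes and the sign check you perform at the end is correct. So the approach is the right one; the missing piece is the Morse-data orientation comparison that produces $(-1)^{|x||x'|}$, which is the content of the lemma the paper devotes to this point.
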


In order to prove this property, we have to understand how to switch variables on the manifold $X^2$. Let $\Xi$ be a set of Morse data on $X$.

Let $\Xi' = (f',\xi', s'_{x',y'}, o',\mathcal{Y}', \theta')$ be another Morse DG data set on $X$ and define $\Xi_1$, $\Xi_2$ to be the Morse DG data sets on $X^2$ described in Section \ref{subsection : Morse data on a Cartesian product} with $Y=X$ and respectively $(\Xi_X,\Xi_Y) =(\Xi, \Xi')$ and $(\Xi_X,\Xi_Y) = (\Xi',\Xi)$.

\begin{lemme}
    The shriek map $\tau_! : C_*(X^2, \Xi_1, C_*(F^2)) \to C_*(X^2, \Xi_2, \tau^*C_*(F^2))$ is given up to chain homotopy by $$\tau_!((\alpha,\beta) \otimes (x,x')) = (-1)^{|x||x'|+n} (\alpha, \beta) \otimes (x',x). $$ 
\end{lemme}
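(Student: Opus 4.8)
The goal is to compute the shriek map $\tau_!$ for the switch $\tau : X^2 \to X^2$, $\tau(x,x') = (x',x)$, with respect to the two Künneth-type DG Morse data sets $\Xi_1$ and $\Xi_2$ on $X^2$ (which only differ by the order in which the two factors are taken). The natural approach is to use the chain-level description of the shriek map for a diffeomorphism given in \cite[Section 9]{BDHO23}: if $\tau$ is a diffeomorphism and we push forward all the structure by $\tau$, then $\tau_!(\alpha \otimes z) = \deg(\tau) \cdot \alpha \otimes \tau^{-1}(z)$ up to a sign coming from the orientation conventions and the grading shift (here $n_X - n_Y = 0$ since both source and target are $X^2$, so there is no degree shift). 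First I would observe that $\Xi_2 = \tau_* \Xi_1$ essentially by construction of the data on a Cartesian product in Section \ref{subsection : Morse data on a Cartesian product}: the Morse function $f' \oplus f$ is $(f \oplus f') \circ \tau$, the pseudo-gradient and tree are exchanged accordingly, and the representing chain systems of Lemma \ref{lemme : representing chain system of a Cartesian product} transform compatibly since $\tau$ identifies $\trajbi{f \oplus f'}{(x,x'),(y,y')}$ with $\trajbi{f' \oplus f}{(x',x),(y',y)}$.

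The sign is the heart of the matter. I would extract it by comparing orientations of unstable manifolds: $\tau$ sends $\wbi{u}{f\oplus f'}{x,x'} = \wbi{u}{f}{x} \times \wbi{u}{f'}{x'}$ to $\wbi{u}{f'}{x'} \times \wbi{u}{f}{x}$, and the product orientation convention (point 3 of the construction in Section \ref{subsection : Morse data on a Cartesian product}) introduces the Koszul sign $(-1)^{|x||x'|}$ under this swap. This accounts for the $(-1)^{|x||x'|}$ in the statement. The remaining $(-1)^n$ is the sign built into the definition of the shriek map for an inclusion/diffeomorphism in \cite[Section 9]{BDHO23} applied in our grading; concretely, one works with $\tau$ as a diffeomorphism, checks $\deg(\tau)$ — here $\tau$ swaps two copies of an $n$-manifold, so $\deg(\tau) = (-1)^{n^2} = (-1)^n$ — and this is exactly the extra factor. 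I would also need to verify that the coefficient module transforms as claimed, i.e. that the module structure on $C_*(F^2)$ pulled back along $\tau$ is $\tau^*C_*(F^2)$ with the lifting function $\tau^*\Phi^2$ described just above the statement, which is immediate since $\tau_E : \tau^*E^2 \to E^2$ intertwines the lifting functions by definition; on split chains $(\alpha,\beta)$ no further sign appears because the coefficient $\alpha \otimes \beta$ is carried along unchanged — the coordinate swap only acts on the critical points, not on the fiber labels, in this chain-level formula (the fiber swap $\tilde{\tau_E}$ is a separate map carrying its own $(-1)^{|\alpha||\beta|}$).

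Concretely the steps are: (i) record that $\tau$ is an orientation-preserving-or-not diffeomorphism $X^2 \to X^2$ with $\deg \tau = (-1)^n$, and that $\Xi_2 = \tau_*\Xi_1$ up to the representing-chain-system choices, which are homologous and hence yield chain-homotopic complexes by \cite[Proposition 5.2.8]{BDHO23} / Theorem \ref{thm : continuation morphism Psi_01}; (ii) invoke the chain-level formula for the shriek map of a diffeomorphism from \cite[Section 9]{BDHO23}, giving $\tau_!((\alpha,\beta)\otimes(x,x')) = \deg(\tau) \cdot \varepsilon \cdot (\alpha,\beta) \otimes (x',x)$ with $\varepsilon$ the orientation-comparison sign; (iii) compute $\varepsilon = (-1)^{|x||x'|}$ from the product orientation convention under the factor swap, and $\deg\tau = (-1)^n$; (iv) conclude $\tau_!((\alpha,\beta)\otimes(x,x')) = (-1)^{|x||x'|+n}(\alpha,\beta)\otimes(x',x)$ up to chain homotopy.

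The main obstacle I anticipate is getting the sign bookkeeping exactly right — in particular disentangling the three potential sources of signs (the degree of $\tau$ as a self-map of a $2n$-manifold, the Koszul sign from swapping the two unstable-manifold factors in the orientation rule, and any sign hidden in the identification $\Delta^*$ vs. the Künneth cocycle $m^K$ of Definition \ref{defi : m^K}) and confirming that they combine to precisely $(-1)^{|x||x'|+n}$ and not, say, $(-1)^{|x||x'|+n^2}$ (which happens to coincide) or with an extra shift. I would pin this down by testing the formula on the simplest case $x = x'$ a minimum of $f$, where $\tau_!$ should reduce to the honest Poincaré-dual-of-the-diagonal-swap computation, and cross-check against the commutativity statement of Proposition \ref{prop : commutativité CSDG} that this lemma is meant to feed into, since the Dold sign there is rigged precisely to absorb this $(-1)^{|x||x'|+n}$.
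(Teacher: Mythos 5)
Your proposal is correct and follows essentially the same route as the paper's proof: invoke the BDHO chain-level formula $\varphi_!(\alpha\otimes z)=(-1)^{\deg\varphi}\alpha\otimes\varphi^{-1}(z)$ for a diffeomorphism to get the factor $(-1)^n$, then correct for the mismatch between $\Xi_1$ and $\tau^*\Xi_2$ via a continuation map whose coefficient is the Koszul sign $(-1)^{|x||x'|}$ coming from swapping the two factors in the product orientation of unstable manifolds. One small inaccuracy in your write-up: you first attribute the discrepancy between $\Xi_1$ and $\tau^*\Xi_2$ to the representing-chain-system choices, but in fact (as you correctly compute a sentence later) the representing chain systems agree and it is only the orientation $o$ that differs, by exactly $(-1)^{|x||x'|}$; the continuation map $\Psi((\alpha\otimes\beta)\otimes(x,x'))=(-1)^{|x||x'|}(\alpha\otimes\beta)\otimes(x,x')$ then implements that orientation change, which is what the paper makes explicit.
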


\begin{proof}

Let us recall how the direct and shriek maps of a diffeomorphism $\varphi : Y \to Z$ have been defined in \cite[Section 9.1]{BDHO23}.

Given $\Xi = (f, \xi, \mathcal{Y}, o, s_{x,y}, \theta)$, define $\varphi^* \Xi = ( f \circ \varphi^{-1}, \varphi_*\xi, \varphi\mathcal{Y}, \varphi(o), \varphi^*s_{\varphi(x),\varphi(y)}, \varphi\circ\theta)$ where the orientations $\varphi(o)$ of the unstable manifolds are defined for any $x,y \in \Crit(f)$ by

    $$\Or \ \overline{W^{u}}(\varphi(x)) := \Or \ \wb{u}{x},$$

    and the representing chain system is  $$\varphi^*s_{\varphi(x),\varphi(y)} = \varphi_*(s_{x,y}) \in C_{|x|-|y|-1}\left( \trajb{\varphi(x), \varphi(y)} \right).$$

Then, the direct map and shriek map are defined for any DG system $\G$ on $Z$ by $$\varphi_* : C_*(Y, \Xi, \varphi^*\G) \to C_*(Z, \varphi^*\Xi, \G), \ \varphi_*(\alpha \otimes x) = \alpha \otimes \varphi(x)$$

and $$\varphi_! : C_*(Z, \varphi^*\Xi, \G) \to C_*(Y, \Xi, \varphi^*\G), \ \varphi_!(\alpha \otimes x) = (-1)^{deg \ \varphi} \ \alpha \otimes \varphi^{-1}(x).$$

Therefore $$\tau_! : C_*(X^2, \tau^*\Xi_2, C_*(F^2)) \to C_*(X^2, \Xi_2, \tau^*C_*(F^2)), \ \tau_!((\alpha, \beta) \otimes (x,x')) = (-1)^n (\alpha, \beta) \otimes (x',x).$$

However, $\Xi_1 \neq \tau^*\Xi_2$. Indeed, \begin{align*}
    o_1(x,x') &:= \Or \  \overline{W}^{u}_{f,f'}(x,x') = \left( \Or \ \overline{W}^u_{f}(x), \Or \ \overline{W}^u_{f'}(x') \right)\\
    &= (-1)^{|x||x'|} \left( \Or \ \overline{W}^u_{f'}(x'), \Or \ \overline{W}^u_{f}(x) \right) = (-1)^{|x||x'|} \tau^* o_2(x,x').
\end{align*} 

The other data are the same. A corollary of the proof of the first step of \cite[Theorem 6.3.1]{BDHO23} shows that the continuation map $\Psi : C_*(X^2,\Xi_1, C_*(F^2)) \to C_*(X^2, \tau^*\Xi_2, C_*(F^2))$ is $$\Psi((\alpha \otimes \beta) \otimes (x,x')) = (-1)^{|x||x'|} (\alpha \otimes \beta) \otimes (x,x').$$

Therefore, the shriek map  $\tau_! : C_*(X^2, \Xi_1, \F^2) \to C_*(X^2, \Xi_2, \tau^*\F^2)$ is given up to chain homotopy by $$\tau_!((\alpha,\beta) \otimes (x,x')) = (-1)^{|x||x'|+n} (\alpha, \beta) \otimes (x',x). $$ 

\end{proof}

\begin{myproof}{of Proposition}{\ref{prop : commutativité CSDG}} We first remark that $\tau \circ \Delta = \Delta$ and therefore the composition property for the shriek map gives $\Delta_! = \Delta_! \circ \tau_!$ up to chain homotopy.\\

We compute \begin{align*}
    \CS( \alpha \otimes x, \beta \otimes x') &= (-1)^{n(n-|\beta|-|x'|)}(-1)^{|\beta||x|} \Tilde{m} \circ \Delta_!( (\alpha,\beta) \otimes (x,x') ) \\
    &= (-1)^{|\beta|(|\alpha|+|x|+n) + (n+|x|)|x'|} \Tilde{m} \circ \Delta_! \circ \tilde{\tau_E} \circ \tau_! ( (\beta, \alpha) \otimes (x',x))\\
    &= (-1)^{|\beta|(|\alpha|+|x|+n) + (n+|x|)|x'|}  \Tilde{m}\circ \tilde{\tau_F} \circ \Delta_! ((\beta, \alpha) \otimes (x',x))\\
    &= (-1)^{|\beta|(|\alpha|+|x|+n) + (n+|x|)|x'|}  \Tilde{m} \circ \Delta_! ((\beta, \alpha) \otimes (x',x))\\
    &= (-1)^{(n-|\alpha|-|x|)(n-|\beta| -|x'|)} \CS(  \beta \otimes x' \otimes \ \alpha \otimes x). 
    \end{align*}

The equality $\Delta_! \circ \tilde{\tau_E} = \Delta^*\tilde{\tau} \circ \Delta_! = \tilde{\tau_F} \circ \Delta_! $ is a direct consequence of Proposition \ref{prop : morphisme Ai commute avec direct et shriek}.  The equality $\Tilde{m} \circ \tilde{\tau_F} = \Tilde{m}$ is a direct consequence of Corollary \ref{cor : composition of morphism of fibrations} and Corollary \ref{cor : equality of Ai morphism on total space gives equality of induced maps}.

\end{myproof}

\subsubsection{Neutral element}

\begin{prop}\label{prop : Neutral element}
     Suppose that the fibration $F \hookrightarrow E \to X$ admits a section $s: X \to E$ such that

     $$m(s(\pi(e)), e) = m(e, s(\pi(e))) = e $$ for any $e \in E.$
     Then, $\tilde{s}([X]) \in H_n(X, \F)$ is a neutral element for $\CS$.
\end{prop}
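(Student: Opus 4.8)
The plan is to exhibit $\tilde s([X])$ as a genuine two-sided unit by realising the section as a one-sided inverse to $m$ \emph{at the level of fibrations}, and then transporting the resulting identity $m\circ\sigma=\mathrm{id}$ through the functorial machinery of Theorems \ref{thm : morphisme induit commute avec iso}, \ref{thm : Künneth formula} and \ref{thm : DG Thom iso}. First I would fix notation. The section $s:X\to E$ is a morphism of fibrations from the trivial fibration $\mathrm{id}_X:X\to X$ (fibre a point, equal to $\{s(\star)\}\subset F$ over $\star$) to $\pi:E\to X$, since $\pi\circ s=\mathrm{id}_X$. The complex $C_*(X,C_*(\{\mathrm{pt}\}))=C_*(X,\Z)$ is the ordinary Morse complex of $X$, so $H_n(X,\Z)\cong H_n(X)\cong\Z\langle[X]\rangle$; set $e_X:=\tilde s([X])\in H_n(X,\F)$. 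Since $\tilde s$ has degree $0$, $e_X$ is indeed an $n$-dimensional class, and by Theorem \ref{thm : morphisme induit commute avec iso} together with the fact that the Fibration Theorem isomorphism $\Psi_X:C_*(X,\Z)\to C_*(X)$ sends $[X]$ to the fundamental class, one gets $\Psi_E(e_X)=s_*[X]\in H_n(E)$.

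Next I would introduce the two morphisms of fibrations over $X$,
\[
\sigma_L:=(s\circ\pi,\ \mathrm{id}_E):E\to E\ftimes{\pi}{\pi}E,\qquad
\sigma_R:=(\mathrm{id}_E,\ s\circ\pi):E\to E\ftimes{\pi}{\pi}E,
\]
which are well defined because $\pi(s(\pi(e)))=\pi(e)$. The hypothesis $m(s(\pi(e)),e)=m(e,s(\pi(e)))=e$ is precisely $m\circ\sigma_L=m\circ\sigma_R=\mathrm{id}_E$. Hence, by Corollary \ref{cor : composition of morphism of fibrations} and Corollary \ref{cor : equality of Ai morphism on total space gives equality of induced maps} (and Corollary \ref{cor : homotopy euqivalence if different lifting functions} to identify $\widetilde{\mathrm{id}_E}$ with the identity in homology), one obtains $\tilde m\circ\tilde\sigma_L=\tilde m\circ\tilde\sigma_R=\mathrm{id}$ on $H_*(X,\F)$.

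The heart of the argument is then the pair of identities, for every $a\in H_*(X,\F)$,
\[
\Delta_!\big(K(e_X\otimes a)\big)=(-1)^{n(n-|a|)}\,\tilde\sigma_L(a),
\qquad
\Delta_!\big(K(a\otimes e_X)\big)=\tilde\sigma_R(a),
\]
the signs being exactly those needed so that, combined with the Dold sign in the definition of $\CS$, one gets $\CS(e_X\otimes a)=\tilde m\tilde\sigma_L(a)=a$ and $\CS(a\otimes e_X)=\tilde m\tilde\sigma_R(a)=a$. To prove these I would pass through the Fibration Theorem: by Lemma \ref{lemme : Compatibility K and Fibration Theorem}, $\Psi_{E\times E}(K(e_X\otimes a))=\EZ(s_*[X]\otimes\Psi_E(a))=(s\times\mathrm{id}_E)_*([X]\times\Psi_E(a))$; by Theorem \ref{thm : DG Thom iso}, $\Psi_{E\ftimes{\pi}{\pi}E}\circ\Delta_!$ corresponds to the Gysin map $\Delta^E_!$ of the embedding $\Delta^E:E\ftimes{\pi}{\pi}E\hookrightarrow E\times E$ (whose normal bundle is $\pi^*TX$); and by Theorem \ref{thm : morphisme induit commute avec iso} again, $\Psi_{E\ftimes{\pi}{\pi}E}\circ\tilde\sigma_L=(\sigma_L)_*\circ\Psi_E$, and likewise for $\sigma_R$. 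Thus the claim reduces to the purely topological statement $\Delta^E_!\big((s\times\mathrm{id}_E)_*([X]\times b)\big)=\pm(\sigma_L)_*(b)$ for $b\in H_*(E)$, and symmetrically for $\sigma_R$. This follows from the base-change (naturality) property of the Pontryagin--Thom Gysin map: the preimage of $E\ftimes{\pi}{\pi}E$ under $s\times\mathrm{id}_E:X\times E\to E\times E$ is the graph $\{(\pi(e),e):e\in E\}\cong E$, the intersection is transverse of the expected codimension $n$, and the Gysin map of this graph inside $X\times E$ sends $[X]\times b$ to $b$ --- the latter being exactly the statement that $[X]$ is a unit for the intersection product, which is the purpose of the Dold sign.

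The main obstacle I anticipate is the sign bookkeeping in the displayed identities and checking that, together with the Dold sign, the total sign is $+1$ rather than $\pm1$; the topological input (base change for Gysin maps, and $[X]$ being the intersection unit) is standard, but the orientation conventions of Section \ref{subsection : Direct and shriek maps} and of the Künneth map must be tracked carefully, and both sides ($e_X\otimes a$ and $a\otimes e_X$) genuinely need to be treated, since commutativity of $\CS$ is not available without extra hypotheses on $m$. I would also record, as a remark, that once Proposition \ref{prop : CS < CSDG} is available a shorter but less self-contained route is to transport everything to $H_*(E)$ and invoke the fact that $s_*[X]$ is the unit of the Gruher--Salvatore product \cite{GS07}.
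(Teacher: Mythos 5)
Your proposal is correct in outline and reaches the result by a genuinely different route at the crucial step. The paper places $[X]$ in the right slot, factors $m\circ(\Id_E\times s)$ as the canonical identification $\sigma : E\ftimes{\pi}{\Id}X\to E$, and then does a \emph{direct Morse chain-level computation} on $X$: choosing a Morse function with a single maximum $x_{\max}$, it verifies by hand that $\overline{\mathcal{M}}^{\Delta_!}((x,x_{\max}),y) = \overline{\mathcal{M}}^{\Id_!}(x,y)$ with matching orientations, so that $\tilde\sigma\circ\Delta_!\circ K(\alpha\otimes x\otimes\star\otimes x_{\max}) = \alpha\otimes x$ at the chain level. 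You instead introduce the two sections $\sigma_L,\sigma_R : E\to E\ftimes{\pi}{\pi}E$ of the two coordinate projections, observe $m\circ\sigma_L=m\circ\sigma_R=\Id_E$ so that $\tilde m\tilde\sigma_L=\tilde m\tilde\sigma_R=\Id$ in homology (Corollaries \ref{cor : composition of morphism of fibrations}, \ref{cor : equality of Ai morphism on total space gives equality of induced maps}, \ref{cor : homotopy euqivalence if different lifting functions}), and then use Theorems \ref{thm : morphisme induit commute avec iso}, \ref{thm : Künneth formula}, \ref{thm : DG Thom iso} to push the remaining identity $\Delta_!\circ K$ into singular homology of the total spaces, where it becomes a base-change statement for the Pontryagin--Thom Gysin map of $E\ftimes{\pi}{\pi}E\subset E\times E$ under $s\times\Id_E$, landing on the graph $\Gamma_\pi\subset X\times E$. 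What the paper's approach buys is a completely self-contained sign check within the Morse model, since the identification $\overline{\mathcal{M}}^{\Delta_!}((x,x_{\max}),y)=\overline{\mathcal{M}}^{\Id_!}(x,y)$ makes the orientation comparison essentially tautological. What your approach buys is that it explicitly and symmetrically handles \emph{both} $\CS(e_X\otimes a)$ and $\CS(a\otimes e_X)$; the paper's written proof only checks $\CS(\gamma\otimes e_X)=\gamma$ (where the Dold sign is $(-1)^{n(n-n)}=1$), leaving the left unit --- for which the Dold sign is nontrivial and commutativity of $\CS$ is not available without additional hypotheses on $m$ --- unverified. You correctly flag the sign bookkeeping for the base-change identity $(\Gamma_\pi)_!([X]\times b)=\pm b$ as the main obstacle; note that this "fibered intersection unit" statement, while standard in spirit, is not quite the classical statement that $[X]$ is a unit for the intersection product on $X$ --- it takes place in $X\times E$ rather than $X\times X$ --- and in this paper's framework the cleanest way to discharge it would be essentially the Morse computation the paper does, or a Thom-class argument as in Proposition \ref{prop : Morse identification DG is the Thom iso}, so be aware that the route is not entirely free.
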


\begin{proof}

A section $s : X \to E$ is a morphism of fibrations when one considers $X$ as the total space of the trivial fibration $ \star \hookrightarrow X \overset{\Id}{\to} X$. Denote $\F = C_*(F)$ and $\F^2 = C_*(F^2).$
    
Consider the following diagram 
    $$\xymatrix{H_*(X,\F) \otimes H_*(X,\Z) \ar[r]^-K \ar[d]^{\textup{Id} \otimes \tilde{s}} & H_*(X^2, C_*(F \times \{\star\})) \ar[r]^-{\Delta_!} \ar[d]^{\widetilde{\Id \times s}} & H_*(X, C_*(F \times \{\star\})) \ar[d]^{\Delta^* \widetilde{\Id \times s}} \ar[dr]^{\tilde{\sigma}} &  \\
    H_*(X,\F) \otimes H_*(X,\F) \ar[r]^-K  & H_*(X^2, \F^2)) \ar[r]^-{\Delta_!} & H_*(X, \Delta^*\F^2)) \ar[r]^{\tilde{m}} & H_*(X, \F),}$$

    where $\sigma : E \ftimes{\pi}{\Id} X \to E$, $(e,\pi(e)) \mapsto e$. Since $m \circ (\Id \times s) = \sigma$, Corollary \ref{cor : composition of morphism of fibrations} proves that the last triangle commutes.  The first diagram commutes by Lemma \ref{lemme : induced maps on a Cartesian product commutes with K}. The second diagram commutes by Proposition \ref{prop : morphisme Ai commute avec direct et shriek}.

    It remains to prove that $\tilde{\sigma} \circ \Delta_! \circ K(\gamma \otimes [X]) = \gamma$ for any $\gamma \in H_*(X,\F).$
    Let $\Xi$ be a set of DG Morse data on $X$. We can and will assume that the Morse function $f : X \to \R$ has only one local maximum $x_{max}$. In this case, $[(\star \otimes x_{\max})] = [X] \in H_n(X,\Z).$ 

    Let $\alpha \otimes x \in C_*(X,\Xi,\F)$.
    We start by computing $\Delta_!\left((\alpha, \star) \otimes (x,x_{max})\right)$ :
    \begin{align*}
        \overline{\mathcal{M}}^{\Delta_!}((x,x_{\max}),y)
        &= \overline{W}^s(y) \cap \Delta^{-1}(\overline{W}^u(x,x_{\max}))\\
        &= \overline{W}^s(y) \cap \overline{W}^u(x_{\max}) \cap \overline{W}^u(x) \\
        &= \overline{W}^s(y) \cap \overline{W}^u(x) = \overline{\mathcal{M}}^{\Id_!}(x,y).
    \end{align*}

 We now compare orientations.
  The orientation rule $(\Or \ \overline{W}^s(x_{\max}), \Or \ \overline{W}^u(x_{\max})) = \Or \ X$ gives $\Or \ \overline{W}^s(x_{\max}) = +$ and 

  $$\left(\Or \ \overline{W}^s(x,x_{\max}), \Or \ \overline{\mathcal{M}}^{\Delta_!}((x,x_{\max}),y)\right) = \Or \ \overline{W}^s(y) \Leftrightarrow \left( \Or \ \overline{W}^s(x), \Or \ \overline{\mathcal{M}}^{\Delta_!}((x,x_{\max}),y)\right) = \Or \ \overline{W}^s(y). $$ Therefore $\Or \ \overline{\mathcal{M}}^{\Delta_!}((x,x_{max}),y) = \Or \ \overline{\mathcal{M}}^{\Id_!}(x,y)$ and it follows that for any $$\tilde{\sigma} \circ\Delta_! \circ K(\alpha \otimes x \otimes \star \otimes x_{max}) = \sigma(\alpha, \star) \otimes x = \alpha \otimes x$$ up to chain homotopy. Indeed, $\sigma_* : C_*(F \times \{\star\}) \to C_*(F), \ (\alpha, \star) \mapsto \alpha$ is a morphism of DG-modules and therefore $\tilde{\sigma} : C_*(X,C_*(F \times \{\star\})) \to C_*(X,C_*(F))$, $(\alpha,\star) \otimes x \mapsto \alpha \otimes x.$ \\
\end{proof}

\begin{rem}
    The inclusion of constant loops $s : X \to \mathcal{L}X$ is a section of the loop-loop fibration $\Omega X \hookrightarrow \ls{X} \overset{\ev}{\to} X$ and it clearly satisfies the assumptions of Proposition \ref{prop : Neutral element}.
\end{rem}

\subsubsection{Functoriality}

Let $(Y^k, \star_Y)$ be a $k$-dimensional, smooth, pointed, oriented, closed, connected manifold.
 Let $g : Y \to X$ be a continuous map and $F \hookrightarrow E_Y \overset{\pi_Y}{\to} Y$ be the fibration obtained by pulling back by $g$ the fibration $F \hookrightarrow E \overset{\pi}{\to} X$. This fibration is endowed with the morphism of fibrations $$g^*m : E_Y \ftimes{\pi_Y}{\pi_Y} E_Y \to E_Y.$$

\begin{prop}\label{prop : Functoriality property}
    The shriek map $g_! : H_*(X, C_*(F)) \to H_{*-n+k}(Y, g^*C_*(F))$ is a morphism of rings up to sign.
    More precisely, up to chain homotopy, $$\CS^Y\left(g_!(\alpha \otimes x) \otimes g_!(\beta \otimes x')\right) = (-1)^{(k-n)(n + |\alpha| + |x|-|\beta| - |x'|)} g_!\left(\CS^X(\alpha \otimes x \otimes \beta \otimes x')\right).$$
   \end{prop}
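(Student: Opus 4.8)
The plan is to mimic, in the fibration setting, the diagram-chasing strategy already used for associativity and commutativity, using the compatibility lemmas proved earlier in the paper. Recall that $\mathrm{CS}^X$ and $\mathrm{CS}^Y$ are defined as $\tilde m\circ\Delta_!\circ K$ (up to the Dold sign) for the diagonals $\Delta:X\to X^2$ and $\Delta^Y:Y\to Y^2$, and the morphism of fibrations on $E_Y$ is $g^*m$. First I would set up the big diagram whose outer boundary expresses the claimed identity: on the top row, $H_*(Y,g^*\mathcal F)^{\otimes 2}\xrightarrow{K}H_*(Y^2,g^*\mathcal F^2)\xrightarrow{\Delta^Y_!}H_*(Y,(\Delta^Y)^*g^*\mathcal F^2)\xrightarrow{\widetilde{g^*m}}H_*(Y,g^*\mathcal F)$, and on the bottom row the same chain of maps for $X$, with the vertical arrows given by $g_!\otimes g_!$, then $(g\times g)_!$, then $g_!$, then $g_!$. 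The identity $\mathrm{CS}^Y(g_!(\cdot)\otimes g_!(\cdot))=\pm\,g_!(\mathrm{CS}^X(\cdot\otimes\cdot))$ will follow once each of the three squares is shown to commute up to chain homotopy (with the correct Koszul/Dold signs), together with bookkeeping of the Dold signs attached to $\mathrm{CS}^X$ and $\mathrm{CS}^Y$.

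The three squares are handled by results already available. The leftmost square — commutativity of $K$ with shriek maps — is exactly Lemma \ref{lemme : commutativité K avec direct et shriek} applied to $\varphi=\psi=g:Y\to X$ (with $n_Y=n_X=k$ on one side and $n_W=n_Z=\dots$ — here both factors are the same map $g$), which gives $K(g_!(\alpha\otimes x)\otimes g_!(\beta\otimes x'))=(-1)^{(k-n)(n-|\beta|-|x'|)+(k-n)(|\alpha|+|x|)}(g\times g)_!K(\alpha\otimes x\otimes\beta\otimes x')$. The middle square is the compatibility of the shriek map with the map induced by a morphism of fibrations: here one uses that $(g\times g)\circ\Delta^Y=\Delta\circ g$, so that $(g\times g)^*m = g^*(m)$ after identifying the pullback fibrations, and then invokes the composition property of shriek maps \cite[Theorem 8.2]{BDHO23} together with Proposition \ref{prop : morphisme Ai commute avec direct et shriek}, which states precisely that $\widetilde\varphi$ commutes with $g_!$ (second diagram of that Proposition). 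The rightmost square — that $g_!$ intertwines $\widetilde{g^*m}$ and $\tilde m$ — is again Proposition \ref{prop : morphisme Ai commute avec direct et shriek} (or, in the fibration language, Corollary \ref{cor : compatibility induced map by a morphism of fibration and direct maps} combined with the fact that $g^*m$ is the pullback morphism of fibrations of $m$, Lemma \ref{lemme : morphism induit poussé par une application continue}).

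Once the three squares commute, I would carefully collect the signs. Writing out $\mathrm{CS}^X(\alpha\otimes x\otimes\beta\otimes x')=(-1)^{n(n-|\beta|-|x'|)}\tilde m\Delta_! K(\alpha\otimes x\otimes\beta\otimes x')$ and $\mathrm{CS}^Y(g_!(\alpha\otimes x)\otimes g_!(\beta\otimes x'))=(-1)^{k(k-(|\beta|+|x'|+n-k))}\widetilde{g^*m}\,\Delta^Y_!\,K(g_!(\alpha\otimes x)\otimes g_!(\beta\otimes x'))$, one substitutes the sign from Lemma \ref{lemme : commutativité K avec direct et shriek} for the leftmost step, notes the shriek degree shift ($g_!$ raises degree by $n-k$, so $|g_!(\beta\otimes x')|=|\beta|+|x'|+n-k$), and then the remaining signs must telescope to $(-1)^{(k-n)(n+|\alpha|+|x|-|\beta|-|x'|)}$. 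This sign-chasing is the only genuinely fiddly part — I expect the main obstacle to be verifying that all the Dold signs, the Koszul sign $(-1)^{n|\alpha|}$-type terms hidden in $K$, and the orientation signs in the shriek composition $(g\times g)\circ\Delta^Y=\Delta\circ g$ combine correctly, rather than any conceptual difficulty; every commutativity statement needed is already established. For the second bullet of the functoriality item, if $g$ is an orientation-preserving homotopy equivalence, then $g_*$ and $g_!$ are mutually inverse isomorphisms in homology (Corollary \ref{cor : iso sur fibre induit tilde iso} together with the composition and identity properties of $g_*,g_!$), and the degree shift $n-k$ vanishes since $n=k$, so the sign $(-1)^{(k-n)(\cdots)}$ is trivial; hence $g_!$ and $g_*$ are ring isomorphisms inverse to each other.
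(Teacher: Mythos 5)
Your proof follows exactly the paper's strategy: split the claim into the same three-square ladder diagram and close each square with the same three ingredients (Lemma \ref{lemme : commutativité K avec direct et shriek} for the Künneth/shriek square, the composition property of shriek maps applied to $(g\times g)\circ\Delta^Y=\Delta\circ g$ for the middle square, and Proposition \ref{prop : morphisme Ai commute avec direct et shriek} for the square involving $\tilde m$). The only slight misattribution is that the middle square needs only the shriek composition property, not the identification $(g\times g)^*m = g^*m$ (that enters for the rightmost square), but this does not affect the correctness of the argument.
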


\begin{proof}

We denote $l = k-n \in \Z$.

The proof amounts to show that the following diagram is commutative up to the wanted sign :

    \[
\xymatrix{
H_{i+l}(Y,g^*\F) \otimes H_{j+ l}(Y,g^*\F) \ar[r]^-K & H_{i+j + 2l}\left(Y^2 , (g^2)^*\F^2 \right) \ar[r]^-{\Delta_!} & H_{i+j-n+l}(Y, g^*\F^2) \ar[r]^-{g^*\Tilde{m}} & H_{i+j-n+l}(Y, g^*\F)  \\
H_i(X,\F) \otimes H_j(X,\F) \ar[r]_K \ar[u]^{g_! \otimes g_!} & H_{i+j}\left(X^2 , \F^2 \right) \ar[r]_{\Delta_!}\ar[u]^{(g \times g)_!} & H_{i+j-n}(X, \F^2)\ar[r]_{\Tilde{m}}\ar[u]^{g_!} & H_{i+j-n}(X, \F) \ar[u]^{g_!}
}
\]

The first square commutes up to the wanted sign by Lemma \ref{lemme : commutativité K avec direct et shriek}, the second square commutes by the composition property \cite[Theorem 8.1.1]{BDHO23} and the third square commutes according to Proposition \ref{prop : morphisme Ai commute avec direct et shriek}.

\end{proof}

\begin{rem}
    In particular, if $Y$ is $n$-dimensional, then $g_! : H_*(X,\F) \to H_*(Y, g^*\F)$ is a morphism of rings.
\end{rem}

\begin{cor}\label{cor : shriek map of an homotopy equivalence is ring iso with pullbacked module structure}
    If $g : Y \to X$ is an orientation-preserving homotopy equivalence between two manifolds of same dimension, then $g_! : H_*( X, C_*(F)) \to H_*(Y, g^*C_*(F))$ and $g_* : H_*(Y, g^*C_*(F)) \to H_*( X, C_*(F))$ are isomorphisms of rings.

\end{cor}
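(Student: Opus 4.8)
The plan is to deduce the corollary from the functoriality statement already proved and from the invariance properties of direct and shriek maps recalled from \cite{BDHO23}. Write $\F = C_*(F)$ and let $n = \dim X = \dim Y$. Since $\dim Y = n$, the shift $k-n$ occurring in Proposition \ref{prop : Functoriality property} vanishes, so the sign there is trivial and that proposition says precisely that $g_! : H_*(X,\F) \to H_*(Y, g^*\F)$ is a morphism of rings for $\CS^X$ and $\CS^Y$. Hence the remaining points to establish are: $g_!$ is bijective, $g_*$ is bijective, and $g_*$ is multiplicative as well.

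First I would check that $g_!$ and $g_*$ are isomorphisms of $\Z$-modules. Pick a basepoint-preserving homotopy inverse $h : X \to Y$ of $g$; since $g_*h_* = \Id$ on top homology and $\deg g = 1$, the map $h$ is again an orientation-preserving homotopy equivalence of closed oriented $n$-manifolds, so Proposition \ref{prop : Functoriality property} applies to it too. Using the composition, homotopy-invariance and identity properties of \cite[Theorem 8.1.1]{BDHO23}, together with the canonical identifications $(gh)^*\F \cong \F$ and $(hg)^*g^*\F \cong g^*\F$ coming from the homotopies $gh \simeq \Id_X$ and $hg \simeq \Id_Y$, one gets that $g_!h_!$, $h_!g_!$, $g_*h_*$ and $h_*g_*$ are all isomorphisms; hence $g_!$ and $g_*$ are isomorphisms.

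The real content is then to identify $g_*$ with $g_!^{-1}$, i.e. to prove $g_* \circ g_! = \Id_{H_*(X,\F)}$: granting this, $g_*$ is the inverse of the ring isomorphism $g_!$ and is therefore itself a ring isomorphism, and $g_! \circ g_* = \Id$ as well (which incidentally also yields the ``inverses of each other'' clause of the Functoriality item of Theorem \ref{thm : A}). To prove $g_* \circ g_! = \Id$ the first step I would take is the spectral-sequence property (item 4 of \cite[Theorem 8.1.1]{BDHO23}): $g_*$ and $g_!$ are limits of morphisms between the canonical spectral sequences, and on the second page $g_{p,*} \circ g_{p,!}$ is the composite of the pushforward and the wrong-way map of the orientation-preserving homotopy equivalence $g$ on $H_p$ with coefficients in the local system $H_q(\F)$, which is the identity by the projection formula (a degree-one map). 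The hard part will be promoting this from $E^2$ to the abutment, since a filtered automorphism inducing the identity on the associated graded of a bounded filtration need not be the identity. I would handle this by a chain-level argument in the spirit of the composition property and of the proof of Proposition \ref{prop : equivalence homotopie param par Morse function}: fixing DG Morse data on $X$ and on $Y$ and the continuation data on $X \times [0,1]$ used to define $g_*$ and $g_!$, one constructs a homotopy cocycle in the sense of Proposition \ref{Prop : homotopy Criterion Ai} exhibiting the composition of the two families of evaluation cocycles as homotopic to the trivial one. Alternatively, and perhaps more transparently, one can transport the identity through the Fibration Theorem and Theorem \ref{thm : DG Thom iso}, under which $g_!$ becomes the Gysin map of the homotopy equivalence $g^*E = E_Y \to E$ of total spaces, whose normal data is $0$-dimensional, so that this Gysin map is literally a two-sided inverse of $(E_Y \hookrightarrow E)_*$ and $g_* = g_!^{-1}$ follows at once.
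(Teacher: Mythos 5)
Your overall plan matches the paper's: note that with $\dim Y = \dim X$ the degree shift vanishes so the Functoriality proposition gives $g_!$ as a ring morphism, then argue that $g_!$ and $g_*$ are mutually inverse isomorphisms, whence both are ring isomorphisms. Where you diverge from the paper is the crucial middle step: the paper simply cites \cite[Corollary~10.6.4]{BDHO23}, which asserts directly that for an orientation-preserving homotopy equivalence $g$ of closed oriented $n$-manifolds, $g_*$ and $g_!$ are isomorphisms inverse to each other. You instead attempt to reprove this from scratch, and both of your proposed routes for establishing $g_*\circ g_! = \mathrm{Id}$ have gaps.

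For the spectral sequence route: you correctly identify that $g_{p,*}\circ g_{p,!}$ is the identity on the $E^2$-page (this is the classical projection formula for a degree-one map with local coefficients), and you correctly flag that a filtered endomorphism inducing the identity on the associated graded need not be the identity. But the promotion step is then merely gestured at (``one constructs a homotopy cocycle in the spirit of Proposition~\ref{Prop : homotopy Criterion Ai}''), with none of the actual construction carried out. This is exactly the content of \cite[Corollary~10.6.4]{BDHO23}; without citing it or supplying the cocycle, the step remains open.

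For the Fibration Theorem route: this does not apply as stated. Theorem~\ref{thm : DG Thom iso} is proved only for an \emph{embedding} $\varphi : Y \hookrightarrow Z$ of closed manifolds, because the construction goes through a tubular neighborhood and a Thom class. An orientation-preserving homotopy equivalence $g : Y \to X$ of equidimensional manifolds is generally not an embedding, and the pullback map $g^*E = E_Y \to E$ is a homotopy equivalence but not an inclusion, so the phrase ``whose normal data is $0$-dimensional'' does not correspond to an actual normal bundle or Gysin map in the sense of that theorem. To run this argument one would have to factor $g$ through an embedding (e.g.\ into $D^N \times X$) and track both factors, which is essentially the work already packaged in \cite[Corollary~10.6.4]{BDHO23}.

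So: the skeleton of your argument is right and the first two paragraphs are sound, but the key claim $g_* = g_!^{-1}$ is not actually established by either of your proposed methods. The clean way to close it is to cite the BDHO23 result, as the paper does.
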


\begin{proof}
    Using \cite[Corollary 10.6.4]{BDHO23}, $g_!$ and $g_*$ are isomorphisms inverse to each other. Therefore $g_!$ is an isomorphism of rings and so is $g_* = (g_!)^{-1}$.
    
\end{proof}

\subsubsection{Spectral sequence, chain description of the product}

We now describe a chain-level model $\CS : C_i(X,\Xi, \F) \otimes C_j(X,\Xi,\F) \to C_{i+j-n}(X,\Xi,\F)$ for the product $\CS : H_i(X,\F) \otimes H_i(X,\F) \to H_{i+j-n}(X,\F)$.

We will prove that this preserves the canonical filtrations associated to these complexes and therefore prove that our construction also endows the canonical spectral sequence $E^r_{p,q}$ associated to an enriched Morse complex $C_*(X,\Xi_0, \F)$ with an algebra structure which converges towards $H_*(X,\F)$ as algebras.

This a DG Morse equivalent to \cite[Theorem 1]{CoJoYan} for the Chas-Sullivan product that has been generalized in \cite[Theorem 3.6]{GS07}.

\begin{prop}\label{prop : spectral sequence CSDG}
     Let $\Xi_0$ be a set of DG Morse data on $X$. The canonical filtration
    $$F_p(C_*(X,\Xi_0, \F)) = \bigoplus_{\substack{i +j = k\\ i \leq p}} \F_j \otimes \Z\Crit_i(f)$$
   induces a spectral sequence $E^r_{p,q}$ that is endowed with an algebra structure
    $$E^r_{p,q} \otimes E^r_{l,m} \to E^r_{p+l-n, q+m}$$
   and converges towards $H_*(X, \F)$ as algebras. For $s,t \geq 0$, $E^2_{s,t} = H_s(X,H_t(\F))$ and the algebra structure is given up to sign by the intersection product on $X$ with coefficients in $H_t(\F)$.
\end{prop}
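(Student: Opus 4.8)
The plan is to build the chain-level product first and then read off the statements about the spectral sequence from the compatibility lemmas already established. I would define
$$\CS : C_i(X,\Xi_0,\F) \otimes C_j(X,\Xi_0,\F) \to C_{i+j-n}(X,\Xi_0,\F)$$
as the composition $(-1)^{n(n-|\tau|)}\,\tilde{m} \circ \Delta_! \circ K^{top}$, using at the chain level the Künneth map $K^{top}$ from Theorem \ref{thm : Künneth formula}, the chain-level shriek map $\Delta_!$ of Section \ref{subsubsection : Second definition} (second definition, which is a genuine chain map for a given choice of DG Morse data on $X^2$, here $\Xi_{X\times X}$ built from two copies of $\Xi_0$ as in Section \ref{subsection : Morse data on a Cartesian product}), and the chain-level morphism $\tilde m$ of Proposition \ref{prop : Morphisme de Ai module en morphisme de complexe} induced by the $\Ai$-morphism of modules attached to $m$ by Theorem \ref{thm : morphisme induit commute avec iso}. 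Each of these three maps is a morphism of complexes, so $\CS$ is a chain map, and the identification of its homology-level version with the composition defining $\CS$ in Theorem \ref{thm : A} is immediate. The change of set of DG Morse data is handled by Corollary \ref{cor : induced morphism well-defined in homology}, Lemma \ref{lemme : commutativité K avec direct et shriek} (well-definedness of $K$) and the invariance of shriek maps, so the induced map on $H_*(X,\F)$ is independent of $\Xi_0$.

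Next I would check that $\CS$ respects the canonical filtration $F_p$. Since $\m$ strictly decreases the Morse index, $K^{top}$ sends $F_p \otimes F_{p'}$ into $F_{p+p'}$ of $C_*(X^2,\Xi_{X\times X},\F^2)$ exactly as in the proof of Lemma \ref{lemme : K is limit of morphism of spectral sequences}; the shriek map $\Delta_!$ lowers the index filtration by $n$ because it is built from the twisting-type cocycles $\nu^{\Delta_!}$ whose leading term shifts critical points of $X^2$ of index $i$ to critical points of $X$ of index $i-n$ (again using that the remainder strictly decreases the index); and $\tilde m$ preserves the filtration by the same argument as in Proposition \ref{prop : induced morphism is limit of spectral sequence maps}. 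Hence $\CS$ induces morphisms $\CS^{(r)} : E^r_{p,q}\otimes E^r_{l,m} \to E^r_{p+l-n,q+m}$ of the associated spectral sequences, and at the level of homology these converge to $\CS$ on $H_*(X,\F)$; the convergence as algebras is then the usual statement that a filtered chain map inducing a pairing of spectral sequences converges to the induced pairing on the abutment.

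It remains to identify the pairing on $E^2$. On the $E^2$-page one has $E^2_{s,t} = H_s(X,H_t(\F))$ (the lifted-Morse-complex homology with coefficients in the $\Z[\pi_1(X)]$-module $H_t(\F)$, as recalled in Section \ref{subsection : Filtration and spectral sequence}). On the induced map, $K^{top,(2)}$ is the Künneth cross product in ordinary (lifted) Morse homology with coefficients in $H_t(\F)$ and $H_{t'}(\G)$; $\Delta_!^{(2)}$ is, by the second-definition description of $\Delta_!$ reduced modulo the filtration, the Gysin/shriek map of the diagonal in Morse homology, i.e. the intersection product; and $\tilde m^{(2)}$ is induced by $\varphi_{1,*}=m_*$ on $H_t(\F^2)=H_t(C_*(F))\otimes H_{t'}(C_*(F))$ followed by the module-algebra multiplication, which for the coefficients $H_*(F)$ is just the pushforward by $m$. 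The composite is therefore, up to the Dold sign, the intersection product on $X$ with coefficients in $H_*(\F)$. Finally, via Theorem \ref{thm : morphisme induit commute avec iso}, Theorem \ref{thm : DG Thom iso} and Lemma \ref{lemme : Compatibility K and Fibration Theorem} the whole $\CS$ corresponds in homology to the Gruher--Salvatore product $\mu_*$, which by their Theorem 3.6 carries exactly this algebra structure on the Leray--Serre spectral sequence; this gives an independent confirmation of the $E^2$ computation. The main obstacle I expect is keeping the Dold and Koszul signs coherent through the three maps when passing to $E^2$, and checking carefully that the second-definition $\Delta_!$, which a priori depends on auxiliary perturbation data, induces on $E^2$ precisely the classical intersection product rather than some twisted variant; this is exactly the point where the Pontryagin--Thom identification of Theorem \ref{thm : DG Thom iso} does the heavy lifting.
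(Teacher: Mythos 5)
Your outline follows the same strategy as the paper: build a chain-level representative of $\CS$ as $(-1)^{\textup{Dold}}\,\tilde m\circ\Delta_!\circ K^{top}$, show each factor preserves the canonical filtration, and read off the $E^2$-page from the known descriptions of the constituents on $E^2$. The filtration-preservation arguments for $K^{top}$ and $\tilde m$ and the $E^2$ identifications (via Proposition \ref{prop : induced morphism is limit of spectral sequence maps}, Lemma \ref{lemme : K is limit of morphism of spectral sequences}, and the remark in \cite{BDHO23} that $\Delta_!$ induces the classical shriek on $E^2$) match what the paper does.

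There is, however, one technical gap in your chain-level construction. You propose to use the second (chain-level) definition of $\Delta_!$ with $\Xi_{X\times X}$ built from two copies of $\Xi_0$ on $X^2$ and, implicitly, $\Xi_0$ as the target data on $X$. But that chain-level $\Delta_!$ only exists once the transversality condition
\[
\Delta\lvert_{W^s_{f_2}(y)}\pitchfork W^u_{f_0+f_1}(x,x')
\]
holds for all relevant critical points, and this generically \emph{fails} when $f_0=f_1=f_2$ (for instance, critical points of $f_0$ sit on the diagonal and lie in both the relevant stable and unstable manifolds). The paper resolves this by choosing auxiliary generic data $\Xi_1,\Xi_2$ so that the transversality holds, defining the chain map
\[
C_*(X,\Xi_0,\F)^{\otimes 2}\xrightarrow{\Id\otimes\Psi_{01}} C_*(X,\Xi_0,\F)\otimes C_*(X,\Xi_1,\F)\xrightarrow{K} C_*(X^2,\Xi_{01},\F^2)\xrightarrow{\Delta_!} C_*(X,\Xi_2,\Delta^*\F^2)\xrightarrow{\Psi_{20}} C_*(X,\Xi_0,\Delta^*\F^2)\xrightarrow{\tilde m} C_*(X,\Xi_0,\F),
\]
and then observing that the continuation maps $\Psi_{01},\Psi_{20}$ (defined by the continuation cocycle) also preserve the filtration. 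Your argument for the change of Morse data only addresses well-definedness of the resulting map in homology, not the existence of a single chain-level model landing in a fixed $C_*(X,\Xi_0,\F)$, which is what the filtration argument needs. Once you insert $\Psi_{01}$ and $\Psi_{20}$ and note that they preserve the filtration (they are given by a cocycle that strictly decreases the Morse index in the lower-order terms), the rest of your argument goes through unchanged.
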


\begin{proof}

We will use for $\Delta_!$ the second definition of the shriek maps given in \cite[Section 10.4]{BDHO23} and take $\Xi_1$ and $\Xi_2$ generic sets of Morse data on $X$ such that $$\Delta\lvert_{W^s_{f_2}(y)} \pitchfork W^u_{f_0 + f_1}(x,x')$$ for all $y \in \Crit(f_2)$, $x \in \Crit(f_0)$, $x' \in \Crit(f_1)$.

We define a chain-level product
$$\begin{array}{ccl}
   C_*(X, \Xi_0, \F) \otimes C_*(X, \Xi_0, \F)  & \overset{\Id \otimes \Psi_{01}}{\longrightarrow} &  C_*(X, \Xi_0, \F) \otimes C_*(X, \Xi_1, \F)  \\
    & \overset{K}{\longrightarrow} & C_*(X^2, \Xi_{01}, \F^2) \\
    & \overset{\Delta_!}{\longrightarrow} & C_*(X, \Xi_2, \Delta^*\F^2)\\
    & \overset{\Psi_{20}}{\longrightarrow} & C_*(X, \Xi_0, \Delta^*\F^2) \\
    & \overset{\tilde{m}}{\longrightarrow} & C_*(X, \Xi_0, \F).
\end{array}$$

Every map preserves the canonical fibrations associated to an enriched complex.
Indeed, \cite[Theorem 8.1.1]{BDHO23} states that all direct and shriek maps
preserve the filtrations. The continuation maps $\Psi_{01}$ and $\Psi_{20}$
are defined by equation \cite[equation (33)]{BDHO23} and clearly preserves filtrations.
We proved in Proposition \ref{prop : induced morphism is limit of spectral sequence maps}
that every morphism of complexes induced by a morphism of $\Ai$-modules preserves filtrations.

It only remains to prove that $K$ preserves filtrations.
It suffices to remark that if $\alpha \otimes x \in \F_j \otimes
     \Z\Crit_i(f)$ and $\beta \otimes y \in \G_l \otimes
      \Z\Crit_k(g)$, then
     $$K^{alg}((\alpha \otimes x) \otimes (\beta \otimes y)) =
      (-1)^{li} (\alpha \otimes \beta) \otimes(x,y) \in
      (\F \otimes \G)_{j+l} \otimes \Z\Crit_{i+k}(f \oplus g) $$
     and in particular $$K^{alg} (F_p(C_*(X,\Xi_X,\F)) \otimes
     F_q(C_*(Y,\Xi_Y,\G))) = F_{p+q}(C_*(X \times Y, \Xi_{X \times Y},
     \F \otimes \G)).$$ If $\F = C_*(F)$ and $\G= C_*(G)$, we also have
     $$K^{top}((\alpha \otimes x) \otimes (\beta \otimes y)) =
      (-1)^{li} (\alpha, \beta) \otimes(x,y) \in C_{j+l}(F \times G) \otimes \Z\Crit_{i+k}(f \oplus g)$$
     and therefore
     $$K^{top} (F_p(C_{i+j}(X,\Xi_X,C_*(F))) \otimes
     F_q(C_{k+l}(Y,\Xi_Y,C_*(G)))) \subset F_{p+q}(C_{i+j+k+l}(X \times Y, \Xi_{X \times Y},
     C_*(F \times G))).$$

It follows that for any $a,b\in \N$, $$\CS ( F_p(C_a(X, \Xi_0, \F)) \otimes F_q(C_b(X, \Xi_0, \F)))
 \subset F_{p+q-n}(C_{a+b-n}(X, \Xi_0, \F))$$  and \cite[Theorem 2.14]{McC01}
 proves that $\CS$ induces an algebra structure $$\CS^{(r)} : E^r_{p,q}
  \otimes E^r_{l,m} \to E^r_{p+l-n,q+m}$$ such that $E^r_{p,q}$  converges towards
  $H_*(X,\F)$ as algebras.

On the second page, we infer using the description of the cross products above, \cite[Remark 9.2.3]{BDHO23} stating that $\Delta_!$ induces on the second page $\Delta_!^{(2)}$ the usual shriek morphism with local coefficients and Proposition \ref{prop : induced morphism is limit of spectral sequence maps} stating that $\tilde{m}$ is a limit of morphism of spectral sequences, that the algebra structure on $H_s(X,H_t(\F))$ is given up to sign by the intersection product on $X$ with coefficients in $H_t(\F).$

\end{proof}

\subsubsection{Equivalence with the Grüher-Salvatore definition }

\begin{prop}\label{prop : CS < CSDG} 

The product $\CS$ corresponds, up to sign, at the homology level via the Fibration Theorem to the product $\mu_* : H_*(E)^{\otimes 2} \to H_*(E)$ defined in \cite{GS07}.
\end{prop}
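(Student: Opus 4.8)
The plan is to assemble the commuting diagram from the definition of $\CS$ together with the compatibility results proven throughout the paper. Recall that $\CS(\gamma\otimes\tau) = (-1)^{n(n-|\tau|)}\,\tilde m\circ\Delta_!\circ K^{top}(\gamma\otimes\tau)$, while the Grüher-Salvatore product $\mu_*$ is defined as the composition $m_*\circ u_*\circ\tau_*\circ \EZ$ where $\EZ$ is the Eilenberg-Zilber map on $H_*(E)^{\otimes 2}\to H_*(E\times E)$, $\tau_*$ the Pontryagin-Thom collapse onto a tubular neighbourhood of $\Delta E\ftimes{\pi}{\pi} E$, $u_*$ the Thom isomorphism, and $m_*$ the map induced by the multiplication on $E$. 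So the goal is to show that the outer rectangle of the diagram already drawn in the introduction (the one with $\F = C_*(\Omega X)$, replaced here by the general fiber $F$) commutes, where the vertical arrows are the Fibration Theorem isomorphisms $\Psi_{E}$, $\Psi_{E^2}$, $\Psi_{E\ftimes{\pi}{\pi} E}$.

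First I would split the rectangle into three squares, one for each of the three maps composing $\CS$. The left square, comparing $K^{top}$ with $\EZ$ via the Fibration Theorem, is exactly Lemma \ref{lemme : Compatibility K and Fibration Theorem} (applied with $E_X = E_Y = E$, $X = Y$, so that $E_X\times E_Y = E^2$ and $X\times Y = X^2$). The middle square, comparing $\Delta_!$ with the composition $u_*\circ\tau_*$ of the Pontryagin-Thom collapse and the Thom isomorphism, is precisely Theorem \ref{thm : DG Thom iso} (Theorem D) applied to the embedding $\varphi = \Delta : X\hookrightarrow X^2$ and the fibration $F^2\hookrightarrow E^2\to X^2$, whose pullback by $\Delta$ is $F^2\hookrightarrow E\ftimes{\pi}{\pi} E\to X$; the Gysin map $\Delta_!$ on total spaces is by definition the composition $u_*\circ\tau_*$ appearing in \cite{GS07}. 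The right square, comparing $\tilde m$ with $m_*$, is Theorem \ref{thm : morphisme induit commute avec iso} (Theorem B) applied to the morphism of fibrations $m : E\ftimes{\pi}{\pi} E\to E$, giving $\Psi_E\circ\tilde m = m_*\circ\Psi_{E\ftimes{\pi}{\pi} E}$ up to chain homotopy. Pasting the three squares along the shared vertical arrows $\Psi_{E^2}$ and $\Psi_{E\ftimes{\pi}{\pi} E}$ yields the commutativity of the full rectangle up to the Dold sign, which is built into both definitions the same way (the sign $(-1)^{n(n-|\tau|)}$ in $\CS$ matches the convention used in \cite{Lau11} and \cite{GS07}). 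Hence $\Psi_E\circ\CS = \mu_*\circ(\Psi_E\otimes\Psi_E)$ in homology.

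The only delicate point I anticipate is bookkeeping around the coefficient module structures on $C_*(F^2)$: in Lemma \ref{lemme : Compatibility K and Fibration Theorem} the module structure on $C_*(F\times F)$ over $C_*(\Omega X^2)$ is the one induced by the transitive lifting function $(\Phi,\Phi)$, and in Theorem D the pullback $\Delta^*C_*(F^2)$ carries the structure over $C_*(\Omega X)$ induced by the lifting function of the pullback fibration $E\ftimes{\pi}{\pi} E$; one must check these agree with the structure on $\Delta^*\F^2$ that $\tilde m$ is defined with respect to in Theorem A, which it does by Remark \ref{rem : on the module structure over F times G}. I would therefore state the proof as: combine Lemma \ref{lemme : Compatibility K and Fibration Theorem}, Theorem \ref{thm : DG Thom iso} and Theorem \ref{thm : morphisme induit commute avec iso}, observing that the three commuting squares paste together, and that the Dold sign appears identically on both sides. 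In particular, specializing to the loop-loop fibration $\Omega X\hookrightarrow \ls X\overset{ev}{\to} X$ with $m$ the concatenation, and using that $\mu_*$ for this fibration is the Chas-Sullivan product \cite{GS07}, one concludes that $\CS$ computes the Chas-Sullivan product via $\Psi_{\ls X}$. The remaining properties of $\CS$ listed in Theorem A (associativity, commutativity, neutral element, functoriality, spectral sequence) then transfer to $\mu_*$ and the Chas-Sullivan product, as claimed.

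\begin{proof}
By construction, $\CS = (-1)^{n(n-|\tau|)}\,\tilde m\circ\Delta_!\circ K^{top}$, while the product $\mu_*$ of \cite{GS07} is, with the same Dold sign, the composition $m_*\circ u_*\circ\tau_{\Delta,E^2,*}\circ\EZ$ where $\tau_{\Delta,E^2,*}$ is the Pontryagin-Thom collapse associated to a tubular neighbourhood of $E\ftimes{\pi}{\pi} E$ in $E^2$ and $u_*$ the corresponding Thom isomorphism. Consider the diagram
\[
\xymatrix@C=14pt@R=1cm{
H_*(X,\F)^{\otimes 2} \ar[r]^-{K^{top}} \ar[d]^-{\Psi_E\otimes\Psi_E} & H_*(X^2,\F^2) \ar[d]^-{\Psi_{E^2}} \ar[rr]^-{\Delta_!} & & H_*(X,\Delta^*\F^2) \ar[r]^-{\tilde m} \ar[d]^-{\Psi_{E\ftimes{\pi}{\pi} E}} & H_*(X,\F) \ar[d]^-{\Psi_E} \\
H_*(E)^{\otimes 2} \ar[r]^-{\EZ} & H_*(E^2) \ar[r]^-{\tau_{\Delta,E^2,*}} & \widetilde H_*\!\big((E\ftimes{\pi}{\pi} E)^{\pi^*TX}\big) \ar[r]^-{u_*} & H_*(E\ftimes{\pi}{\pi} E) \ar[r]^-{m_*} & H_*(E)
}
\]
where $\F = C_*(F)$, $\F^2 = C_*(F^2)$, and the vertical maps are the Fibration Theorem isomorphisms. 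The leftmost square commutes by Lemma \ref{lemme : Compatibility K and Fibration Theorem} (applied with $E_X=E_Y=E$). The middle rectangle commutes by Theorem \ref{thm : DG Thom iso}, applied to the embedding $\Delta:X\hookrightarrow X^2$ and the fibration $F^2\hookrightarrow E^2\to X^2$, whose pullback by $\Delta$ is $F^2\hookrightarrow E\ftimes{\pi}{\pi} E\to X$; indeed, the Gysin map of \cite{GS07} on total spaces is exactly $u_*\circ\tau_{\Delta,E^2,*}$, and the module structures coincide by Remark \ref{rem : on the module structure over F times G}. The rightmost square commutes up to chain homotopy by Theorem \ref{thm : morphisme induit commute avec iso}, applied to the morphism of fibrations $m:E\ftimes{\pi}{\pi} E\to E$. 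Since the Dold sign $(-1)^{n(n-|\tau|)}$ is built into both $\CS$ and $\mu_*$ identically, pasting the three squares gives $\Psi_E\circ\CS = \mu_*\circ(\Psi_E\otimes\Psi_E)$ in homology.

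In particular, when the fibration is the loop-loop fibration $\Omega X\hookrightarrow\ls X\overset{ev}{\to} X$ and $m:\ls X\ftimes{ev}{ev}\ls X\to\ls X$ is the concatenation, the product $\mu_*$ is the Chas-Sullivan product \cite{GS07}, and therefore $\CS$ corresponds to it via $\Psi_{\ls X}$. Consequently the properties of $\CS$ established in Theorem \ref{thm : A} transfer to $\mu_*$, and in particular to the Chas-Sullivan product.
\end{proof}
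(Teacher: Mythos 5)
Your proof is correct and follows the same decomposition as the paper: split the rectangle into the $K^{top}$-vs-$\EZ$ square handled by Lemma \ref{lemme : Compatibility K and Fibration Theorem}, the $\Delta_!$-vs-Gysin-map part handled by Theorem \ref{thm : DG Thom iso} applied to $\Delta : X\hookrightarrow X^2$, and the $\tilde m$-vs-$m_*$ square handled by Theorem \ref{thm : morphisme induit commute avec iso}. The only cosmetic difference is that the paper draws the middle part as two explicit squares (through $H_*(\overline{\eta_\Delta},\partial\overline{\eta_\Delta},\F^2)$ and the Thom space) while you write it as a single rectangle; your extra remarks on the Dold sign and on the coincidence of the module structures via Remark \ref{rem : on the module structure over F times G} are sound.
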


\begin{proof}
We prove that the following diagram commutes up to sign.

$$
\xymatrix@C=15pt{
H_i(X,\F) \otimes H_j(X, \F) \ar[r]^-K \ar[d]^{\sim} & H_{i+j}(X^2, \F^2) \ar[r]^-{i_{\eta_{\Delta},!}} \ar[d]^{\sim} \ar@/^2pc/[rr]^-{\Delta_!}&  H_{i+j}(\overline{\eta_{\Delta}}, \partial \overline{\eta_{\Delta}}, \F^2) \ar@{=}[r] \ar[d]^{\sim} & H_{i+j-n}(X, \F^2) \ar[r]^-{\Tilde{m}} \ar[d]^{\sim} & H_{i+j-n}(X, \F) \ar[d]^{\sim}\\
H_i(E) \otimes H_j(E) \ar[r]^-K & H_{i+j}(E^2) \ar[r]^-{\tau_*} & \Tilde{H}_{i+j}\left( (E \ftimes{\pi}{\pi} E)^{\pi^* TX}\right) \ar[r]^-{u_*} & H_{i+j-n}(E \ftimes{\pi}{\pi} E) \ar[r]^-{m_*} & H_{i+j-n}(E). 
\label{diag : extension }
}
$$ 
The vertical arrows are given by the Fibration Theorem.
We used the following notations from \cite{GS07}:
\begin{itemize}
    \item $\eta_{\Delta}$ is a tubular open neighborhood of the diagonal in $X^2$ seen as a bundle over $X$ whose normal bundle is denoted by $\nu_{\Delta}$. Then $\eta_{\Delta}$ is homeomorphic to the total space of $\nu_{\Delta}$.
   
    \item $(E \ftimes{\pi}{\pi} E)^{\pi^* TX}$ is the Thom space of the normal bundle $\pi^*\nu_{\Delta} \simeq \pi^*TX$ associated to the inclusion $E \ftimes{\pi}{\pi} E \hookrightarrow E^2$. We have an identification $(E \ftimes{\pi}{\pi} E)^{\pi^* \nu_{\Delta}} \simeq \faktor{E^2}{E^2 \setminus \pi^* \eta_{\Delta}}$.
    
    \item $\tau_*$ is the Pontrjagin-Thom collapse map and $u_*$ is the Thom isomorphism.
\end{itemize}
    $\bullet$ Lemma \ref{lemme : Compatibility K and Fibration Theorem} shows that the first square commutes.

$\bullet$ The second and third squares commute up to sign. This is a direct application of Theorem \ref{thm : DG Thom iso}.

$\bullet$ The fourth square commutes by Theorem \ref{thm : morphisme induit commute avec iso} applied to the morphism of fibrations $m : E \ftimes{\pi}{\pi} E \to E$.
\end{proof}

\begin{cor}
    In particular, the product 

    $$\CS : H_*(X, C_*(\Omega X)^{ad})^{\otimes 2} \to H_*(X, C_*(\Omega X)^{ad})$$

    corresponds, up to sign, via the Fibration Theorem to the Cohen-Jones \cite{cohen2002homotopy} definition of the Chas-Sullivan product.
\end{cor}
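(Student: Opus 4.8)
The plan is to obtain the statement as a direct specialization of Proposition \ref{prop : CS < CSDG}. First I would take the fibration of that proposition to be the loop-loop fibration $\Omega X \hookrightarrow \ls{X} \overset{ev}{\to} X$, equipped with the transitive lifting function $\Phi : \ls{X} \ftimes{ev}{ev_0} \mathcal{P}X \to \ls{X}$, $\Phi(\alpha,\gamma) = \gamma^{-1} \# \alpha \# \gamma$, and with the morphism of fibrations $m : \ls{X} \ftimes{ev}{ev} \ls{X} \to \ls{X}$ given by concatenation of loops based at a common point. The first thing to record is that the $C_*(\Omega X)$-module structure induced by $\Phi$ on the fiber $F = ev^{-1}(\star) \simeq \Omega X$ is precisely the conjugation action; that is, $C_*(F)$ with this structure is $C_*(\Omega X)^{ad}$. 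This is immediate from the formula for $\Phi$ once restricted to based loops at $\star$, so the source $H_*(X, C_*(\Omega X)^{ad})$ of $\CS$ is identified, through the Fibration Theorem, with $H_*(\ls{X})$.

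Next I would invoke Proposition \ref{prop : CS < CSDG}, which asserts that $\CS$ corresponds, via the vertical Fibration Theorem isomorphisms, to the composition along the bottom row of the diagram in its proof. For the present data that bottom row is
\begin{gather*}
H_i(\ls{X}) \otimes H_j(\ls{X}) \overset{\EZ}{\longrightarrow} H_{i+j}(\ls{X}^2) \overset{\tau_*}{\longrightarrow} \widetilde{H}_{i+j}\big( (\ls{X} \ftimes{ev}{ev} \ls{X})^{ev^*TX} \big) \\
\overset{u_*}{\longrightarrow} H_{i+j-n}(\ls{X} \ftimes{ev}{ev} \ls{X}) \overset{m_*}{\longrightarrow} H_{i+j-n}(\ls{X}),
\end{gather*}
where $\tau_*$ is the Pontryagin-Thom collapse onto the Thom space of the normal bundle $ev^*TX$ of the figure-eight space $\ls{X} \ftimes{ev}{ev} \ls{X} \hookrightarrow \ls{X}^2$, $u_*$ is the Thom isomorphism, and $m_*$ is induced by concatenation. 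This is exactly the description of the Chas-Sullivan loop product given by Cohen-Jones in \cite{cohen2002homotopy} through the Thom spectrum $\ls{X}^{-TX}$, with the Dold sign $(-1)^{n(n-|\tau|)}$ already built into the definition of $\CS$ as explained after Theorem \ref{thm : A}. I would also note that $\ls{X} \ftimes{ev}{ev} \ls{X}$ is Cohen-Jones's pullback $\ls{X} \times_X \ls{X}$ and that concatenation is indeed a morphism of fibrations over $X$, as recalled in the introduction.

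Combining these observations yields the asserted correspondence, with no new construction needed: all vertical maps are the Fibration Theorem isomorphisms and each square was already shown to commute (Lemma \ref{lemme : Compatibility K and Fibration Theorem} for the cross product, Theorem \ref{thm : DG Thom iso} for the identification of $\Delta_!$ with $u_* \circ \tau_*$, and Theorem \ref{thm : morphisme induit commute avec iso} for $\Tilde{m}$ versus $m_*$). The one point deserving care — and the place I expect to spend most effort — is the bookkeeping of conventions: ensuring that the orientation of $ev^*TX$ and the normalization of the Thom class used here, transported through Theorem \ref{thm : DG Thom iso}, coincide with those of \cite{cohen2002homotopy}, so that the identification holds on the nose and not merely up to an overall sign.
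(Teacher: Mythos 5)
Your proposal is correct and matches the paper's (implicit) reasoning: the corollary is stated as an immediate specialization of Proposition \ref{prop : CS < CSDG}, and you have simply spelled out why — the conjugation lifting function on $\Omega X \hookrightarrow \ls{X} \overset{ev}{\to} X$ yields the adjoint module structure $C_*(\Omega X)^{ad}$, and the bottom row of the commutative diagram in that proposition, in this case, is exactly the Cohen--Jones Thom-spectrum description of the loop product (the Grüher--Salvatore $\mu_*$ specializes to it). No further argument is needed.
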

\begin{flushright}
    $\blacksquare$
\end{flushright}

We can reprove using our setting the homotopy invariance of the Chas-Sullivan product.

\begin{prop} \textbf{(Homotopy invariance)}
    Let $(Y, \star_Y)$ be a smooth, oriented, pointed, closed and connected manifold. Let $g : Y \to X$ be an orientation-preserving homotopy equivalence such that $\star = g(\star_Y)$.
    Consider the fibrations $$\Omega X \hookrightarrow \ls{X} \to X \ \textup{and} \ \Omega Y \hookrightarrow \ls{Y} \to Y.$$ 
    
    There exists an isomorphism of rings $$g_{\#} : H_*(X,C_*(\Omega X)) \to H_*(Y, C_*(\Omega Y)).$$
\end{prop}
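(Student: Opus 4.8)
The plan is to build the ring isomorphism $g_\#$ by composing the shriek map of $g$ with the morphism induced by a morphism of fibrations comparing the loop-loop fibration of $Y$ with the pullback by $g$ of the loop-loop fibration of $X$. First I would apply the Functoriality property (Corollary \ref{cor : shriek map of an homotopy equivalence is ring iso with pullbacked module structure}) to the map $g : Y \to X$ and the fibration $\Omega X \hookrightarrow \ls{X}\overset{ev}{\to} X$: since $g$ is an orientation-preserving homotopy equivalence between manifolds of the same dimension, $g_! : H_*(X, C_*(\Omega X)) \to H_*(Y, g^*C_*(\Omega X))$ is an isomorphism of rings, where the right-hand side carries the product $\CS^Y$ built from the morphism of fibrations $g^*m : g^*\ls{X} \ftimes{}{} g^*\ls{X} \to g^*\ls{X}$ (the pullback of the concatenation).

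The remaining step is to identify $H_*(Y, g^*C_*(\Omega X))$ with $H_*(Y, C_*(\Omega Y))$ as rings. For this I would produce a morphism of fibrations over $Y$ between $\Omega Y \hookrightarrow \ls{Y} \to Y$ and $g^*\Omega X \hookrightarrow g^*\ls{X} \to Y$ and invoke Corollary \ref{cor : equality of Ai morphism on total space gives equality of induced maps} together with Corollary \ref{cor : composition of morphism of fibrations}. Concretely, the natural map $\ls{Y} \to g^*\ls{X}$, $\gamma \mapsto (ev(\gamma), g\circ \gamma)$, is continuous and preserves the fibration over $Y$, hence is a morphism of fibrations; since $g$ is a homotopy equivalence, the induced map $\ls{Y} \to \ls{X}$ on total spaces is a homotopy equivalence, so the map on total spaces $\ls{Y} \to g^*\ls{X}$ is a homotopy equivalence as well (the total space of $g^*\ls{X}$ being homotopy equivalent to $\ls{X}$), hence induces an isomorphism in homology. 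Corollary \ref{cor : equality of Ai morphism on total space gives equality of induced maps} then gives that the induced map $\widetilde{(ev, g\circ -)} : H_*(Y, C_*(\Omega Y)) \to H_*(Y, g^*C_*(\Omega X))$ is an isomorphism, and it is compatible with the concatenation morphisms of fibrations on both sides (the square of morphisms of fibrations $\ls{Y} \ftimes{}{}\ls{Y} \to g^*\ls{X}\ftimes{}{}g^*\ls{X}$ over the concatenations commutes), so Corollary \ref{cor : composition of morphism of fibrations} and Lemma \ref{lemme : induced maps on a Cartesian product commutes with K} show it intertwines $\CS^Y$ defined with $C_*(\Omega Y)$ and $\CS^Y$ defined with $g^*C_*(\Omega X)$. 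Composing, $g_\# := \widetilde{(ev, g\circ -)}^{-1} \circ g_!$ is an isomorphism of rings.

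The main obstacle I anticipate is bookkeeping the module structures and the two different instances of the product $\CS^Y$ on $H_*(Y,-)$: one must be careful that the transitive lifting function used for $g^*\ls{X}$ is the pullback of that of $\ls{X}$, that the morphism of fibrations $(ev, g\circ-)$ intertwines these lifting functions only up to coherent homotopy (so that Theorem \ref{thm : morphisme induit commute avec iso} and its corollaries genuinely apply), and that the concatenation squares commute as diagrams of morphisms of fibrations so that $\tilde{m}$ behaves functorially. Once these compatibilities are in place, the ring isomorphism statement follows formally from the already-established functoriality of $\CS$ under morphisms of fibrations and under the shriek map of a homotopy equivalence; no new geometric input is needed beyond observing that $\gamma \mapsto (ev(\gamma), g\circ\gamma)$ is the required fiberwise comparison map.
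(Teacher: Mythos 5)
Your proof is correct, and it takes a genuinely different (and arguably more economical) route through the second step than the paper does. Both approaches start identically with the ring isomorphism $g_! : H_*(X,C_*(\Omega X)) \to H_*(Y, g^*C_*(\Omega X))$ coming from Corollary \ref{cor : shriek map of an homotopy equivalence is ring iso with pullbacked module structure}. For the remaining identification $H_*(Y,g^*C_*(\Omega X)) \cong H_*(Y,C_*(\Omega Y))$, the paper chooses a homotopy inverse $f : X \to Y$ of $g$ and uses \emph{two} morphisms of fibrations over $Y$, namely $\ls{f} : g^*\ls{X} \to (f\circ g)^*\ls{Y}$ and $\ls{(f\circ g)} : \ls{Y} \to (f\circ g)^*\ls{Y}$, obtaining $g_\# = \widetilde{\ls{(f\circ g)}}^{-1}\circ\widetilde{\ls{f}}\circ g_!$ after checking a three-column diagram of compatibilities with $K$, $\Delta_!$ and $\tilde m$. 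You instead notice that the single morphism of fibrations $\phi : \ls{Y} \to g^*\ls{X}$, $\gamma \mapsto (\gamma(0), g\circ\gamma)$, already does the job: it is a morphism of fibrations over $Y$ whose map on total spaces is a homotopy equivalence (by two-out-of-three using $\ls{g}$ and the pullback projection $g^*\ls{X}\to\ls{X}$), so Corollary \ref{cor : equality of Ai morphism on total space gives equality of induced maps} gives that $\tilde\phi$ is an isomorphism, and $\phi$ strictly intertwines the concatenations $m_Y$ and $g^*m_X$ (indeed $g\circ(\gamma_1\#\gamma_2)=(g\circ\gamma_1)\#(g\circ\gamma_2)$), so compatibility with the product follows as in the paper's own diagram chase. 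Your construction therefore avoids the auxiliary homotopy inverse $f$ and the intermediate fibration $(f\circ g)^*\ls{Y}$ entirely; what the paper's route buys is a symmetric presentation where both comparison maps point in the same direction toward a common target, at the cost of one extra morphism of fibrations and the explicit choice of $f$.
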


\begin{proof}
    We proved in Corollary \ref{cor : shriek map of an homotopy equivalence is ring iso with pullbacked module structure} that $g_! : H_*(X, C_*(\Omega X)) \to H_*(Y, g^*C_*(\Omega X))$ is a ring isomorphism. It remains to define a ring isomorphism $H_*(Y, g^*C_*(\Omega X)) \to H_*(Y, C_*(\Omega Y)).$
    Consider the map $$\ls{g} :\ls{Y} \to  g^*\ls{X} ,\ \ls{g}(\gamma)= (\gamma(0), g \circ \gamma).$$

  This is a morphism of fibrations over $\Omega Y$ and induces an isomorphism $$\ls{g}_* : H_*(\ls{Y}) \to H_*(g^* \ls{X}).$$

Corollary \ref{cor : equality of Ai morphism on total space gives equality of induced maps} shows that 

$$\widetilde{\ls{g}} : H_*(Y, C_*(\Omega Y)) \to H_*(Y,  g^*C_*(\Omega X)) $$ is an isomorphism.
Denote now $$g_{\#} = \widetilde{\ls{ g}}^{-1}  \circ g_! : H_*(X,C_*(\Omega X)) \overset{\sim}{\to} H_*(Y, C_*(\Omega Y)). $$

    We prove that $g_{\#}$ is an isomorphism of rings where the products on $H_*(X, \Omega X)$ and $H_*(Y, \Omega Y)$ are respectively the DG Chas-Sullivan products induced by the concatenations $m_X :  \ls{X} \ftimes{\ev}{\ev} \ls{X} \to \ls{X}$ and $m_Y :  \ls{Y} \ftimes{\ev}{\ev} \ls{Y} \to \ls{Y}$ which are morphisms of fibrations.\\

    We already know from Corollary \ref{cor : shriek map of an homotopy equivalence is ring iso with pullbacked module structure} that $$g_! : H_*(X, C_*(\Omega X)) \to H_*(Y, g^*C_*(\Omega X))$$ is an isomorphism of rings. It remains to prove that $$\widetilde{\ls{g}}^{-1}  : H_*(Y,g^*C_*(\Omega X)) \overset{\sim}{\to} H_*(Y, C_*(\Omega Y))$$ is a morphism of rings.

    Denote $\F = C_*(\Omega X)$, $\F^2 = C_*(\Omega X^2)$ and $\G = C_*(\Omega Y)$, $\G^2 = C_*(\Omega Y^2)$. The following diagram commutes

    $$\xymatrix{
    H_*(Y,g^*\F)^{\otimes 2} \ar[r]^-K & H_*(Y^2,  (g \times g)^*\F^2) \ar[r]^-{\Delta_!}  & H_*(Y, \Delta^*(g \times g)^*\F^2) \ar[r]^-{g^* \tilde{m}_X}  & H_*(Y, g^*\F) \\
    H_*(Y, \G)^{\otimes 2}  \ar[u]_{\widetilde{\ls{g}}^{\otimes 2}} \ar[r]^-K  & H_*(Y^2, \G^2) \ar[u]^{\widetilde{\ls{g} \times \ls{g}}} \ar[r]^-{\Delta_!}  & H_*(Y, \Delta^* \G^2) \ar[r]^-{\tilde{m}_Y} \ar[u]^{\Delta^*\widetilde{\ls{g} \times \ls{g}}}  & H_*(Y, \G) \ar[u]^-{\widetilde{\ls{g}}} .
    }$$

    Indeed, the commutativity of the square on
    
    \begin{itemize}
        \item the first column is a consequence of Lemma  \ref{lemme : induced maps on a Cartesian product commutes with K}.
        \item the second column is a consequence of Proposition \ref{prop : morphisme Ai commute avec direct et shriek}.
        \item the third column is a consequence of Corollary \ref{cor : equality of Ai morphism on total space gives equality of induced maps} and Corollary \ref{cor : composition of morphism of fibrations} since 

    $$m_X\circ( \ls{g} \times \ls{g}) = \ls{g} \circ m_Y.$$
    \end{itemize}

\end{proof}

\subsubsection{Pullback}

Let $(Y,\star_Y)$ be a pointed topological space and $F \hookrightarrow E \overset{p}{\to} Y$ be a fibration.
Let $f : X^n \to Y$ be a continuous map such that $f(\star_X) = \star_Y$. Let $m : E \ _p\times_p E \to E$ be a morphism of fibrations. Consider the pullback fibrations 

$$F \hookrightarrow f^*E \to X \textup{ and } F^2 \hookrightarrow f^*(E \ftimes{p}{p} E) \to X.$$

It is straightforward that $f^*m_* : f^*\left(E \ _p\times_p E\right) \to f^*E $ is also a morphism of fibrations. Therefore, there exists a product of degree $-n$

$$\CS : H_*(X, f^*C_*(F))^{\otimes 2} \to H_*(X,f^*C_*(F))$$
 
For instance :\begin{itemize}
    \item Let $X = \{\star\} \overset{i}{\subset} Y$ and consider $\Omega Y \hookrightarrow \ls{Y} \to Y$. Then, $\CS : H_*(\star, i^*C_*(\Omega Y))^{\otimes 2} \to H_*(\star,i^*C_*(\Omega Y))$ describes the Pontryagin product on $\Omega Y$.
    \item Let $Y = \ls{X}$ and $f : X \to \ls{X}$ be the inclusion of constant loops. Consider the fibration $\Omega \ls{X} \hookrightarrow \ls{\ls{X}} \to \ls{X}$. Then,  $\CS : H_*(X, f^*C_*(\Omega \ls{X}))^{\otimes 2} \to H_*(X,f^*C_*(\Omega \ls{X}))$ describes a product of degree $-n$ on pinched tori.
\end{itemize}

\subsection{DG Chas-Sullivan product for manifolds with boundary}

If $X$ has a boundary, take a Morse function without any critical points on the boundary and a pseudo-gradient pointing outward along $\partial_+ X \subset \partial X$ and inward along $\partial_- X\subset \partial X$ so that the boundary decomposes $\partial X = \partial_+ X \cup \partial_- X$ (see \cite[ Section 3.5]{AD14} for further explanations on Morse theory for a manifold with boundary and \cite[Section 5.3]{BDHO23} for the construction of Morse homology with DG coefficients for a manifold with boundary). 

Let $X,Y$ be pointed, oriented, compact and connected manifolds with boundary. Let $\F, \F'$ be two DG modules over $C_*(\Omega X)$ and $\G$ be a DG module over $C_*(\Omega Y)$. Let $\boldsymbol{\varphi} : \F \to \F'$ be a morphism of $\Ai$-modules.

The Künneth map $$\deffct{K}{C_*(X, \partial_+ X, \F) \otimes C_*(Y, \partial_+Y, \G)}{C_*(X \times Y, X \times \partial_+ Y \cup \partial_+X \times Y, \F \times \G)}{\alpha \otimes x \otimes \beta \otimes y}{(-1)^{|\beta||x|} (\alpha, \beta) \otimes(x,y)}$$ and  $$\tilde{\varphi} = \sum_n \varphi_{n+1} \m^{n} : H_*(X, \partial_+ X, \F) \to H_*(X, \partial_+ X, \F') $$ are defined in the same way, since all the trajectories between critical points avoid the boundary. 

Let $\psi : X \to Y$ be a continuous map. The shriek map $\psi_!$ has target $H_*(X,\partial_+ X, \psi^*\G)$ and factors through $H_*(Y, \G) \to H_*(Y, \partial_+ Y, \G) \to H_*(X,\partial_+ X, \psi^*\G)$ (see \cite{BDHO23} Remark 10.4.3). Therefore $\Delta_! : H_*(X^2, \partial_+ (X^2), \F^2) \to H_*(X,\partial_+ X, \Delta^*\F^2) $ is well-defined and has the same properties and the degree $-n$ product 

$$\CS : H_*(X,\partial_+ X, \F)^{\otimes 2} \to H_*(X,\partial_+ X, \F)$$

is defined in the same way and has the same properties under the same assumptions.

In particular, if $F \hookrightarrow E \overset{\pi}{\to} X$ and $m : E \ftimes{\pi}{\pi} E \to E$ is a morphism of fibrations, then $\CS$ induces a degree $-n$ product

$$\CS : H_*(E,E_+) \otimes H_*(E,E_+) \to H_*(E,E_+)$$

where we denoted $E_+ = \pi^{-1}(\partial_+ X).$

\section{Further directions}\label{section : Further direction}

In this paper, we laid out the necessary tools to study, using enriched Morse theory, products on total spaces of fibrations that \emph{intersect on the basis} and \emph{multiply on the fiber}. The Path-product defined and studied by \cite{Max23} is an example of such a product that does not fall into the category of products studied in this paper.

\subsection{Path products}

 Let $Y$ be a topological space, $X^n$ be a pointed, oriented, closed, connected manifold, and $f : X \to Y$ be a continuous map.\\

Define $$\mathcal{P}_{X,f} Y = \{(x,x',\alpha) \in X^2 \times \mathcal{P}Y, \ \alpha(0) = f(x) , \alpha(1)= f(x')\} $$ and $\ev_0, \ev_1 : \mathcal{P}_{X,f} Y \to X$, $\ev_0(x,x',\alpha) = x \in X$, $\ev_1(x,x',\alpha) = x' \in X$ the evaluation at the basepoint and endpoint. We will denote $\ev=(\ev_0,\ev_1) :  \mathcal{P}_{X,f} Y \to X^2$.
A degree $-n$ product $$\Lambda : H_*(\mathcal{P}_{X,f} Y)^{\otimes 2} \to H_*(\mathcal{P}_{X,f} Y)$$ has been defined and studied by \cite{Max23} if $Y$ is a closed manifold. This product, as the Chas-Sullivan product, is defined by intersecting on a space where the paths are concatenable and then concatenating.

Denote $\pi : \mathcal{P}_{X,f} Y \ftimes{\ev_1}{\ev_0} \mathcal{P}_{X,f} Y \to X^3, \ \pi(\gamma, \tau) = (\gamma(0), \gamma(1)=\tau(0), \tau(1))$.
Since $\mathcal{P}_{X,f} Y$ and \\ $\mathcal{P}_{X,f} Y \ftimes{\ev_1}{\ev_0} \mathcal{P}_{X,f}$ are the total spaces of the fibrations  $$\Omega Y \hookrightarrow \mathcal{P}_{X,f} Y \overset{\ev}{\rightarrow} X^2$$ and $$\Omega Y^2 \hookrightarrow \mathcal{P}_{X,f} Y \ftimes{\ev_1}{\ev_0} \mathcal{P}_{X,f} Y \overset{\pi}{\to} X^3, $$

we interpret this product in our setting by

$$\begin{array}{rll}
    \textup{PP}_{DG} : & H_i(X^2, C_*(\Omega Y)) \otimes H_j(X^2, C_*(\Omega Y))  &\\
    \overset{K}{\longrightarrow} & H_{i+j}(X^4, C_*(\Omega Y^2)) & \simeq H_{i+j}(\mathcal{P}_{X,f} Y \times \mathcal{P}_{X,f} Y)   \\
     \overset{D_!}{\longrightarrow} & H_{i+j-n}(X^3, \Delta^* C_*(\Omega Y^2)) & \simeq H_{i+j-n}(\mathcal{P}_{X,f} Y \ftimes{\ev_1}{\ev_0} \mathcal{P}_{X,f} Y) \\
     \overset{\tilde{m}}{\longrightarrow} & H_{i+j-n}(X^3, p^*C_*(\Omega Y)) & \simeq H_{i+j-n}(m(\mathcal{P}_{X,f} Y \ftimes{\ev_1}{\ev_0} \mathcal{P}_{X,f} Y)) \\
     \overset{p_{*}}{\longrightarrow} & H_{i+j-n}(X^2, C_*(\Omega Y)) & \simeq H_{i+j-n}(\mathcal{P}_{X,f} Y)
\end{array}$$

where $D : X^3 \to X^4, \ D(a,b,c) = (a,b,b,c)$ and $p : X^3 \to X^2, \ p(a,b,c) = (a,c)$ and $$m : \mathcal{P}_{X,f} Y \ftimes{\ev_1}{\ev_0} \mathcal{P}_{X,f} Y \to p^*\mathcal{P}_{X,f} Y$$ is the morphism of fibrations over $X^3$ induced by the concatenation of paths.

\begin{rem}
    Intuitively, given two chains $\sigma, \tau$ in $\mathcal{P}_{X,f}Y$, this product will intersect $\ev_{1,*}(\sigma)$ with $\ev_{0,*}(\tau)$ on $X$, concatenate at the intersection and forget the concatenation point.

    We can remark that, since we do not intersect along the diagonal $\Delta: X^2 \to X^4$, this product does not fall into the category of products studied in this paper. Nonetheless, the developed tools enable to define and study it. 
\end{rem}

Using techniques very similar to those used in Section \ref{section : Morse description and generalization of the Chas-Sullivan product} we prove in \cite{PPDG}

\begin{thm}
    The product $\textup{PP}_{DG}$ is associative, admits a neutral element and corresponds, via the Fibration Theorem, to the product $\Lambda$ defined by \cite{Max23}.
\end{thm}

\subsection{Further study on \texorpdfstring{$\Ai$}{Ai}-structures}

\subsubsection{\texorpdfstring{$\Ai$}{Ai}-algebra structures on enriched Morse complexes}

This introduction of $\Ai$-structures to study enriched Morse theory is a clear path to defining an $\Ai$-structures on enriched Morse chains as it has been defined in \cite{Abo11} and \cite{mazuir2022higheralgebraainftyomega} for the Morse cochains with coefficients in $\Z$. These constructions rely on studying the moduli space of perturbed Morse gradient trees $\overline{\mathcal{T}}^{\mathbb{X}}_t(y ; x_1, \dots, x_n)$ and particularly those of dimension $0$ and $1$.

The main difficulty in the enriched Morse setting would be to extend this work to define a fundamental class for the manifold with boundary and corners $\overline{\mathcal{T}}^{\mathbb{X}}_t(y ; x_1, \dots, x_n)$ in any dimension and understand how the orientations behave with respect to the boundary strata.

\subsubsection{Towards a Chas-Sullivan product \texorpdfstring{$\textup{CS}_{\Ai}$}{CSAi} with coefficients in an \texorpdfstring{$\Ai$}{Ai}-module}
Whenever $\F$ is an $\Ai$-module over $C_*(\Omega X)$ and there exists a morphism of $\Ai$-modules $m : \Delta^*(\F \otimes \F) \to \F$, there exists a product 

$$CS_{\Ai} : H_*(X, \F)^{\otimes 2} \to H_*(X, \F).$$

We will study, in future work, the question of associativity, commutativity and the existence of a neutral element for this product.

\bibliography{mybib}
\bibliographystyle{alpha}

Université de Strasbourg, Institut de recherche mathématique avancée, IRMA,
Strasbourg, France.

\emph{e-mail:} r.riegel@unistra.fr

\end{document}